\newcommand{\ns}{{\mathbb N}} 
\newcommand{\zs}{{\mathbb Z}} 
\newcommand{\cs}{{\mathbb C}} 
\renewcommand{\chi}{{\bf 1}}
\newcommand{\spacebreak}
{\begin{displaymath} \triangleleft \; \lhd \;
\diamond \; \rhd \; \triangleright
  \end{displaymath}}
\newcommand{\cA}{\mathcal A}
\newcommand{\cS}{\mathcal S}
\newcommand{\cT}{\mathcal T}
\newcommand{\bmc}{\mbox{\boldmath$c$}}
\newcommand{\bmsc}{\mbox{\scriptsize\boldmath$c$}}
\newcommand{\llb}{\llbracket }
\newcommand{\rrb}{\rrbracket }
\newcommand{\tn}{\tilde n}
\newtheorem{Theorem}{Theorem}
\newtheorem{Proposition}[Theorem]{Proposition}
\newtheorem{Observation}[Theorem]{Observation}
\newtheorem{Lemma}[Theorem]{Lemma}
\newtheorem{Definition}[Theorem]{Definition}
\newcommand{\beq}{\begin{equation}}
\newcommand{\eeq}{\end{equation}}
\newcommand{\gf}{generating function}
\def\emm#1,{{\em #1}}
\def\section{\@startsection{section}{1}%
 \z@{.7\linespacing\@plus\linespacing}{.5\linespacing}%
 {\normalfont\bfseries\scshape\centering}}
\def\subsection{\@startsection{subsection}{2}%
  \z@{.5\linespacing\@plus\linespacing}{.5\linespacing}%
  {\normalfont\bfseries\scshape}}
\def\subsubsection{\@startsection{subsubsection}{3}%
 \z@{.5\linespacing\@plus\linespacing}{-.5em}
  {\normalfont\bfseries\itshape}}
\def\qed{$\hfill{\vrule height 3pt width 5pt depth 2pt}$}
\def\boite{${\vrule height 5pt width 5pt depth 0pt}$\ }
\begin{document}
\title
[The vertical profile of embedded trees]
{The vertical profile of embedded trees}

\author[M. Bousquet-M\'elou]{Mireille Bousquet-M\'elou}
\author[G. Chapuy]{Guillaume Chapuy}
\thanks{GC was partially supported by the ERC grant StG 208471
  - ExploreMaps, and thanks the LaBRI for a ``Junior Invitation'' in September 2010.}
\address{MBM: CNRS, LaBRI, Universit\'e Bordeaux 1, 
351 cours de la Lib\'eration, 33405 Talence, France}
\email{mireille.bousquet@labri.fr}
\address{GC: CNRS, LIAFA, Universit\'e Paris 7, 
175 rue du Chevaleret, 75203 Paris, France}
\email{guillaume.chapuy@liafa.jussieu.fr}

\thanks{MBM was partially supported by  the French ``Agence Nationale
de la Recherche'', project A3 ANR-08-BLAN-0190.}


\keywords{Enumeration -- Embedded trees}
\subjclass[2000]{05A15}

\begin{abstract}
Consider a rooted binary tree with $n$ nodes. Assign with the root the
abscissa $0$, and with the left (resp.~right) child of a node of
abscissa $i$  the abscissa $i-1$ (resp.~$i+1$). We prove that the number of binary
trees of size $n$ having exactly $n_i$ nodes at abscissa $i$, for
$\ell \le i \le r$ (with $n = \sum _i n_i$), is 
$$
\frac{n_0}{n_\ell n_r} {{n_{-1}+n_1} \choose {n_0-1}}
\prod_{\ell\le i\le r \atop i\not = 0}{{n_{i-1}+n_{i+1}-1} \choose {n_i-1}},
$$
with $n_{\ell-1}=n_{r+1}=0$. 
The sequence $(n_\ell, \ldots, n_{-1};n_0, \ldots, n_r)$ is called the
\emm vertical profile, of the tree. The vertical profile of a uniform
random tree of size $n$ is known to converge, in a certain sense and after
normalization, to a random mesure called the \emm integrated
superbrownian excursion,,  which  motivates our interest
in the profile.

We prove similar looking formulas for other families of trees whose
nodes are embedded in $\zs$. We also refine these formulas  by taking into account the number of nodes at abscissa
$j$ whose parent lies at abscissa $i$, and/or the number of vertices at
abscissa $i$ having a prescribed number of children at abscissa $j$,
for all $i$ and $j$.

Our proofs are bijective. 
\end{abstract}

\date{\today}
\maketitle

%

\section{Introduction}
Consider a rooted binary tree: each node has a left child and/or a
right child. The \emm height, of a node is its distance
to the root. The \emm horizontal profile, of the tree is $(h_0, h_1,
\ldots, h_k)$, where 
$h_i$ is the number of nodes at height $i$ and $k$ is the maximal height of a node
(Figure~\ref{fig:binary-profiles}, left). It is
easy to see that the number of trees with horizontal profile $(1, h_1,
\ldots, h_k)$ is
\beq
\label{profil-hor}
\prod_{i=0}^{k-1} {{2h_i}\choose h_{i+1}},
\eeq
with $h_0=1$. Indeed, the binomial coefficient ${{2h_i}\choose h_{i+1}}$ describes
how to spread $h_{i+1}$ nodes of height $i+1$ in the $2h_i$ slots
created by the $h_i$ nodes lying at height $i$.
The horizontal profile of trees has been much studied in the
literature
and is very well
understood~\cite{aldous-crtII,drmota-profile,flajolet-prodinger-level-sequences,meir-moon-altitude,takacs}. Expression~\eqref{profil-hor}
appears for instance in~\cite{brown-schubert}. 

\begin{figure}[htb]
\scalebox{0.7}{\input{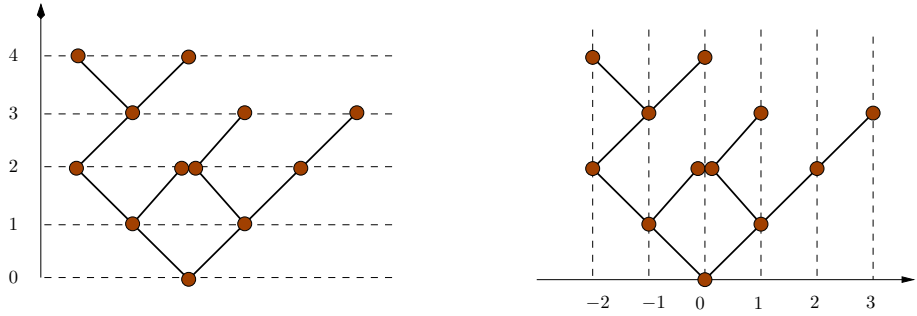}}
\caption{A rooted binary tree having horizontal profile $(1,2,4,3,2)$ and vertical profile $(2,2;4,2,1,1)$.}
\label{fig:binary-profiles}
\end{figure}

Now, assign to each node, instead of an \emm ordinate, (its height), an \emm
abscissa,: the root lies at abscissa 0, and the abscissa of the right
(resp. left) child of a node of abscissa $i$ is $i+1$
(resp.~$i-1$). We say that the tree is (canonically) \emm embedded, in
$\zs$.
The \emm vertical profile, of the tree is $(n_\ell, \ldots, n_{-1};
n_0, n_1, \ldots, n_r)$, where $n_i$ is the number of nodes at
abscissa $i$, and $\ell$ (resp.~$r$) is the smallest (resp.~largest)
abscissa occurring in the tree (Figure~\ref{fig:binary-profiles},
right). We prove in this paper that the number of trees with a 
prescribed vertical profile is given by a formula that is as
compelling as~\eqref{profil-hor}, but, we believe, far less obvious.

\begin{Theorem}\label{thm:binary-profile-pm}
  Let $\ell\le 0\le r$, and let $(n_i)_{\ell\le i\le r}$ be a
  sequence of positive integers. The number of binary trees having
  vertical profile $(n_\ell, \ldots, n_{-1};
n_0, n_1, \ldots, n_r)$ is
$$
\frac{n_0}{n_\ell n_r} {{n_{-1}+n_1} \choose {n_0-1}}
\prod_{\ell\le i\le r \atop i\not = 0}{{n_{i-1}+n_{i+1}-1} \choose {n_i-1}},
$$
with $n_{\ell-1}=n_{r+1}=0$. 
\end{Theorem}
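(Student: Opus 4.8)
The plan is to imitate the slot-filling argument behind the horizontal profile formula~\eqref{profil-hor}, while confronting the genuinely new difficulty that makes the vertical count subtle: because the children of a node sit on \emph{both} sides of it, the layer-by-layer growth that is automatic for heights can now create cycles, and exactly these have to be excised. First I would restate the objects. An embedded binary tree is a rooted tree drawn in $\zs$, with the root at $0$, every edge joining two consecutive abscissae, and each node carrying at most one child to its left (abscissa $i-1$) and at most one to its right (abscissa $i+1$); since the left/right attribute of a child is read off from its abscissa, such trees are rigid, every node being determined by its sequence of left/right steps from the root. Each node offers two \emph{slots}, a left one pointing to $i-1$ and a right one pointing to $i+1$, and each edge occupies exactly one slot.

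The key structural observation is that the slots a node at abscissa $i$ may use for its own \emph{parent} are the right slots of the $n_{i-1}$ nodes at $i-1$ together with the left slots of the $n_{i+1}$ nodes at $i+1$: a pool of $n_{i-1}+n_{i+1}$ slots that is disjoint from the corresponding pool of every other abscissa. I would first count the \emph{relaxed} family in which one forgets that the result must be connected and asks only that each non-root node occupy one slot from its own pool. Exactly as the level-$(i+1)$ nodes inherit their order from the chosen slots in the proof of~\eqref{profil-hor}, here the occupants inherit their identity from the slots they fill, so the data at abscissa $i$ is just the choice of which $n_i$ of the $n_{i-1}+n_{i+1}$ slots are filled (only $n_0-1$ at abscissa $0$, the root having no parent). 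By disjointness these choices are independent, so the relaxed count factorises as $\binom{n_{-1}+n_1}{n_0-1}\prod_{i\neq 0}\binom{n_{i-1}+n_{i+1}}{n_i}$. A relaxed configuration is a functional graph with $n-1$ edges (each non-root node points to its parent), hence a tree precisely when it is acyclic; on small profiles one checks that the spurious configurations are exactly those carrying a cycle, e.g.\ a node at $i$ taking a parent at $i+1$ while a node at $i+1$ takes a parent at $i$.

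The heart of the proof is then to pass from the relaxed count to the acyclic one, and this is where I expect the real work to lie. Comparison with the target shows that the abscissa-$0$ factor already matches, that each factor with $i\neq 0$ must shrink from $\binom{n_{i-1}+n_{i+1}}{n_i}$ to $\binom{n_{i-1}+n_{i+1}-1}{n_i-1}$, i.e.\ be multiplied by $n_i/(n_{i-1}+n_{i+1})$, and that a single global factor $n_0/(n_\ell n_r)$ appears; both the per-abscissa ratio $n_i/(n_{i-1}+n_{i+1})$ and the boundary factors $1/n_\ell,\,1/n_r$ are the hallmark of a \emph{cycle lemma}. The natural anchors are the extreme abscissae: every node at $r$ is forced to be a right child (there is nothing at $r+1$) and every node at $\ell$ a left child, which pins down a conjugation class of size governed by $n_\ell$ and $n_r$, while the factor $n_0$ should come from marking the root among the $n_0$ positions at abscissa $0$. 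I would try to organise a single cyclic/conjugation structure on the whole configuration so that, among its rotations, the genuine trees occur in exactly the proportion $\tfrac{n_0}{n_\ell n_r}\prod_{i\neq0}\tfrac{n_i}{n_{i-1}+n_{i+1}}$, the independence of the pools letting this one reduction split into the local corrections.

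The main obstacle is precisely this global acyclicity bookkeeping. Unlike the horizontal case, where strictly increasing heights forbid cycles for free, here a cycle can straddle any gap, and a naive one-sided rotation does not treat abscissa $0$ and the two boundaries $\ell,r$ symmetrically; the implicit sum over the numbers of left- and right-going edges between consecutive abscissae must collapse into the clean product, rather than being summed term by term. The delicate part will be to exhibit a conjugation that is well defined on embedded configurations (respecting the one-child-per-side and rigidity constraints) and that selects exactly one tree from each class, so that the relaxed product is corrected to the stated formula. I would use the refined statistics announced in the abstract — the numbers of edges classified by the abscissae of their endpoints — both as a guide to making the conjugation local enough to track each edge type, and as a consistency check that the bijection is profile-respecting.
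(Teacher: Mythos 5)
Your setup is sound and in fact matches the paper's starting point: the ``relaxed'' configurations you describe are exactly the parent-choice functions that the paper calls $\cS$-functions injective on each level $V_i$ (for $\cS=\{-1,1\}$), your relaxed count $\binom{n_{-1}+n_1}{n_0-1}\prod_{i\neq 0}\binom{n_{i-1}+n_{i+1}}{n_i}$ is correct, and the ratio $\frac{n_0}{n_\ell n_r}\prod_{i\neq 0}\frac{n_i}{n_{i-1}+n_{i+1}}$ you compute is the right correction factor. But the proof stops exactly where the theorem begins: the conjugation/cycle-lemma that is supposed to realize this ratio is never constructed. You state the obstacle (``the delicate part will be to exhibit a conjugation\dots'') without overcoming it, and this is not a routine gap. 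A cycle in one of your functional graphs is a closed $\pm1$-walk on $\zs$ that can straddle arbitrarily many abscissae, so there is no evident group action whose orbits each contain trees in the proportion $\frac{n_0}{n_\ell n_r}\prod_{i\neq0}\frac{n_i}{n_{i-1}+n_{i+1}}$; indeed the paper explicitly warns (Remark 2 after Theorem~\ref{thm:S-ary-profile}) that re-rooting an $\cS$-ary tree need not yield an $\cS$-ary tree, which undercuts the most natural rotation-based schemes, and the fact that the case $\ell<0$ forces the authors into the considerably heavier machinery of Section~\ref{sec:general} indicates that no soft symmetry argument is known.

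For comparison, the paper does not average over all relaxed configurations at all. It first restricts to functions with normalized images on distinguished vertices $i^1$ (condition (F): $f(i^1)=(i-1)^1$, etc.), which is where the ``$-1$''s inside the binomials of Theorem~\ref{thm:binary-profile-pm} actually come from (Lemma~\ref{thm:injections-profile}: the images of $V_i\setminus\{i^1\}$ must avoid the forced image $(i-1)^1$). It then builds a Joyal-type bijection $\Phi$ (Theorem~\ref{thm:basic}, and Theorem~\ref{thm:rooted} for $\ell<0$): each cycle is cut at its minimal vertex, the resulting pieces are concatenated by decreasing minima into a marked tree satisfying a path condition (T), and a final subtree-swapping step preserves in-types. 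The prefactor $n_0/(n_\ell n_r)$ arises from the counting Lemma~\ref{lem:link}/\ref{lem:linkneg} relating marked trees satisfying (T) to arbitrary embedded trees, not from a proportion of acyclic configurations. If you want to salvage your plan, you would either have to supply the conjugation explicitly (and verify it respects the one-child-per-side constraint, which is the hard part), or redirect it into the paper's route by imposing a normalization like (F) before cutting cycles.
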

For instance, the number of binary trees having vertical profile $(2;2,1)$ is
$$
\frac 2 {2  \times 1} {3\choose 1}{1 \choose 1}{1\choose 0}=3,
$$
and these trees are shown in Figure~\ref{fig:binary-small}.

\begin{figure}[htb]
\includegraphics[scale=0.7]{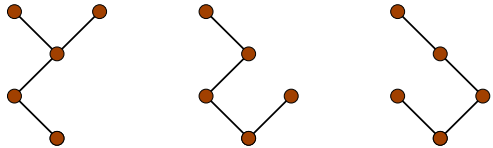}
\caption{The three  rooted binary trees having vertical profile $(2;2,1)$.}
\label{fig:binary-small}
\end{figure}

\medskip
 This unexpected formula has first  an obvious combinatorial
 interest: its proof --  
especially a bijective proof -- has to shed  a new light on the 
combinatorics of binary trees, which are of course  eminently classical objects.
But our original motivation lies in the link between the vertical profile of
binary trees and a certain random probability measure, called the
\emm integrated superbrownian excursion,, or ISE. The ISE is  the limit, as 
$n$ increases, of the 
(normalized) 
\emm occupation measure, of a
uniform random tree $T$ having  $n$ vertices~\cite{jf-homothetie}. The
normalized occupation measure  of $T$ is defined to be
$$
\mu_n = \frac 1 n \sum_{v\in T} \delta_{a(v)n^{-1/4}},
$$
where $\delta_x$ is the Dirac measure at $x$ and $a(v)$ denotes the abscissa of the vertex $v$. Note the double
normalization, first by $1/n$ (to obtain a probability 
distribution) and then  by $n^{-1/4}$ (which is known to be the
correct scaling to obtain a non-trivial limit).
Theorem~\ref{thm:binary-profile-pm} thus describes explicitly the law of
$\mu_n$: 
indeed, the probability that 
$$
\mu_n(i\,n^{-1/4})=\frac{n_i}n \quad \hbox{ for all } i \in \llbracket \ell,
r\rrbracket
$$  (and $\mu_n(in^{-1/4})=0$ for other
values of $i$), with $n=\sum_i n_i$, is the number of Theorem~\ref{thm:binary-profile-pm},
divided by $C_n={2n\choose n}/(n+1)$, the
number of binary trees of size $n$.  

 The ISE is not only related to binary trees. In fact, it appears to be a
``universal'' measure associated with numerous embedded branching
structures~\cite{aldous,derbez1,janson-jfm,legall-livre,marckert-mokka-snake,
miermont-multitypeGW}. Due
to the existence of bijections between certain 
families of \emm rooted planar maps, and  
 embedded trees,
it also describes (up to a translation) the limiting distribution of distances to the root vertex in
 planar maps of large
 size~\cite{bdg-geodesic,bdg-statistics,chassaing-schaeffer,cori-vauquelin,
 miermont-weill, miermont-marckert-bipartite}.
 Similar connections actually exist for maps on any orientable
 surface, for which the limiting distribution of distances
is explicitly related to the  ISE~\cite{chapuy-ISEgenus}.
 The law of the ISE
is  the subject of a very active research~\cite{mbm-ise,mbm-janson,delmas,devroye-janson,chassaing-janson,janson,legall-livre,legall-weill},
and we hope that knowing explicitly the law of $\mu_n$  will eventually
yield a better understanding of the law of the ISE.  
For instance, the law of the support of the ISE, and the law of its
density at one point, have already been determined though the
study of embedded binary trees~\cite{mbm-ise,mbm-janson}.

\medskip
 Let us now return to Theorem~\ref{thm:binary-profile-pm}. This theorem is
 not isolated: for instance, we  prove a  similar
formula for embedded ternary trees. 
But our results also deal with  embedded
\emm Cayley, trees. Recall that a (rooted) Cayley tree of size $n$ is a tree (in the graph-theoretic sense) on the
vertex set $V=\{1, 2, \ldots, n\}$, with a distinguished vertex $\rho$
called the \emm root,.  An \emm embedding, of such a tree in $\zs$   is a
map $a: V\rightarrow \zs$ such that $a(\rho)=0$ and $|a(v)-a(v')|=1$ if
$v$ and $v'$ are neighbours. We call $a(v)$ the \emm abscissa, of
$v$. The \emm vertical profile, of this
embedded tree is  $(n_\ell, \ldots, n_{-1}; n_0, n_1, \ldots, n_r)$,
where $n_i$ is the number of vertices
at abscissa $i$, and
$\ell$ (resp.~$r$) is the smallest (resp.~largest) 
abscissa occurring in the tree. 
The counterpart of Theorem~\ref{thm:binary-profile-pm} for Cayley trees reads as follows.

\begin{Theorem}\label{thm:cayley-profile-pm}
  Let $\ell\le 0\le r$, and let $(n_i)_{\ell\le i\le r}$ be a
  sequence of positive integers. The number of embedded rooted Cayley trees having
  vertical profile $(n_\ell, \ldots, n_{-1};n_0, n_1, \ldots, n_r)$ is
$$
\frac{n_0}{n_\ell n_r} \frac{n!}{\prod \limits_{i=\ell}^r (n_i-1)!}
\prod_{i=\ell}^{ r}(n_{i-1}+n_{i+1})^ {n_i-1},
$$
where $n=\sum_i n_i$ is the number of vertices and $n_{\ell-1}=n_{r+1}=0$.
\end{Theorem}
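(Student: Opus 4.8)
The plan is to recognize that an embedded rooted Cayley tree is nothing but a spanning tree of a ``layered'' graph, and then to count such spanning trees by the Matrix--Tree theorem, exploiting the symmetry among vertices sharing a common abscissa.

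First I would reformulate the objects. In a tree there is a unique path from the root $\rho$ to each vertex, so an embedding $a$ is determined by, and freely chooses, a direction ($+1$ or $-1$) along every edge; equivalently, an embedded rooted Cayley tree with profile $(n_i)$ is the same datum as a tree on $\{1,\dots,n\}$ whose vertices are partitioned into ``levels'' $V_\ell,\dots,V_r$ with $|V_i|=n_i$ and $\rho\in V_0$, in which every edge joins two consecutive levels. Let $G$ be the graph on these labelled vertices having a complete bipartite graph between $V_{i-1}$ and $V_i$ for each $i$ and no other edge; the admissible trees are then exactly the spanning trees of $G$. Since the number $\tau(n_\ell,\dots,n_r)$ of spanning trees of $G$ depends only on the sizes $n_i$, choosing the labels at each level ($\binom{n}{n_\ell,\dots,n_r}$ ways) and the root in $V_0$ ($n_0$ ways) gives
\[
N=\frac{n!}{\prod_i n_i!}\;n_0\;\tau(n_\ell,\dots,n_r).
\]
It therefore suffices to prove that $\tau(n_\ell,\dots,n_r)=\big(\prod_{\ell<i<r}n_i\big)\prod_{i=\ell}^{r}(n_{i-1}+n_{i+1})^{\,n_i-1}$, since substituting this and using $\prod_{\ell<i<r}n_i=\big(\prod_i n_i\big)/(n_\ell n_r)$ reproduces the stated formula after elementary simplification.

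Next I would compute $\tau$ spectrally. The partition into levels is equitable: a vertex at level $i$ has exactly $n_{i-1}+n_{i+1}$ neighbours, all in levels $i\pm1$. Hence the Laplacian $L$ of $G$ has two kinds of eigenvectors. For each level $i$, any vector supported on $V_i$ with zero sum is annihilated by the adjacency matrix, so it is an eigenvector of $L$ with eigenvalue $n_{i-1}+n_{i+1}$; this yields that eigenvalue with multiplicity $n_i-1$, already accounting for the product $\prod_i(n_{i-1}+n_{i+1})^{\,n_i-1}$. The remaining eigenvalues are those of the $(r-\ell+1)\times(r-\ell+1)$ quotient matrix $\bar L$, the tridiagonal matrix with diagonal entries $n_{i-1}+n_{i+1}$ and off-diagonal entries $\bar L_{i,i+1}=-n_{i+1}$, $\bar L_{i+1,i}=-n_i$ (with $n_{\ell-1}=n_{r+1}=0$), one of which is $0$. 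By the Matrix--Tree theorem $\tau=\frac1n\prod_{\lambda\neq0}\lambda$, so it remains to show that the product of the nonzero eigenvalues of $\bar L$ equals $n\prod_{\ell<i<r}n_i$.

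Finally, that product equals $\sum_{k=\ell}^{r}\det\bar L^{(k)}$, the sum of the principal minors of $\bar L$ obtained by deleting row and column $k$. Reading $\bar L$ as the Laplacian of the weighted digraph on the path $\ell,\dots,r$ with arc weights $w(i\to i+1)=n_{i+1}$ and $w(i\to i-1)=n_{i-1}$, the directed Matrix--Tree theorem identifies $\det\bar L^{(k)}$ with the weight of the unique arborescence of the path oriented towards $k$, namely $n_k\prod_{\ell<i<r}n_i$; summing over $k$ gives $\big(\sum_k n_k\big)\prod_{\ell<i<r}n_i=n\prod_{\ell<i<r}n_i$, as required. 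The only genuinely nontrivial idea is the reformulation in the first step; the spectral computation is then forced and explicit, because the quotient is carried by a path, which has a single spanning tree. One could alternatively replace the last two steps by a level-preserving Pr\"ufer-type encoding, matching the bijective spirit announced in the abstract, but the main obstacle there would be to design a code that simultaneously tracks the abscissa of every vertex and that of its parent.
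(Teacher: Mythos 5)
Your proof is correct, but note that it does not reproduce the paper's primary argument (the Joyal-type bijection of Sections~\ref{sec:basic}--\ref{sec:enum-neg}); rather it essentially coincides with the paper's own alternative proof via the matrix-tree theorem in Section~\ref{sec:other}. There too the embedded Cayley trees are identified, up to the factor $n_0\,n!/\prod_i n_i!$, with spanning trees of the layered graph, and the Laplacian determinant is factored by exhibiting, for each level $i$, the $(n_i-1)$-dimensional space of zero-sum vectors supported on $V_i$ with eigenvalue $n_{i-1}+n_{i+1}$, leaving a tridiagonal quotient. You differ only in the final step: the paper evaluates the quotient determinant through the inductive cycle-configuration identity of Lemma~\ref{lem:cycles} (and its refinement~\eqref{refined}), whereas you take the product of the nonzero eigenvalues of $\bar L$ as the sum of its principal cofactors and evaluate each cofactor by a second, directed matrix-tree theorem on the path $\llbracket \ell,r\rrbracket$, which has a unique arborescence towards each root --- arguably cleaner. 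If you write this up, make explicit that $0$ is a \emph{simple} eigenvalue of $\bar L$ (it follows from connectedness of $G$ and positivity of the level eigenvalues when $r>\ell$), and treat the degenerate case $\ell=r=0$ separately. What the spectral route does not give, and what motivates the paper's bijection, are the refinements by out-, in- and complete types and the analogous statements for $\cS$-ary trees.
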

For example, the number of Cayley trees having vertical profile $(2;2,1)$ is
$$
\frac 2{2\times 1}\frac{5!}{1!\times  1!\times  0!}\ 2^1\times 3^1\times 2^0= 5! \times 6.
$$
The shapes of these trees are shown in
Figure~\ref{fig:cayley-small}. The positions of the vertices indicate
their abscissas, but the labels of the nodes (in the interval
$\llbracket 1,5\rrbracket$)
are not indicated. To each of the first 5 shapes there corresponds $5!$
Cayley trees. To each of the last 2 shapes there corresponds $5!/2$
Cayley trees.

\begin{figure}[htb]
\includegraphics[scale=0.7]{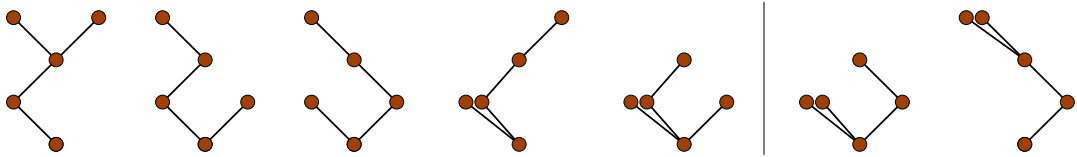}
\caption{The shapes of the rooted  Cayley trees having vertical profile $(2;2,1)$.}
\label{fig:cayley-small}
\end{figure}

\medskip
Theorems~\ref{thm:binary-profile-pm} and~\ref{thm:cayley-profile-pm} can be
proved using the multivariate Lagrange inversion formula, as will be
shown in Section~\ref{sec:other}. 
Theorem~\ref{thm:cayley-profile-pm}
can also be proved using the matrix-tree theorem. However, non-trivial
cancellations occur during the calculation, and the simple product
forms remain mysterious. This is why we focus in the paper on
\emm bijective, proofs,  which explain directly the product
forms. Moreover, these proofs allow us to  consider other abscissa increments
than $\pm 1$ (with the condition that the largest increment is
1). They also allow us to refine the enumeration,
by taking into account the number of vertices at abscissa
$j$ whose parent 
lies at abscissa $i$, and/or the number of vertices at
abscissa $i$ having a prescribed number of children at abscissa $j$,
for all $i$ and $j$. 
In particular, we can impose that each vertex has at most one child of
each abscissa, and  the binary trees
of Theorem~\ref{thm:binary-profile-pm} will in fact be seen
(up to a symmetry factor of $n!$, where $n$ is the 
number of vertices) as
rooted Cayley trees such that each vertex of abscissa
$i$ has at most one child at abscissa $i+1$ and at most one child at
abscissa $i-1$.

Our enumerative results are presented in the next section. We describe in
Section~\ref{sec:basic} a first, basic bijection. It transforms certain functions into
embedded trees, and is  close to a bijection constructed by
Joyal to count Cayley trees~\cite{joyal}. Section~\ref{sec:enum-nonneg} collects
simple enumerative results on functions, and converts them, via our
bijection, into results on trees.
Unfortunately, this basic bijection only proves the results of
Section~\ref{sec:main} for trees with non-negative labels. In
Section~\ref{sec:general}, we design a much more involved variant of the basic bijection,
which proves the remaining results (in Section~\ref{sec:enum-neg}). Finally, we discuss in
Section~\ref{sec:other} two other approaches to count embedded trees,
namely functional equations coupled with the multivariate Lagrange inversion formula, and
the matrix-tree theorem. These approaches require less invention, but
they do not explain the product forms, and they prove only part of our results.

\section{Main results}
\label{sec:main}
\subsection{Embedded trees: definitions}
\label{sec:def}
 A \emm rooted Cayley tree of size $n$, is a tree (that is, an acyclic connected graph)
on the vertex set $V=\{1, \ldots, n\}$, with a distinguished vertex
called the root. 

Let $\cS \subset \zs$ be a set of integers. An
\emm $\cS$-embedded, Cayley tree is a rooted Cayley tree in which every
vertex $v$ is assigned an abscissa $a(v)\in \zs$ in such a way:
\begin{itemize}
\item the abscissa of the root vertex is $0$,
\item if $v'$ is a child of $v$, then $a(v')-a(v) \in \cS$.
\end{itemize}
The \emm vertical profile, of the tree is $(n_\ell, \ldots, n_{-1};
n_0, n_1, \ldots, n_r)$, where $n_i$ is the number of nodes at
abscissa $i$, and $\ell$ (resp.~$r$) is the smallest (resp.~largest)
abscissa found in the tree. The tree is \emm non-negative, if all
vertices lie at a non-negative abscissa.
Equivalently, $\ell=0$.

Let $m =\min \cS$ and $M=\max \cS$. The \emm type, of a vertex $v$ is
$(i; s; c^m,  \ldots, c^M)$, where $i=a(v)$ is the abscissa of $v$,
$i-s$ is the abscissa of its parent (if $v$ is the root, we take
$s=\varepsilon$), and for $m\le k\le M$, $c^k$ is the
number of children of $v$ at abscissa $i+k$. 
Note that $s\in \cS \cup \{\varepsilon\}$. 
 We often denote $\bmc:=(c^m,  \ldots, c^M)\in
\ns^{M+1-m}$. The \emm out-type, of $v$ is
simply $(i;s)$. Its \emm in-type, is $(i; \bmc)$. The
reason for this terminology is that the edges are considered to be
oriented towards the root. We sometimes call $(i;s;\bmc)$ the \emm complete, type of $v$.

An embedded Cayley tree is \emm injective, if two distinct vertices
lying at the same abscissa have different parents. Equivalently,
every vertex $v$ has at most one child at abscissa $a(v)+s$, for all
$s\in \cS$. Two embedded Cayley $T$ and $T'$ trees are \emm equivalent, if they
only differ by a renaming of the vertices.
More precisely, if $T$ and $T'$ have size $n$, they are equivalent if
there exists a bijection $f$ on $\llbracket 1, n\rrbracket$ that 
\begin{itemize}
\item respects the tree: if $w$ is the parent of $v$ in $T$,
  then $f(w)$ is the parent of $f(v)$ in $T'$,
\item respects abscissas: the abscissas of $v$ in $T$ and $f(v)$ in
  $T'$ are the same.
\end{itemize}
An example in shown in Figure~\ref{fig:equivalence}.
 Finally, an \emm $\cS$-ary tree, is an equivalence
class of $\cS$-embedded injective Cayley trees. Thus an $\cS$-ary tree can
be seen as an (unlabelled) rooted plane tree, drawn in the plane is
such a way the root lies at abscissa 0, and each vertex $v$ has at most
one child at abscissa $a(v)+s$, for all $s\in \cS$. For instance, a
$\{-1, 1\}$-ary tree is a binary tree that is canonically embedded, as
shown in Figure~\ref{fig:binary-small}. Similarly, a
$\{-1, 0, 1\}$-ary tree is a canonically embedded ternary tree~\cite{kuba}.
Since injective trees have no symmetry, the  $n!$ ways one can label
the vertices of a given $\cS$-ary tree of size $n$ give rise to exactly $n!$
distinct injective $\cS$-embedded Cayley trees.
\begin{figure}[htb]
\includegraphics[scale=0.7]{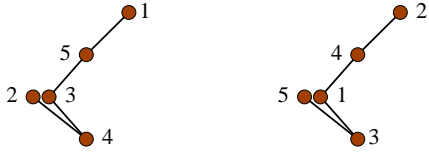}
\caption{Two equivalent embedded Cayley trees (the positions of the
  vertices indicate their  abscissas).}
\label{fig:equivalence}
\end{figure}
\subsection{The vertical profile}
Our first results deal with the number of embedded trees having a
prescribed profile. As all results in this paper, they require
the largest element of $\cS$ to be 1.
This condition is reminiscent of the enumeration of lattice paths in
the half-line $\ns$, for which simple formulas exist provided the set
$\cS$ of allowed steps satisfies $\max \cS=1$;
see for instance~\cite[p.~75]{flajolet-sedgewick}. 
\begin{Theorem}[Embedded Cayley trees]
\label{thm:cayley-profile}
Let $\cS \subset \zs$ such that $\max
  \cS=1$.     Let $\ell\le 0\le r$, and let $(n_i)_{\ell\le i\le r}$ be a
  sequence of positive integers. If $\min \cS=-1$ or $\ell=0$, the
  number of $\cS$-embedded 
  Cayley trees having   vertical profile $(n_\ell, \ldots, n_{-1};
n_0, n_1, \ldots, n_r)$ is
$$\frac{n_0}{n_\ell n_r}\frac {n!}{\prod\limits_{i=\ell}^r
  (n_i-1)!}\prod\limits_{i=\ell}^r\left(\sum_{s\in \cS}n_{i-s}\right)^{n_i-1},
$$
where $n=\sum_i n_i$ is the number of vertices and  $n_i=0$ if $i<\ell$ or $i>r$.
\end{Theorem}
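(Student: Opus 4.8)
The plan is to prove Theorem~\ref{thm:cayley-profile} bijectively, following the strategy the introduction announces: replace embedded trees by certain \emph{functions} on a labelled vertex set, count the functions directly, and transport the count back through the bijection. The key structural fact is that $\max \cS = 1$, which plays the same role here as in the enumeration of lattice paths confined to $\ns$: a vertex at abscissa $i$ can only be reached from vertices at abscissas $i-s$ with $s \in \cS$, and since the largest forward increment is $1$, the abscissa can climb by at most one unit per edge. This monotonicity in the ``increasing'' direction is exactly what makes a cycle-lemma / Joyal-type argument go through.

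\medskip
Concretely, I would first set up the right combinatorial model. Fix the profile $(n_i)_{\ell \le i \le r}$ and the vertex set $\llbracket 1,n\rrbracket$ partitioned into blocks $V_i$ of sizes $n_i$ according to abscissa, with the root forced to lie at abscissa $0$. An embedded Cayley tree with this profile amounts to choosing, for each non-root vertex $v$ at abscissa $i$, a parent among the vertices at abscissa $i-s$ for some $s\in\cS$, subject to the global constraint that the resulting functional graph is a tree (acyclic and connected). The natural relaxation is to drop the tree constraint and instead count \emph{functions} $\varphi$ that send each non-root vertex at abscissa $i$ to a vertex at abscissa $i-s$ for some admissible $s$, i.e. arbitrary parent-assignments respecting abscissas. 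The number of such unconstrained assignments factors completely: a vertex at abscissa $i$ has $\sum_{s\in\cS} n_{i-s}$ choices of parent, and there are $n_i$ vertices at abscissa $i$ of which we are free to assign all but suitably many, which is where the exponents $n_i-1$ and the multinomial $n!/\prod(n_i-1)!$ originate. The exact bookkeeping is handled by the basic bijection of Section~\ref{sec:basic} together with the enumerative lemmas of Section~\ref{sec:enum-nonneg}, which I would invoke rather than reprove.

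\medskip
The heart of the matter is passing from ``all functions'' to ``trees,'' and this is where I expect the main obstacle to lie. The prefactor $\frac{n_0}{n_\ell n_r}$ is not a clean multinomial coefficient, so it cannot come from a naive independent-choices count; it must arise from a cycle-lemma or marking argument that selects exactly one representative from each orbit. The mechanism is that among the $\sum_s n_{i-s}$ potential parents for each vertex, a tree is singled out by controlling where the ``excess'' edges close up into cycles, and $\max\cS=1$ guarantees that any cycle in the functional graph must be confined to a single abscissa class or move monotonically, which sharply restricts how cycles can form and lets one count tree-representatives via a rotation/exchange on the extremal abscissa blocks $V_\ell$ and $V_r$. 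The factor $n_\ell n_r$ in the denominator reflects a free choice of ``entry point'' in each of the two extremal layers that the cycle lemma quotients out, while the numerator $n_0$ accounts for the distinguished root layer; the hypothesis $\min\cS=-1$ or $\ell=0$ is precisely what is needed for the argument to be symmetric (or to degenerate correctly) at the bottom of the range.

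\medskip
In summary, the order of operations is: (i) encode an $\cS$-embedded Cayley tree with the given profile as a tree-shaped function on the blocked vertex set; (ii) apply the basic bijection of Section~\ref{sec:basic} to replace trees by unconstrained functions of a controlled type, for which the count is the manifestly product-form quantity $\frac{n!}{\prod (n_i-1)!}\prod_i\big(\sum_{s\in\cS} n_{i-s}\big)^{n_i-1}$; and (iii) correct by the orbit-counting factor $\frac{n_0}{n_\ell n_r}$ coming from a cycle-lemma argument on the extremal abscissa layers, whose validity depends on $\max\cS=1$. I would expect step (iii), and in particular verifying that the cycle lemma selects exactly one tree per orbit and produces precisely this rational prefactor, to be the genuinely delicate part, with steps (i)--(ii) being essentially bookkeeping once the bijection of Section~\ref{sec:basic} is available.
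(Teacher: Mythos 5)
Your overall strategy --- encode embedded trees as parent-assignment functions, count the functions (which factors trivially), and correct by a prefactor --- is indeed the route the paper takes, but the proposal leaves the genuinely hard step unproved and, more importantly, misdescribes the mechanism that produces the prefactor $\frac{n_0}{n_\ell n_r}$. There is no orbit-counting or cycle-lemma quotient over the two extremal layers. The map $\Phi$ of Theorem~\ref{thm:basic} is an \emph{exact} bijection between functions satisfying the pointwise condition (F) (the image of each $i^1$ is forced to be $(i-1)^1$, whence the exponents $n_i-1$ and the count $\prod_i(\sum_s n_{i-s})^{n_i-1}$) and $\cS$-trees carrying a marked vertex $r^q$ at abscissa $r$ and satisfying condition (T). The rational prefactor then comes from two separate, elementary sources: Lemma~\ref{lem:link} shows that each such marked canonical tree corresponds to exactly $n!/\prod_i(n_i-1)!$ marked embedded Cayley trees (a pure relabelling count), and one divides by $n_r$ to forget the mark --- when $\ell=0$ this already gives $\frac{n_0}{n_\ell n_r}=\frac{1}{n_r}$. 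Your picture of ``quotienting a free choice of entry point in each extremal layer'' would predict a symmetric factor coming from a group action on both $V_\ell$ and $V_r$, which is not what happens and which you could not substantiate.

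Two further points. First, the case $\ell<0$ is not addressed: the basic bijection of Section~\ref{sec:basic} applies only to non-negative trees, and the hypothesis ``$\min\cS=-1$ or $\ell=0$'' enters through a separate re-rooting argument (the fourth remark following Theorem~\ref{thm:cayley-profile}): one marks a vertex at abscissa $\ell$, re-roots there and translates, which requires the increment set to be symmetric, i.e.\ $\min\cS=-1$; this is where the remaining factor $n_0/n_\ell$ comes from, not from any feature of the function-to-tree correspondence. Second, your structural claim that $\max\cS=1$ forces cycles of the functional graph to be ``confined to a single abscissa class or move monotonically'' is false: a cycle may oscillate arbitrarily so long as its increments sum to zero. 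What $\max\cS=1$ actually buys (Lemma~\ref{prop:piece}) is that the edge entering the \emph{minimum} vertex $i^k$ of a cycle must come from abscissa $i$ or $i+1$, which is precisely what makes the concatenation-by-decreasing-minima construction output a valid marked $\cS$-tree satisfying (T) and makes the inverse map well defined. Without these three ingredients made precise, the proposal is a plan rather than a proof.
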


When $\cS=\{-1, 1\}$, this theorem specializes to Theorem~\ref{thm:cayley-profile-pm}.

\medskip
\noindent
{\bf Remarks}\\
1. This formula has several interesting specializations. 
When $0\in \cS$ and $\ell=r=0$, every vertex lies at abscissa $0$
and we are just  counting  rooted Cayley trees of size
$n=n_0$. Accordingly, the above formula is $n^{n-1}$.

If $\cS=\{1\}$, each rooted Cayley tree has a unique $\cS$-embedding,
and a vertex at distance $i$ from the root lies at abscissa $i$. Hence
the vertical profile of the tree coincides with its horizontal
profile. The above theorem thus gives the number of rooted Cayley
trees with horizontal profile $(1, n_1, \ldots , n_r)$ as
$$
\frac {n!}{n_1! \cdots n_r!} \prod_{i=1}^{r-1} n_i^{n_{i+1}}
$$
with $n=1+ n_1 + \cdots +n_r$.
This formula has a straightforward explanation: the multinomial
coefficient describes  the choice of the vertices lying at height $i$,
for all $i$,  
and the factor $n_i^{n_{i+1}}$ describes how to choose a parent for
each vertex lying at height $i+1$. 

If $\cS=\{-1,1\}$ and $\ell=0$, $r=1$, the above theorem gives the
number of bicolored Cayley trees, rooted at a white vertex, having
$n_0$ white vertices and $n_1$ black vertices:
$$
n_0 {n_0+n_1 \choose n_0} n_1^{n_0-1} n_0^{n_1-1}.
$$
Equivalently, the number of spanning trees of the complete bipartite
graph $K_{n_0,n_1}$ is $ n_1^{n_0-1} n_0^{n_1-1}$ (for references on
this result, see
the solution of Exercise 5.30 in~\cite{stanley-vol2}).

 \medskip\noindent
2. The assumption that the numbers $n_i$ are positive is
 not restrictive. Indeed, if $(n_\ell, \ldots;
n_0, \ldots, n_r)$ is the profile of an $\cS$-embedded tree, and the
above conditions on $\cS$ and $\ell$ hold, then $n_\ell, \ldots, n_r$ are positive. Indeed, by 
definition of the profile, $n_ \ell >0$ and $n_r>0$.  Moreover, the fact that the root lies at
abscissa 0, and the condition $\max \cS=1$, imply that $n_0, n_1,
\ldots, n_{r-1}$ are also positive. Finally, if $\ell <0$, then we are
assuming that $\min \cS=-1$, so that, symmetrically, $n_{\ell+1},
  \ldots, n_{-1}$ are positive.

  \medskip\noindent
3. It seems that no simple product formula exists\footnote{See however the note at the end of this
  paper and the more recent paper~\cite{bernardi} for a (less explicit)
  formula that applies  more generally.}  when $\ell <0$
and $\min \cS<-1$.  For instance, when $\cS=\{-2,-1,1\}$, the number
of $\cS$-embedded trees with vertical profile $(1,1,1,2,1;1)$ is 
$6!\,  \frac{3\cdot 107}2$, and $107$ is prime (this number can be
easily obtained from a recursive description of trees, as discussed in
Section~\ref{sec:lagrange}). 

Symmetrically, if  $\cS=\{-1,1,2\}$, there are $6!\,  \frac{3\cdot
  107}2$ $\cS$-embedded trees with vertical profile $(1,1,2,1,1,1)$,
which shows that the assumption $\max \cS=1$ is also needed, even when
$\ell=0$. 

\medskip\noindent
4. 
It suffices to prove the theorem when $\ell=0$. 
Indeed, assume $\ell<0$ and $\min \cS=-1$. We claim that  the number of $\cS$-embedded trees
having profile $(n_\ell, \ldots; n_0, \ldots, n_r)$ and a marked
vertex $v$ at abscissa $\ell$ equals the number of $\cS$-embedded trees
with profile $(m_0, \ldots, m_{r-\ell}):=(n_\ell, \ldots, n_0,
\ldots, n_r)$ (no semi-colon!) having a marked vertex at abscissa $-\ell$. 
This follows from re-rooting  the tree at $v$
(that is, choosing the vertex $v$ as the new root of the tree),
marking the former root
and then translating the abscissas by $-\ell$. 
The resulting tree is a
$\cS$-embedded tree because, when $\min \cS=-1$,
the set of increments is
symmetric. Thus, the number  $T(n_\ell, 
\ldots; n_0, \ldots, n_r)$ of trees having profile $(n_\ell,
\ldots; n_0, \ldots, n_r)$ satisfies
\begin{eqnarray*}
  n_\ell\, T(n_\ell, \ldots; n_0, \ldots, n_r)&=&m_{-\ell} T(m_0, \ldots,
m_{r-\ell})\\
&=&
\frac{m_{-\ell} }{m_{r-\ell}} \prod_{i=0}^{r-\ell} 
\left(\sum_{s\in \cS}m_{i-s}\right)^{m_i-1}  \hskip 2mm (\hbox{case } \ell=0 \hbox{
  of Theorem~\ref{thm:cayley-profile}})\\
&=&
\frac{n_0 }{n_{r}} \prod_{i=\ell}^{r}
\left(\sum_{s\in \cS}n_{i-s}\right)^{n_i-1} \hskip 8mm  (\hbox{because } m_i=n_{i+\ell})
,
\end{eqnarray*}
which gives the announced expression of $ T(n_\ell, \ldots; n_0, \ldots, n_r)$.

\medskip
Let us now state the counterpart of Theorem~\ref{thm:cayley-profile}
for $\cS$-ary trees. 
\begin{Theorem}[$\cS$-ary trees]
\label{thm:S-ary-profile}
Let $\cS \subset \zs$ such that $\max
  \cS=1$.     Let $\ell\le 0\le r$, and let $(n_i)_{\ell\le i\le r}$ be a
  sequence of positive integers. If $\min \cS=-1$ or $\ell=0$, the
  number of $\cS$-ary trees having   vertical profile $(n_\ell, \ldots, n_{-1};
n_0, n_1, \ldots, n_r)$ is
$$
\frac{n_0}{n_\ell n_r}
{\sum\limits_{s\in \cS}n_{-s} \choose n_0-1} 
\prod_{\ell\le i\le r \atop i\not = 0 } {{\sum\limits_{s\in \cS} n_{i-s} -1} \choose{ n_i-1}},
$$
with $n_i=0$ if $i<\ell$ or $i>r$.
\end{Theorem}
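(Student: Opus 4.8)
The plan is to deduce Theorem~\ref{thm:S-ary-profile} from the machinery behind Theorem~\ref{thm:cayley-profile}, using the fact recorded in Section~\ref{sec:main} that an $\cS$-ary tree of size $n$ is an equivalence class of injective $\cS$-embedded Cayley trees, and that each class contains exactly $n!$ labelled representatives. Thus it suffices to count \emph{injective} $\cS$-embedded Cayley trees with the prescribed profile and divide by $n!$. Writing $P_i:=\sum_{s\in\cS}n_{i-s}$ for the number of admissible parents of a vertex at abscissa $i$, the target formula is obtained from the Cayley count by the single substitution
\[
P_i^{\,n_i-1}\;\longmapsto\;
\begin{cases}
P_0^{\,\underline{n_0-1}}, & i=0,\\
(P_i-1)^{\,\underline{n_i-1}}, & i\neq 0,
\end{cases}
\]
(with $a^{\underline{k}}=a(a-1)\cdots(a-k+1)$) followed by division by $n!$: since $\binom{P_i}{n_i-1}=P_i^{\underline{n_i-1}}/(n_i-1)!$, a direct check turns $\frac{n_0}{n_\ell n_r}\frac{n!}{\prod_i(n_i-1)!}\prod_iP_i^{n_i-1}$ into $\frac{n_0}{n_\ell n_r}\binom{P_0}{n_0-1}\prod_{i\neq0}\binom{P_i-1}{n_i-1}$. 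The whole task reduces to proving that this is the \emph{only} change.

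The reason for the substitution is transparent in the functional language of Sections~\ref{sec:basic} and~\ref{sec:enum-nonneg}. In the basic bijection, a non-negative embedded Cayley tree is encoded by a family of maps, one per abscissa, sending the $n_i$ vertices at abscissa $i$ to their admissible parents; the count factorizes over abscissas, each level contributing a factor that enumerates \emph{arbitrary} maps from an $n_i$-set to a $P_i$-set (hence the power), up to a cycle-lemma normalization that produces the prefactor $\frac{n_0}{n_\ell n_r}$ and lowers one exponent per level. Now injectivity of the embedded tree says precisely that each of these maps is injective. Replacing ``arbitrary map'' by ``injective map'' replaces $P_i^{n_i}$ by the falling factorial $P_i^{\underline{n_i}}$ — at the level of exponential generating functions, it replaces the local factor $e^{P_iz}$ by $(1+z)^{P_i}$ — and running the \emph{same} cycle-lemma bookkeeping yields exactly the binomials above, the asymmetry of the $i=0$ term reflecting that the root alone carries no parent.

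Concretely, for $\ell=0$ I would first isolate, inside the proof of Theorem~\ref{thm:cayley-profile} in the non-negative case, the precise point where the ``arbitrary map'' hypothesis enters, and verify that it enters only level by level, so that restricting to injective maps is compatible with the cycle-lemma argument and leaves the prefactor untouched. I would then rerun that argument with the injective analogues of the enumerative lemmas on functions and read off the formula, finally dividing by $n!$ to pass from labelled injective Cayley trees to $\cS$-ary trees.

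The main obstacle is the case $\ell<0$. For Theorem~\ref{thm:cayley-profile} one reduces it to $\ell=0$ by re-rooting at a vertex of abscissa $\ell$ and translating; but re-rooting reverses the orientation of the edges along the path to the new root, and a vertex on that path may thereby acquire two children at a common abscissa, so \emph{injectivity is not preserved}. Hence this cheap reduction is unavailable here, and the negative levels must be handled directly by the more involved variant of the bijection built in Section~\ref{sec:general}, again with injective maps substituted throughout; I would prove this case by transporting, through that variant, the injective version of the enumerative input used for Theorem~\ref{thm:cayley-profile}. As an independent check — and an alternative, if less illuminating, proof valid uniformly in $\ell$ — one can apply multivariate Lagrange inversion to the system $T_i=x_i\prod_{s\in\cS}(1+T_{i+s})$, with $x_i$ marking abscissa $i$ and $T_0$ generating the desired trees: the factor $\prod_i\Phi_i^{n_i}$ at once produces $\prod_j(1+t_j)^{P_j}$, hence the binomials, while the associated Jacobian determinant must collapse to the prefactor $\frac{n_0}{n_\ell n_r}$ together with the $-1$ shifts. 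Evaluating that determinant is exactly the ``non-trivial cancellation'' flagged in Section~\ref{sec:other}, and I expect it to be the technical heart of the Lagrange route.
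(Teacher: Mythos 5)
Your proposal is correct and follows essentially the same route as the paper: reduce to injective $\cS$-embedded Cayley trees (dividing by $n!$), transport the problem through the bijection to $\cS$-functions that are injective on each level, count those level by level to get the binomials (Lemma~\ref{thm:injections-profile}), and, since re-rooting destroys injectivity, handle $\ell<0$ directly via the bijection of Section~\ref{sec:general} rather than by the reduction used for Theorem~\ref{thm:cayley-profile}. The Lagrange-inversion check you sketch is also carried out in Section~\ref{sec:lagrange}, with the determinant collapsing via Lemma~\ref{lem:cycles}.
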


When $\cS=\{-1, 1\}$, this theorem specializes to
Theorem~\ref{thm:binary-profile-pm}.

\medskip
\noindent
{\bf Remarks}\\
1. It seems that no simple product formula exists\footnote{We refer again to the more recent paper~\cite{bernardi} for a (less explicit)
  formula that applies  more generally.}  when $\ell <0$
and $\min \cS<-1$.  For instance, when $\cS=\{-2,-1,1\}$, the number
of $\cS$-ary trees with vertical profile $(1,1,1,2,1;1)$ is 
$ 107$, which is prime.

Symmetrically, if  $\cS=\{-1,1,2\}$, there are $  107$ $\cS$-ary trees with vertical profile $(1,1,2,1,1,1)$,
which shows that the assumption $\max \cS=1$ is also needed, even when
$\ell=0$. 

\smallskip\noindent
2. Rerooting  an $\cS$-ary tree does not always give an $\cS$-ary
tree, even if $\min \cS=-1$ (think of re-rooting the first tree of
Figure~\ref{fig:binary-small} at the lowest vertex of abscissa $-1$). Thus the case $\ell <0$ of
Theorem~\ref{thm:S-ary-profile} does \emm not, follow from the case
$\ell=0$, at least in an obvious way. It would  be interesting
to explore the 
combinatorial connection between these two cases.

\subsection{The out-types}
We now prescribe the number $n(i,s)$ of vertices of out-type $(i;s)$, for all
$i \in \zs$ and $s\in \cS$. In particular, the number $n_i$ of vertices at
abscissa $i$ is determined, equal to $\chi_{i=0}+\sum_s n(i,s)$. In other
words, the profile is fixed.

\begin{Theorem}[Embedded Cayley trees]\label{thm:cayley-out}
Let $\cS \subset \zs$ such that $\max \cS=1$.   
Let $n(i,s)$ be non-negative integers, for $i\in\zs$ and $s\in \cS$,
and assume that either $\min \cS=-1$, or $n(i,s)=0$ for all
$i<0$ and $s\in \cS$.
The  number of $\cS$-embedded   Cayley trees in which, for
  all  $i\in \zs$ and $s\in \cS$, exactly $n(i,s)$ non-root vertices have
  out-type $(i;s)$ is    
$$
\frac {n!\prod\limits_{i=\ell}^rn_i^{c(i)-1} \prod\limits_{i=\ell}^{-1} n(i,-1)\prod\limits_{i=1}^r
  n(i,1)
}
{\prod\limits_{i,s} n(i,s)!},
$$
where  $n$ is the number of vertices,
$(n_\ell, \ldots, n_{-1};n_0, \ldots, n_r)$ is the profile
corresponding to the numbers $n(i,s)$,
and $c(i)$ is the number of 
vertices whose parent lies at abscissa $i$:
$$
n=\sum_i n_i, \quad \quad 
n_i= \sum_{s} n(i,s) + \chi_{i=0},
\quad \quad 
c(i)= \sum_{s} n(i+s,s).
$$
When the range of a product or sum is not indicated, it is the
`natural' one ($s\in \cS$, $i\in \zs$). It is assumed that $n_i>0$ for
$\ell \le i \le r$.
\end{Theorem}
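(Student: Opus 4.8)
The plan is to prove Theorem~\ref{thm:cayley-out} by the bijective machinery announced in the introduction, reducing to functions on a suitable domain. First I would treat the case $\ell=0$ (non-negative trees), since the general case $\ell<0$ with $\min\cS=-1$ reduces to it by the re-rooting argument already used for Theorem~\ref{thm:cayley-profile} (Remark 4): re-root at a marked minimal vertex, translate, and track how the prescribed out-type counts $n(i,s)$ transform. So the heart of the matter is to count, via the basic bijection of Section~\ref{sec:basic}, the functions corresponding to $\cS$-embedded Cayley trees with exactly $n(i,s)$ non-root vertices of out-type $(i;s)$.

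\medskip
The key structural idea is that, since $\max\cS=1$, the abscissa of a vertex exceeds that of its parent by at most $1$, so the edges pointing from abscissa $i{-}1$ to abscissa $i$ (the ``$s=1$'' edges) form a distinguished set that controls the global connectivity between successive levels. I would think of building the tree level by level: the edges with increment $s=1$ must weave the levels $n_0,n_1,\ldots,n_r$ together into a single tree, while the edges with increments $s\le 0$ attach vertices back to earlier or equal levels. The factor $\prod_{i=1}^{r} n(i,1)\cdot\prod_{i=\ell}^{-1} n(i,-1)$ is exactly the ``boundary correction'' one expects from a cycle-lemma / Joyal-type argument applied independently on the positive and negative sides, where a marked extremal vertex at abscissa $r$ (resp.\ $\ell$) must be reached by a unique increasing chain of $s=1$ (resp.\ $s=-1$) edges.

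\medskip
Concretely, I would encode each tree as a function and count functions by a two-step process. The multinomial factor $n!/\prod_{i,s}n(i,s)!$ arises from choosing which of the $n$ labelled vertices carry each complete out-type; once the multiset of out-types at each level is fixed, the factor $\prod_i n_i^{c(i)-1}$ counts the ways to assign to each of the $c(i)$ child-endpoints lying above abscissa $i$ one of the $n_i$ possible parents at abscissa $i$, with one degree of freedom removed per level to enforce acyclicity and connectedness (this is the ``minus one in the exponent'' that a generalized Cayley/parking argument produces). The main obstacle will be making this last step rigorous: namely, showing that the naive count $\prod_i n_i^{c(i)}$ of parent-assignments overcounts by exactly the factor $\prod_i n_i\big/\big(\prod_{i=1}^r n(i,1)\prod_{i=\ell}^{-1}n(i,-1)\big)$, i.e.\ that imposing the tree (acyclic, connected, correctly rooted) condition cuts the arbitrary functional assignments down precisely by this ratio. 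I expect to handle this via the basic bijection together with the functional enumeration of Section~\ref{sec:enum-nonneg}: transport the statement to the function side, where ``tree'' becomes a transparent combinatorial constraint and the exponents $c(i)-1$ and the extremal factors $n(i,\pm1)$ emerge directly from counting functions with prescribed fibers. Verifying that the profile bookkeeping ($n_i=\sum_s n(i,s)+\chi_{i=0}$ and $c(i)=\sum_s n(i+s,s)$) is compatible with this function count, so that summing Theorem~\ref{thm:cayley-out} over all out-type refinements recovers Theorem~\ref{thm:cayley-profile}, is a useful consistency check I would carry out at the end.
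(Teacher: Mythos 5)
Your plan for the case $\ell=0$ is essentially the paper's: transport the problem through the bijection of Theorem~\ref{thm:basic} (which preserves out-types, Property (a)) and Lemma~\ref{lem:link}, and then count $\cS$-functions satisfying (F) with a prescribed out-type distribution. The count you sketch — a multinomial for distributing out-types, times $\prod_i n_i^{c(i)-1}$ for choosing images, with the factors $n(i,\pm1)$ coming from the forced skeleton $r^1\to\cdots\to 0^1$ — is exactly Lemma~\ref{thm:functions-out}, and would go through.

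The genuine gap is your treatment of the case $\ell<0$. You propose to reduce it to $\ell=0$ by the re-rooting argument of Remark~4 after Theorem~\ref{thm:cayley-profile}. That reduction works for the \emph{profile} because the multiset of abscissas is invariant under re-rooting; it does not work for \emph{out-types}. Re-rooting at a vertex $v$ of abscissa $\ell$ reverses the orientation of every edge on the path from $v$ to the old root, and each reversed edge between abscissas $j$ and $j+1$ trades a contribution to $n(j+1,1)$ for one to $n(j,-1)$ (or vice versa). How many such reversals occur at each level depends on the abscissa sequence of that particular path, which is not determined by the out-type distribution: two trees with identical out-type counts can have root paths wandering differently through the levels, hence distinct out-type distributions after re-rooting. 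So you cannot ``track how the prescribed out-type counts transform'' in any uniform way, and the reduction collapses. This is precisely why the paper does not reuse the re-rooting trick here, but instead constructs the substantially more involved bijection of Section~\ref{sec:general} (Theorem~\ref{thm:rooted}, proved via Propositions~\ref{prop:casA} and~\ref{prop:casB} with their case analysis and subtree surgery), which preserves out-types for trees with negative abscissas and feeds into Lemma~\ref{lem:linkneg} and Lemma~\ref{lem:funtctions-out-neg}. Without an argument of that kind (or some other mechanism controlling the orientation flips along the re-rooting path), your proof covers only $\ell=0$.
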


\noindent{\bf Remarks}\\
1. An equivalent formulation consists in giving the \gf\ of $\cS$-embedded Cayley
trees of vertical profile $(n_\ell, \ldots; n_0, \ldots, n_r)$, where
a variable $x_{i,s}$ keeps track of the number of vertices of out-type
$(i;s)$. One easily checks that the above theorem boils down to saying
that this \gf\ is
\beq\label{cayley-profile-refined}
\frac{n_0}{n_\ell n_r}
 \frac {n!}{\prod\limits_{i=\ell}^r
  (n_i-1)!}\prod_{i=\ell}^{-1} x_{i,-1}\prod_{i=1}^r
x_{i,1}\prod\limits_{i=\ell}^r\left(\sum_{s\in \cS}n_{i-s}x_{i,s}\right)^{n_i-1}
.\eeq 
This formula refines of course Theorem~\ref{thm:cayley-profile},
obtained by setting $x_{i,s}=1$ for all $i$ and $s$.

\noindent 2. As pointed out in~\cite{bernardi},  this theorem follows
from~\cite[Eq.~(23)]{bousquet-chauve}, upon identifying the cofactor
that occurs in that formula as a number of trees (which is simple to
determine).

\medskip
Let us now state the counterpart of Theorem~\ref{thm:cayley-out}
for $\cS$-ary trees. 
\begin{Theorem}[$\cS$-ary trees]\label{thm:S-ary-out}
Let $\cS \subset \zs$ such that $\max \cS=1$.   
Let $n(i,s)$ be non-negative integers, for $i\in\zs$ and $s\in \cS$,
and assume that either $\min \cS=-1$, or $n(i,s)=0$ for all
$i<0$ and $s\in \cS$.
The  number of $\cS$-ary trees in which, for
  all  $i\in \zs$ and $s\in \cS$, exactly $n(i,s)$ non-root vertices have
  out-type $(i;s)$ is    
$$
\frac {\prod\limits_{i=\ell}^{-1} n(i,-1) \prod\limits_{i=1}^r
  n(i,1)}{\prod\limits_{i=\ell}^{r} n_i} 
\prod\limits_{i,s} {n_{i-s} \choose n(i,s)},
$$
where $(n_\ell, \ldots, n_{-1};n_0, \ldots, n_r)$ is the profile
corresponding to the numbers $n(i,s)$.
Again, it is assumed that $n_i>0$ for
$\ell \le i \le r$.
\end{Theorem}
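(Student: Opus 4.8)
The plan is to reduce the statement to the labelled count of Theorem~\ref{thm:cayley-out}. By definition an $\cS$-ary tree is an equivalence class of injective $\cS$-embedded Cayley trees, and, as recalled in Section~\ref{sec:def}, an injective tree has no nontrivial abscissa-preserving automorphism; hence the $n!$ labellings of a fixed $\cS$-ary tree of size $n$ yield $n!$ \emph{distinct} injective $\cS$-embedded Cayley trees, and conversely. Writing $N_{\mathrm{inj}}$ for the number of injective labelled $\cS$-embedded Cayley trees realizing the prescribed out-type counts $n(i,s)$, the number we want is therefore $N_{\mathrm{inj}}/n!$. So everything reduces to counting injective labelled trees, and I would obtain $N_{\mathrm{inj}}$ by revisiting the argument behind Theorem~\ref{thm:cayley-out} and isolating the single ingredient that sees injectivity.

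That argument encodes a labelled $\cS$-embedded Cayley tree with the given out-types by: (i) a way of assigning the out-types $(i;s)$, with multiplicities $n(i,s)$, to the non-root vertices together with a choice of root, contributing $n!/\prod_{i,s} n(i,s)!$; (ii) a choice, for every non-root vertex of out-type $(i;s)$, of one of the $n_{i-s}$ vertices at abscissa $i-s$ as its parent, i.e.\ an arbitrary \emph{parent-function}, whose naive count is $\prod_{i,s} n_{i-s}^{\,n(i,s)}=\prod_i n_i^{\,c(i)}$ (using $c(i)=\sum_s n(i+s,s)$ and reindexing $j=i+s$); and (iii) a cycle-lemma / re-rooting step that retains exactly the parent-functions giving a genuine tree, supplying the factor $\bigl(\prod_{i=\ell}^{-1} n(i,-1)\prod_{i=1}^{r} n(i,1)\bigr)/\prod_{i=\ell}^{r} n_i$. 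The key point is that a labelled tree is injective precisely when its parent-function is injective on each type-class: two vertices of the same out-type $(i;s)$ must be attached to two distinct vertices at abscissa $i-s$. Thus, to enumerate injective trees I would simply replace the arbitrary parent-functions of step (ii) by the injective ones, counted by $\prod_{i,s} n_{i-s}(n_{i-s}-1)\cdots(n_{i-s}-n(i,s)+1)=\prod_{i,s}\binom{n_{i-s}}{n(i,s)}\,n(i,s)!$, leaving (i) and (iii) untouched.

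Granting this, the computation closes immediately:
$$
N_{\mathrm{inj}}=\frac{n!}{\prod_{i,s} n(i,s)!}\;\Bigl(\prod_{i,s}\binom{n_{i-s}}{n(i,s)}\,n(i,s)!\Bigr)\;\frac{\prod_{i=\ell}^{-1} n(i,-1)\,\prod_{i=1}^{r} n(i,1)}{\prod_{i=\ell}^{r} n_i}=n!\,\frac{\prod_{i=\ell}^{-1} n(i,-1)\,\prod_{i=1}^{r} n(i,1)}{\prod_{i=\ell}^{r} n_i}\prod_{i,s}\binom{n_{i-s}}{n(i,s)},
$$
and dividing by $n!$ gives exactly the claimed formula. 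The main obstacle is the step I glossed over: justifying that the cycle-lemma (or re-rooting) bijection of Sections~\ref{sec:basic} and~\ref{sec:general} restricts to a bijection between injective parent-functions and injective trees, so that the correction factor (iii) really is the same in both counts. Concretely, one must check that the cyclic shifts and re-rootings used to select the tree-like parent-functions never merge two siblings of equal increment, i.e.\ that they preserve injectivity; this is where the hypotheses $\max\cS=1$ (and $\min\cS=-1$ when $\ell<0$) intervene, exactly as they do in the basic bijection. I expect this verification, rather than any algebra, to be the crux.
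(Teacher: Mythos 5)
Your reduction to injective labelled trees (divide by $n!$), your identification of injectivity of a tree with injectivity of the parent assignment on each abscissa class, and your closing algebra all coincide with the paper's proof (Section~\ref{sec:proof-S-ary-out} for $\ell=0$ and the corresponding part of Section~\ref{sec:enum-neg} for $\ell<0$, via Lemmas~\ref{lem:link}, \ref{lem:linkneg} and~\ref{thm:injections-out} and its negative-$\ell$ analogue). The problem is the step you defer: it is not a verification to be tacked on at the end, it is the central result of the paper, and as you phrase it it is not quite the right claim. The factor you call (iii) is not produced by a cycle lemma acting on parent-functions of trees. The enumeration goes through a bijection $\Phi$ (resp.\ $\Psi$) between marked $\cS$-trees and $\cS$-functions \emph{with cycles} satisfying (F); the function attached to a tree is therefore not its parent-function, and the equivalence ``tree injective $\Longleftrightarrow$ associated function injective on each $V_i$'' is precisely Property~(b) of Theorems~\ref{thm:basic} and~\ref{thm:rooted}, i.e.\ preservation of the in-type distribution by the bijection.

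Proving that property is where almost all of the work lies. The Joyal-type construction $\Phi_1$ of Section~\ref{sec:basic} does \emph{not} preserve in-types when $0\in\cS$ (the in-types of ``frustrated'' sources change), so a subtree-exchange surgery $\Phi_2$ has to be designed to restore them; and for $\ell<0$ one needs the far more elaborate machinery of Section~\ref{sec:general} (two main cases, split into several sub-bijections, each with its own re-arrangement step). Your heuristic for filling the gap --- ``check that the cyclic shifts and re-rootings never merge two siblings of equal increment'' --- would fail as a routine check exactly in these situations, and making it work amounts to reconstructing $\Phi_2$ and Section~\ref{sec:general}. So the proposal is the correct skeleton of the paper's argument, with its load-bearing bijective ingredient asserted rather than proved.
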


\noindent
{\bf Remark.} It seems that no simple counterpart
of~\eqref{cayley-profile-refined} exists. That is, the \gf\ of $\cS$-ary
trees of vertical profile $(n_\ell, \ldots; n_0, \ldots, n_r)$,
taking into account the out-types of the vertices, does not factor nicely.

\subsection{The in-types}
We now prescribe the number $n(i,\bmc)$ of vertices of in-type
$(i;\bmc)$, for all $i$ and $\bmc=(c^m, \ldots, c^M)$, with $m=\min
\cS$ and $M=\max \cS$. By definition of the in-types,
it suffices to study the case where $\cS=\llb m,M\rrb$. 

Note that the number $n_i$ of vertices at
abscissa $i$ is determined, equal to $\sum_{\bmsc} n(i,\bmc)$. Hence
the profile $(n_\ell, \ldots; n_0, \ldots, n_r)$ is 
fixed.   The number $n(i,s)$ of vertices of out-type $(i;s)$ is also
determined by the  choice of the numbers $n(i, \bmc)$. Indeed,
$n(i,s)$ is the number of edges going from a vertex of
abscissa $i$ to its parent of abscissa $i-s$, so that
$$
n(i,s)= \sum_{\bmsc} c^sn(i-s, \bmc).
$$
Since we can express  $n_i$ in terms of the numbers $n(i,s)$ or in
terms of the numbers $n(i,\bmc)$,  the following compatibility
condition is required: for $\ell\le i \le r$,
$$
\chi_{i=0}+  \sum_{s,\bmsc} c^s n(i-s, \bmc)
=\sum_{\bmsc} n(i,\bmc).
$$
We will also assume, as before, that $n_\ell, \ldots, n_r$ are positive.

\begin{Theorem} 
\label{thm:cayley-in}
Let $m\le 1$ and $\cS =\llbracket m, 1\rrbracket$.  
Let $n(i,\bmc)$ be non-negative integers, for $i \in \zs$ and $\bmc
\in \ns^{2-m}$, %
satisfying the above compatibility condition.
 Assume moreover  that either $m=-1$, or  $n(i,\bmc)=0$ for
all $i<0$ and $\bmc\in \ns^{2-m}$. 
The  number of $\cS$-embedded   Cayley trees in which, for
  all $i\in \zs$ and $\bmc\in \ns^{2-m}$, exactly $n(i,\bmc)$ vertices
  have in-type $(i;\bmc)$ is    
$$
\frac {n!\prod\limits_{i=\ell}^r(n_i-1)!}
{\prod\limits_{i, \bmsc}n(i,\bmc)!
\prod\limits_{ b\ge 0,s}b!^{n_s(b)}}
\prod\limits_{i=\ell}^{-1}n(i,-1) \prod\limits_{i=1}^{r}n(i,1),
 $$
where   $n$ is the number of vertices, $(n_\ell, \ldots, n_{-1};n_0, \ldots, n_r)$ is the profile
corresponding to the numbers $n(i, \bmc)$,
$n_s(b)$ is the number of vertices $v$ that
have exactly $b$ children at abscissa $a(v)+s$, and 
$n(i,s)$ is the number of vertices of out-type $(i,s)$.
Equivalently,
\begin{align*}
n=\sum_i n_i, \quad \quad 
  n_i= \sum_{\bmsc} n(i,\bmc),
\quad \quad
n_s(b)=\sum_{i} \sum_{\bmsc: c^s=b} n(i,\bmc),
\quad \quad
n(i,s)= \sum_{\bmsc} c^sn(i-s, \bmc).
\end{align*}
\end{Theorem}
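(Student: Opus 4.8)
The plan is to follow the same route as for the out-type enumeration (Theorem~\ref{thm:cayley-out}): encode each tree by an abscissa-labelled parent map (a \emph{configuration}) through the complete-type-preserving bijection of Sections~\ref{sec:basic} and~\ref{sec:general}, and then count the configurations carrying the prescribed in-types. The point is that this bijection, together with the cyclic argument handling the tree (connectivity/acyclicity) constraint, sees only the underlying parent map: it is blind to whether we record the out-type $(i;s)$ or the in-type $(i;\bmc)$ of each vertex. Hence the tree constraint contributes the \emph{same} cyclic correction factor
$$
C=\frac{\prod_{i=\ell}^{-1}n(i,-1)\ \prod_{i=1}^{r}n(i,1)}{\prod_{i=\ell}^{r}n_i}
$$
as in Theorem~\ref{thm:cayley-out}; crucially it depends only on the profile $(n_i)$ and on the out-type counts $n(i,s)$, both of which are fixed by the data $n(i,\bmc)$ via $n_i=\sum_{\bmsc}n(i,\bmc)$ and $n(i,s)=\sum_{\bmsc}c^s n(i-s,\bmc)$. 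So it remains to count the configurations with the prescribed in-types and multiply by $C$.

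I would count these configurations in three steps. First, distribute the in-types among the $n$ labels: since exactly $n(i,\bmc)$ vertices receive in-type $(i;\bmc)$, there are $\tfrac{n!}{\prod_{i,\bmsc}n(i,\bmc)!}$ ways, after which every label knows its abscissa and, for each $s\in\cS$, the number $c^s$ of children it must receive at abscissa $a(v)+s$. Second, single out the root among the $n_0$ vertices at abscissa $0$, a factor $n_0$. Third, build the parent map one abscissa at a time: at abscissa $j$ the $n_j-\chi_{j=0}$ non-root vertices must be dispatched into the groups indexed by the pairs $(v,s)$ with $a(v)+s=j$, the group of $(v,s)$ having the prescribed size $c^s_v$; since children within a group form an unordered set, this is a multinomial $\tfrac{(n_j-\chi_{j=0})!}{\prod_{v,s:\,a(v)+s=j}(c^s_v)!}$. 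Taking the product over $j$ and regrouping the denominators by the values $b=c^s_v$ gives the configuration count
$$
F_{\mathrm{in}}=\frac{n!}{\prod_{i,\bmsc}n(i,\bmc)!}\cdot n_0\cdot\frac{\prod_{j}(n_j-\chi_{j=0})!}{\prod_{b\ge 0,s}b!^{\,n_s(b)}}
=\frac{n!\,\prod_{i=\ell}^{r}n_i!}{\prod_{i,\bmsc}n(i,\bmc)!\ \prod_{b\ge 0,s}b!^{\,n_s(b)}},
$$
where I used $n_0\cdot\prod_j(n_j-\chi_{j=0})!=\prod_i n_i!$.

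Multiplying $F_{\mathrm{in}}$ by $C$ and using $\prod_i n_i!\cdot\bigl(\prod_i n_i\bigr)^{-1}=\prod_i(n_i-1)!$ produces exactly the announced formula, settling the non-negative case $\ell=0$ directly from the basic bijection of Section~\ref{sec:basic}. Note that the computation parallels Theorem~\ref{thm:cayley-out} verbatim: there one fixes out-types and obtains the configuration count $\tfrac{n!}{\prod_{i,s}n(i,s)!}\prod_i n_i^{c(i)}$; here one fixes in-types instead, and the only change is on the configuration side, the factor $C$ being identical.

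The main obstacle, exactly as for the out-type theorem, is the tree constraint, that is, justifying the factor $C$ and especially treating the case $\ell<0$ (which forces $m=-1$). There the raw parent map need not be acyclic, the basic bijection must be replaced by the more elaborate one of Section~\ref{sec:general}, and the delicate ingredient is the cyclic (cycle-lemma) argument at the two extreme abscissas $\ell$ and $r$, responsible for the asymmetric boundary factors $\prod_{i<0}n(i,-1)$ and $\prod_{i>0}n(i,1)$ and for turning one factor $n_i!$ into $(n_i-1)!$ at each level. What must be checked with care is that the orbits of this cyclic argument are compatible with the complete type of every vertex, so that $C$ stays constant on each class of configurations with fixed in-types; granting this, the count above refines verbatim and the theorem follows.
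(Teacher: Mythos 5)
Your count of the parent maps (``configurations'') carrying a prescribed in-type distribution is correct, and multiplying it by $C$ does reproduce the stated formula; the overall architecture (reduce to a parent-map count, then account for the tree constraint) is also the paper's. The genuine gap is the justification of the factor $C$. You assert that the tree constraint contributes the \emph{same} correction factor as in Theorem~\ref{thm:cayley-out} because the bijection ``sees only the underlying parent map'' and is ``blind to whether we record the out-type or the in-type''. That principle is false, and the paper's own results show it: if it were valid, the identical reasoning would yield a formula of the form $C\times(\hbox{configuration count})$ for the \emph{complete}-type distribution as well, whereas Theorem~\ref{thm:cayley-complete} contains the extra non-multinomial factors $\prod_{i=1}^{r-1}\bigl(\sum_{b>0}b\,n_1(i,1,b)\bigr)$, and Section~\ref{sec:complete-questions} exhibits explicit parent maps whose complete-type distribution is realized by no tree at all. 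The proportion of trees among configurations is therefore \emph{not} constant on classes cut out by finer statistics; that it \emph{is} constant on the classes cut out by the in-type distribution is essentially the content of the theorem, and cannot be obtained by declaring the correction factor insensitive to what one records.

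Concretely, this is exactly where the paper's work lies. The Joyal-type map $\Phi_1$ of Section~\ref{sec:basic} does \emph{not} preserve in-types when $0\in\cS$ (the in-types of the sources change), and the construction of Section~\ref{sec:general} for $\ell<0$ a fortiori does not; Property~(b) of Theorems~\ref{thm:basic} and~\ref{thm:rooted} is obtained only after the subtree-rearrangement surgeries $\Phi_2$, $\Psi_2$ at the frustrated vertices, and, for $\ell<0$, after the case analysis of Propositions~\ref{prop:casA} and~\ref{prop:casB} with the modified concatenations $\tilde R$, $\tilde C$, $\tilde L$, $A$, $B$. Your closing sentence does flag that something must be checked, but frames it as compatibility of the orbits of a cycle lemma with the complete types --- there is no such group action in the paper, and compatibility with complete types provably fails. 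So the proposal reduces the theorem to its hardest step and leaves that step unproved. Two smaller inaccuracies: the bijection of Section~\ref{sec:general} is not complete-type-preserving (only in-type- and out-type-preserving), and the paper counts $\cS$-functions satisfying~(F) on the standardized vertex set $V$ via Lemmas~\ref{lem:link}, \ref{lem:linkneg} and~\ref{thm:functions-in} rather than arbitrary parent maps; your configuration count is equivalent to that route only once the missing factor $C$ has been established.
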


\noindent{\bf Remarks} \\
1. Again, this theorem has interesting specilizations. Assume for
instance that $n(i, \bmc)=0$ as soon as $i\not = 0$, and $m=0$. Then
the only non-zero numbers $n(i,\bmc)$ are of the form $n(0,i,0):=k_i$,
giving the number of vertices of the tree having $i$ children. In
particular, $n_0=n$. The above formula reads
$$
\frac{n!(n-1)!}{\prod_i k_i! \prod_i i!^{k_i}},
$$
and gives the number of rooted Cayley trees having $k_i$
vertices of in-degree $i$~\cite[Corollary 5.3.5]{stanley-vol2}.

If $\cS=\{1\}$, each rooted Cayley tree has a unique $\cS$-embedding,
and a vertex at distance $i$ from the root lies at abscissa $i$. The above theorem  gives the number of rooted Cayley
trees in which $k(i,c)$ vertices have height $i$ and (in-)degree $c$,
for all $i$ and $c$:
$$
\frac {n!}{\prod_{i,c} k(i,c)!} \prod_{i=1}^{r} \frac {n_i!}{\prod_c
c! ^{k(i-1,c)}}.
$$
This formula has a direct explanation: the multinomial
coefficient describes  the choice of the vertices of indegree $c$
lying at height $i$, for all $i$ and $c$
and the multinomial $\frac {n_i!}{\prod_c
c! ^{k(i-1,c)}}$ how to assign children to vertices of height $i-1$.

If $\cS=\{-1,1\}$ and $\ell=0$, $r=1$, the above theorem gives the
number of
bicolored Cayley trees, rooted at a white vertex, having 
$k(0,c)$ white vertices of (in-)degree $c$ and $k(1,c)$ black vertices of
(in-)degree $c$ for all $c$:
\beq\label{bicolored-in}
\frac{(n_0+n_1)! (n_0-1)! n_1!}{\prod_{i,c} k(i,c)! \prod_c c! ^{k(0,c)+k(1,c)}},
\eeq
with $n_i=\sum_c k(i,c)$.
This is related to a known formula that gives the number of
spanning trees of the complete bipartite graph $K_{m_0,m_1}$  (with
white vertices labelled $u_1, \ldots, u_{m_0}$ and black vertices
labelled $v_1, \ldots, v_{m_1}$)
in which each vertex has a prescribed degree:
\beq\label{bicolored-total}
\frac{(m_0-1)! (m_1-1)!}{\prod_c! ^{k(0,c)+k(1,c)}},
\eeq
where $k(0,c)$ (resp. $k(1,c)$) is the number of white (resp. black)
vertices of total degree $c+1$ 
(see~\cite[Exercise 5.30]{stanley-vol2} and~\cite{bacher}).
Indeed, \eqref{bicolored-in} can be derived
from~\eqref{bicolored-total}, taken with $m_0=n_0$ and $m_1=1+n_1$,
when $v_1$ has degree 1. Conversely, our results actually allow us to
prescribe the in-type of the root \emm and, the number of white and
black vertices of fixed in-type (see Section~\ref{sec:proof-cayley-in}), and this refined
formula implies~\eqref{bicolored-total}.

\medskip \noindent
2. If $n(i,\bmc)=0$ as soon as one of the
components of $\bmc$ is larger than~1, then the trees counted by the
above formula are 
injective.  Thus it suffices to divide this formula by $n!$ to obtain
the number  of $\cS$-ary trees having $n(i,\bmc)$ vertices
of in-type $(i;\bmc)$ for all $i$ and $\bmc$.

\subsection{The complete types}
We finally prescribe the in-type $(0;\bmc_0)$ of the root and the
number $n(i,s,\bmc)$ of (non-root) vertices of (complete) type $(i;s;\bmc)$, for all
$i$, $s$ and $\bmc$. 
In particular, the number of vertices of out-type
$(i,s)$  is fixed, and can be expressed in terms of the numbers
$n(i,s,\bmc)$ in two different ways. This yields  the following
compatibility condition, for all $i$ and $s$:
$$
\chi_{i=s} c_0^s+ \sum_{t,\bmsc} c^sn(i-s,t,\bmc)=\sum_{\bmsc} n(i,s,\bmc) .
$$
We also assume, as before, that $n_\ell, \ldots, n_r$ are positive.

We have  only obtained a formula when $\ell=0$ and $0\not \in \cS$ (plus the usual condition $\max \cS=1$).
We thus focus on the case $\cS=\llbracket m,-1\rrbracket \cup \{1\}$. 
\begin{Theorem}
\label{thm:cayley-complete}
Let $m\le 1$ and $\cS =\llbracket m, -1\rrbracket \cup \{1\}$.    
Let $\bmc_0=(0, \ldots, 0, c_0^1) \in \ns^{2-m}$.
Let $n(i,s,\bmc)$ be non-negative integers, for $i \in \zs$, $s\in
\cS$ and $\bmc 
\in \ns^{2-m}$, such that  $n(i,s,\bmc)=0$ if $i<0$ or $c^0>0$. 
Assume that the above compatibility condition holds.
The  number of $\cS$-embedded   Cayley trees in which the root has
in-type $(0;\bmc_{0})$ and for
  all $i\in \zs$, $s\in \cS$ and $\bmc\in \ns^{2-m}$, exactly
  $n(i,s,\bmc)$ non-root vertices have type $(i;s;\bmc)$ is  
$$
\frac {c_0^1 \,n!\prod\limits_{i,s}n(i,s)!}
{\prod\limits_{i,s,\bmsc}n(i,s,\bmc)!
\prod\limits_{i=1}^rn(i,1)\prod\limits_{b\ge 0,s}b!^{n_s(b)}}
\prod\limits_{i=1}^{r-1} \left( \sum\limits_{b> 0} b\,  n_1(i,1,b)\right),
$$
where  $n$ is the number of vertices, $r$ is the maximal abscissa,
$n(i,s)$ is the number of vertices 
of out-type $(i;s)$, $n_s(b)$ is the number of vertices $v$ that have
exactly $b$ children at abscissa $a(v)+s$, 
and $n_1(i,1,b)$ is the number of vertices of out-type $(i;1)$  that
have exactly $b$ children at abscissa $i+1$. 
That is,
\begin{align*}
n(i,s)= \sum_{\bmsc} n(i,s,\bmc), \quad \quad 
n_s(b)=\sum_{i,t} \sum_{\bmsc: c^s=b} n(i,t,\bmc)
+ \chi_{s=1, b= c_0^1},
\quad \quad n_1(i,1,b)=\sum_{\bmsc: c^1=b} n(i,1,\bmc).
\end{align*}
\end{Theorem}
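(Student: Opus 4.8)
Because $\ell=0$, $0\notin\cS$ and $\max\cS=1$, every tree under consideration is non-negative, so the basic bijection of Section~\ref{sec:basic} applies and I would follow the pattern used for the in-type Theorem~\ref{thm:cayley-in}, refined so as to also record, for each vertex, the step $s\in\cS$ leading to its parent. The plan is to count first the purely \emph{local} data --- a labelled \emph{parent function} realizing the prescribed complete types, without requiring connectedness --- and then to correct for connectedness by a cycle-lemma argument carried by the $+1$-edges, which are the only edges that can increase the abscissa since $\max\cS=1$.

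\textbf{Counting parent functions.} First I would distribute the $n$ labels among the complete-type classes, the root forming a singleton class of in-type $\bmc_0$; this gives the multinomial $n!/\prod_{i,s,\bmsc} n(i,s,\bmc)!$. Next, for every abscissa $i$ and step $s$ I would match the $n(i,s)$ vertices of out-type $(i;s)$ to the step-$s$ child-slots offered by the vertices at abscissa $i-s$: since exactly $n(i,s)=\sum_{\bmsc}c^s n(i-s,\bmc)$ slots are available, and children sharing a parent are unordered (we build Cayley, not plane, trees), this contributes $\prod_{i,s} n(i,s)!\big/\prod_{b\ge0,s} b!^{n_s(b)}$. Together these two steps produce an arbitrary labelled parent function with the prescribed complete types, i.e. the product
\[
\frac{n!\,\prod_{i,s} n(i,s)!}{\prod_{i,s,\bmsc} n(i,s,\bmc)!\,\prod_{b\ge0,s} b!^{n_s(b)}},
\]
which over-counts, since a parent function need not be acyclic.

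\textbf{The connectivity correction.} Here $\max\cS=1$ is decisive: the abscissa can increase only along $+1$-edges, so after contracting each maximal subtree of $+1$-edges the object becomes a functional graph whose arcs are the steps $s\le-1$ (all of which point strictly upward, towards the parent), and the full parent function is a tree exactly when this contracted graph is. Running the cycle lemma on the upward $+1$-spine issuing from the root, the fraction of parent functions that are genuine trees should equal a product of local ``continuation'' ratios: $c_0^1/n(1,1)$ for the passage from level $0$ to level $1$ (the root offers $c_0^1$ of the $n(1,1)$ incoming $+1$-edges), and $\big(\sum_{b>0} b\,n_1(i,1,b)\big)/n(i+1,1)$ for the passage from level $i$ to level $i+1$ (the numerator counts the $+1$-edges issued by out-type-$(i;1)$ vertices, namely those continuing an upward spine). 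Their product equals
\[
\frac{c_0^1\,\prod_{i=1}^{r-1}\big(\sum_{b>0} b\,n_1(i,1,b)\big)}{\prod_{i=1}^{r} n(i,1)},
\]
and multiplying the two displays yields the claimed formula.

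\textbf{Main obstacle.} The hard part is the cycle-lemma bookkeeping: proving that the fraction of acyclic parent functions is \emph{exactly} this product, and that it depends only on $+1$-edge statistics. This is precisely where all three hypotheses enter --- $\max\cS=1$ so that $+1$-edges alone govern abscissa increase and hence connectivity, $0\notin\cS$ so that no horizontal edges survive the contraction, and $\ell=0$ so that the basic bijection applies without re-rooting across negative abscissas. As consistency checks, summing the formula over the out-step $s$ must collapse to the in-type Theorem~\ref{thm:cayley-in} restricted to $\ell=0$, $0\notin\cS$, and a recursive expansion as in Section~\ref{sec:lagrange} on small profiles would guard against constant errors.
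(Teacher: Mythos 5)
Your first step is correct and matches a genuine factor of the target: the count of ``parent functions'' (functional digraphs with the prescribed complete types, connectedness not required) is indeed
\[
\frac{n!\,\prod_{i,s} n(i,s)!}{\prod_{i,s,\bmsc} n(i,s,\bmc)!\,\prod_{b\ge0,s} b!^{n_s(b)}},
\]
and the theorem is equivalent to the assertion that the proportion of these that are acyclic equals
$c_0^1\,\prod_{i=1}^{r-1}\bigl(\sum_{b>0} b\, n_1(i,1,b)\bigr)\big/\prod_{i=1}^{r} n(i,1)$.
But that second assertion is the entire content of the theorem, and your proposal does not prove it: the ``connectivity correction'' is asserted via an appeal to the cycle lemma that does not obviously apply. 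Concretely, after contracting the $+1$-components the cycles of the contracted functional graph are governed by the $s\le -1$ arcs (which can jump several levels and interleave with $+1$-paths in arbitrary ways), so it is not clear why the acyclicity fraction should factor over levels, nor why it should depend only on $+1$-edge statistics such as $n_1(i,1,b)$; your own phrasing (``should equal'') flags this. You would need to actually exhibit, for each level, a group action or rotation on parent functions under which tree-ness occurs with the claimed local frequency, and nothing in the sketch supplies that.

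The paper closes exactly this gap by a different mechanism: since $0\notin\cS$, the Joyal-type bijection $\Phi=\Phi_1$ of Theorem~\ref{thm:basic} preserves \emph{complete} types (Property (c)), so counting trees reduces, via Lemma~\ref{lem:link}, to counting $\cS$-functions satisfying the explicit spine condition (F) ($f(i^1)=(i-1)^1$). Lemma~\ref{thm:functions-complete} then counts these functions directly: prescribing each vertex's type yields a product of multinomials in which the spine constraint contributes the factors $c_{(i-1)^1}^1$, and summing over the possible in-types of the spine vertices $i^1$ produces precisely $c_{0^1}^1\, n_r\prod_{i=1}^{r-1}\bigl(\sum_{b>0} b\, n_1(i,1,b)\bigr)$. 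In other words, the ``connectivity factor'' you are trying to obtain probabilistically is derived in the paper as an exact enumeration after a type-preserving bijection; if you want to pursue your route, the cycle-breaking inherent in $\Phi_1$ is the rigorous version of the rotation argument you are gesturing at, and you should either invoke it or reconstruct it.
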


\section{A  bijection for non-negative trees}
\label{sec:basic}
Let $n_0, \ldots, n_r$ be a
sequence of positive integers. Let $V= \cup_{i=0}^r V_i$ with
$V_i=\{i^1, \ldots, i^{n_i}\}$. The elements of $V$ are called \emm
vertices,, and the vertices of $V_i$ are said to have \emm abscissa,
$i$. The abscissa of a vertex $v$ is denoted by $a(v)$.
The vertices are totally ordered as follows:
$$
i^k \le j^p \Longleftrightarrow (i<j) \hbox{ or } (i=j \hbox{ and } k\le
p).
$$
We consider partial functions $f$ on $ V$, which we regard as
digraphs on the vertex set $V$: for each
vertex $v$ such that $f(v)$ is defined,
 an arc joins $v$ to $f(v)$. 

Let $\cS \subset \zs$ with $\min \cS=m$  and $\max \cS=1$. 
A partial function $f$ on $V$ is an \emm $\cS$-function, if for all $v=i^k$
such that $f(v)$ is defined,   
$$
f(i^k) \in \cup_{s\in \cS} V_{i-s}.
$$
The \emm type, (or: \emm complete type,) of the vertex $v$ is
$(i; s; c^m,  \ldots, c^1)$, where $i$ is the abscissa of $v$,
$i-s$ is the abscissa of $f(v)$ (or $\varepsilon$ if $f(v)$ is not
defined), and for $m\le k\le 1$, $c^k$ is the 
number of pre-images of $v$ at abscissa $i+k$. We often denote
$\bmc=(c^m, \ldots, c^1)$. The \emm out-type, of $v$ is
 $(i;s)$. Its \emm in-type, is $(i; \bmc)$. 
In the function shown in Figure~\ref{fig:function-ex}, which is an
$\cS$-function for $\cS=\llbracket -2, 1\rrbracket$, the vertex  $0^1$
has type $(0;\varepsilon; 0,0,0,1)$, and  the vertex
 $2^1$ has type   $(2; 1; 1, 0,0,1)$.
An edge of  a digraph is an \emm $\cS$-edge, if it joins a
vertex of abscissa $i$ to a vertex of abscissa $i-s$, for $s\in \cS$.

\begin{figure}[htb]
\scalebox{0.9}{\input{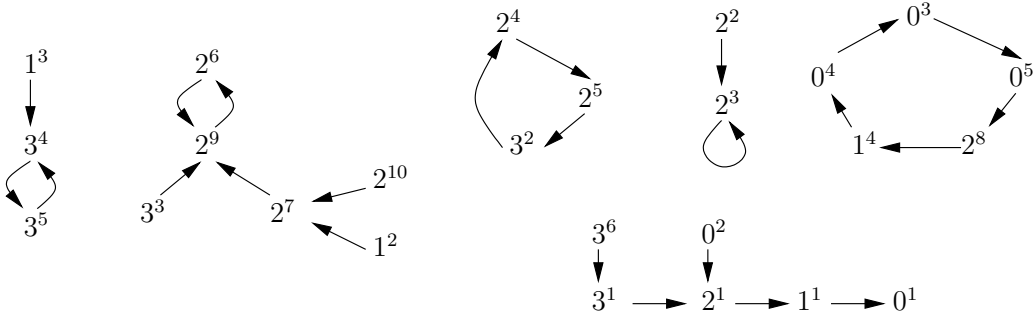}}
\caption{A $\{-2,-1,0,1\}$-function on $V=\{0^ 1, \ldots, 0^5, 1^1,
  \ldots, 1^4, 2^1, \ldots, 2^{10}, 3^1, \ldots, 3^6\}$.
}
\label{fig:function-ex}
\end{figure}

Consider now a rooted tree $T$ on the vertex set $V$.
We say that $T$ is  an \emm $\cS$-tree, if the parent of
any (non-root) vertex of $V_i$ belongs to $\cup_{s\in \cS} V_{i-s}$. We orient
the edges of $T$ towards the root: then $V$ can be seen as an
$\cS$-function. This allows us to define the type, in-type and
out-type of a vertex.
A \emm marked $\cS$-tree, is a pair $(T,r^q)$ consisting of an
$\cS$-tree and a marked vertex $r^q\in V_r$.
There is a simple connection between $\cS$-trees and $\cS$-embedded
Cayley trees,  which will be made explicit in the next section. We
focus for the moment on $\cS$-trees.

\begin{Theorem}\label{thm:basic}
  Let $n_0, \ldots, n_r$, $V$ and $\cS$ be as above.
There exists a bijection $\Phi$ between $\cS$-functions  $f:
V\setminus\{0^1\} \rightarrow V$ satisfying
\begin{itemize}
\item [{\mbox(F)}] for $1 \le i \le r$, $f(i^1)= (i-1)^1$,
\end{itemize}
 and marked $\cS$-trees $(T,r^q)$ on the
  vertex set $V$, rooted at the vertex $0^1$ and satisfying
\begin{itemize}
\item [{\mbox(T)}] on the path going from 
$r^q$ to $0^1$, the first vertex belonging to $V_{i-1}$ is preceded by
$i^1$, for all $i \in \llbracket 1, r \rrbracket$.
\end{itemize}
(The condition $\max \cS=1$ implies that this path contains a vertex
of $V_{i-1}$ for all $i$.)

\medskip
Moreover, the bijection $\Phi$
\begin{enumerate}
\item[(a)]
 preserves the out-type of every vertex;
\item[(b)]
 preserves the number of vertices of in-type $(i;\bmc)$, for all $i$
 and $\bmc$; 
\end{enumerate}
\begin{enumerate}
\item[(c)]
 preserves the complete type of each vertex if $0\not \in \cS$. Of
 course, this implies {\rm{(a)}} and {\rm{(b)}}.
\end{enumerate}
\end{Theorem}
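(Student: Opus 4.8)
The plan is to read both sides of the claimed bijection as $\cS$-functions on $V$ with the single sink $0^1$: a marked $\cS$-tree is nothing but an \emph{acyclic} such function (its edge-reversed parent map) together with the extra data of the mark $r^q$ and condition (T), whereas a general $\cS$-function satisfying (F) may have cycles. Since $\Phi$ must, by part (a), keep the increment $s$ of every arc $v\to f(v)$ unchanged, it may only alter \emph{which} vertex of the correct abscissa an arc points to; thus $\Phi$ has to trade the cyclic part of $f$ for the ``marked spine'' data $(r^q,\text{(T)})$ while leaving all out-types fixed. This is exactly the mechanism of Joyal's function--vertebrate correspondence \cite{joyal}, here refined by the abscissa grading and by the distinguished vertices $i^1$ (the minima of each $V_i$ for the total order on $V$). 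I would therefore define $\Phi$ and $\Phi^{-1}$ explicitly as a Joyal-type straightening adapted to $\cS$, and then check the three type-preservation statements.

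Concretely, for $\Phi^{-1}$ I would start from a marked $\cS$-tree $(T,r^q)$ and extract the spine $r^q=w_0,w_1,\dots,w_L=0^1$ from the mark to the root. Condition (T) guarantees that each distinguished vertex $i^1$ lies on this spine, sitting just before the first descent to abscissa $i-1$ (and $\max\cS=1$ guarantees such a descent exists); these $r$ vertices cut the spine into segments. I would then re-close each segment into an $\cS$-cycle by a fundamental (Foata-type) transform relative to the total order, which simultaneously forces $f(i^1)=(i-1)^1$ and hence realizes condition (F); every off-spine edge of $T$ is kept as an arc of $f$ unchanged. Conversely, for $\Phi$, I would decompose the digraph of $f$ into the basin of the sink $0^1$ together with its ``rho''-components (a cycle with trees hanging off it), linearly order the cycles canonically via the total order, cut each at its distinguished vertex $i^1$, concatenate the pieces into a single spine running from a forced top vertex $r^q\in V_r$ down to $0^1$, and reattach the hanging trees. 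Part (a) is then immediate since no increment is ever changed; part (b) follows because the multiset of incoming increments at each abscissa is preserved by a mere reshuffling of targets within each $V_j$; and part (c), in the case $0\notin\cS$, follows because the cycles are then forced to cross abscissas strictly, so each individual in-type $(i;\bmc)$, and not merely its count, is preserved.

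The hard part will be controlling the genuinely two-dimensional structure of the cycles: an $\cS$-cycle need not live at a single abscissa but weaves up and down, constrained only by $\sum s=0$ along it with each $s\le 1$. I must pin down a canonical cutting rule and a canonical linear order on the cycles --- using the minima $i^1$ and the fact that $\max\cS=1$ forces the spine to meet every intermediate level --- so that the straightening produces a \emph{legitimate} $\cS$-tree (consecutive spine vertices differing by an admissible increment), satisfying condition (T) exactly, with the mark forced into $V_r$, creating no spurious cycle, and with the forced arcs $f(i^1)=(i-1)^1$ of (F) never conflicting with the cycle-closing. The proof is completed by verifying that cutting and re-closing are mutually inverse, i.e.\ $\Phi\circ\Phi^{-1}=\mathrm{id}$ and $\Phi^{-1}\circ\Phi=\mathrm{id}$, and that (F) and (T) are exchanged under this correspondence; I expect this bookkeeping, rather than any single clever step, to be where most of the work lies.
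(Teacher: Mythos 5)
Your construction is essentially the paper's first map $\Phi_1$: cut each cycle at the edge entering its minimal vertex, order the resulting pieces by decreasing source, concatenate them into a spine from some $r^q$ down to $0^1$, and invert by re-closing the segments between successive lower records. That part is sound, and your arguments for (a) and for (c) when $0\notin\cS$ match the paper's. But there is a genuine gap in your justification of (b). You claim that the number of vertices of each in-type $(i;\bmc)$ is preserved ``because the multiset of incoming increments at each abscissa is preserved by a mere reshuffling of targets within each $V_j$.'' Reshuffling targets within $V_j$ preserves the number of arcs from abscissa $i$ to abscissa $j$ (that is exactly statement (a)), but an in-type is the \emph{joint} vector $\bmc$ of counts of children at all neighbouring abscissas of a single vertex. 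When a cycle is cut at its source $i^k$ and the pieces are concatenated, the source loses its incoming arc (coming from some abscissa $j$, with $j=i$ or $j=i+1$ when $0\in\cS$) and instead receives the arc from the sink of the preceding piece, which may lie at the \emph{other} of these two abscissas. Two sources thereby trade a $c^0$-child for a $c^1$-child and vice versa; since their remaining in-types are in general different, the multiset of in-types over $V$ changes. So when $0\in\cS$ your map satisfies (a) and (c)-for-the-relevant-case but genuinely fails (b).

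The paper repairs this with a second map $\Phi_2$ that you would need to supply. One observes that the ``frustrated'' sources at abscissa $i$ (those whose in-type changed) alternate along the spine between $i$-frustrated and $(i{+}1)$-frustrated, beginning with the former and ending with the latter (with a separate, simpler statement at abscissa $r$ involving $r^1$ and the marked vertex $r^q$). Pairing consecutive frustrated sources and exchanging the subtrees hanging from them (keeping the spine vertices in place) makes each vertex of the pair inherit the in-type the other had in $f$, restoring the in-type distribution without touching out-types; and this surgery is an involution, so bijectivity survives. Without this step, or some equivalent device, statement (b) of the theorem is not proved by your argument.
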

\begin{proof} 
   The bijection $\Phi$ is the composition of two bijections $\Phi_1$
   and $\Phi_2$.  The first bijection, $\Phi_1$, transforms an
$\cS$-function into an $\cS$-tree. It is a simple adaptation of 
a construction
designed by Joyal to count Cayley trees~\cite[p.~16]{joyal}.
It satisfies
Properties (a) and (c). However, if $0 \in \cS$, it does not
satisfy (b). The second bijection, $\Phi_2$, remedies this problem by
performing a simple re-arrangement of subtrees. 
 If $0\not\in \cS$, then $\Phi_2$ is the identity,  so that $\Phi=\Phi_1$.

\begin{figure}[htb]
\scalebox{0.9}{\input{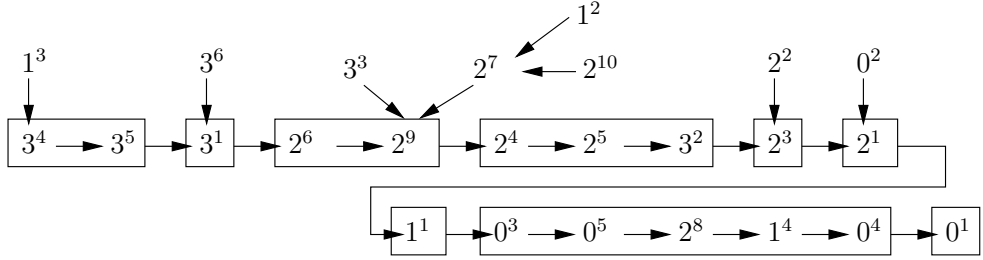}}
\caption{The map $\Phi_1$  applied to the function of Figure~\ref{fig:function-ex}. The marked vertex is $3^4$.}
\label{fig:basic}
\end{figure}

\noindent$\bullet$ {\bf The map $\Phi_1$: from functions to trees}\\
Let us describe the construction $\Phi_1$. It  is illustrated in
  Figure~\ref{fig:basic}, where we construct the tree associated with
  the function of Figure~\ref{fig:function-ex} (which satisfies indeed
  Condition (F)). 

Let $f:V\setminus\{0^1\} \rightarrow V$ be a function satisfying (F), and
consider the associated digraph $G_f$. One of its connected components
contains the path $r^1\rightarrow (r-1)^1 \rightarrow \cdots
\rightarrow 1^1\rightarrow 0^1$. Split each of the $r$ edges of this
path into two half-edges (one in-going, one out-going), thus forming
$r+1$ \emm pieces,. Each piece is of the form
\begin{center}
  \scalebox{0.8}{\input{piece1.pstex_t}}.
\end{center}
We say that $i^1$ (or $r^1$, or $0 ^1$) is the \emm source, (and also the
\emm sink,) of this
piece.

Each of the other components of $G_f$ contains exactly one  cycle. In each of
them, split into two half-edges the edge that goes into the smallest
vertex, say  $i^k$, of the cycle. The vertices that formed the cycle now form
a distinguished path in the resulting \emm piece,, starting from
$i^k$. Trees are attached to the vertices of this path, as shown below:
\begin{center}
  \scalebox{0.8}{\input{piece2.pstex_t}}.
\end{center}
We say that $i^k$ is the \emm source, of this piece.   The endpoint of
the distinguished path is the \emm sink, of the piece. We say that the
piece has \emm type, $(i,j$).
\begin{Lemma}\label{prop:piece}
  If $i^k$ and $j^p$ are respectively  the source and the sink of a
  piece and $k\not = 1$, then $j=i+1$, or $j=i$ with $p \ge k$. The latter case only occurs if $0 \in \cS$.
\end{Lemma}
\begin{proof}
 We have $j^p
\ge i^k$ (since $i^k$ was minimal in its cycle). Given that
$f(j^p)=i^k$, and $\max \cS=1$, this
means that either $j=i+1$, or $j=i$ with $p \ge k$. 
\end{proof}

\begin{figure}[b]
  \centering
  \scalebox{0.8}{\input{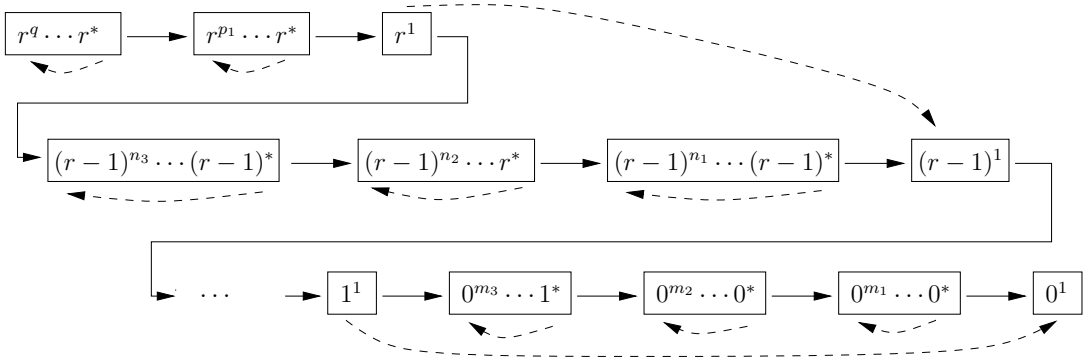}}  
\caption{A typical path from $r^q$ to $0^1$, after concatenation of
  the pieces. It is assumed that  $q>p_1$, $n_3>n_2>n_1$ and
  $m_3>m_2>m_1$. The boxes show the distinguished  paths of the
  pieces. The edges that join the boxes were not in the graph of the
  function $f$,
unless they are of the form $(i+1)^1 \rightarrow i^1$. 
 The dashed edges 
that are not of the form $(i+1)^1 \rightarrow i^1$
 were in the graph   of $f$, but are not in the final tree.} 
 \label{fig:concatenation}
\end{figure}

Now, order the pieces from left to right by decreasing source: the first
(leftmost) piece has source
$r^q$, for some 
$q \in \{1, \ldots, n_r\}$, and the last (rightmost) piece has
source $0^1$. Concatenate them to form a single path going from $r^q$ to
$0^1$, keeping in place the attached trees. Note that there is a piece
of source  $i^1$, for all $i \in 
\llbracket 0,r\rrbracket$. A typical path from $r^q$
to $0^1$ is shown in Figure~\ref{fig:concatenation}.  The resulting graph is
 a tree rooted at $0^1$. In this tree, we mark the vertex $r^q$. We
 define this marked tree to be $\Phi_1(f)$.

 \begin{Proposition}\label{prop:Phi1}
   The map $\Phi_1$ satisfies all properties stated in Theorem~{\rm\ref{thm:basic}},
   apart from {\rm (b)}.
 \end{Proposition}
 \begin{proof}
 Let us first prove that $\Phi_1$ is injective.
  Observe that  the sources of the pieces are the lower records met on the
 path from $r^q$ to $0^1$. This allows us to recover the
 collection of pieces by splitting into two half-edges each edge of
 this path that  goes into a lower 
 record. From the pieces, it is easy to reconstruct the function
 $f$: one adds an edge from $i^1$ to $(i-1)^1$ for
$i\in\llbracket 1,r\rrbracket$, and then connects the two extremal half-edges in
each remaining piece to form a cycle. Thus  $\Phi_1$ is injective. 

 It is clear from the construction (and its illustration in
Figure~\ref{fig:concatenation}) that the out-types are preserved. In
particular, $\Phi_1(f)$ is an $\cS$-tree. It satisfies (T) by
construction.
Note that the \emm in-type, of vertices may change. However, this only
happens for source vertices. In the example of Figure~\ref{fig:basic}, the
in-type of each source other than $1^1$ has changed.

Let us now prove (c). Assume $0\not \in \cS$. By
Lemma~\ref{prop:piece}, a piece of source $i^k$ and sink
$j^{p}$ satisfies  $j=i+1$, provided $k>1$. Thus the  path going
from $r^q$ to $0^1$ in
$\Phi_1(f)$ has a simpler form:
 \begin{center}
 \scalebox{0.8}{\input{concatenation-simple.pstex_t}}     
 \end{center}
In particular, the marked vertex is $r^1$.
It is now easy to check that each vertex has the same type in $\Phi_1(f)$ as in
$f$, so that (c) holds.

It remains to prove that $\Phi_1$ is surjective. Take
a marked $\cS$-tree $(T,r^q)$, rooted at $0^1$ and satisfying (T).
Let us call the path going from $r^q$ to $0^1$ the \emm distinguished
path, of $T$. Each lower record of this path is called a \emm source,. Each
vertex that precedes a source (plus the vertex $0^1$) is called a \emm
sink,. In the distinguished
path,  split into two half-edges each edge that goes into a
source. This gives a number of \emm pieces,, formed of a distinguished 
path starting at a source and ending at a sink, to which trees are attached.  By Condition (T), each
vertex $i^1$ is a lower record, followed  (when
$i>0$) by another one. Thus there is a piece of source and sink $i^1$, for all $i \in
\llbracket 0, r \rrbracket$. Concatenate the pieces of source $i^1$ to form a path $r^1\rightarrow \cdots \rightarrow 0^1$. In each
of the other pieces, close the distinguished path to form a cycle. The
resulting graph $G$ is the graph of a function $g:V\setminus\{0^1\}
\rightarrow V$. It satisfies (F) by construction.
As observed when proving injectivity, if $(T,r^q)=\Phi_1(f)$, where $f$ is
an $\cS$-function satisfying (F), then $f$ and $g$ coincide.

We claim that $g$ is always an $\cS$-function.  
We only need to  check
that each edge of $G$ that was not in the tree is an $\cS$-edge.
Since $1 \in \cS$, the edges $r^1\rightarrow \cdots \rightarrow
0^1$ are $\cS$-edges. Now consider  a piece of source $i^k$, with
$k>1$. Its sink $j^p$ is followed in $T$ by a lower record that has also abscissa
$i$ (because one of the lower records is $i^1$), say $i^m$. The
edge we add to form a cycle is $j^p \rightarrow i^k$, and it is an
$\cS$-edge since the edge $j^p \rightarrow i^m$ was an $\cS$-edge of $T$.
 \end{proof}

 Proposition~\ref{prop:Phi1} says that $\Phi_1$ fulfills almost all requirements of
 Theorem~\ref{thm:basic}, apart from Condition~(b). More specifically, the
 in-types of sources may change, and this only happens if $0\in
 \cS$. Our second transformation, $\Phi_2$, performs on the tree
 $(T,r^q):=\Phi_1(f)$ a little surgery that remedies this problem.

\medskip
\noindent$\bullet$ {\bf The map $\Phi_2$: rearranging subtrees}\\
Let $(T,r^q)$ be a marked tree satisfying the conditions of
Theorem~\ref{thm:basic}. Let $f=\Phi_1^{-1}(T,r^q)$.  Consider the section
of the distinguished path of $T$ comprised between the 
 vertices $(i+1)^1$ and $i^1$ (we assume for the moment that
 $i<r$). The first piece of 
 this section has source and sink $(i+1)^1$, while all other pieces,
 including the final one with source $i^1$, have type
 $(i,i)$ or $(i,i+1)$:
\begin{center}
 \scalebox{0.8}{\input{portion.pstex_t}}   
\end{center}
We say that a source $v$ of abscissa $i$ is \emm frustrated, if its
 in-type is not the same in $T$ and $f$. The above
 figure, in which frustrated sources are indicated with an exclamation
 mark, shows that this happens in two  cases. 
 \begin{Observation}\label{obs:frustrated-i}
 A source $v$ of abscissa $i<r$ is  frustrated if and only if one of
 the following conditions holds:
  \begin{itemize}
 \item the sink of the piece containing $v$ has abscissa $i$, but the
   sink of piece that    precedes $v$ has abscissa $i+1$,
\item   the sink of the piece containing $v$ has abscissa $i+1$, but the
   sink of piece that    precedes $v$ has abscissa $i$.
 \end{itemize}
 \end{Observation}
 In the former (resp.~latter) case, we say that $v$ is \emm $i$-frustrated,
(resp.~$(i+1)$-frustrated). The key observation that will allow us to
preserve in-types after a small surgery is the following.
\begin{Observation}\label{obs:frustrated-ii}
  Consider the frustrated vertices of abscissa $i$ met on the
  distinguished path of $T$. Then $i$-frustrated vertices and
  $(i+1)$-frustrated vertices alternate, starting with an $i$-frustrated
  vertex, and ending with an $(i+1)$-frustrated vertex.
\end{Observation}
Recall that the marked tree $(T,r^q)$ consists of subtrees attached to a
distinguished path. If there are $2m$ frustrated vertices of abscissa
$i$, say $i^{k_1}, \ldots, i^{k_{2m}}$, to which the subtrees $\tau_1,
\ldots, \tau_{2m}$ are attached, exchange the subtrees $\tau_{2p-1}$ and
$\tau_{2p}$ for all 
$p \in \llbracket 1, m\rrbracket$
(leaving their roots $i^{k_1}, \ldots, i^{k_{2m}}$ in place).
 In the resulting tree, $i^{k_{2p-1}}$ inherits the in-type
that $i^{k_{2p}}$ has in the function $f$, and vice-versa. The in-type
of non-frustrated vertices has not changed.

\smallskip
The surgery is  simpler for frustrated vertices of
abscissa $r$, because there are no pieces of type $(r,r+1)$. 
\begin{Observation}\label{obs:frustrated-r}
 If the marked vertex  is $r^q=r^1$, there are no frustrated vertices 
at abscissa $r$. Otherwise, the only two frustrated vertices at
abscissa $r$ are $r^1$ and the marked vertex $r^q$.
\end{Observation}
In the latter case we simply exchange the subtrees attached to $r^1$
and $r^q$, so that $r^q$ inherits the in-type that $r^1$ has in 
$f$, and vice-versa.

 We define $\Phi_2(T,r^q)$ to be the marked tree obtained
 after rearranging the subtrees of $(T,r^q)$, and $\Phi:=\Phi_2\circ
 \Phi_1$. An example is shown  in Figure~\ref{fig:basic-step2},
 where we rearrange 
the subtrees of the tree $(T,r^q)=\Phi_1(f)$ shown in Figure~\ref{fig:basic}.

\begin{figure}[htb]
\scalebox{0.9}{\input{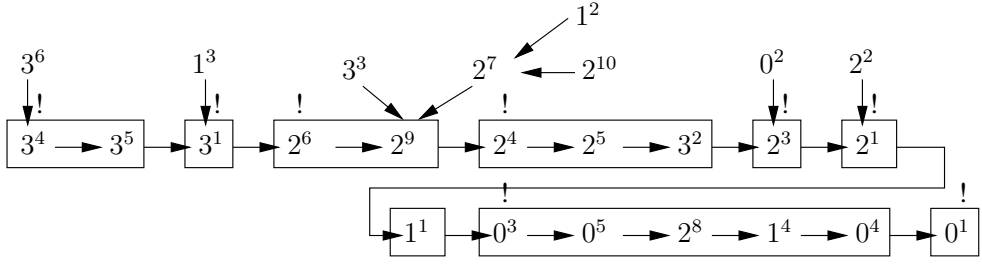}}
\caption{The map $\Phi_2$  applied to the tree $(T,r^q)$ of
  Figure~\ref{fig:basic} 
 (with $r^q=3^4$). 
The frustrated vertices are the same in $(T,r^q)$
  and $\Phi_2(T)$, and are indicated by ``!''.}
\label{fig:basic-step2}
\end{figure}

We claim that $\Phi$ satisfies
 Theorem~\ref{thm:basic}. Of course, the proof of this fact uses the properties of $\Phi_1$ stated  in
 Proposition~\ref{prop:Phi1}. 

First, observe that the out-types are the same in $T$ and $\Phi_2(T,r^q)$
(because vertices pointing to a frustrated source of abscissa $i$ in
$T$ also point to a source of abscissa $i$ in $\Phi_2(T,r^q)$). Hence
$\Phi$ satisfies (a). It also satisfies (c), because there are no
frustrated vertices if $0\not \in \cS$, so that $\Phi_2$ leaves all trees
unchanged. Finally, the surgery performed by $\Phi_2$ is precisely
designed to ``correct'' the in-types of frustrated vertices, while
leaving unchanged the in-types of non-frustrated vertices. Hence
$\Phi$ satisfies (b).

It remains to prove that $\Phi_2$ is bijective. A stronger property
actually holds: $\Phi_2$ is an involution. Indeed, the distinguished
paths of  $(T,r^q)$ and $\Phi_2(T,r^q)$ coincide, and Observations~\ref{obs:frustrated-i}
and~\ref{obs:frustrated-r} imply that the frustrated
vertices of $(T,r^q)$ and $\Phi_2(T,r^q)$ are the same. Applying $\Phi_2$ to
$\Phi_2(T,r^q)$ just restores the marked tree $(T,r^q)$.
\end{proof}

\section{Enumeration of non-negative embedded trees}
\label{sec:enum-nonneg}
In this section, we prove the  enumerative results of
Section~\ref{sec:main} in the case $\ell=0$, that is, for non-negative trees. 
These results follow from the bijection of 
Theorem~\ref{thm:basic}, combined with the enumeration of
$\cS$-functions (which, as we shall see,  is an elementary exercise). We also need to
relate the $\cS$-trees 
occurring in Theorem~\ref{thm:basic} to the $\cS$-embedded
Cayley trees  of Section~\ref{sec:main}. This is done in the following lemma.
We adopt  the same notation as in the previous section:  $V= \cup_{i=0}^r V_i$ with
$V_i=\{i^1, \ldots, i^{n_i}\}$, and $\cS\subset \zs$ satisfies $\min
\cS=m$ and $\max \cS=1$.  The \emm type distribution, of a tree is the
collection  of numbers $n(i,s,\bmc)$ (with $i\in
 \llbracket 0,r\rrbracket$, $s\in \cS$ and $\bmc \in \ns^{-m+2}$)
 giving the number of vertices of type $(i;s;\bmc)$.

\begin{Lemma}\label{lem:link}
   The number of non-negative
 $\cS$-embedded Cayley trees having a prescribed type distribution is
$$
\frac 1 {n_r} \frac {n!}{\prod_{i=0}^r (n_i-1)!}
$$
times the number of marked $\cS$-trees satisfying Condition $(\rm T)$ of
Theorem~{\rm\ref{thm:basic}} and having the  same  type distribution (as
always, $(n_0, \ldots, n_r)$ denotes the profile of the tree, and $n$
its size).
\end{Lemma}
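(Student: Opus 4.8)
The plan is to prove the identity bijectively, by introducing an auxiliary global relabelling and reducing everything to a single multiplicity count. Write $D$ for the prescribed type distribution; it fixes the profile $(n_0,\dots,n_r)$ and the size $n=\sum_i n_i$. Let $N_{\mathrm{mt}}$ denote the number of marked $\cS$-trees on $V$ that are rooted at $0^1$, satisfy Condition (T), and realise $D$, and let $N_{\mathrm{ct}}$ denote the number of non-negative $\cS$-embedded Cayley trees realising $D$. I would first observe that $n!\,N_{\mathrm{mt}}$ counts pairs $\big((T,r^q),\beta\big)$, where $(T,r^q)$ is such a marked $\cS$-tree and $\beta\colon V\to\{1,\dots,n\}$ is an arbitrary bijection. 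Transporting the tree structure of $T$ along $\beta$, and declaring the abscissa of $\beta(i^k)$ to be $i$, turns $(T,r^q)$ into an $\cS$-embedded Cayley tree $C=\beta(T)$ with a marked vertex $\beta(r^q)$ at abscissa $r$; since relabelling changes neither abscissas nor the local structure around a vertex, $C$ realises the same distribution $D$. Reading $\beta$ through its inverse $\gamma=\beta^{-1}$, I regard $\gamma$ as an \emph{admissible relabelling} of $(C,\beta(r^q))$, that is, a bijection from the vertex set of $C$ onto $V$ that preserves abscissas, sends the root to $0^1$, and makes Condition (T) hold. This gives a bijection between the pairs above and triples $(C,m,\gamma)$, where $C$ realises $D$, $m$ is a marked abscissa-$r$ vertex of $C$, and $\gamma$ is an admissible relabelling of $(C,m)$.

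The heart of the argument is to count admissible relabellings of a \emph{fixed} marked Cayley tree $(C,m)$, and I claim this number equals $\prod_{i=0}^{r}(n_i-1)!$, independently of $(C,m)$. An admissible $\gamma$ must send the root to $0^1$. Moreover, on the path from $m$ to the root, Condition (T) forces the label $i^1$ onto the vertex immediately preceding the first vertex of abscissa $i-1$, for each $i\in\llbracket 1,r\rrbracket$; the hypothesis $\max\cS=1$ guarantees both that this path meets abscissa $i-1$ and that the vertex preceding its first vertex of abscissa $i-1$ itself lies at abscissa $i$, so these constraints are consistent and pin down $0^1,1^1,\dots,r^1$ on $r+1$ distinct vertices. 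The remaining $n_i-1$ vertices of each abscissa $i$ may then receive the labels $i^2,\dots,i^{n_i}$ in any of $(n_i-1)!$ ways, and every such choice yields a genuine marked $\cS$-tree satisfying (T) (Condition (T) constrains only the labels $i^1$), giving the claimed count. Since there are $n_r$ choices of $m$, the set of triples has cardinality $n_r\,N_{\mathrm{ct}}\prod_i(n_i-1)!$, whence
$$
n!\,N_{\mathrm{mt}}=n_r\,N_{\mathrm{ct}}\prod_{i=0}^{r}(n_i-1)! ,
$$
which is exactly the announced formula after dividing by $n_r\prod_i(n_i-1)!$.

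The main obstacle is precisely this middle claim: that the number of admissible relabellings is a constant equal to $\prod_i(n_i-1)!$. Everything hinges on the rigidity provided by Condition (T) together with the root constraint, which must determine the positions of the ``first copies'' $i^1$ uniquely and consistently. This is where $\max\cS=1$ is essential, as it forces the distinguished path to descend through each intermediate abscissa one unit at a time, so that the predecessor of the first $V_{i-1}$-vertex genuinely belongs to $V_i$. Once this is established the remaining labels are truly free, and the factors $n_r$, $\prod_i(n_i-1)!$ and the global $n!$ then come, respectively, from the marking, from the free labels within each abscissa class, and from the relabelling $\beta$.
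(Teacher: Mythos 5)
Your proof is correct and follows essentially the same route as the paper's: both rest on the observation that the root condition and Condition (T) rigidly determine the positions of the names $i^1$ (using $\max\cS=1$ to ensure the predecessor of the first $V_{i-1}$-vertex on the distinguished path lies at abscissa $i$), while the remaining names within each abscissa class are free, yielding the multiplicity $n!/\prod_i(n_i-1)!$. The paper packages this as a two-step explicit $1$-to-$\bigl(\prod_i n_i\bigr)\cdot\bigl(n!/\prod_i n_i!\bigr)$ correspondence rather than your double count of triples $(C,m,\gamma)$, but the combinatorial content is identical.
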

\begin{proof}
Equivalently, we want to prove that the number  of non-negative
 $\cS$-embedded Cayley trees having a prescribed type distribution 
\emm and  a marked vertex at abscissa~$r$, is
$$
 \frac {n!}{\prod_i (n_i-1)!}
$$
times the number of marked $\cS$-trees satisfying  (T) and having the  same  type
distribution. We will construct a 1-to-$n!/\prod_i (n_i-1)!$
correspondence between marked
$\cS$-trees satisfying (T) and marked $\cS$-embedded Cayley trees,
preserving  the type distribution.

Let $(T,r^q)$ be a marked $\cS$-tree on $V$ satisfying (T). 
On the path going from $r^q$ to $0^1$, the  first vertex
of $V_{i-1 }$ is preceded by $i^1$, for $1\le i \le r$. Let us rename
the vertex $i^1$ by $i^k$, for a $k$ chosen in
$\{1, \ldots, n_i\}$; conversely, let us rename $i^k$ by $i^1$. Let us
also exchange the names of the vertices $0^1$ and $0^p$, for a $p$ chosen in $\{1, \ldots, n_0\}$. This gives
an arbitrary  $\cS$-tree $T_1$, rooted at $0^p$, with a marked vertex at abscissa $r$. This tree may
or may not satisfy (T).
The number of different trees $T_1$ that can be constructed  from $T$
in such a way is $\prod_{i=0}^r n_i$.
The marked tree $(T,r^q)$ can be recovered
from $(T_1,r^q)$ by restoring vertex names, since we know that $T$ is
rooted at $0^1$ and satisfies (T). 

Let us now assign labels from $\{1, \ldots,
n\}$, with $n=\sum_i n_i$, to the vertices of $T_1$, in such a way
that the labels $a$ and $b$ assigned to $i^k$ and $i^p$ satisfy
$a<b$ if $k<p$. There are $n!/\prod_i n_i!$ ways to do so. Finally,
erase all names $i^k$ from the tree, for all $i$ and $k$. This gives
an arbitrary rooted $\cS$-embedded
Cayley tree $T_2$, with a marked vertex at abscissa $r$. The tree
$T_1$ can be recovered from $T_2$ by renaming the vertices of abscissa
$i$ with $i^1, \ldots, i^{n_i}$ in the unique way that is consistent
with the order on labels: if two vertices of labels $a$ and $b$, with
$a<b$, lie at abscissa $i$, then  their names $i^k$ and $i^p$ must
satisfy $k<p$. 

 The marked $\cS$-tree $T$ has given rise to $n!/\prod_i (n_i-1)!$
marked embedded trees $T_2$. Moreover,
$T_2$, $T_1$ and $T$ have the same type distribution. The result
follows.
\end{proof}

In what follows, we will count trees by counting functions, using the
correspondence of Theorem~\ref{thm:basic}. We will sometimes prescribe
the type (or in-type, or out-type) of every vertex of $V$, or just the
number of vertices of each type (or in-type, or out-type). We will
always assume that these type distributions are compatible with the
conditions required for the function (by writing ``assuming
compatibility...''). For instance, 
\begin{itemize}
\item if we fix the type $(i_v; s_v; \bmc_v)$ of each vertex $v\in V$
  (or just the in-type or out-type),
  we assume that 
  \begin{itemize}
  \item $i_v=i$ if $v \in V_i$, 
\item $s_v \in \cS$ and $i_v-s_v \in \llbracket 0, r\rrbracket$,
\item $s_v=1$ if $v=i^1$ with $i>0$, and $s_v=\varepsilon$ if $v=0^1$,
  \end{itemize}
\item if the number $n(i,s)$ of vertices of $V$ of out-type $(i;s)$,
  for all $i \in \llbracket 0, r\rrbracket$ and $s\in \cS$,  is
   prescribed, we assume that $n(i,s)=0$ if $i-s \not \in \llbracket
   0, r\rrbracket$, and that
$$
n_i=\sum_{s} n(i,s) +\chi_{i=0},
$$
\item if the number $n(i,\bmc)$ of vertices of $V$ of in-type $(i;\bmc)$,
  for all $i \in \llbracket 0, r\rrbracket$ and $\bmc\in \ns^{2-m}$  is
   fixed (whether we prescribe it directly, or  whether we prescribe
  the in-type of each vertex), we assume that $n(i, \bmc)=0$ if
  $c^s>0$ for some $s\not \in \cS$, that
$$
n_i=\sum_{\bmsc} n(i,\bmc), 
$$
and also that the following condition,
  obtained by counting in two different ways the vertices of $V_i$,
  holds:
$$
\chi_{i=0}+ \sum_{s, \bmsc} c^s n(i-s,\bmc)=\sum_{\bmsc} n(i,\bmc),
$$
\item finally, if we fix the out-type $\bmc_0$ of the vertex $0^1$, and if the number $n(i,s,\bmc)$ of vertices of $V$ of type $(i;s;\bmc)$,
  for all $i \in \llbracket 0, r\rrbracket$, $s\in \cS$  and $\bmc\in
  \ns^{2-m}$,  is also
   fixed (whether we prescribe it directly, or  whether we prescribe
  the type of each vertex), we assume that $n(i, t,\bmc)=0$ if
  $c^s>0$ for some $s\not \in \cS$, that
$$
n_i=\sum_{s,\bmsc} n(i,s,\bmc), 
$$
and also that the following condition,
  obtained by counting in two different ways the vertices of out-type
  $(i,s)$,   holds:
$$
\chi_{i=s} c_0^s+ \sum_{t,\bmsc} c^sn(i-s,t,\bmc)=\sum_{\bmsc} n(i,s,\bmc) .
$$
\end{itemize}

\subsection{The  profile of $\cS$-embedded Cayley trees: 
proof of
  Theorem~\ref{thm:cayley-profile}}
\label{sec:proof-cayley-profile}
By Lemma~\ref{lem:link}, the number of $\cS$-embedded Cayley trees having 
  vertical profile $(n_0, \ldots, n_r)$ is $n!/n_r/\prod_i (n_i-1)!$
  times the number of marked
  $\cS$-trees satisfying (T) and having the same profile.
By Theorem~\ref{thm:basic}, the number of such marked trees is also  the number of $\cS$-functions from $V\setminus\{0^1\}$  to $V$ satisfying
 (F). This number is given by the following
lemma. Theorem~{\rm\ref{thm:cayley-profile}} follows, in the case
$\ell=0$. As explained in the remarks that follow
Theorem~\ref{thm:cayley-profile}, this   suffices to prove this
theorem in full generality (that is, also when  $\ell<0$ and $\min \cS =-1$).
\begin{Lemma}
\label{thm:functions-profile}
The number of $\cS$-functions from $V\setminus\{0^1\}$  to $V$ satisfying
 $(\rm F)$ is
$$
\prod\limits_{i=0}^r\left(\sum_{s\in \cS}n_{i-s}\right)^{n_i-1},
$$
where $n_i=0$ if $i<0$ or $i>r$.
\end{Lemma}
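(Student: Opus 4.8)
The plan is to count the $\cS$-functions $f:V\setminus\{0^1\}\to V$ satisfying (F) by choosing the images of the vertices independently, one abscissa at a time. Recall that condition (F) forces $f(i^1)=(i-1)^1$ for $1\le i\le r$, so the value of $f$ is already determined on the $r$ vertices $1^1,\ldots,r^1$. Every remaining vertex $v=i^k$ (with $k\ge 2$, plus all of $V_0$ except $0^1$, i.e.\ $0^2,\ldots,0^{n_0}$) is free, subject only to the single local constraint defining an $\cS$-function, namely $f(i^k)\in\bigcup_{s\in\cS}V_{i-s}$.

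The key observation is that these constraints are completely \emph{decoupled}: the image of $v=i^k$ may be any vertex lying at an abscissa of the form $i-s$ with $s\in\cS$, and the number of such target vertices is exactly $\sum_{s\in\cS}n_{i-s}$ (using the convention $n_j=0$ for $j<0$ or $j>r$, which correctly discards steps $s$ that would leave the strip $\llbracket 0,r\rrbracket$). Since there is no injectivity requirement and no global condition linking different vertices, the choices for distinct free vertices are independent. Thus the total count is the product over all free vertices of the number of admissible images. First I would fix an abscissa $i$ and count the free vertices there: of the $n_i$ vertices in $V_i$, exactly one is constrained by (F) (namely $i^1$, for $i\ge 1$) or is excluded as the root ($0^1$, for $i=0$), leaving $n_i-1$ free vertices in either case. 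Each of these contributes a factor $\sum_{s\in\cS}n_{i-s}$, giving $\bigl(\sum_{s\in\cS}n_{i-s}\bigr)^{n_i-1}$ for abscissa $i$, and the lemma follows upon taking the product over $0\le i\le r$.

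I expect the argument to be essentially immediate once the bookkeeping is set up, so there is no genuine obstacle; the only point requiring a moment's care is verifying that exactly $n_i-1$ vertices are free at each abscissa. For $i\ge 1$ this is because (F) pins down precisely the single vertex $i^1$, and for $i=0$ it is because $f$ is defined on $V\setminus\{0^1\}$, removing precisely the one vertex $0^1$. One should also check that the count $\sum_{s\in\cS}n_{i-s}$ of admissible targets is correct at the boundary: a step $s\in\cS$ with $i-s\notin\llbracket 0,r\rrbracket$ simply contributes the term $n_{i-s}=0$, so the formula needs no special casing. Putting these two points together yields the stated product immediately.

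\begin{proof}
By Condition (F), the value $f(i^1)=(i-1)^1$ is prescribed for all $i\in\llbracket 1,r\rrbracket$, and $f$ is undefined at $0^1$. Every other vertex $v\in V$ is \emph{free}: its image may be chosen to be any vertex at an abscissa $i-s$ with $s\in\cS$, where $i=a(v)$, and these choices are mutually independent since being an $\cS$-function imposes no further constraint. For a free vertex of abscissa $i$, the number of admissible images is the number of vertices lying at an abscissa of the form $i-s$ with $s\in\cS$, that is
$$
\sum_{s\in\cS}n_{i-s},
$$
with the convention $n_j=0$ for $j\notin\llbracket 0,r\rrbracket$, which automatically discards any step $s$ leading outside the strip. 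At abscissa $i$, exactly one of the $n_i$ vertices of $V_i$ is not free (namely $i^1$ if $i\ge 1$, or $0^1$ if $i=0$), so there are $n_i-1$ free vertices of abscissa $i$. Multiplying the independent choices over all free vertices gives
$$
\prod_{i=0}^r\left(\sum_{s\in\cS}n_{i-s}\right)^{n_i-1},
$$
as claimed.
\end{proof}
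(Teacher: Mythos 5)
Your proof is correct and is essentially identical to the paper's (one-line) argument: choose, independently for each $i$, the images of the $n_i-1$ vertices of $V_i\setminus\{i^1\}$ (respectively $V_0\setminus\{0^1\}$) among the $\sum_{s\in\cS}n_{i-s}$ vertices of $\cup_{s}V_{i-s}$. The extra bookkeeping you supply about which vertices are free and about the boundary convention $n_j=0$ is accurate but adds nothing beyond what the paper's proof implicitly uses.
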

\begin{proof}
We  choose the image of  each vertex of $V_i\setminus\{i^1\}$ in the
set $\cup_s V_{i-s}$, for $i=0, \ldots, r$. 
\end{proof}

\subsection{The  profile of $\cS$-ary trees: 
proof of  Theorem~\ref{thm:S-ary-profile}}
\label{sec:proof-S-ary-profile}
 As discussed at the end of
 Section~\ref{sec:def}, the number of
 $\cS$-ary trees having vertical profile  $(n_0, \ldots, n_r)$ is obtained
 by dividing by $n!$ the number of \emm injective, $\cS$-embedded
 Cayley trees having this profile.  Whether an $\cS$-embedded Cayley tree is
injective can be decided from its type distribution, and more
precisely from its distribution of in-types. Hence by
Lemma~\ref{lem:link}, the number of injective $\cS$-embedded Cayley
trees having    vertical profile $(n_0, \ldots, n_r)$ is
$n!/n_r/\prod_i (n_i-1)!$   times the number of marked injective
  $\cS$-trees satisfying (T) and having the same profile (by \emm
  injective,, we mean again that distinct vertices that lie
at the same abscissa have different parents).
By Theorem~\ref{thm:basic}
(and particularly Property (b) of this theorem),
 the number of such marked trees is also
the number of  $\cS$-functions from $V\setminus\{0^1\}$  to
$V$ satisfying (F) that are injective on each $V_i$. This number is given by the following
lemma. Theorem~{\rm\ref{thm:S-ary-profile}} follows, in the case
$\ell=0$.

\begin{Lemma}
\label{thm:injections-profile}
The number of $\cS$-functions from $V\setminus\{0^1\}$  to $V$,
injective on each $V_i$ and  satisfying  $(\rm F)$ is
$$
{\sum\limits_{s\in \cS}n_{-s} \choose n_0-1} 
\prod_{i=1}^r {{\sum\limits_{s\in \cS} n_{i-s} -1} \choose{ n_i-1}}
\prod\limits_{i=0}^r(n_i-1)! ,
$$
where $n_i=0$ if $i<0$ or $i>r$.
\end{Lemma}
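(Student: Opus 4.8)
The plan is to count these functions \emph{level by level}, exactly as in the proof of Lemma~\ref{thm:functions-profile}, exploiting the fact that both the injectivity constraint on each $V_i$ and Condition $(\rm F)$ are local to a single abscissa: the image of a vertex of $V_i$ lies in $\cup_{s\in\cS}V_{i-s}$, a target set that depends only on the profile, and the requirement ``injective on each $V_i$'' never couples two different levels. Writing $N_i:=\sum_{s\in\cS}n_{i-s}$ for the cardinality of this target set, the total count factors as the product over $i=0,\dots,r$ of the number of admissible assignments on level $i$. The work then reduces to two elementary counts, one for the exceptional level $i=0$ and one uniform count for $i\ge 1$.

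For level $i=0$ the vertex $0^1$ is removed from the domain $V\setminus\{0^1\}$, so the free vertices are $0^2,\ldots,0^{n_0}$, that is $n_0-1$ of them, and an admissible assignment is simply an injection of this set into the target of size $N_0=\sum_{s\in\cS}n_{-s}$. The number of such injections is $\binom{N_0}{n_0-1}(n_0-1)!$.

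For each level $i\ge 1$, Condition $(\rm F)$ forces $f(i^1)=(i-1)^1$. Since $\max\cS=1$ we have $1\in\cS$, so $V_{i-1}$ is one of the blocks comprising the target $\cup_{s\in\cS}V_{i-s}$, and $(i-1)^1$ is a genuine element of it (using $n_{i-1}\ge 1$). Injectivity on $V_i$ then says that the remaining free vertices $i^2,\ldots,i^{n_i}$ must be sent injectively into the target while avoiding the already-occupied element $(i-1)^1$; equivalently, we inject $n_i-1$ vertices into a set of size $N_i-1$. This gives $\binom{N_i-1}{n_i-1}(n_i-1)!$ admissible assignments on level $i$. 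Multiplying the level contributions and collecting the factorials yields
\[
\binom{\sum_{s\in\cS}n_{-s}}{n_0-1}\prod_{i=1}^r\binom{\sum_{s\in\cS}n_{i-s}-1}{n_i-1}\prod_{i=0}^r(n_i-1)!,
\]
which is the claimed expression.

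There is no genuine obstacle here; the only point that demands care is the bookkeeping of the ``$-1$'' appearing in the upper index of the binomial coefficients for $i\ge 1$. That $-1$ records precisely the single slot of the target occupied by the forced image $(i-1)^1$ under Condition $(\rm F)$, a slot that is \emph{absent} at level $0$ because $0^1$ has been deleted from the domain rather than assigned a forced image. Making explicit that the level-by-level factorization is legitimate — i.e. that the sole interaction among the choices is the within-level injectivity constraint, never a cross-level one — is what renders the product form transparent and completes the proof.
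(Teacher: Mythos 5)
Your proof is correct and follows essentially the same route as the paper's: the paper likewise counts level by level, choosing for $i=0$ the distinct images of $V_0\setminus\{0^1\}$ in $\cup_s V_{-s}$ and, for $i\ge 1$, the distinct images of $V_i\setminus\{i^1\}$ in $\cup_s V_{i-s}\setminus\{(i-1)^1\}$, yielding exactly the binomial factors you obtain. Your added remarks on why the within-level injectivity constraint never couples two levels are a correct (if implicit in the paper) justification of the product form.
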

\begin{proof}
For $i=0$, we choose the (distinct) images of  the vertices of
$V_0\setminus\{0^1\}$ in the set $\cup_s V_{-s}$.
There are ${{\sum\limits_{s\in \cS} n_{-s} } \choose{
    n_0-1}}(n_0-1)!$ ways to do so.

For $i\ge 1$, we choose the (distinct) images of  the vertices of
$V_i\setminus\{i^1\}$ in the set $\cup_s V_{i-s}\setminus\{(i-1)^1\}$.
There are ${{\sum\limits_{s\in \cS} n_{i-s} -1} \choose{
    n_i-1}}(n_i-1)!$ ways to do so.

The lemma follows.
\end{proof}

\subsection{The out-types of $\cS$-embedded Cayley trees: 
proof of
  Theorem~\ref{thm:cayley-out}}
\label{sec:proof-cayley-out}
We argue as in Section~\ref{sec:proof-cayley-profile}. By
Lemma~\ref{lem:link} and Theorem~\ref{thm:basic} (in particular
Property (a) of this theorem), the number of $\cS$-embedded Cayley
trees having $n(i,s)$ non-root vertices of out-type $(i;s)$ is
$n!/n_r/\prod_i (n_i-1)!$   times  the number of $\cS$-functions from
$V\setminus\{0^1\}$ to $V$ satisfying (F) and having the
same distribution of out-types. This number is given by the second
part of the following lemma. Theorem~\ref{thm:cayley-out} follows, in
the case $\ell=0$.

  \begin{Lemma}\label{thm:functions-out}
$1$. The number of $\cS$-functions from $V\setminus\{0^1\}$ to $V$ satisfying
 $(\rm F)$ and in which each  $v \in V$  has a prescribed
out-type $(i_v;s_v)$ is, assuming compatibility,
$$
n_r \prod\limits_{i=0}^{r}n_i^{c(i)-1},
$$
where  $c(i)$ is the number of vertices whose image lies in $V_i$:
$$
c(i)=\sharp\{v\in V: i_v-s_v =i\}.
$$

\noindent
$2$. Let $n(i,s)$ be non-negative integers, for $i\in\llbracket 0,r
\rrbracket$ and $s\in \cS$, satisfying the compatibility conditions of
an out-type distribution. The number of $\cS$-functions from $V\setminus\{0^1\}$ to $V$ satisfying
 $(\rm F)$ and in which, for
  all  $i\in \llbracket 0,r \rrbracket$ and $s\in \cS$, exactly
  $n(i,s)$  vertices have   out-type $(i;s)$ is    
$$
\frac {n_r \prod\limits_{i=0} ^r (n_i-1)!\prod\limits_{i=0}^rn_i^{c(i)-1} \prod\limits_{i=1}^r n(i,1)}
{\prod\limits_{i,s} n(i,s)!},
$$
where $c(i)$ is the number of vertices whose image lies in $V_i$:
$$
c(i)= \sum_{s} n(i+s,s).
$$
\end{Lemma}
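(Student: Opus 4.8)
The plan is to prove the two statements in order, deducing the second from the first by a direct multinomial argument. Throughout, I would keep in mind that the only vertices whose images are not genuinely free are $0^1$ (which has no image at all, since $f$ is defined on $V\setminus\{0^1\}$, so its out-type is $(0;\varepsilon)$) and the vertices $i^1$ with $1\le i\le r$, whose image is forced by (F) to be $(i-1)^1$, i.e.\ whose out-type is pinned to $(i;1)$. Everything else is an independent choice.

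For statement $1$, I would simply count the free choices. Once the out-type $(i_v;s_v)$ of every vertex is fixed, each vertex $v=i^k$ other than $0^1$ and the $i^1$'s must be sent to some vertex of $V_{i_v-s_v}$, which can be done in $n_{i_v-s_v}$ ways, independently of the other vertices. Regrouping this product according to the target abscissa $j=i_v-s_v$, the number of \emph{free} vertices whose image lies in $V_j$ equals $c(j)$ minus the number of pinned vertices targeting $V_j$. Since the forced edges are exactly $i^1\to(i-1)^1$ for $1\le i\le r$, there is precisely one pinned vertex targeting each of $V_0,\dots,V_{r-1}$ and none targeting $V_r$. Hence the count is
\[
\prod_{j=0}^{r-1} n_j^{\,c(j)-1}\cdot n_r^{\,c(r)} \;=\; n_r\prod_{j=0}^r n_j^{\,c(j)-1},
\]
the asserted formula. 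Equivalently, the naive product $\prod_j n_j^{c(j)}$ over all $c(j)$ image-slots overcounts the $r$ pinned slots (one into each of $V_0,\dots,V_{r-1}$), so one divides by $\prod_{j=0}^{r-1}n_j$.

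For statement $2$, the key observation is that the count in statement $1$ depends on the chosen assignment of out-types only through the numbers $c(i)=\sum_s n(i+s,s)$, hence only through the out-type distribution $(n(i,s))$. It therefore suffices to multiply the formula of statement $1$ by the number of ways to realise a given distribution, i.e.\ to distribute the prescribed out-types among the actual vertices within each $V_i$. For $i=0$ the vertex $0^1$ is pinned to $(0;\varepsilon)$ and the remaining $n_0-1$ vertices are split with multiplicities $n(0,s)$, giving $(n_0-1)!/\prod_s n(0,s)!$ choices. For $i\ge 1$ the vertex $i^1$ is pinned to out-type $(i;1)$, so the remaining $n_i-1$ vertices are split with multiplicities $n(i,1)-1$ and $(n(i,s))_{s\ne 1}$, giving
\[
\frac{(n_i-1)!}{(n(i,1)-1)!\prod_{s\ne 1}n(i,s)!}=\frac{(n_i-1)!\,n(i,1)}{\prod_s n(i,s)!}.
\]
Taking the product over $i$ and multiplying by the count $n_r\prod_i n_i^{c(i)-1}$ of statement $1$ yields exactly the claimed expression
\[
\frac{n_r\,\prod_{i=0}^r(n_i-1)!\,\prod_{i=0}^r n_i^{c(i)-1}\,\prod_{i=1}^r n(i,1)}{\prod_{i,s}n(i,s)!}.
\]

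The computations are elementary; the only place demanding care is the bookkeeping of the pinned vertices $0^1$ and $i^1$. In statement $1$ they must be removed from the free count and, crucially, recognised as the unique forced targets of $V_0,\dots,V_{r-1}$, which is what produces the shift from $\prod_j n_j^{c(j)}$ to $n_r\prod_j n_j^{c(j)-1}$. In statement $2$ the pinning of each $i^1$ to out-type $(i;1)$ is precisely what turns the multinomial into the factor $n(i,1)$ via $(n(i,1)-1)!=n(i,1)!/n(i,1)$, thereby explaining the distinctive product $\prod_{i=1}^r n(i,1)$ in the formula. I would also invoke the compatibility hypotheses ($\sum_s n(i,s)=n_i-\chi_{i=0}$ and $c(i)=\sum_s n(i+s,s)$) to guarantee that the multinomials are well defined, in particular that $n(i,1)\ge 1$ for $i\ge 1$ so that $(n(i,1)-1)!$ makes sense.
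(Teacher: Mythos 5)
Your proposal is correct and follows essentially the same route as the paper: part 1 is the same direct count of free image choices grouped by target abscissa (with the $r$ pinned edges $i^1\to(i-1)^1$ accounting for the exponent shift and the factor $n_r$), and part 2 multiplies by the same multinomial count of out-type assignments, using that part 1 depends only on the numbers $c(i)$.
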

\begin{proof}
1. We first choose the images of the $c(r)$ vertices that have their
image in $V_r$. There are $n_r^{c(r)}$ possible choices. For $i<r$, we
only choose in $V_i$ the images of the $c(i)-1$ vertices distinct from
$(i+1)^1$ that have their image in $V_i$.  There are $n_i^{c(i)-1}$
possible choices. 

2. We first choose the out-type of every vertex, and then apply the
previous result. For all $i$ and $s$, we must choose the $n(i,s)$
vertices of $V_i$ that have out-type $(i;s)$, keeping in mind that $i^1$ has
out-type $(i;1)$ when $i>0$ and $(0;\varepsilon)$ when $i=0$. Thus the
number of ways to assign the out-types is 
$$
\frac{(n_0-1)!}{\prod\limits_s n(0,s)!} \prod_{i=1}^r \frac{(n_i-1)!}
{(n(i,1)-1)!\prod\limits_{s\not = 1} n(i,s)!}.
$$
The lemma follows. 
\end{proof}

\subsection{The out-types of $\cS$-ary trees: 
proof of
  Theorem~\ref{thm:S-ary-out}}
\label{sec:proof-S-ary-out}

We argue as in Section~\ref{sec:proof-S-ary-profile}. By
Lemma~\ref{lem:link} and Theorem~\ref{thm:basic} (in particular
Properties (a) and (b) of this theorem), the number of $\cS$-ary
trees having $n(i,s)$ non-root vertices of out-type $(i;s)$ is
$1/n_r/\prod_i (n_i-1)!$   times  the number of $\cS$-functions from
$V\setminus\{0^1\}$ to $V$ satisfying  (F) that are
injective on each $V_i$ and  have the
same distribution of out-types. This number is given by the second
part of the following lemma. Theorem~\ref{thm:S-ary-out} follows, in
the case $\ell=0$. 
\begin{Lemma}\label{thm:injections-out}
$1$. The number of $\cS$-functions from $V\setminus\{0^1\}$ to $V$,
  injective on each $V_i$,  satisfying
 $(\rm F)$, and in which each  $v \in V$ has a prescribed out-type
 $(i_v;s_v)$ is, assuming compatibility,
$$
\frac 1 {\prod\limits_{i=0}^{r-1} n_i} \prod_{i,s} n(i,s)! {{n_{i-s}} \choose {n(i,s)}},
$$
where $n(i,s)$ is the number of vertices of out-type $(i,s)$.

\noindent
$2$. Let $n(i,s)$ be non-negative integers, for $i\in\llbracket 0,r
\rrbracket$ and $s\in \cS$, satisfying the compatibility conditions of
an out-type distribution.
The number of $\cS$-functions from $V\setminus\{0^1\}$ to $V$, injective on
each $V_i$,  satisfying  $(\rm F)$ and in which, for
  all  $i\in \llbracket 0,r \rrbracket$ and $s\in \cS$, exactly
  $n(i,s)$  vertices have   out-type $(i;s)$ is    
$$
\frac {\prod\limits _{i=0}^r (n_i-1)! \prod\limits_{i=1}^r n(i,1)} 
{\prod \limits_{i=0}^{r-1} n_i} \prod_{i,s} {n_{i-s} \choose n(i,s)}.
$$
\end{Lemma}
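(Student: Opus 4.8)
The plan is to prove both parts by direct enumeration, mirroring the proof of Lemma~\ref{thm:functions-out} but replacing each ``free choice of image'' by a ``choice of distinct images'', since we now require $f$ to be injective on each $V_i$. The key structural observation is that for a fixed abscissa $i$, two vertices of $V_i$ with \emph{different} out-types $(i;s)$ and $(i;s')$ automatically have images in the disjoint sets $V_{i-s}$ and $V_{i-s'}$; hence injectivity on $V_i$ is equivalent to injectivity \emph{within} each out-type class. This decouples the count into independent choices, one for each pair $(i,s)$.

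For Part~1, where the out-type $(i_v;s_v)$ of every vertex is prescribed, I would proceed as follows. For each $i$ and each $s\in\cS$, the $n(i,s)$ vertices of $V_i$ of out-type $(i;s)$ must receive distinct images in $V_{i-s}$. When $s\ne 1$, or when $i=0$, there is no constraint from (F), so the number of such injections is $n(i,s)!\binom{n_{i-s}}{n(i,s)}$. When $s=1$ and $i\ge 1$, Condition (F) fixes $f(i^1)=(i-1)^1$; thus $i^1$ is one of the $n(i,1)$ vertices of out-type $(i;1)$, its image is imposed, and the remaining $n(i,1)-1$ vertices must be injected into $V_{i-1}\setminus\{(i-1)^1\}$, a set of size $n_{i-1}-1$. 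The number of ways is therefore $(n_{i-1}-1)!/(n_{i-1}-n(i,1))!$, which is exactly $n(i,1)!\binom{n_{i-1}}{n(i,1)}$ divided by $n_{i-1}$. Taking the product over all $(i,s)$, each abscissa $i-1\in\{0,\dots,r-1\}$ contributes one such extra factor $1/n_{i-1}$, and these assemble into the announced prefactor $1/\prod_{i=0}^{r-1} n_i$, proving the formula of Part~1.

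For Part~2, where only the numbers $n(i,s)$ are prescribed, I would first count the ways to \emph{assign} out-types to the vertices of each $V_i$ and then multiply by the result of Part~1. For $i=0$ the vertex $0^1$ has out-type $(0;\varepsilon)$ and the remaining $n_0-1$ vertices are distributed among the classes $(0;s)$, giving $(n_0-1)!/\prod_s n(0,s)!$ choices; for $i\ge 1$ the vertex $i^1$ is forced into the class $(i;1)$, leaving $(n_i-1)!/\bigl((n(i,1)-1)!\prod_{s\ne1}n(i,s)!\bigr)$ choices. Collecting these multinomials yields $\frac{\prod_{i=0}^r(n_i-1)!}{\prod_{i,s}n(i,s)!}\prod_{i=1}^r n(i,1)$, and multiplying by the Part~1 count cancels the factors $\prod_{i,s}n(i,s)!$, leaving precisely the stated expression.

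The routine algebra is harmless; the step I expect to require the most care is the interaction of Condition (F) with injectivity in Part~1 --- namely keeping track of the ``already used'' image $(i-1)^1$ when counting injections of the class $(i;1)$ for $i\ge 1$, since it is exactly this bookkeeping that produces the $1/\prod_{i=0}^{r-1}n_i$ prefactor (and, in Part~2, the $\prod_{i=1}^r n(i,1)$ factor). One must also handle correctly the boundary vertex $0^1$ (no image, out-type $\varepsilon$) and verify that the compatibility hypotheses guarantee $n(i,s)\le n_{i-s}$, so that all binomial coefficients are meaningful.
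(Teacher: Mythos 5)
Your proposal is correct and follows essentially the same route as the paper: distinct images are chosen class by class, with the class $(i;1)$ for $i\ge 1$ losing the forced image $(i-1)^1$ and thereby producing the factor $1/n_{i-1}$, and Part~2 is obtained by multiplying by the same multinomial count of out-type assignments used for Lemma~\ref{thm:functions-out}. The decoupling observation (injectivity on $V_i$ reduces to injectivity within each out-type class because distinct $s$ give disjoint target sets $V_{i-s}$) is left implicit in the paper but is exactly the justification needed.
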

\begin{proof}
1. For any $i \in \llbracket 0, r\rrbracket$, and $s\not =1$, we
choose  in $V_{i-s}$ the (distinct) images  of the $n(i,s)$ vertices having out-type
$(i,s)$. There are ${n_{i-s} \choose n(i,s)}n(i,s)!$ ways
to do so.

When $i \in \llbracket 1, r\rrbracket$ and $s=1$,  we
choose  in $V_{i-1}\setminus\{(i-1)^1\}$ the images  of the $n(i,s)-1$ vertices different from $i^1$
having out-type $(i,1)$. There are
$$
{n_{i-1}-1 \choose n(i,1)-1}(n(i,1)-1)!
= {n_{i-1} \choose n(i,1)}\frac{n(i,1)!}{n_{i-1}}
$$
 ways to do so.

Since there is no vertex of out-type $(0;1)$, this concludes the proof of
the first result.

\medskip
\noindent
2. The argument used to prove the second part of
Lemma~\ref{thm:functions-out} can be copied \it{verbatim}.
\end{proof}

\subsection{The in-types: proof of Theorem~\ref{thm:cayley-in}}
\label{sec:proof-cayley-in}
Assume $\cS=\llbracket m, 1\rrbracket$. We argue as in Section~\ref{sec:proof-cayley-profile}. By
Lemma~\ref{lem:link} and Theorem~\ref{thm:basic} (in particular
Property (b) of this theorem), the number of $\cS$-embedded Cayley
trees having $n(i,\bmc)$  vertices of in-type $(i;\bmc)$ is
$n!/n_r/\prod_i (n_i-1)!$   times  the number of $\cS$-functions from
$V\setminus\{0^1\}$ to $V$ satisfying (F) and having the
same distribution of in-types. This number is given by the second
part of the following lemma. Theorem~\ref{thm:cayley-in} follows, in
the case $\ell=0$ (that is, $n(i, \bmc)=0$ if $i<0$).

 \begin{Lemma}\label{thm:functions-in}
 Let $\cS=\llbracket m,1\rrbracket$, with $m\le 1$.\\
 $1$.  The number of $\cS$-functions from $V\setminus\{0^1\}$ to $V$ satisfying
 $(\rm F)$ and in which each  $v \in V$ has a prescribed
in-type $(i_v;\bmc_v)$ is, assuming compatibility,
$$
\frac{\prod\limits_{i=0}^r (n_i-1)!}{\prod\limits_{b\ge 0, s}  b!^{n_s(b)}}
  \prod_{i=0}^{r-1} c_{i^1}^1,
$$
where $n_s(b)$ is the number of vertices $v$ that
have exactly $b$ pre-images at abscissa $a(v)+s$:
$$
n_s(b)=\sharp \{v \in V: c_v^s=b\}.
$$

\noindent
$2.$ Let $n(i,\bmc)$ be non-negative integers, for $i\in\llbracket 0,r
\rrbracket$ and $\bmc\in \ns^{2-m}$, satisfying the compatibility
conditions of an in-type distribution.
The number of $\cS$-functions from $V\setminus\{0^1\}$ to $V$ satisfying
 $(\rm F)$ and in which, for
  all  $i\in \llbracket 0,r \rrbracket$ and $\bmc\in \ns^{2-m}$, exactly
  $n(i,\bmc)$  vertices have in-type $(i;\bmc)$ is    
$$
\frac {n_r\prod\limits_{i=0}^r(n_i-1)!^2}
{\prod\limits_{i, \bmsc}n(i,\bmc)!
\prod\limits_{ b\ge 0,s}b!^{n_s(b)}}
\prod\limits_{i=1}^{r} n(i,1),
$$
where  $n_s(b)$ is the number of vertices $v$ that
have exactly $b$ pre-images at abscissa $a(v)+s$, and 
$n(i,1)$ is the number of vertices of out-type $(i;1)$.
Equivalently,
\begin{align*}
n_s(b)=\sum_{i} \sum_{\bmsc: c^s=b} n(i,\bmc),
\quad \quad
n(i,1)=\sum_{\bmsc} c^1 n(i-1, \bmc).
\end{align*}
\end{Lemma}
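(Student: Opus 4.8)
The plan is to prove the two parts in sequence, obtaining Part 2 from Part 1 by summing over the ways to distribute in-types among the vertices of each $V_i$.

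For Part 1, I would build the function $f$ one \emph{source level} at a time, deciding simultaneously, for each $j\in\llbracket 0,r\rrbracket$, the images of all vertices of $V_j$. Reading the prescribed in-types from the target side, a parent $v$ must receive exactly $c_v^s$ pre-images from level $a(v)+s$, for each $s\in\cS$; for a fixed source level $j$, the pre-images of $v$ coming from $V_j$ form a group of size $c_v^{\,j-a(v)}$. Thus assigning images to $V_j$ amounts to partitioning $V_j$ into labelled groups of these sizes, one group per admissible parent $v$ (those with $a(v)+s=j$ for some $s\in\cS$). The only subtlety is Condition (F): for $j\ge 1$ the vertex $j^1$ is forced to map to $(j-1)^1$, hence already fills one slot of the group attached to $(j-1)^1$ at relative abscissa $1$. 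So for $j\ge 1$ the number of admissible partitions of the remaining $n_j-1$ vertices is the multinomial
$$
\frac{(n_j-1)!}{\bigl(c_{(j-1)^1}^1-1\bigr)!\ \prod' c_v^s!}
=\frac{(n_j-1)!\,\, c_{(j-1)^1}^1}{\prod_{s,\,v:\,a(v)+s=j} c_v^s!},
$$
where $\prod'$ omits the group $\bigl((j-1)^1,s=1\bigr)$; for $j=0$ the root $0^1$ carries no image and no (F)-constraint applies, giving simply $(n_0-1)!/\prod c_v^s!$. Multiplying over $j=0,\dots,r$ collects $\prod_{i=0}^r(n_i-1)!$ in the numerator, the correction factors $\prod_{j=1}^r c_{(j-1)^1}^1=\prod_{i=0}^{r-1}c_{i^1}^1$, and in the denominator $\prod_{v,s}c_v^s!=\prod_{b\ge 0,s}b!^{n_s(b)}$, which is exactly the claimed formula.

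For Part 2, I would observe that the Part 1 count $N_1(\phi)$ for a fixed in-type assignment $\phi$ depends on $\phi$ only through the factor $\prod_{i=0}^{r-1}c_{i^1}^1$, since both $\prod_i(n_i-1)!$ and $\prod_{b,s}b!^{n_s(b)}$ are determined by the prescribed numbers $n(i,\bmc)$ alone. As distinct assignments $\phi$ yield disjoint sets of functions, the desired total is $\sum_\phi N_1(\phi)$, so everything reduces to computing $\sum_\phi\prod_{i=0}^{r-1}c_{i^1}^1$. Because $\phi$ is chosen independently on each level, this factorizes over $i$. At a level $i\le r-1$ the weighted count is
$$
\sum_{\bmc}c^1\,\frac{(n_i-1)!}{(n(i,\bmc)-1)!\prod_{\bmc'\ne\bmc}n(i,\bmc')!}
=\frac{(n_i-1)!}{\prod_{\bmc}n(i,\bmc)!}\sum_{\bmc}c^1 n(i,\bmc)
=\frac{(n_i-1)!}{\prod_{\bmc}n(i,\bmc)!}\,n(i+1,1),
$$
using $\sum_{\bmc}c^1n(i,\bmc)=n(i+1,1)$ (the case $s=1$ of $n(i,s)=\sum_{\bmc}c^sn(i-s,\bmc)$); at level $r$ there is no weight, so the count is the plain multinomial $n_r!/\prod_{\bmc}n(r,\bmc)!$. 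Taking the product over all levels and simplifying $\prod_{i=0}^{r-1}(n_i-1)!\cdot n_r!=n_r\prod_{i=0}^r(n_i-1)!$ gives $\sum_\phi\prod c_{i^1}^1$, which multiplied by the level-independent prefactor produces the stated expression.

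The main obstacle, and the point needing care rather than cleverness, is the bookkeeping around the forced vertices $i^1$: the $-1$ in $\bigl(c_{(j-1)^1}^1-1\bigr)!$ is precisely what produces the product $\prod c_{i^1}^1$ in Part 1, and in Part 2 this same product converts the naive level multinomials into weighted sums whose evaluation hinges on recognising $\sum_{\bmc}c^1n(i,\bmc)$ as the out-type count $n(i+1,1)$. I would double-check the two edge cases where the index ranges of the target formulas are set: $j=0$ (no (F)-constraint, hence no $c_{i^1}^1$ correction) and $i=r$ (no weight, hence a full $n_r!$ rather than $(n_r-1)!$, which is the source of the isolated factor $n_r$).
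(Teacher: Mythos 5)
Your proposal is correct and follows essentially the same route as the paper: Part 1 is the same level-by-level multinomial count with the $c_{(i-1)^1}^1$ correction coming from the forced image of $i^1$, and Part 2 is the same summation over in-type assignments, factorized by level, using $\sum_{\bmc}c^1 n(i,\bmc)=n(i+1,1)$ for $i<r$ and the unweighted sum $n_r$ at level $r$. The paper merely phrases the Part 2 summation via an intermediate step (prescribing the in-types of the vertices $i^1$ and the reduced counts $\tilde n(i,\bmc)$), which is the same computation you carry out directly.
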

\begin{proof}
  1. For $0\le i \le r$, let us choose the images of the $n_i-1$ vertices of
  $V_i\setminus\{i^1\}$. Exactly 
$c_{(i-s)^k}^{s}-\chi_{s=1=k,i>0}$ 
of  them have image   $(i-s)^k$, for all $j$ and $k$. Thus the number of
  $\cS$-functions satisfying the required properties is
$$
\frac{(n_0-1)!}{\prod\limits
  _{s,k}c_{(-s)^k}^{s}!}\prod_{i=1}^r \frac{(n_i-1)!\,c_{(i-1)^1}^{1}}{\prod\limits
  _{s,k}c_{(i-s)^k}^{s}!},
$$
which is equivalent to the first result.

\medskip
2. As an intermediate problem, let us prescribe the in-type
$(i;\bmc_{i^1})$ of all vertices of the form $i^1$, and the number $\tilde
n(i;\bmc)$ of vertices of $V_i\setminus\{i^1\}$ having in-type
$(i;\bmc)$, for all $\bmc\in \ns^{2-m}$. Clearly,
$$
\tilde n (i,\bmc)= n (i,\bmc)- \chi_{\bmsc=\bmsc_{i^1}}.
$$
The number of ways to assign types to  vertices of
$V_i\setminus\{i^1\}$ is
$$
\frac{(n_i-1)!}{\prod_{\bmsc}\tilde n(i, \bmc)!}
=
\frac{(n_i-1)!}{\prod_{\bmsc} n(i, \bmc)!}\, n(i, \bmc_{i^1}) .
$$
Using the first result, we conclude that the number of functions such
that $i^1$ has in-type $(i;\bmc_{i^1})$ and $\tilde
n(i;\bmc)$ of vertices of $V_i\setminus\{i^1\}$ having in-type
$(i;\bmc)$ is
$$
\frac {\prod\limits_{i=0}^r(n_i-1)!^2}
{\prod\limits_{i, \bmsc}n(i;\bmc)!
\prod\limits_{ b\ge 0,s}b!^{n_s(b)}}
\left( \prod\limits_{i=0}^{r-1} c_{i^1}^1 \right)\left( \prod_{i=0}^r n(i, \bmc_{i^1}) \right).
$$

Finally, let us only prescribe the values $n(i,\bmc)$. That is, we
want to sum the above formula over all possible in-types of the vertices
$i^1$, for $i=0,\ldots, r$. Note that only the two rightmost products
depend on the choice of these types. We are thus led to evaluate
$$
\sum_{i=0}^r\sum_{\bmsc_{i^1}\in \ns^{2-m}} \left( \prod\limits_{i=0}^{r-1}
c_{i^1}^1 \right)\left( \prod_{i=0}^r n(i, \bmc_{i^1}) \right)
=
\left(\sum_{\bmsc_{r^1}} n(r,\bmc_{r^1})\right)
\prod_{i=0}^{r-1} \left(\sum_{\bmsc_{i^1} } c_{i^1}^1n(i,
\bmc_{i^1})\right)
.
$$
The first sum, over $\bmc_{r^1} $, is
$n_r$. For $i\le r-1$, the sum over $\bmc_{i^1}$ is the number $n(i+1,1)$ of
vertices of out-type $(i+1;1)$.   This gives the
second result of the lemma.
\end{proof}

\subsection{The complete types: proof of Theorem~\ref{thm:cayley-complete}}
Assume $\cS=\llbracket m,- 1\rrbracket \cup\{1\}$. We argue as in Section~\ref{sec:proof-cayley-profile}. By
Lemma~\ref{lem:link} and Theorem~\ref{thm:basic} (in particular
Property (c) of this theorem), the number of $\cS$-embedded Cayley
trees having $n(i,s,\bmc)$  vertices of type $(i;s;\bmc)$ is
$n!/n_r/\prod_i (n_i-1)!$   times  the number of $\cS$-functions from
$V\setminus\{0^1\}$ to $V$ satisfying (F) and having the
same type distribution. This number is given by the second
part of the following lemma. Theorem~\ref{thm:cayley-complete} follows.

\begin{Lemma}\label{thm:functions-complete}
 Let $\cS=\llbracket m, -1\rrbracket \cup\{1\}$.\\
   $1$.  
  The number of  $\cS$-functions from $V\setminus\{0^1\}$ to $V$ satisfying
 $(\rm F)$  and  in which each  $v \in V$ has a prescribed type
$(i_v;s_v;\bmc_v)$ is, assuming compatibility,
$$
\frac{\prod\limits_{i,s}n(i,s)!}{\prod\limits_{b\ge 0, s}  b!^{n_s(b)}\prod\limits_{i=1}^r n(i,1)}
  {\prod\limits_{i=0}^{r-1} c_{i^1}^1},
$$
where $n(i,s)$ is the number of vertices of out-type $(i;s)$ and
$n_s(b)$ is the number of vertices $v$ that 
have exactly $b$ pre-images at abscissa $a(v)+s$. That is,
$$
n(i,s)=\sharp\{v\in V: i_v=i \hbox{ and } s_v =s\},
\quad \quad n_s(b)=\sharp \{v \in V: c_v^s=b\}.
$$

\noindent
$2.$ Let $\bmc_0=(0,\ldots,0, c_0^1) \in \ns^{2-m}$.
Let $n(i,s,\bmc)$ be non-negative integers, for $i\in\llbracket 0,r
\rrbracket$, $s\in \cS$ and $\bmc\in \ns^{2-m}$, satisfying the
compatibility conditions of a type distribution.
The number of $\cS$-functions from $V\setminus\{0^1\}$ to $V$ satisfying
 $(\rm F)$ and in which $0^1$ has in-type $(0;\bmc_0)$ and  for
  all  $i\in \llbracket 0,r \rrbracket$, $s\in \cS$  and $\bmc\in \ns^{2-m}$, exactly
  $n(i,s,\bmc)$  vertices have type $(i;s;\bmc)$ is    
$$
\frac { c_{0^1}^1 n_r\prod\limits_{i=0}^r(n_i-1)! \prod\limits_{i,s} n(i,s)!}
{\prod\limits_{i,s, \bmsc}n(i,s,\bmc)!
\prod\limits_{ b\ge 0,s}b!^{n_s(b)}
\prod\limits_{i=1}^r n(i,1)}
\prod\limits_{i=1}^{r-1} \left( \sum\limits_{b> 0} b n_1(i,1,b)\right),
$$
where $n(i,s)$ is the number of vertices of out-type $(i;s)$, $n_s(b)$ is the number of vertices $v$ that
have exactly $b$ pre-images at abscissa $a(v)+s$, and $n_1(i,1,b)$ is
the number of vertices having out-type $(i,1)$ and $b$ pre-images  in $V_{i+1}$.
Equivalently,
\begin{align*}
n(i,s)= \sum_{\bmsc} n(i,s,\bmc),
\quad \quad
n_s(b)=\sum_{i,t} \sum_{\bmsc: c^s=b} n(i,t,\bmc)
+ \chi_{s=1, b=c_{0}^1},
\quad \quad
n_1(i,1,b)= \sum_{\bmsc: c^1=b} n(i,1,\bmc).
\end{align*}
\end{Lemma}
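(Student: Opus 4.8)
The plan is to prove Part~1 by a direct count of edges, organised by the increment $s\in\cS$ and the abscissa of the parent, and then to deduce Part~2 from Part~1 in exactly the way the second part of Lemma~\ref{thm:functions-in} is deduced from its first part: one prescribes the complete type of each special vertex $i^1$, assigns types to the remaining vertices by a multinomial, and finally sums over the free in-types of the $i^1$.

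For Part~1 I would fix the complete type $(i_v;s_v;\bmc_v)$ of every vertex. Each non-root vertex $v$ then has a determined out-type $(i_v;s_v)$, so its image must lie at abscissa $i_v-s_v$, while the in-types force each vertex $w$ to receive exactly $c^s_w$ preimages through the increment $s$. These requirements decouple over the pairs (increment $s$, parent-abscissa $p$): for fixed $s$ and $p$ one distributes the vertices of out-type $(p+s;s)$ among the vertices of $V_p$, the vertex $w\in V_p$ receiving $c^s_w$ of them. For $s\neq 1$ this is a multinomial $n(p+s,s)!\big/\prod_{w\in V_p}c^s_w!$. For $s=1$ the edge $(p+1)^1\to p^1$ is forced by $(\mathrm{F})$, so only the $n(p+1,1)-1$ remaining children are distributed, with $p^1$ receiving $c^1_{p^1}-1$ of them; this yields $(n(p+1,1)-1)!\,c^1_{p^1}\big/\prod_{w\in V_p}c^1_w!$. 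Taking the product over all $s$ and $p$, the denominators combine into $\prod_{w,s}c^s_w!=\prod_{b\ge 0,s}b!^{n_s(b)}$, the numerators into $\prod_{i}\prod_{s\neq1}n(i,s)!\cdot\prod_{i=1}^r(n(i,1)-1)!=\prod_{i,s}n(i,s)!\big/\prod_{i=1}^r n(i,1)$, and the forced-edge corrections into $\prod_{i=0}^{r-1}c^1_{i^1}$, which is the announced formula.

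For Part~2 I would first prescribe, for each $i\in\llbracket1,r\rrbracket$, the free in-type $\bmc_{i^1}$ of the special vertex $i^1$ (recall $i^1$ has forced out-type $(i;1)$), together with the fixed in-type $\bmc_0$ of the root $0^1$, and then prescribe how many of the vertices of $V_i\setminus\{i^1\}$ carry each complete type. The number of such vertices of type $(i;s;\bmc)$ is $\tilde n(i,s,\bmc)=n(i,s,\bmc)-\chi_{i\ge1,(s,\bmc)=(1,\bmc_{i^1})}$, and assigning these types costs $\prod_{i=0}^r(n_i-1)!\big/\prod_{s,\bmsc}\tilde n(i,s,\bmc)!=\frac{\prod_{i=0}^r(n_i-1)!}{\prod_{i,s,\bmsc}n(i,s,\bmc)!}\prod_{i=1}^r n(i,1,\bmc_{i^1})$. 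Multiplying by the Part~1 count (in which $n(i,s)$ and $n_s(b)$ are already fixed by the prescribed data and $\bmc_0$), everything is explicit except the factor $\big(\prod_{i=0}^{r-1}c^1_{i^1}\big)\big(\prod_{i=1}^r n(i,1,\bmc_{i^1})\big)$, which still depends on the $\bmc_{i^1}$.

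It then remains to sum this last factor over all $\bmc_{i^1}\in\ns^{2-m}$ for $i=1,\dots,r$. Since $c^1_{0^1}=c_0^1$ is fixed, the factor splits as $c_0^1\,n(r,1,\bmc_{r^1})\prod_{i=1}^{r-1}\big(c^1_{i^1}\,n(i,1,\bmc_{i^1})\big)$, and the sum factorises over $i$: for $i=r$ one gets $\sum_{\bmsc}n(r,1,\bmc)=n(r,1)$, and for $1\le i\le r-1$ one gets $\sum_{\bmsc}c^1 n(i,1,\bmc)=\sum_{b>0}b\,n_1(i,1,b)$. The hard part will be the careful reconciliation of this bookkeeping, together with the identification $n(r,1)=n_r$: because $r$ is the maximal abscissa, $0\notin\cS$ and $\max\cS=1$, every vertex at abscissa $r$ must have its parent at abscissa $r-1$, hence out-type $(r;1)$. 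This asymmetry between $i=r$ (producing the factor $n_r$) and $i<r$ (producing the factors $\sum_{b>0}b\,n_1(i,1,b)$) is precisely what yields the stated formula.
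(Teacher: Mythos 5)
Your proposal is correct and follows essentially the same route as the paper's own proof: Part~1 is the same multinomial edge-count (organised by increment and parent abscissa, with the forced edges $(p+1)^1\to p^1$ contributing the factors $c^1_{p^1}$), and Part~2 is deduced exactly as in the paper by prescribing the in-types $\bmc_{i^1}$, paying the multinomial $\prod_i(n_i-1)!/\prod\tilde n(i,s,\bmc)!$, and then summing the residual factor over the $\bmc_{i^1}$, with the same identification $n(r,1)=n_r$ at the end.
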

\begin{proof}
  1. For $0\le i \le r$ and $s\not = 1$, let us choose the images of the $n(i,s)$ vertices of
  out-type $(i,s)$. Exactly $c_{(i-s)^k}^{s}$ of
  them have image   $(i-s)^k$, for all  $k$. For $s=1$ and
  $i\ge 1$, one
  vertex of out-type $(i;1)$, namely $i^1$, has image $(i-1)^1$ by Condition (F). Thus the number of
  $\cS$-functions satisfying the required properties is
$$
\left( \prod_{i=0}^r \prod_{s\not = 1}\frac{n(i,s)!}{\prod_k c_{(i-s)^k}^s!}\right)
\left( \prod_{i=1}^r
\frac{(n(i,1)-1)!\,c_{(i-1)^1}^1}
{\prod_k c_{(i-1)^k}^1!} \right)
,
$$
which is equivalent to the first result.

\medskip
2. As an intermediate problem, let us prescribe the in-type
$(i;\bmc_{i^1})$ of all vertices of the form $i^1$ (their out-type is
forced), and the number $\tilde 
n(i,s,\bmc)$ of vertices of $V_i\setminus\{i^1\}$ having type
$(i;s;\bmc)$. Clearly,
$$
\tilde n (i,s,\bmc)= n (i,s,\bmc)- \chi_{i\ge 1,s=1,\bmsc=\bmsc_{i^1}}.
$$
The number of ways to assign types to  vertices of
$V_i\setminus\{i^1\}$ is
$$
\frac{(n_i-1)!}{\prod_{s,\bmsc}\tilde n(i, s,\bmc)!}
=\left\{
\begin{array}{ll}
 \displaystyle   \frac{(n_i-1)!}{\prod_{s,\bmsc} n(i, s,\bmc)!}\, n(i,1, \bmc_{i^1}), 
& \hbox{if } i\ge 1;
\\
  \displaystyle \frac{(n_0-1)!}{\prod_{s,\bmsc} n(0, s,\bmc)!},
& \hbox{otherwise.}
\end{array}\right.
$$
Using the first result, we conclude that the number of functions such
that $i^1$ has in-type $(i;\bmc_{i^1})$ and $\tilde
n(i,s,\bmc)$ of vertices of $V_i\setminus\{i^1\}$ have in-type
$(i;s;\bmc)$ is
$$
\frac {\prod\limits_{i=0}^r(n_i-1)! 
\prod\limits_{i,s}n(i,s)!}
{\prod\limits_{i, s,\bmsc}n(i,s,\bmc)!
\prod\limits_{ b\ge 0,s}b!^{n_s(b)}
\prod\limits_{i=1}^{r}n(i,1)}
 \prod\limits_{i=0}^{r-1} c_{i^1}^1  \prod\limits_{i=1}^r n(i,1, \bmc_{i^1}) 
.
$$

Finally, let us only prescribe the in-type of $0^1$ and the values $n(i,s,\bmc)$. That is, we
need to sum the above formula over all possible in-types of the vertices
$i^1$, for $i=1,\ldots, r$. Note that only the rightmost two products
depend on the choice of these types. We are thus led to evaluate
$$
\sum_{i=1}^r\sum_{\bmsc_{i^1}\in \ns^{2-m}}
 \prod\limits_{i=0}^{r-1} c_{i^1}^1  \prod\limits_{i=1}^r n(i,1, \bmc_{i^1}) 
=
 c_{0^1}^1 \left(\sum_{\bmsc_{r^1} } n(r,1,\bmc_{r^1})\right) 
\prod_{i=1}^{r-1} \left(\sum_{\bmsc_{i^1} } c_{i^1}^1n(i,1,
\bmc_{i^1})\right)
.
$$
Given that the  sum over $\bmc_{r^1} $ is $n_r=n(r,1)$, this gives the
second result of the lemma.
\end{proof}

\section{A bijection for general embedded trees}
\label{sec:general}

In this section we adapt
 the bijection of Section~\ref{sec:basic} to the
case where $\ell <0$ and $\min \cS=-1$. 
The main ideas of the bijection are similar: given a function in an
appropriate class, we cut its cycles where they reach their minima, and connect
the resulting pieces together to construct a tree; finally, we
rearrange some subtrees so as to  ensure the conservation of types.

This bijection is however more intricate that the previous one. In
particular, it is twofold: we split our class of functions
 into two subsets, and  use a different construction on each
of them (Propositions~\ref{prop:casA} and~\ref{prop:casB}). 
The (disjoint) union of the images of the two bijections forms a set of
trees which will be related to $\cS$-embedded trees in the next
section.
Moreover, our bijection lacks the right/left symmetry one could expect
from the symmetry of $\cS$. The trees we consider have a marked vertex $r^q$
at abscissa $r$, but no marked vertex at abscissa $\ell$. The vertex
$\ell^1$, however, plays a role similar to $r^q$, but the conditions
satisfied by the vertices on the path from $r^q$ to the root, or on
the path from $\ell^1$ to the root, are not symmetric.
\medskip

In the rest of this section,  $\ell < 0$ and $r\geq 0$ are
integers and $n_\ell ,\ldots, n_r$ is a 
sequence of positive integers. Let $V= \cup_{i=\ell}^r V_i$ with
$V_i=\{i^1, \ldots, i^{n_i}\}$. We extend  the  
notation and definitions of Section~\ref{sec:basic}. In particular
$v\in V$ is a \emm vertex, and $a(v)\in\llbracket \ell , r\rrbracket$
is its \emm abscissa,. We equip $V$ with the same total order as
before:
$$
i^k \le j^p \Longleftrightarrow (i<j) \hbox{ or } (i=j \hbox{ and } k\le
p).
$$
The notions of $\cS$-function, of in/out/complete/types are defined as before.
Also, a rooted tree $T$ on the vertex set $V$
is  an \emm $\cS$-tree, if the parent of
any (non-root) vertex of $V_i$ belongs to $\cup_{s\in \cS} V_{i-s}$.

\begin{Definition}
A \emm  marked $\cS$-tree, is a pair $(T, r^q)$ where $T$ is 
an $\cS$-tree on the vertex set $V$, rooted at a vertex of $V_0$, and $r^q$
a distinguished vertex in $ V_r$. 
\end{Definition}
\begin{Definition}
Let $(T, r^q)$ 
be a  marked $\cS$-tree.  Let
$\ell ^1\wedge r^q$ denote the 
\emm meet,  of $\ell ^1$ and $r^q$ in $T$, that is, the common
ancestor of $\ell ^1$ and $r^q$ that is the farthest from the root. 
We consider  the following properties.
\begin{itemize}
\item[$(\rm T_1)$]  On the path going from $r^q$ to the root,
the first vertex belonging to $V_{i-1}$ is preceded by  $i^1$, for all
$i \in\llbracket 1, r\rrbracket$.
\item[$(\rm T_2')$] On the path going from $r^q$ to root, the vertex $1^1$
appears strictly before $\ell ^1\wedge r^q$. Moreover, on the path going from $\ell ^1$
to the root, 
the last vertex belonging to $V_{i-1}$ is followed by $i^1$, for all
$ i \in \llbracket \ell +1, 0\rrbracket $.
\item[$(\rm T_2'')$] On the path going from $r^q$ to the root, the vertex $1^1$
appears weakly after $\ell ^1\wedge r^q$. 
Moreover, $\ell ^1\wedge r^q$ lies at a positive abscissa, and on the path going from $\ell ^1$
to $\ell ^p\wedge r^q$,  the last vertex of $V_{i-1}$ is followed by
$i^1$,
for all $i \in \llbracket \ell +1, -1\rrbracket $. 
Finally, on the 
path from $r^q$ to the root, $0^1$ precedes the first vertex
of $V_{-1}$, if such a vertex exists; otherwise, 
$0^1$ is the root of the tree. 
\item[$(\rm T _2)$] Either $(\rm T_2')$ or $(\rm T _2'')$ holds.  
\end{itemize}
\end{Definition}

We now state our main result, which is the counterpart of
 Theorem~\ref{thm:basic} for negative trees. As will be
 shown in the next section, it implies all the enumerative results
 stated in Section~\ref{sec:main} in the case $\ell <0$.
\begin{Theorem}\label{thm:rooted}
  Let $n_\ell, \ldots, n_r$, $V$ and $\cS$ be as above.
There exists a bijection $\Phi$ between $\cS$-functions  $f:
V\setminus\{0^1\} \rightarrow V$ satisfying
$$
({\rm F}) \hskip 33mm f(i^1)= \left\{
\begin{array}{lll}
  (i+1)^1 & \hbox{if } \ell  \le i \le -2,
\\
v_0 \in V_0 & \hbox{if } i=-1,
\\
(i-1)^1  & \hbox{if } 1 \le i \le r,
\end{array} \right.\hskip 40mm \ 
$$
 and  marked $\cS$-trees on the
  vertex set $V$ satisfying  $(\rm T _1)$ and $(\rm T _2)$.

Moreover, this bijection 
\begin{enumerate}\item[(a)] preserves the number of vertices of out-type $(j;s)$, for all $j$ and $s$,
\item[(b)]  preserves the number of vertices of in-type $(i;\bmc)$, for all $i$ and $\bmc$.
\end{enumerate}
\end{Theorem}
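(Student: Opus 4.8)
The plan is to build $\Phi$ as a composition $\Phi=\Phi_2\circ\Phi_1$, exactly as in the proof of Theorem~\ref{thm:basic}: the first map $\Phi_1$ turns the function into a tree by cutting every cycle at its minimal vertex and reassembling the resulting pieces, while $\Phi_2$ performs the same subtree-swapping surgery to restore in-types. Condition (F) now forces two chains in the digraph $G_f$: the \emph{right chain} $r^1\to(r-1)^1\to\cdots\to 1^1\to 0^1$, ending at the unique sink $0^1$, and the \emph{left chain} $\ell^1\to(\ell+1)^1\to\cdots\to(-1)^1\to v_0$, whose last edge lands at the free vertex $v_0:=f((-1)^1)\in V_0$. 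Since $0^1$ is the only vertex without image, its component in $G_f$ is a tree, and every other component carries a single cycle, which I cut at its minimum as in Lemma~\ref{prop:piece} to produce pieces consisting of a distinguished path with attached subtrees.

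The right half of the construction is verbatim the basic one: I order the pieces whose source lies at nonnegative abscissa by decreasing source and concatenate them into a path from the marked vertex $r^q$ down to $0^1$; the lower-record structure of this path yields exactly condition $(\mathrm{T}_1)$, and out-types are preserved on the nose. The genuinely new ingredient is the left half, and this is where the construction bifurcates according to the orbit of $v_0$ under $f$: either $v_0$ eventually reaches $0^1$ (so the whole left chain lies in the tree component), or $v_0$ lies in a cyclic component. The first alternative is handled by Proposition~\ref{prop:casA} and produces trees in which the meet $\ell^1\wedge r^q$ sits at a nonpositive abscissa, i.e.\ condition $(\mathrm{T}_2')$; the second is handled by Proposition~\ref{prop:casB}, where the left chain must be threaded up to a positive-abscissa meet, giving condition $(\mathrm{T}_2'')$ together with the extra constraint relating $0^1$ and the first vertex of $V_{-1}$. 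In both cases the left pieces are concatenated by \emph{increasing} source, the mirror of the right rule, so that the records read off the path from $\ell^1$ to the meet encode precisely the ``last vertex of $V_{i-1}$ followed by $i^1$'' clauses of $(\mathrm{T}_2)$.

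For bijectivity I would argue as in Proposition~\ref{prop:Phi1}. Given a marked tree satisfying $(\mathrm{T}_1)$ and $(\mathrm{T}_2)$, the sub-conditions $(\mathrm{T}_2')$ and $(\mathrm{T}_2'')$ are mutually exclusive and jointly exhaustive, so they dictate which of the two inverse constructions to apply; the sources are then recovered as the lower records along the path from $r^q$ and as the upper records along the path from $\ell^1$, each cut edge is reclosed into a cycle, and the forced chains of (F) are rebuilt by reinserting the split edges. One checks, exactly as in Lemma~\ref{prop:piece}, that each reclosed edge is an $\cS$-edge (here $\min\cS=-1$ is used symmetrically on the left), so the reconstructed $g$ is a genuine $\cS$-function satisfying (F), and that $\Phi_1$ and this inverse undo each other. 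Property (a) is then immediate, since concatenation only relinks sinks to sources at a controlled abscissa difference; property (b) follows because $\Phi_2$ is the same involution swapping subtrees at frustrated sources, whose definition and correctness (Observations~\ref{obs:frustrated-i}--\ref{obs:frustrated-r}) carry over unchanged.

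The hard part will be the bifurcation itself: proving that the two function classes of Propositions~\ref{prop:casA} and~\ref{prop:casB} are sent bijectively onto the tree classes cut out by $(\mathrm{T}_2')$ and $(\mathrm{T}_2'')$, and in particular that the position of the meet $\ell^1\wedge r^q$ relative to $1^1$ and $0^1$ is correctly controlled. The asymmetry noted before the theorem---a marked vertex $r^q$ on the right but only the distinguished vertex $\ell^1$ on the left---means the two halves cannot be collapsed into a single symmetric argument, and the delicate bookkeeping of how the left path attaches to the right spine (below $0^1$ in case $(\mathrm{T}_2')$, high up in case $(\mathrm{T}_2'')$) is where almost all of the work lies.
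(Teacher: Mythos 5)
Your overall two-phase strategy (cut cycles at their minima, reassemble into a spine, then swap subtrees to repair in-types) is indeed the one the paper follows, but three of your specific claims are wrong in ways that would sink the proof. First, the dichotomy between Propositions~\ref{prop:casA} and~\ref{prop:casB} is not ``$v_0$ reaches $0^1$'' versus ``$v_0$ lies on a cycle.'' It is governed by the \emph{abscissa of the source} of the component of $\tilde G_f$ containing $v_0$: nonpositive source gives $(\mathrm{T}_2')$, positive source gives $(\mathrm{T}_2'')$. In case B the vertex $v_0$ may or may not lie on a cycle; what matters is that its component is anchored at positive abscissa, which forces the meet $\ell^1\wedge r^q$ above $1^1$ and requires the extra surgery producing $\tilde L(i_0)$, $A(i_0)$ and $B(i_0)$ (cutting at the first and last vertices of $V_{-1}$ on the path from $v_0$ to the attachment point). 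Second, the left concatenation is not merely the right one with the ordering reversed: the cycles whose minimum has nonpositive abscissa are opened at the edge \emph{leaving} the minimum rather than the edge entering it, and the pieces are glued sink-to-source. This is exactly what makes Observation~\ref{obs:typeRi} hold, i.e.\ in-types on the left side are preserved automatically (except at $i^1$), so that no frustrated sources appear there. If you open the left cycles at the incoming edge as on the right, you create frustrated vertices at negative abscissas with no analogue of the alternation of Observation~\ref{obs:frustrated-ii}, and the swapping surgery has no reason to work.

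Third, and most seriously, your assertion that $\Phi_2$ ``carries over unchanged'' is precisely where the paper says the naive argument fails. Reattaching the left chain via the edge $1^1\to v_0$ (and rerouting $-1^1$'s image) frustrates the two vertices $0^1$ and $v_0$ in a way that the alternating swaps of Observations~\ref{obs:frustrated-i}--\ref{obs:frustrated-r} cannot repair; one must additionally exchange two specific subtrees $T(0^1)$ and $T(v_0)$, and the correct identification of these subtrees (and the proof that they are disjoint and that the exchange is an involution) depends on whether $v_0$ lies on a cycle, in the component of $0^1$, or in neither. This is why Proposition~\ref{prop:casA} is split into the three Lemmas~\ref{lemma:casA1}--\ref{lemma:casA3}, each with its own variant of the construction ($\tilde R(i_0)$ and $\tilde C(i_0)$ in the cyclic case, the modified graph $\hat G_f$ in the $0^1$-component case); the authors remark explicitly that they do not believe a single uniform swap exists. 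Your proposal treats this as routine bookkeeping, but it is the core of the proof and is missing.
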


\noindent {\bf Remark.}
Property (a) follows from (b). Indeed, as already explained in
the case of embedded trees, the number of vertices of out-type $(j;s)$
is completely determined if we know the number of vertices of in-type
$(i;\bmc)$, for all $i$ and $\bmc$. This is why  we will focus on (b)
in the proof. 
We have not found any way of preserving the complete types, and this
is why Theorem~\ref{thm:cayley-complete} only deals with non-negative
trees. 

\subsection{Setup of the bijection, and the  main two cases}
We now start describing the bijection.  Let $f$ be a function from
$V\setminus\{0^1\}$ to $V$ satisfying (F).  As before,
our bijection transforms  the digraph $G_f$ representing $f$.
First, for  each $i \in \llbracket \ell ,r \rrbracket \setminus \{0\}$, we
split the edge going from $i^1$ to $f(i^1)$ into two half-edges. 
We let $\tilde G_f$ be the digraph thus obtained, which contains
vertices, edges and half-edges.

It follows from  (F) that
the connected component of $\tilde{G}_f$ containing the vertex $i^1$, for
$i\in \llbracket \ell , r \rrbracket$, is of the form: 
\begin{center}
  \scalebox{0.8}{\input{piece1-LR.pstex_t}}
\end{center}
 We call \emph{piece} each of these components, and we
say that $i^1$ is the \emm source, of its piece. 
Each of the  remaining components of $\tilde G_f$ contains exactly one
cycle. We say that the smallest vertex in this cycle is the \emm 
source, of this connected component. We now define a partition 
$$V=\biguplus_{i=\ell }^r W_i$$ 
of the vertex set $V$ as follows: $v\in W_i$ if and only if the 
source of the
connected component of $\tilde G_f$ containing $v$ belongs to $V_i$.

Recall that  $v_0:=f(-1^1)$ belongs to $V_0$.
 We will prove the two following propositions, which, taken together,
imply Theorem~\ref{thm:rooted}.
\begin{Proposition}\label{prop:casA}
There exists a bijection between $\cS$-functions $f$ satisfying $(\rm F)$
 and such that $v_0$ belongs to $\bigcup_{i=\ell }^0 W_i,$
and  marked $\cS$-trees satisfying conditions $(\rm T _1)$ and $(\rm T
_2')$. This bijection satisfies~$(\rm b)$.
\end{Proposition}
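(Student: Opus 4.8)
The plan is to mirror the two-stage structure $\Phi=\Phi_2\circ\Phi_1$ used in the proof of Theorem~\ref{thm:basic}, adapting it to the two-sided setting. In the first stage I would turn $f$ into a tree by cutting the cycles of $\tilde G_f$ at their sources (their minimal vertices) to produce the collection of pieces, and then reassembling these pieces into a marked tree. On the \emph{positive side} the recipe is exactly as in $\Phi_1$: concatenate the pieces whose source lies at abscissa $\ge 1$ in order of decreasing source, producing the branch running from $r^q$ down towards abscissa $0$ and enforcing $(\rm T_1)$. On the \emph{negative side} I would concatenate the pieces whose source lies at abscissa $\le 0$---which, because $\min\cS=-1$, can be threaded into a single path climbing through the abscissas $\ell,\ell+1,\dots,0$---so that, reading from $\ell^1$ towards the root, the last vertex of $V_{i-1}$ is followed by $i^1$, which is precisely the second clause of $(\rm T_2')$. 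The defining hypothesis of Case~A, namely $v_0\in\bigcup_{i=\ell}^{0}W_i$, is exactly the condition guaranteeing that the left chain $\ell^1\to\cdots\to(-1)^1\to v_0$ attaches to the non-positive part of the structure; this forces the meet $\ell^1\wedge r^q$ to lie at abscissa $\le 0$, so that $1^1$ appears strictly before the meet on the path to $r^q$, i.e. the first clause of $(\rm T_2')$ holds. This is why Case~A lands in $(\rm T_2')$ and not $(\rm T_2'')$.

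For injectivity I would argue as in Proposition~\ref{prop:Phi1}: the sources of the pieces are recovered as the records met along the two distinguished paths (lower records on the way to $r^q$ for the positive side, and the appropriate one-sided records on the way to $\ell^1$ for the negative side), so cutting each edge entering a record restores the pieces; re-closing each cyclic piece and reinstating the images $f(i^1)$ dictated by $(\rm F)$ then recovers $f$ uniquely. Out-types are visibly preserved by the assembly, so property~(a) holds directly (as does the fact that the reassembled object is an $\cS$-tree).

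For surjectivity I would start from a marked tree satisfying $(\rm T_1)$ and $(\rm T_2')$, cut it at the records of its two distinguished paths into pieces, close the cyclic pieces, and re-impose the edges prescribed by $(\rm F)$; the main verification is that every edge created in closing a cycle is an $\cS$-edge, which follows as in Proposition~\ref{prop:Phi1} from $\max\cS=1$ and $\min\cS=-1$ together with the record structure, and that the resulting function falls into Case~A precisely because the tree satisfies $(\rm T_2')$ rather than $(\rm T_2'')$. Finally, the assembly need not preserve the in-type of individual source vertices, so I would finish with the analogue of the surgery $\Phi_2$: using the analogues of Observations~\ref{obs:frustrated-i}--\ref{obs:frustrated-r} to see that frustrated sources of each abscissa come in matched pairs, I would swap the subtrees hanging from paired frustrated sources, which leaves the distinguished paths (hence out-types) untouched while restoring the in-type distribution, yielding property~(b).

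The hard part will be the negative side and the gluing at abscissa~$0$: proving that the reassembled graph is genuinely a tree (connected and acyclic) rooted in $V_0$, and that the asymmetric record conditions---``first vertex, preceded by'' on the path to $r^q$ versus ``last vertex, followed by'' on the path to $\ell^1$---are simultaneously consistent and invertible. The case split into Propositions~\ref{prop:casA} and~\ref{prop:casB} exists exactly to separate the two possible locations of the meet, and the delicate point is to check that the dichotomy on the function side ($v_0\in\bigcup_{i\le 0}W_i$ versus $v_0\in\bigcup_{i\ge 1}W_i$) matches the dichotomy $(\rm T_2')$ versus $(\rm T_2'')$ on the tree side, with no overlap and no gap, so that the two bijections together account for all functions satisfying $(\rm F)$ and all trees satisfying $(\rm T_1)\wedge(\rm T_2)$.
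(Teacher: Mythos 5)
Your overall architecture (cut the cycles of $\tilde G_f$ at their minima, left-concatenate on the positive side, right-concatenate on the negative side, glue at abscissa $0$, then repair in-types by exchanging subtrees attached to paired frustrated sources) is indeed the skeleton of the paper's proof, and your matching of the dichotomy $v_0\in\bigcup_{i\le 0}W_i$ versus $v_0\in\bigcup_{i\ge 1}W_i$ with $(\rm T_2')$ versus $(\rm T_2'')$ is correct. But there is a genuine gap in the repair stage. The gluing at abscissa $0$ disturbs the in-types of exactly two vertices that your surgery never touches: $0^1$, which acquires a spurious in-edge from $b_{-1}\in V_{-1}$ when $R(-1)$ is concatenated to $R(0)$ (replacing the edge $-1^1\to v_0$ of $G_f$), and $v_0=f(-1^1)$ itself, which loses that in-edge and instead receives the new edge coming from $1^1$. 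Neither vertex is a lower record of the path from $r^q$ to $1^1$ (the vertex $v_0$ lies strictly after $1^1$ on the path to the root, and $0^1$ lies on the path from $\ell^1$ to the root), so a pairing surgery modelled on Observations~\ref{obs:frustrated-i}--\ref{obs:frustrated-r} leaves both of them frustrated and property~(b) fails.

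Correcting $0^1$ and $v_0$ requires exchanging the subtrees $T(0^1)$ and $T(v_0)$ hanging off them, and making this exchange well defined (the two subtrees must be disjoint, the map must be an involution, and its image must be recognizable on the tree side) is precisely what forces the paper to split Proposition~\ref{prop:casA} into Lemmas~\ref{lemma:casA1}--\ref{lemma:casA3}, with three different first-stage constructions according to whether $v_0$ lies on a cycle of $\tilde G_f$ (that cycle is then opened at the edge \emph{entering} $v_0$ rather than at its minimum, producing $\tilde C(i_0)$), in the component of $0^1$ (an extra cycle is first manufactured in the auxiliary graph $\hat G_f$), or neither. The three images are then distinguished on the tree side by the position of $\ell^1\wedge r^q$ relative to $0^1$ and $w_0$. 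The authors explicitly remark after the proof of Lemma~\ref{lemma:casA1} that they could not (and believe one cannot) define a single re-arranging bijection covering all of Case~A at once, so the uniform one-pass construction you describe cannot be completed as stated.
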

\begin{Proposition}\label{prop:casB}
There exists a bijection between $\cS$-functions $f$ satisfying $(\rm F)$ and such that $v_0$ belongs to $\bigcup_{i=1}^r W_i,$ and
 marked $\cS$-trees satisfying conditions $(\rm T _1)$ and $(\rm T
 _2'')$. This bijection satisfies~$(\rm b)$.
\end{Proposition}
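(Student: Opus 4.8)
The plan is to build the bijection by hand, as a two-spine version of the map $\Phi_1$ of Section~\ref{sec:basic} followed by an in-type-correcting surgery in the spirit of $\Phi_2$. I keep the notation $\tilde G_f$, \emph{pieces}, \emph{sources} and the partition $V=\biguplus_i W_i$ introduced above, and recall that in Case B the source of the component of $\tilde G_f$ containing $v_0$ lies at a positive abscissa. First I would assemble the pieces whose sources are $0^1,1^1,\dots,r^1$ exactly as in $\Phi_1$: order them by decreasing source and concatenate, leaving the attached subtrees in place. This produces a path running from some $r^q$ down towards a vertex of $V_0$, and the same lower-record argument as in Proposition~\ref{prop:Phi1} shows that its sources are recovered as the lower records of the path and that condition $(T_1)$ holds; in particular there is a piece of source $i^1$ for each $i\in\llbracket 0,r\rrbracket$.

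The genuinely new feature is the treatment of the negative spine $\ell^1\to(\ell+1)^1\to\cdots\to(-1)^1$ together with the cycle-bearing component carrying $v_0$. In Case B this component has a source $j^1$ with $j\ge 1$; I would cut its unique cycle at the minimum, as in $\Phi_1$, and reconnect the negative chain to $v_0$ through the split edge $(-1)^1\to v_0$. Because $v_0$ thereby sits inside the positive part of the tree, the meet $\ell^1\wedge r^q$ is forced to a positive abscissa, while $0^1$ comes before the first vertex of $V_{-1}$ on the $r^q$-to-root path, and the record condition along the $\ell^1$-to-meet path is inherited from the original chain. This is exactly $(T_2'')$, so the image is a marked $\cS$-tree satisfying $(T_1)$ and $(T_2'')$.

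For bijectivity I would describe the reverse map and check that the two cases partition both sides. Given a tree obeying $(T_1)$ and $(T_2'')$, the positive spine and its subtrees are recovered by splitting the $r^q$-to-root path at its lower records, precisely as in $\Phi_1$; the data in $(T_2'')$ --- that $\ell^1\wedge r^q$ is at a positive abscissa, that the record condition holds on the $\ell^1$-to-meet path, and that $0^1$ precedes the first $V_{-1}$-vertex --- are exactly what is needed to cut out the chain $\ell^1\to\cdots\to(-1)^1$ unambiguously, to read off $v_0$, and to reconstruct a function satisfying (F) whose $v_0$ lands in a positive-source component. The crucial bookkeeping is that ``$\ell^1\wedge r^q$ at positive abscissa'' versus ``at non-positive abscissa'' partitions the trees satisfying $(T_1),(T_2)$ exactly as ``$v_0\in\bigcup_{i\ge 1}W_i$'' versus ``$v_0\in\bigcup_{i\le 0}W_i$'' partitions the functions satisfying (F); this is what makes the disjoint union of Propositions~\ref{prop:casA} and~\ref{prop:casB} reconstitute the full bijection of Theorem~\ref{thm:rooted} (Proposition~\ref{prop:casA} being the mirror construction giving $(T_2')$). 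Finally, as with $\Phi_1$, the concatenation changes the in-type only at source vertices, so I would invoke the $\Phi_2$-analogue: identify the frustrated sources at each abscissa, check as in Observations~\ref{obs:frustrated-i} and~\ref{obs:frustrated-r} that $i$-frustrated and $(i+1)$-frustrated sources alternate, and swap the attached subtrees pairwise to restore in-types, which yields~(b) (and hence~(a), since out-types are determined by in-types).

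I expect the main obstacle to be the grafting step together with the matching of the two dichotomies: tracking how the negative spine attaches relative to $0^1$ and the meet, proving that $(T_2'')$ holds on the image and is invertible, and ruling out the boundary ambiguities (for instance when no $V_{-1}$-vertex lies on the $r^q$-to-root path, so that $0^1$ is itself the root). The asymmetry of the setup --- a marked vertex $r^q$ at abscissa $r$ but only the distinguished vertex $\ell^1$ at abscissa $\ell$, with non-symmetric record conditions on the two spines --- is what prevents a direct symmetric argument and must be handled by hand; by contrast the positive-spine assembly and the in-type surgery are faithful adaptations of the already-established $\Phi_1$ and $\Phi_2$.
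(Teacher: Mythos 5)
Your overall strategy (cut cycles at their minima, concatenate pieces by records, then repair in-types by swapping subtrees) is the right one, and you correctly identify the dichotomy between the two propositions. But the step you flag as "the genuinely new feature" --- attaching the negative chain to $v_0$ through the restored edge $(-1)^1\to v_0$ --- is exactly where the construction cannot be done the way you describe, and the paper's proof is built around a surgery you are missing. The component of $\tilde G_f$ containing $v_0$ has its source $i_0^1$ at positive abscissa, but the path inside $L(i_0)$ from $v_0$ up to its attachment point $v$ on the distinguished path may dip to arbitrarily negative abscissas. If you leave that path intact and simply hang $\ell^1\to\cdots\to(-1)^1$ on $v_0$, then on the path from $\ell^1$ to the meet $\ell^1\wedge r^q$ the last vertex of $V_{i-1}$ is in general \emph{not} followed by $i^1$ (it can be a vertex of the $v_0$-to-$v$ path), so $(\rm T_2'')$ fails. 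Moreover your map only ever produces trees rooted at $0^1$ with no vertex of $V_{-1}$ on the $r^q$-to-root path, whereas $(\rm T_2'')$ explicitly allows $0^1$ not to be the root, followed by a vertex of $V_{-1}$ on that path; so the map cannot be surjective onto the stated set of trees. The paper's fix is to excise the negative excursion: it locates the first and last $V_{-1}$-vertices $x_{-1},y_{-1}$ on the $v_0$-to-$v$ path, cuts out the components $A(i_0)$ (from $x_{-1}$ to $y_{-1}$) and $B(i_0)$ (from $v_0$ to $x_0$), attaches the negative spine at $y_0$ (not at $v_0$), and regrafts $A(i_0)$ and $B(i_0)$ \emph{below} $0^1$ via edges $0^1\to x_{-1}$ and $y_{-1}\to v_0$, making $x_0\in V_0$ the root. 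This is precisely what realizes the last clause of $(\rm T_2'')$ and what keeps $v_0$, $y_0$ and $x_{-1}$ unfrustrated.

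Two further omissions. First, the components of $\tilde G_f$ whose sources lie at negative abscissas (the sets $W_i$, $i\in\llbracket\ell,-1\rrbracket$) must also be incorporated; the paper assembles them with the \emph{right} concatenations $R(\ell),\dots,R(-1)$ (concatenation by increasing sinks, cutting each cycle \emph{after} its minimum), which is what preserves in-types on the negative side --- your "negative spine" appears to consist only of the chain $\ell^1\to\cdots\to(-1)^1$ and ignores these cycles, or else would need the left concatenation there, which destroys in-types. Second, your in-type repair correctly handles the lower records of the positive path, but the delicate point in Case B is showing that $v_0$, $y_0$ and $x_{-1}$ need no repair at all, which holds only because of the specific rerouting above ($v_0$ receives an edge from $y_{-1}\in V_{-1}$ in place of the edge from $-1^1$, $y_0$ receives one from $b_{-1}$ in place of the one from $y_{-1}$, and $x_{-1}$ receives one from $0^1$ in place of the one from $x_0$). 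As written, the proposal acknowledges the grafting step as "the main obstacle" but does not supply it, and the natural way of filling it that you sketch does not work.
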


\subsection{Two kinds of concatenations: the graphs $L(i)$, $R(i)$.}
We will  prove the above two propositions separately, but two basic
constructions  are used in both cases. Each of them produces
a tree, denoted $L(i)$ or $R(i)$, from the
subgraph $\tilde G_f \cap W_i$ (the
restriction of $\tilde G_f$ to the vertex set $W_i$).
Both constructions are based on the concatenation of certain elementary pieces of 
graphs by decreasing minima: in the case of $L(i)$ this concatenation is performed 
from left to right, whereas for $R(i)$ it is performed from right to left.
The first kind of concatenation 
was used in  Section~\ref{sec:basic} to describe the
bijection $\Psi_1$. Here, depending on the case
(Proposition~\ref{prop:casA} or~\ref{prop:casB}), and on the value of
$i\in\llbracket \ell, r \rrbracket$, we will use one concatenation or the other.
\\
 
\noindent{$\bullet$ \bf The left concatenation $L(i)$}. This construction is
used only for $i\in\llbracket 0, r\rrbracket$, and it is similar to the one
used in Section~\ref{sec:basic}. 
Consider all the connected components of $\tilde G_f$ whose 
source belongs to
$V_i\setminus\{i^1\}$. In each of them, split
the edge entering the source
 into two half-edges. As in Section~\ref{sec:basic}, one obtains a
 piece of the form: 
\begin{center}
  \scalebox{0.8}{\input{piece2.pstex_t}}
\end{center}
with  $j=i+1$, or $j=i$ and $p \ge k$.
As before $i^k$ and $j^p $ are called the \emm source, and the \emm sink, of the
piece, respectively.  Then order the pieces  by decreasing sources,
including the 
piece rooted at $i^1$, and
concatenate them to form a path
(Figure~\ref{fig:portion-L}). We denote by $L(i)$  the tree on the vertex set $W_i$ consisting of this
path and all the subtrees that are attached to it.
This tree has a distinguished vertex $a_i\in V_i$ (the leftmost source, that
is, the greatest one) and it is rooted at the smallest source, $i^1$.
Note that the sources of the pieces are the lower records encountered on the
path going from $a_i$ to $i^1$. 
\begin{figure}[h]
  \centering
  \scalebox{1}{\includegraphics{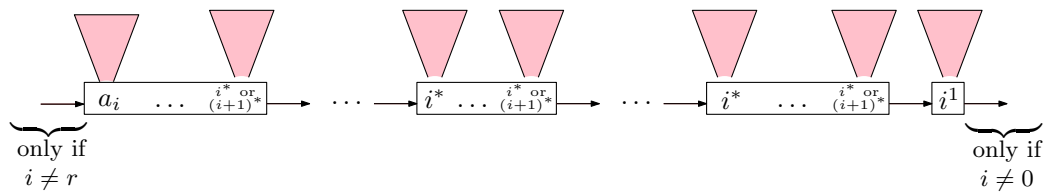}} 
  \caption{The graph $L(i)$. Pieces are ordered by decreasing 
source from left
to right. The leftmost and
rightmost half-edges are present only if $i\neq r$ and $i\neq 0$,
respectively. 
The vertex $a_i$ belongs to $V_i$.} 
\label{fig:portion-L}
\end{figure}

 The following fact is obvious by construction.
\begin{Observation}\label{obs:typeLi}
All the vertices belonging to $W_i$ 
that are not lower records on the path
from $a_i$ to $i^1$ have the same in-type in the function $f$
and in the graph $L(i)$.
\end{Observation}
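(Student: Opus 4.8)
The plan is to reduce the statement to a claim about incoming edges. The in-type $(i;\bmc)$ of a vertex $v$ records, for each $s\in\cS$, the number $c^s$ of pre-images of $v$ lying at abscissa $a(v)+s$; hence it is determined entirely by the multiset of abscissas of the edges pointing into $v$. It therefore suffices to prove that any vertex $v\in W_i$ which is not a lower record on the path from $a_i$ to $i^1$ has the same bundle of incoming edges in $f$ and in $L(i)$.

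First I would enumerate exactly the edge modifications carried out between $f$ and $L(i)$, and check that each of them is incident to a source. Forming $\tilde G_f$ deletes the spine edges $j^1\to f(j^1)$ for $j\in\llbracket\ell,r\rrbracket\setminus\{0\}$; each deleted edge points into the vertex $f(j^1)$, and by (F) this head equals $(j-1)^1$ (if $j\ge 1$) or $(j+1)^1$ (if $j\le -2$), both of which are spine vertices and hence sources of their pieces. The construction of $L(i)$ then performs only two further operations on $\tilde G_f\cap W_i$: it splits the edge entering each cyclic source of $V_i\setminus\{i^1\}$, and it adds the concatenation edges, each of which joins the sink of a piece to the \emph{source} of the next piece. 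The first operation removes an edge pointing into a source (it alters only the out-type of the cut predecessor), and the second adds an edge pointing into a source. Since the sources are precisely the lower records on the path from $a_i$ to $i^1$, no edge pointing into a non-record vertex is ever created or destroyed, and the in-type of every such vertex is the same in $f$ and in $L(i)$.

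The only delicate point---and the main, if modest, obstacle---is the spine edge with $j=-1$, namely $(-1)^1\to v_0$ with $v_0=f((-1)^1)\in V_0$, for here the head $v_0$ need not be a spine vertex and could be a non-record vertex of $W_i$ that genuinely loses its child $(-1)^1$. I would resolve this case separately: either $v_0$ is itself a source, in which case it falls outside the statement, or else the missing pre-image $(-1)^1$ is exactly the one restored when the blocks $L(\cdot)$ and $R(\cdot)$ are concatenated along the reconstructed spine in the global construction underlying Theorem~\ref{thm:rooted}. Away from this boundary vertex the argument is entirely local, which is why the Observation is immediate from the construction: the only vertices whose set of incoming edges can change are the sources, that is, the lower records of the path from $a_i$ to $i^1$.
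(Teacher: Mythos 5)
Your argument is correct, and it is exactly the reasoning the paper leaves implicit (the paper offers no proof beyond ``The following fact is obvious by construction''): every edge created or destroyed in passing from $\tilde G_f\cap W_i$ to $L(i)$ points into a source, and the sources are precisely the lower records of the path from $a_i$ to $i^1$, so no other vertex of $W_i$ can change in-type. Your worry about the spine edge $(-1)^1\to v_0$ is also well placed, but note how the paper actually disposes of it: it does not restore the lost pre-image inside $L(i)$; it arranges never to rely on the Observation for $v_0$. In case A (Lemmas~\ref{lemma:casA1}--\ref{lemma:casA3}) the left concatenation is built only for $i\in\llbracket 1,r\rrbracket$ while $v_0$ lies in $\bigcup_{j\le 0}W_j$, so $v_0$ is not a vertex of any $L(i)$ considered; in case B (Proposition~\ref{prop:casB}) the block containing $v_0$ is replaced by $\tilde L(i_0)\cup A(i_0)\cup B(i_0)$, and the companion statement, Observation~\ref{obs:typeLi-tilde}, explicitly lists $v_0$ (together with $y_0$ and $x_{-1}$) among the exceptions, the compensation being argued separately at the level of the assembled tree. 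So your second branch (``the missing pre-image is restored in the global construction'') is the right idea, but what it proves is the non-frustration of $v_0$ in $\Psi_1(f)$, not the literal statement about $L(i)$; the clean formulation of what your local argument establishes is the Observation with $v_0$ excluded whenever $v_0\in W_i$, which is all the paper ever uses.
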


\noindent{$\bullet$ \bf The right concatenation $R(i)$}. This construction is
used only for $i\in\llbracket \ell , 0\rrbracket$.
Consider all the connected components of $\tilde G_f$ whose
source belongs to $V_i\setminus\{i^1\}$.  
In each of them, split the edge \emm leaving, the source
 into two half-edges. One obtains a  piece of the form
\begin{center}
  \scalebox{0.8}{\input{piece2-R.pstex_t}}
\end{center}
with $j=i+1$ or $j=i$ with $p\ge k$. We call $i^k$ and $j^p$ the \emm
sink, and the \emm source, of the piece, respectively
(even though $i^k$ was called the source before splitting the edge!). 
Note that 
the minimum vertex now lies \emm to the right, of the
piece.
 Then order the pieces, including the piece containing $i^1$, by \emm
 increasing, sinks, and concatenate them to form a path 
(Figure~\ref{fig:portion-R}). We denote by $R(i)$  the tree on the vertex set $W_i$ consisting of this
path and all the subtrees that are attached to it. 
This tree is rooted at a vertex $b_i \in V_i$ (the rightmost sink, that is, the
greatest one) and it contains the vertex $i^1$.
Note that the sinks of the pieces are the lower records encountered on the path
that goes from $b_i$ to $i^1$ (and thus in the direction opposite to
edges).
\begin{figure}[h]
  \centering
  \scalebox{1}{\includegraphics{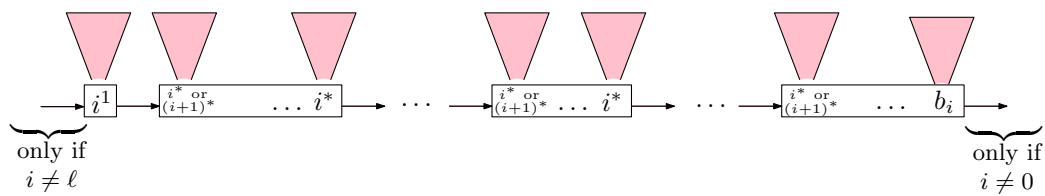}} 
  \caption{The graph $R(i)$. Pieces are ordered by decreasing 
sink from
right to left.  The leftmost and
rightmost half-edges are present only if $i\neq \ell $ and $i\neq 0$,
respectively. 
The vertex $b_i$ belongs to $V_i$.} 
\label{fig:portion-R}
\end{figure}

We now observe an important property of the right-to-left concatenation.
First, 
when opening the cycles to form the pieces,
the source $j^p$ of each piece 
is disconnected from one of its pre-images $i^k$,
which  belongs to
$V_i$. Then, during the
concatenation of pieces, the source $j^p$ 
is reconnected to  the sink of the piece on its left,
which is also an element of $V_i$. Therefore the in-type of each
source distinct from $i^1$ 
is preserved by the construction.  The in-types of all other
vertices are clearly preserved as well. 
\begin{Observation}\label{obs:typeRi}
All the vertices belonging to $W_i$,
distinct from $i^1$,
 have the same in-type in the function $f$
and in the graph $R(i)$.
\end{Observation}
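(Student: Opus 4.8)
The plan is to compare, for each vertex $v\in W_i$, the multiset of pre-images recorded by its in-type in the function $f$ with the one recorded in the graph $R(i)$, tracking exactly which incidences change. Recall that the in-type of $v$ stores, for $m\le k\le 1$, the number $c^k$ of arcs entering $v$ from abscissa $a(v)+k$; I read a severed half-edge as still carrying the abscissa of its missing endpoint, so it continues to contribute to the corresponding $c^k$. With this convention the passage from $f$ to $\tilde G_f$ preserves every in-type, since splitting an arc replaces it by two half-edges incident to the same vertices at the same abscissas. It therefore suffices to control the further modifications made in building $R(i)$ from $\tilde G_f\cap W_i$.

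The decisive step is that those modifications are confined to the sources of the pieces and cancel on them. When the arc leaving the source of a cyclic component is split, the resulting source $j^p$ of the piece loses exactly one incoming arc, the one coming from the former cycle-minimum $i^k\in V_i$, that is, one arc from abscissa $i$. During the right-to-left concatenation $j^p$ is reconnected to the sink of the piece immediately on its left, which is again an element of $V_i$, so $j^p$ regains one incoming arc from abscissa $i$. Since the lost and the regained pre-image have the same relative abscissa $i-j$ (equal to $0$ or $-1$ according as $j=i$ or $j=i+1$), the vector $(c^m,\dots,c^1)$ of $j^p$ is unchanged. This is precisely the feature distinguishing the right concatenation from the left one: for $L(i)$ the split detaches an \emph{incoming} arc whose abscissa need not match that of the reattached sink, which is why Observation~\ref{obs:typeLi} asserts preservation only for non-record vertices, whereas here it holds for every source.

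It then remains to see that nothing else is altered. A vertex of $W_i$ that is neither a source nor $i^1$ keeps all of its incoming arcs and half-edges throughout, so its in-type is preserved verbatim; this covers in particular the special vertex $v_0=f(-1^1)$, whose incoming half-edge from abscissa $-1$ (created already when forming $\tilde G_f$) is never touched by $R(i)$ and still supplies its unit to $c^{-1}$. The sinks only ever lose or regain \emph{outgoing} arcs, so their in-types are untouched as well; this applies to the extremal sink $b_i$, which merely loses its outgoing arc (the rightmost dangling half-edge). Finally, the one source never reconnected on its left, namely the source of the leftmost piece, is the vertex $i^1$ itself, which is exactly the vertex excluded from the statement. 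Assembling these cases yields the claim.

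The part I expect to require the most care is the boundary bookkeeping around $i^1$ and the global root $0^1$: one must confirm that the leftmost piece is the piece containing $i^1$ (equivalently that $i^1$ is the smallest sink), and deal with the degenerate features of the case $i=0$, where $i^1=0^1$ carries no outgoing arc and the rightmost half-edge is absent. Fixing the half-edge/abscissa convention so that ``in-type in $f$'' and ``in-type in $R(i)$'' denote literally the same counts is what makes the otherwise transparent ``lose one at abscissa $i$, gain one at abscissa $i$'' argument rigorous; modulo this, the proof is a direct case check paralleling the setup already recorded for the left concatenation.
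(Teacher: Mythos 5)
Your proof is correct and takes essentially the same route as the paper: the paper's argument is exactly that when a cycle is opened the source $j^p$ of the resulting piece loses one pre-image in $V_i$ (its former cycle minimum $i^k$) and, upon concatenation, regains one pre-image in $V_i$ (the sink of the piece to its left), while every other vertex of $W_i$ distinct from $i^1$ keeps all its incoming edges. Your additional half-edge bookkeeping and the boundary discussion around $i^1$ only make explicit what the paper leaves implicit.
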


We now prove  Propositions~\ref{prop:casA} and~\ref{prop:casB} separately.
The bijection of Proposition~\ref{prop:casA} is actually split  into
three closely related bijections.
In each case  the bijection reads $\Psi=\Psi_2\circ\Psi_1$ where
$\Psi_1$ is a bijection between the desired set of functions and the desired
set of trees, 
but does not satisfy Property~(b). As in
Section~\ref{sec:basic}, the second bijection $\Psi_2$ is a simple
re-arrangement of subtrees designed in such a way that
$\Psi_2\circ\Psi_1$ satisfies~(b). 

\subsection{Proof of Proposition~\ref{prop:casA}} 
Given a  marked $\cS$-tree $(T,r^q)$
satisfying~$(\rm T _1)$, we denote by $w_0$
the vertex following $1^1$ on the path from $r^q$ to
the root.  This vertex has abscissa $0$.
To prove Proposition~\ref{prop:casA} we distinguish three cases, 
discussed in the following  three lemmas.

\begin{Lemma}\label{lemma:casA1}
There exists a bijection between $\cS$-functions $f$ satisfying $(\rm
F)$ such that 

\begin{itemize}
\item $v_0:=f(-1^1)$ belongs to $\bigcup_{i=\ell }^0 W_i$ 
but neither to a cycle of the graph $\tilde{G}_f$
nor to the connected component of $\tilde{G}_f$ containing $0^1$,  
\end{itemize}
and marked $\cS$-trees $(T,r^q)$ satisfying $(\rm T _1)$ and $(\rm T
_2')$ such that 
\begin{itemize}
\item    $\ell^1\wedge r^q$ is neither  $0^1$ nor $w_0$. 
\end{itemize}
This bijection satisfies Property~$(\rm b)$ of Theorem~\rm{\ref{thm:rooted}.}
\end{Lemma}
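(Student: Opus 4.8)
The plan is to produce the bijection as a composition $\Psi=\Psi_2\circ\Psi_1$, exactly mirroring the architecture of the proof of Theorem~\ref{thm:basic}: first a map $\Psi_1$ that already lands in the prescribed set of trees and preserves out-types but not in-types, and then a subtree surgery $\Psi_2$ that repairs the in-types so that $\Psi$ satisfies Property~(b). Since the lemma only requires~(b) (and~(a) follows from~(b) by the Remark after Theorem~\ref{thm:rooted}), it suffices to track in-types throughout.

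For $\Psi_1$, I would start from $f$, form $\tilde G_f$ and the partition $V=\biguplus_{i=\ell}^r W_i$. Because $v_0\in\bigcup_{i\le 0}W_i$, the plan is to apply the right concatenation $R(i)$ to each $W_i$ with $i\le 0$ and the left concatenation $L(i)$ to each $W_i$ with $i\ge 1$, and then glue the resulting $r-\ell+1$ trees along the chain. The positive blocks $L(r),\dots,L(1)$ are concatenated by decreasing abscissa as in Section~\ref{sec:basic} (attaching the root $i^1$ of $L(i)$ to the top source $a_{i-1}$ of $L(i-1)$), yielding the branch from $r^q=a_r$ down to $1^1$ and into the abscissa-$0$ block; symmetrically, the blocks $R(\ell),\dots,R(0)$ are concatenated into the branch running from $\ell^1$ up to $0^1$. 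The two branches are tied together by reinstating the edge $-1^1\to v_0$ coming from $(\rm F)$, which makes $v_0$ the parent of $-1^1$; consequently the whole negative part hangs off $v_0$, and the meet $\ell^1\wedge r^q$ is governed by the position of $v_0$ in the tree built from the $W_i$'s. I would then read $(\rm T_1)$ off the left concatenations (as in Section~\ref{sec:basic}) and $(\rm T_2')$ off the mirror-image records of the right concatenations, the condition ``the last vertex of $V_{i-1}$ is followed by $i^1$'' being the exact reflection of $(\rm T_1)$.

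Bijectivity of $\Psi_1$ I would establish as for $\Phi_1$: the sources are recovered as the lower records along the two distinguished paths (from $r^q$ and from $\ell^1$), so the collection of pieces is retrieved by splitting each edge entering a lower record, and $f$ is reconstructed by closing the reopened cycles, restoring the chain edges, and reading $v_0$ as the parent of $-1^1$. The crux, and the main obstacle, is matching the two excluded positions of the meet to the two degenerate positions of $v_0$: I expect to show that $v_0$ lying on a cycle of $\tilde G_f$ forces $\ell^1\wedge r^q=w_0$, and $v_0$ lying in the component of $0^1$ forces $\ell^1\wedge r^q=0^1$, so that the hypotheses ``$v_0$ neither on a cycle nor in the component of $0^1$'' correspond precisely to ``$\ell^1\wedge r^q\notin\{0^1,w_0\}$''. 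This is exactly the step that makes the three cases of Proposition~\ref{prop:casA} partition both the functions and the trees, and verifying it (together with the surjectivity check that the reconstructed $g$ is an $\cS$-function in the right subclass) is where the genericity of the meet is used essentially.

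Finally, for $\Psi_2$: by Observations~\ref{obs:typeLi} and~\ref{obs:typeRi} the concatenations preserve the in-type of every vertex except the sources of the left-concatenated blocks, the right concatenations $R(i)$ preserving all in-types save possibly that of $i^1$; moreover the reconnection $-1^1\to v_0$ preserves the in-type of $v_0$, since $f(-1^1)=v_0$ already holds in $f$. The residual in-type defects are therefore of the same frustrated-source nature encountered in the proof of Theorem~\ref{thm:basic}. I would define $\Psi_2$ by the identical alternating subtree-exchange along each abscissa level, invoking the analogues of Observations~\ref{obs:frustrated-i}--\ref{obs:frustrated-r} to see that frustrated sources occur in alternating pairs and that $\Psi_2$ is an involution leaving the two distinguished paths fixed; when $0\notin\cS$ no source is frustrated and $\Psi_2$ is the identity. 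Composing, $\Psi=\Psi_2\circ\Psi_1$ preserves all in-types, which is Property~(b).
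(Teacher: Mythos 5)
Your overall architecture (writing the map as $\Psi_2\circ\Psi_1$, using the left concatenations $L(i)$ for $i\ge 1$ and the right concatenations $R(i)$ for $i\le 0$, inverting via lower records, and finishing with a frustrated-source surgery) is the paper's, but the single edge you use to glue the two halves together is the wrong one, and this breaks the construction. The paper adds the edge $1^1\to v_0$, i.e.\ it makes $v_0$ the \emph{parent of $1^1$}: the entire positive branch (from $a_r$ down to $1^1$) is hung onto $v_0$, which sits inside the non-positive structure obtained by concatenating $R(\ell),\dots,R(0)$ into the trunk running from $\ell^1$ to the root $b_0$. This is what forces $w_0=v_0$ in the resulting tree, and that identity is precisely what translates the hypotheses on $f$ (``$v_0$ on no cycle and not in the component of $0^1$'') into the stated conditions on the tree (``$\ell^1\wedge r^q$ is neither $w_0$ nor $0^1$''). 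Your edge $-1^1\to v_0$ cannot play this role: $-1^1$ already has a parent inside the concatenated trunk (it is an interior vertex of $R(-1)$'s distinguished path unless that piece is trivial), and $v_0$ itself lies in $\bigcup_{i\le 0}W_i$, i.e.\ in the very branch you propose to hang off it, so you produce either a vertex with two parents or a cycle; moreover nothing then attaches the positive branch, and since $w_0$ is no longer $v_0$ your dictionary between the positions of $v_0$ and the positions of the meet has no basis. (As stated it is also reversed relative to what the construction actually gives: $v_0$ on a cycle corresponds to the meet being $0^1$, and $v_0$ in the component of $0^1$ to the meet being $w_0$.)

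The second gap is in $\Psi_2$. Precisely because the tree contains $1^1\to v_0$ in place of the edges $1^1\to 0^1$ and $-1^1\to v_0$ of $f$, and contains the concatenation edge $b_{-1}\to 0^1$, the two vertices $0^1$ and $v_0$ \emph{are} frustrated: $0^1$ trades an incoming edge from $V_1$ for one from $V_{-1}$, and $v_0$ trades the reverse. The paper repairs both simultaneously by exchanging the subtrees $T(0^1)$ and $T(w_0)$ before performing the alternating exchanges at frustrated sources of positive abscissa; the hypotheses of the lemma are exactly what guarantee these two subtrees are disjoint, so the swap is legal and involutive. Your proposal asserts that $v_0$ keeps its in-type and never mentions $0^1$, so even with $\Psi_1$ corrected, Property (b) would fail at these two vertices.
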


\begin{Lemma}\label{lemma:casA2}
There exists a bijection between $\cS$-functions $f$ satisfying $(\rm
F)$ such that 
\begin{itemize}
\item      
$v_0:=f(-1^1)$ belongs to $\bigcup_{i=\ell }^0 W_i$ and 
to a cycle of the graph $\tilde{G}_f$,  
\end{itemize}
and  marked $\cS$-trees $(T,r^q)$ satisfying conditions $(\rm T _1)$ and
$(\rm T _2')$  such that 
\begin{itemize}
\item    $\ell ^1\wedge r^q$ is equal to $0^1$ but distinct from $w_0$.
\end{itemize}
 This bijection satisfies Property~$(\rm b)$ of Theorem~\rm{\ref{thm:rooted}.}
\end{Lemma}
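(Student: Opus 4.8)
The bijection is built, exactly as in Section~\ref{sec:basic} and following the same template as the companion Lemma~\ref{lemma:casA1}, as a composition $\Psi=\Psi_2\circ\Psi_1$: the first map $\Psi_1$ produces from $f$ a marked tree satisfying $(\rm T_1)$ and $(\rm T_2')$ but possibly altering the in-types of a few \emph{source} vertices, and the second map $\Psi_2$ repairs these in-types by a local rearrangement of subtrees. Throughout, I would use the two elementary constructions already introduced: the left concatenation $L(i)$ on the nonnegative levels and the right concatenation $R(i)$ on the negative levels.

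First I would describe $\Psi_1$. Starting from $\tilde G_f$ and the partition $V=\biguplus_i W_i$, I build $L(r),\dots,L(0)$ and chain them into a single branch running from the marked vertex $r^q$ down to $0^1$; as in Section~\ref{sec:basic} this is precisely the concatenation that guarantees $(\rm T_1)$ and places $1^1$ immediately before the first vertex of $V_0$ on the $r^q$-branch. Symmetrically I build $R(-1),\dots,R(\ell)$ and chain them into an $\ell^1$-branch; the ordering by increasing sinks is exactly what forces the mirror record condition of $(\rm T_2')$ (on the path from $\ell^1$ the last vertex of $V_{i-1}$ is followed by $i^1$). The two branches are then joined at $0^1$: since $v_0=f(-1^1)$ lies on a cycle whose source has abscissa $\le 0$, that cyclic component is opened at its source inside the relevant concatenation, and keeping the spine edge $f(-1^1)=v_0$ reconnects the $\ell^1$-branch to the nonnegative structure so that $0^1$ becomes the last common vertex of the two branches. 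The resulting graph is a tree, which I root at the distinguished vertex of $V_0$ and mark at $r^q$.

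Next I would check that $\Psi_1$ lands in the claimed set and is a bijection onto it. Condition $(\rm T_1)$ is immediate from the left concatenation and the $\ell^1$-part of $(\rm T_2')$ from the right one, while $1^1$ appears strictly before the meet because the junction sits at abscissa $0$, above $1^1$. The decisive point is that $\ell^1\wedge r^q=0^1$ and that this meet is distinct from $w_0$: the $\ell^1$-branch re-enters the nonnegative structure through the opened cycle precisely at $0^1$, while $w_0$ belongs instead to the acyclic piece of $0^1$ (the situation of the third sub-case of Proposition~\ref{prop:casA}), whereas here $v_0$ genuinely sits on a cycle. For the inverse I would, as in Proposition~\ref{prop:Phi1}, recover the sources of the pieces as the lower records of the two distinguished paths, cut the corresponding edges, and then close each piece back into a cycle (or reattach it to a spine vertex); under this inversion the hypothesis ``$v_0$ on a cycle'' corresponds exactly to ``meet $=0^1$, $\neq w_0$.'' Establishing this precise dictionary, and checking that the three sub-cases of Proposition~\ref{prop:casA} partition both the functions and the trees compatibly, is the step I expect to demand the most care.

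Finally I would define $\Psi_2$. By Observation~\ref{obs:typeRi} the right concatenations already preserve all in-types, so only the sources created by the left concatenations can be \emph{frustrated}; these are treated exactly as in Section~\ref{sec:basic}. On each nonnegative level the $i$-frustrated and $(i+1)$-frustrated sources alternate (Observations~\ref{obs:frustrated-i}--\ref{obs:frustrated-r}), and exchanging the subtrees hanging from consecutive frustrated sources restores their in-types while leaving the distinguished paths, the out-types, and the set of frustrated vertices unchanged. Hence $\Psi_2$ is an involution and $\Psi=\Psi_2\circ\Psi_1$ satisfies Property~$(\rm b)$, which completes the proof.
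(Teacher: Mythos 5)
Your proposal follows the right template ($\Psi=\Psi_2\circ\Psi_1$, left concatenations on nonnegative levels, right concatenations on negative ones), but it misses the one idea that makes this case different from Lemma~\ref{lemma:casA1}, and as a result both the image of $\Psi_1$ and the repair step $\Psi_2$ are wrong. The paper does \emph{not} treat the cyclic component containing $v_0$ as an ordinary piece of the right concatenation. It removes it from $R(i_0)$ (forming a smaller concatenation $\tilde R(i_0)$), opens that cycle at the edge \emph{entering $v_0$} rather than at its minimum, obtaining a component $\tilde C(i_0)$ with a distinguished path from $v_0$ to the vertex $u$ with $f(u)=v_0$, and then inserts this component between the two branches by adding \emph{two} new edges: $1^1\to v_0$ and $u\to 0^1$. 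It is precisely the edge $u\to 0^1$ that forces $\ell^1\wedge r^q=0^1$. Under your construction (cycle opened at its source inside $R(i_0)$, only the edge $1^1\to v_0$ added), the vertex $v_0$ lies on the distinguished path from $\ell^1$ to the root, so the meet would be $v_0=w_0$ itself — exactly the configuration the remark after Lemma~\ref{lemma:casA1} warns about — and you would land outside the target set of trees. Your assertion that ``the $\ell^1$-branch re-enters the nonnegative structure through the opened cycle precisely at $0^1$'' is therefore unjustified; nothing in your construction produces the meet $0^1$. (There is also an architectural slip: you build $L(r),\dots,L(0)$ and only $R(-1),\dots,R(\ell)$, whereas the paper builds $L(i)$ for $i\in\llbracket 1,r\rrbracket$ only and $R(i)$ for $i\in\llbracket\ell,0\rrbracket$; condition $(\rm T_2')$ requires $0^1$ to sit on the path from $\ell^1$ to the root, which your split does not provide.)

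The second gap is in $\Psi_2$. It is not true that only the sources of the left concatenations can be frustrated: the vertices $0^1$ and $v_0$ are frustrated as well, because of the two junction edges. In the function $f$, the in-type of $v_0$ includes an edge from $-1^1$ and an edge from $u$, while the in-type of $0^1$ includes the edge from $1^1$; in the tree these incidences are redistributed ($0^1$ receives the edges from $u$ and from $b_{-1}$, $v_0$ receives the edge from $1^1$). The paper repairs this by exchanging the subtrees $T(0^1)$ and $T(v_0)$ attached to these two vertices before performing the usual alternating exchanges on the path from $r^q$ to $1^1$. Without this extra swap, Property~(b) of Theorem~\ref{thm:rooted} fails for the two vertices $0^1$ and $v_0$, so the bijection you describe would not preserve the distribution of in-types.
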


\begin{Lemma}\label{lemma:casA3}
There exists a bijection between $\cS$-functions $f$ satisfying $(\rm
F)$ such that  
\begin{itemize}   
\item      
$v_0:=f(-1^1)$ belongs 
to the connected component of $\tilde{G}_f$ containing $0^1$ 
(and hence to $\bigcup_{i=\ell }^0 W_i$), 
\end{itemize}
and  marked $\cS$-trees $(T,r^q)$ satisfying conditions $(\rm T _1)$ and
$(\rm T _2')$ such that 
\begin{itemize}
\item       $\ell ^1\wedge r^q$ is equal to $w_0$. 
\end{itemize}
This bijection satisfies Property~$(\rm b)$ of Theorem~\rm{\ref{thm:rooted}.}
\end{Lemma}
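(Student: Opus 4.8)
The plan is to follow the two-step scheme already used for Theorem~\ref{thm:basic} and announced for Proposition~\ref{prop:casA}, writing the desired map as $\Psi=\Psi_2\circ\Psi_1$. Here $\Psi_1$ turns the function into a marked $\cS$-tree and $\Psi_2$ is the subtree-swapping involution that repairs in-types; since Observations~\ref{obs:typeLi} and~\ref{obs:typeRi} show that $L(i)$ and $R(i)$ alter the in-type only of a restricted set of source vertices, I expect $\Psi_2$ to be built exactly as in Section~\ref{sec:basic}, by pairing consecutive frustrated sources of a common abscissa and exchanging the subtrees hanging from them, so that Property~(b) holds once $\Psi_1$ is understood. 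To define $\Psi_1$, I would form $L(i)$ for $i\in\llbracket 1,r\rrbracket$ and $R(i)$ for $i\in\llbracket \ell,-1\rrbracket$, chain the positive blocks along the backbone to obtain the spine from $r^q=a_r$ down to $1^1$, and chain the negative blocks along the backbone $\ell^1,\ldots,(-1)^1$; restoring the edge $(-1)^1\to v_0$ then grafts the whole negative branch at $v_0$. The feature special to this case is the handling of $W_0$: because $v_0$ lies in the component of $\tilde G_f$ containing $0^1$, that component carries no cycle, and I would arrange the abscissa-$0$ part of the trunk so that the path from $v_0$ to the root $0^1$ occupies its top, making $v_0$ the vertex $w_0$ that immediately follows $1^1$ on the path from $r^q$ to the root.

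With this construction the image is a marked $\cS$-tree, and I would next check that it satisfies $(\rm T_1)$, $(\rm T_2')$ and $\ell^1\wedge r^q=w_0$. Condition $(\rm T_1)$ is inherited verbatim from Theorem~\ref{thm:basic}, the sources of the positive blocks being the lower records of the spine from $r^q$; the record clause of $(\rm T_2')$ is the mirror statement for the path from $\ell^1$, and is exactly what the right concatenation $R(i)$ (whose sinks are the lower records encountered from $b_i$ to $i^1$) produces. The remaining, and central, point is the meet: by construction both $1^1$ (coming from the right spine) and $(-1)^1$ (coming from the negative branch, through the genuine edge $(-1)^1\to v_0$) are attached at $v_0=w_0$, whence $1^1$ lies strictly before $w_0$ and $\ell^1\wedge r^q=w_0$. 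I would then verify the converse characterization that makes the three-fold split exhaustive and disjoint: the meet equals $w_0$ precisely when $v_0$ sits in the cycle-free $0^1$-component, it drops to $0^1$ when $v_0$ lies on a reopened cycle (Lemma~\ref{lemma:casA2}), and it lands strictly between the two when $v_0$ hangs off such a cycle (Lemma~\ref{lemma:casA1}).

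Finally I would establish bijectivity as in Section~\ref{sec:basic}: the blocks $L(i)$ and $R(i)$ are recovered as the maximal segments cut out by the lower records of the two distinguished paths, so $\Psi_1$ is inverted by undoing the concatenations and reclosing the opened edges into cycles, and the constraints $(\rm T_1)$, $(\rm T_2')$ and $\ell^1\wedge r^q=w_0$ ensure that the resulting partial function satisfies $(\rm F)$ and returns $v_0=f(-1^1)$ into the $0^1$-component. I expect the main obstacle to be precisely this interplay between the geometry of the meet and the location of $v_0$ — that is, proving that placing the cycle-free component at the top of the abscissa-$0$ trunk is forced, invertible, and compatible with the record conditions — which is the delicate point separating Lemma~\ref{lemma:casA3} from its two companions; the subsequent verification that the swaps of $\Psi_2$ correct the in-type of every frustrated source without disturbing the others should then go through as in the non-negative case.
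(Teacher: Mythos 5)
Your high-level architecture ($\Psi=\Psi_2\circ\Psi_1$, left concatenations $L(i)$ for positive abscissas, right concatenations $R(i)$ for negative ones, edge $1^1\to v_0$ gluing the two halves) matches the paper, but the one step that actually distinguishes Lemma~\ref{lemma:casA3} from its companions is left unresolved, and you say so yourself (``the main obstacle\dots is the delicate point''). The missing idea is the construction of the auxiliary graph $\hat G_f$: letting $u$ be the vertex preceding $0^1$ on the path from $v_0$ to $0^1$ in the acyclic component, one replaces the edge $u\to 0^1$ by $u\to v_0$, thereby manufacturing an artificial cycle through $v_0$ whose minimum sits at some abscissa $i_0\le 0$. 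The machinery of Lemma~\ref{lemma:casA1} is then applied verbatim to $\hat G_f$; the new cycle is opened and inserted into $R(i_0)$ exactly like any other piece, which is what forces $v_0$ onto the trunk from $\ell^1$ to the root (hence $\ell^1\wedge r^q=w_0$) \emph{while preserving the lower-record structure that encodes} $(\rm T_2')$ \emph{and makes $\Psi_1$ invertible} (one recovers $\tilde G_f$ by cutting the edge entering $w_0$ in its cycle and redirecting it to $0^1$). Your alternative --- ``arrange the abscissa-$0$ part of the trunk so that the path from $v_0$ to the root $0^1$ occupies its top'' --- does not work as stated: the path from $v_0$ to $0^1$ may visit vertices of negative abscissa, so grafting it near the root at abscissa $0$ would place vertices of $V_{-1}$ after $b_{-1}$ and violate the clause of $(\rm T_2')$ requiring the last $V_{-1}$-vertex to be followed immediately by $0^1$; and the root of the image tree is $b_0$ (the greatest sink of the abscissa-$0$ pieces), not $0^1$ in general, so pinning the root at $0^1$ would also shrink the image set.

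Two secondary points. First, your $\Psi_2$, described as pairwise swaps of consecutive frustrated sources along the positive spine, is not enough here: the rerouting changes the in-types of $0^1$ and of $v_0$ as well, and the paper repairs these by an additional exchange of the subtrees $T(0^1)$ and $T(v_0)$ attached to those two vertices before performing the usual swaps on the path from $a_r$ to $1^1$. Second, your proposed trichotomy is slightly off: in Lemma~\ref{lemma:casA1} the meet is the attachment point on the trunk of the subtree containing $v_0$, which covers both ``$v_0$ hangs off a cycle'' and ``$v_0$ lies in a tree component other than that of $0^1$''; it is not usefully described as ``strictly between'' $0^1$ and $w_0$. The case disjunction that makes the three lemmas exhaustive is simply: $v_0$ in the $0^1$-component, $v_0$ on a cycle, or neither.
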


 Since the connected component of $\tilde{G}_f$ containing $0^1$
contains no cycle, Proposition~\ref{prop:casA} follows immediately from
Lemmas~\ref{lemma:casA1}, \ref{lemma:casA2}, and~\ref{lemma:casA3}, by case
disjunction.

\medskip

\subsubsection{Proof of Lemma~\rm{\bf\ref{lemma:casA1}}}
\label{sec:A1}

Let $f$ be as in the statement of the lemma.
We first construct a marked tree $\Psi_1(f)$ from $\tilde G_f$. The
construction is depicted in Figure~\ref{fig:Psi-A}. 
A second transformation $\Psi_2$ will the  rearrange certain subtrees
of $\Psi_1(f)$.
\begin{itemize}
\item For $i\in \llbracket 1, r\rrbracket$ construct the left concatenation
$L(i)$. Concatenate all these pieces, 
 by decreasing value of $i$, to
obtain a path from the vertex $a_r\in V_r$ to the vertex $1^1$.
\item For $i\in \llbracket \ell , 0\rrbracket$ construct the right concatenation
$R(i)$. Concatenate all these
pieces, by increasing value of $i$, to
obtain a path from $\ell ^1$ to the vertex $b_0\in V_0$.
\item Add an edge from $1^1$ to $v_0$.  Since $v_0\in
\bigcup_{i=\ell }^{0} W_i$, this connects the two previously constructed
components. 
\end{itemize}
We let $(T,a_r):=\Psi_1(f)$ be the marked tree thus obtained. It is rooted at
$b_0$. It is clearly an $\cS$-tree.
\begin{figure}[h]
  \centering
  \scalebox{1}{\includegraphics{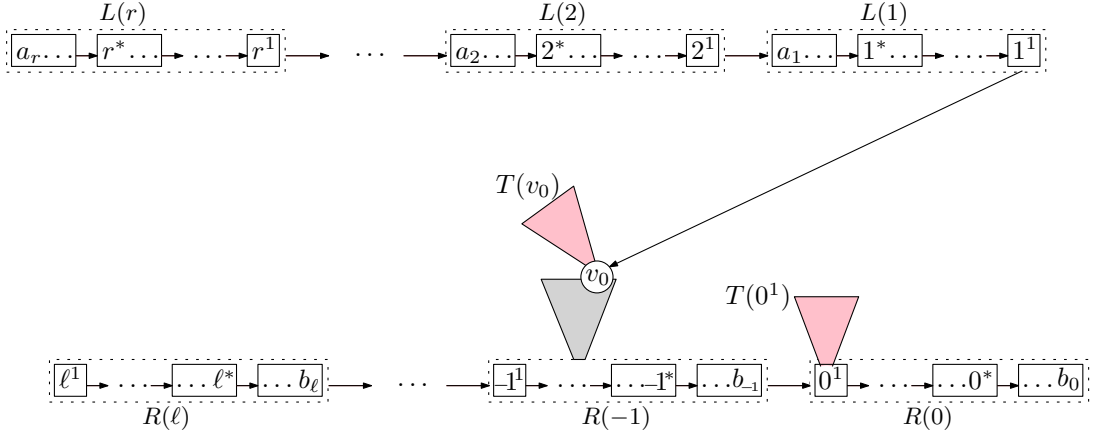}} 
  \caption{The bijection $\Psi_1$ of Lemma~\ref{lemma:casA1}. 
In this example,
the vertex $v_0$ belongs to one of the subtrees of $R(-1)$. In
general, $v_0$ could belong to any subtree attached to the path
$\ell ^1\rightarrow b_0$, except the subtree $T(0^1)$.
  The tree $T(v_0)$, attached to $v_0$ on the path from $a_r$ to the
  root, will be
exchanged with $T(0^1)$ in the construction $\Psi_2$.
}
\label{fig:Psi-A}
\end{figure}

\noindent\boite {\bf The marked tree $(T,a_r)$ satisfies Properties
  $(\rm T _1)$ and $(\rm T _2')$.}
Let $i \in \llbracket 1,r\rrbracket$. Along the distinguished path of
$L(i)$, all vertices have abscissa at least $i$, 
and the rightmost vertex is $i^1$. The vertex that follows $i^1$ on
the path from $a^r$ to $b_0$ is  $a_{i-1}\in V_{i-1}$ if $i>1$,
or $v_0 \in V_0$ if $i=1$. This implies  that $(\rm T _1)$ holds.

Now let $i\in \llbracket \ell+1, 0\rrbracket$.  Along the
distinguished path of $R(i)$, all vertices have abscissa at least $i$,
and  the leftmost vertex  is $i^1$. 
The vertex that precedes $i^1$ on
the path from $\ell^1$ to $b_0$ is  $b_{i-1}\in V_{i-1}$.
This implies  that the second part of $(\rm T _2')$ holds.

Finally, since $v_0\in \bigcup_{i=\ell}^{0} W_i$, it is clear by
construction that $1^1$ appears strictly before 
$\ell ^1\wedge a_r$   on the path from $a_r$ to $b_0$. Hence the first
part of $(\rm T _2')$ holds.

\medskip
\noindent\boite {\bf The meet $\ell ^1\wedge a_r$ is neither $w_0$ nor $0^1$.}
Observe that the vertex $v_0$ follows $1^1$ on the
path from $a_r$ to the root. Hence:
\begin{Observation}
In the marked tree $\Psi_1(f)=(T,a_r)$, the vertex $w_0$ is $v_0=f(-1^1)$.
\end{Observation}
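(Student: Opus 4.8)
The plan is simply to trace, in the tree $T=\Psi_1(f)$, the path going from the marked vertex $a_r$ to the root $b_0$, and to read off the vertex immediately following $1^1$. Since $(T,a_r)$ satisfies $(\rm T_1)$, as just shown, the vertex $w_0$ is by definition the vertex that follows $1^1$ on that path, so it suffices to identify it.

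By the first step of the construction, the left concatenation of $L(r),\ldots,L(1)$ (taken by decreasing $i$) produces a path whose two extremities are $a_r$ and $1^1$; since this path follows the edges of $T$ oriented towards the root, $1^1$ is an ancestor of $a_r$ and therefore lies on the path from $a_r$ to the root. Hence it remains only to identify the parent of $1^1$ in $T$.

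The third step of the construction adds the single edge $1^1\to v_0$, which is the unique edge leaving $1^1$ towards the root; consequently $v_0$ is the parent of $1^1$, i.e.\ the vertex that immediately follows $1^1$ on the path from $a_r$ to the root. By the definition of $w_0$ recalled above, this yields $w_0=v_0=f(-1^1)$. There is essentially no obstacle here: the identity is an immediate consequence of the way the edge $1^1\to v_0$ is inserted, the only point worth noting being that $1^1$ genuinely lies on the distinguished path, which is precisely guaranteed by the $L$-concatenation step.
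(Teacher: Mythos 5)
Your proposal is correct and follows exactly the paper's own (very brief) argument: the $L$-concatenation puts $1^1$ on the path from $a_r$ to the root, and the explicitly added edge $1^1\to v_0$ makes $v_0$ the vertex following $1^1$ on that path, which is the definition of $w_0$. Your extra remark that one must check $1^1$ genuinely lies on the distinguished path is a reasonable bit of added care, but it does not change the substance.
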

 The meet $\ell ^1\wedge a_r$ belongs to
the path going from $\ell ^1$ to
$b_0$. By assumption, $w_0\equiv v_0$ is not on a cycle of
$\tilde{G}_f$, and  is not $0^1$.
Hence  $v_0$ does not belong to the path going from $\ell ^1$ to
$b_0$, and thus cannot be equal to $\ell ^1\wedge a_r$. 
Moreover, the fact that $v_0$ does not belong to the component of $\tilde{G}_f$ containing $0^1$
implies that $\ell ^1\wedge a_r\neq 0^1$.

\medskip
\noindent\boite {\bf The map $\Psi_1$ is injective.} 
Let us start from the marked tree $(T,a_r)$ and  reconstruct the
function $f$. First, for $i\in \llbracket 1,r\rrbracket $, the graph
$L(i)$ and the 
pieces that constitute it can be recovered by splitting  into two
half-edges each edge that
enters a lower record on the path from $a_r$ to $w_0$. Similarly, on the path that goes from $b_0$ to $\ell^1$
(visited in this direction),
we split into two half-edges all edges that leave a lower record to
recover the graphs $R(i)$, for $i\in \llb \ell, 0 \rrb$, and their
pieces. Then we close
each piece  that does not contain a vertex of the form $i^1$ to form a cycle.
One thus recovers the graph $\tilde G_f$.
Finally, we add an edge from $i^1$ to $(i+1)^1$
for $i\in\llbracket \ell , -2\rrbracket$, an edge from $i^1$ to $(i-1)^1$
for $i\in\llbracket  1,r\rrbracket$, and  an edge from $-1^1$ to 
$w_0$ to recover the graph $G_f$. 

\medskip
\noindent\boite {\bf The map $\Psi_1$ is surjective.} 
Let $(T,r^q)$ be a  marked $\cS$-tree
rooted at $\rho\in V_0$, 
satisfying $(\rm T _1)$ and $(\rm T _2')$.  Let $w_0$ be the
vertex that follows $1^1$ on the path from $r^q$ to $\rho$.
Assume that the meet $\ell ^1\wedge r^q$ is distinct
from $0^1$ and $w_0$.
We first split the edge $1^1\rightarrow w_0$ into two
half-edges, thus creating two connected components: 
one of them contains $r^q$ and
$1^1$, while the other contains $w_0$,  $\rho$,  and $\ell ^1$
(by $(\rm T_2')$).

We first consider the path going from $r^q$  to $1^1$ in the first 
component. Lower records on this
path are called \emm sources,, and vertices preceding the sources are called
\emm sinks, 
(we consider $1^1$ as a sink).
We now split into two half-edges each edge that enters a source on the
path. This gives  a number of \emm pieces,, each of them carrying a
\emm distinguished path, going from a source to a sink.
By  $(\rm T _1)$, each vertex $i^1$ for $i\in\llbracket 1,r \rrbracket$ is
the source and the sink of a piece.
Take  all the pieces containing a vertex of the form $i^1$, for
$i\geq1$, and concatenate them by
adding an edge from $i^1$ to
$(i-1)^1$ for $i\in\llbracket 2, r\rrbracket$.
Transform each of the other pieces into a cycle by 
connecting its sink to its source.

We now visit the path going 
from the root $\rho$ to $\ell^1$ (in this direction).
Lower records on this path  are called  \emm sinks,, and  vertices
preceding a sink (in 
the same ``wrong'' direction) are called \emm sources,
(we consider $\ell^1$ as a source).
We now split all the edges between sinks and sources, and thus obtain a collection of
\emm pieces,. By $(\rm T _2')$, 
 there is a piece of source and sink $i^1$ for $i\in
\llbracket \ell ,0 \rrbracket$.
Take all the pieces containing a  vertex of the form $i^1$,
for $i\le -1$, 
and concatenate them by adding an edge from $i^1$ to $(i+1)^1$ for
$i\in\llbracket \ell , -2\rrbracket$. In all the remaining pieces,
merge the two extremal half-edges to form a cycle.  

Finally, add an edge from $1^1$ to $0^1$, from $-1^1$ to the vertex
$w_0$, and let $H$ be the graph thus obtained.
By construction, $H$ is the graph $G_h$ of a function $h: V\setminus \{0^1\}
\rightarrow V$ satisfying (F). 

Let us prove that $h$ is an $\cS$-function. 
It suffices to check that
the edges we have created are $\cS$-edges. Since $1$ and $-1$ belong
to $\cS$, this is clear for the edges that start from a vertex $i^1$,
for $i\in \llb \ell, r\rrb\setminus\{0\}$.
Consider a  piece  of
source $i^k$, with $i \in \llbracket 1,r \rrbracket$ and $k\not = 1$.
Its sink $j^p$ is followed, on the path from $r^q$ to $1^1$,  by a
lower record of abscissa $i$ (because 
by $(\rm T_1)$, $i^1$ is one of the lower records), say $i^m$. Since
the edge $j^p \rightarrow i^m$ was an $\cS$-edge of $T$, the edge $j^m
\rightarrow i^k$ that we create to construct $H$ is  also an $\cS$-edge.
In brief, the out-type of the sink $j^p$ has not changed.
A similar result holds for pieces of 
sink $i^k$, with $i \in
\llbracket \ell,0 \rrbracket$ and $k>1$: when we close them to form a cycle, the
in-type of the source does not change.
Therefore $h$ is a $\cS$-function.

It remains to prove that $h$ satisfies the three statements of
Lemma~\ref{lemma:casA1} dealing with $h(-1^1)=w_0$.
By construction, $w_0$ belongs in $\tilde G_h$ to a component whose source
lies at a nonpositive abscissa. That is,
$h(-1^1)\in\cup_{i=\ell }^0 W_i$ (the sets
$W_i$'s being understood with respect to the function $h$). 
Given that $\ell ^1\wedge r^q\neq w_0$ by assumption, the vertex $w_0$
does not belong to the path of $T$ going from $\ell ^1$ to the root
$\rho$. Hence it cannot be found in a cycle of $G_h$.
The vertex of the  path of $T$ going from $\ell ^1$ to the root
$\rho$ to which $w_0$ is attached is  $\ell ^1\wedge r^q$, 
which by
assumption is different from $0^1$. Hence $h(-1^1)$ does not belong to
the  component of source  $0^1$ in $\tilde G_h$.

Finally, it is clear by construction that $\Psi_1(h)=(T,r^q)$, so $\Psi_1$ is
surjective.\\

\noindent$\bullet$ {\bf Re-arranging subtrees: the bijection $\Psi_2$.}
We say as before that a vertex $v\in V$ is \emm frustrated, if its in-type is not the
same in $f$ and in $(T, r^q)=\Psi_1(f)$.
 We claim that  the vertices of $\cup_{i=\ell }^0 W_i \setminus
\{0^1,v_0\}$ are not frustrated. 
 This is a direct
consequence of Observation~\ref{obs:typeRi} and of the fact that to concatenate
$R(i-1)$ to $R(i)$, for $i\in\llbracket \ell +1, -1 \rrbracket$ we add
a new incoming 
edge to the vertex $i^1$ coming from $V_{i-1}$, which 
 compensates the deletion of the edge $(i-1)^1 \rightarrow
i^1$ in the construction of $\tilde{G}_f$ from $f$. Together with
Observation~\ref{obs:typeLi}, this implies:
\begin{Observation}\label{obs:type-casA1}
Any vertex distinct from $v_0$, $0^1$ and from the lower records
 of the path from $r^q$ to $1^1$ is not frustrated.
\end{Observation}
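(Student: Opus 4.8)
The plan is to prove the statement by combining the two local in-type preservation results we already have, after splitting the vertex set according to the sign of the source abscissa: $V=\bigl(\bigcup_{i=\ell}^{0}W_i\bigr)\uplus\bigl(\bigcup_{i=1}^{r}W_i\bigr)$, the left summand being handled by the right concatenations $R(i)$ and the right summand by the left concatenations $L(i)$. Since $(T,a_r)$ is assembled precisely from these two families, it suffices to control frustration on each family separately and then reconcile the exception lists.

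For the part $\bigcup_{i=\ell}^{0}W_i$ I would invoke the claim established just above, namely that every vertex of this set other than $0^1$ and $v_0$ has the same in-type in $f$ and in $(T,a_r)$; this followed from Observation~\ref{obs:typeRi} together with the compensation, valid for $i\in\llbracket \ell+1,-1\rrbracket$, of the split edge $(i-1)^1\to i^1$ by the incoming edge created at $i^1$ when $R(i-1)$ is attached to $R(i)$. So on this side the only candidates for frustration are $0^1$ and $v_0$, both already among the exceptions. For the part $\bigcup_{i=1}^{r}W_i$ I would argue from Observation~\ref{obs:typeLi}: for each $i\in\llbracket 1,r\rrbracket$, every vertex of $W_i$ that is not a lower record of the spine of $L(i)$ — equivalently, not the source of one of the pieces making up $L(i)$ — keeps its in-type in $L(i)$.

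It then remains to check that assembling the $L(i)$ by decreasing $i$, and finally attaching $1^1$ to $v_0$, does not disturb the in-type of any non-source vertex. The key point is that each junction between $L(i)$ and $L(i-1)$ creates exactly one new edge $i^1\to a_{i-1}$, which enters the source $a_{i-1}\in V_{i-1}$ of $L(i-1)$, and that the final edge $1^1\to v_0$ enters $v_0$. Hence every newly created incoming edge lands on a source or on $v_0$, and no non-source vertex of $\bigcup_{i\ge 1}W_i$ gains or loses an incoming edge; such a vertex is therefore not frustrated. To finish, I would identify these surviving candidates with the lower records of the path from $r^q$ to $1^1$: the spine of $L(i)$ lies in $V_i\cup V_{i+1}$ with its piece-sources in $V_i$ being its only lower records, and passing from $L(i)$ to $L(i-1)$ strictly decreases the abscissa, so the lower records met along the whole path $a_r=r^q\to\cdots\to 1^1$ are exactly the piece-sources across all the $L(i)$. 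Combining the two sides shows that only $v_0$, $0^1$, and these lower records may be frustrated, which is the assertion.

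The main obstacle is the bookkeeping in the positive part: one must verify carefully that each concatenation junction produces a single incoming edge, directed into a source, so that Observation~\ref{obs:typeLi}, which concerns each $L(i)$ in isolation, indeed lifts to the assembled tree, and that the identification of the spine lower records with the full set of piece-sources is exact.
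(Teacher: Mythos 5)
Your proof is correct and follows essentially the same route as the paper: the negative part is handled by Observation~\ref{obs:typeRi} together with the compensation of the split edge $(i-1)^1\to i^1$ at each junction of the right concatenations, and the positive part by Observation~\ref{obs:typeLi} once one checks that the assembly of the $L(i)$ only creates incoming edges at piece-sources (which are exactly the lower records of the path from $r^q$ to $1^1$) and at $v_0$. The paper states this almost without argument, so your explicit bookkeeping of the junction edges and the identification of the spine's lower records with the piece-sources is simply a fuller write-up of the same proof.
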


We first ``correct'' simultaneously the in-types of $0^1$ and $v_0$.
Let us denote by $T(0^1)$ the subtree attached to the vertex $0^1$ on the path
from $\ell ^1$ to the root in $T$, and by $T(w_0)$ the subtree
attached to the vertex 
$v_0=w_0$ on the path from $r^q$ to the root in $T$. 
By assumption, $v_0$ does not belong to $T(0^1)$. Moreover, $0^1$
cannot belong to $T(w_0)$ (it has no image by $f$, and is by
assumption distinct from $v_0$). Hence the subtrees $T(0^1)$ and
$T(w_0)$ are disjoint. Let us exchange them, and denote by $\tilde T$ 
the resulting tree. Then the in-type of $0^1$ in $\tilde{T}$ equals the in-type
of $v_0$ in the function $f$: indeed, edges contributing to these
in-types  are in both cases  the
edges coming from $T(v_0)$, 
plus an edge coming from $V_{-1}$
(this edge joins $b_{-1}$ to $0^1$ in $\tilde T$ and  $-1^1$ 
to $v_0$ in $f$).
Similarly, the  in-type of $v_0$ in
$\tilde T$ equals the in-type of $0^1$ in $f$, since the edges
contributing to these in-types are in both cases
the edges coming from $T(0^1)$ and the edge coming from $1^1$.
Finally, note that the operation 
$(T,r^q)\mapsto(\tilde T,r^q)$ is an involution since
the exchange of subtrees does not modify the 
marked vertex of the tree.

It remains to correct the in-types  of the lower records of the path
going from $r^q$ to $1^1$ in $\tilde T$. 
We proceed as in  the proof of Theorem~\ref{thm:basic} in
Section~\ref{sec:basic}. 
First, clearly, Observations~\ref{obs:frustrated-i}
and~\ref{obs:frustrated-ii}
hold  for $i\in\llbracket 1,r-1 \rrbracket$, as does
Observation~\ref{obs:frustrated-r}.
As in Section~\ref{sec:basic}, we exchange the subtrees attached to
adjacent frustrated sources of abscissa $i \in \llb 1, r\rrb$. This
corrects the in-type of all of them. 
Let $\Psi_2(T,r^q)$ be the tree obtained after performing these exchanges,
and let $\Psi(f)=\Psi_2\circ\Psi_1(f)$.  Since we have corrected all in-types,  $\Psi$ satisfies
Property~(b). Moreover $\Psi_2$ is again an involution (the lower
records on the path from $r^q$ to $1^1$ do not change when exchanging
subtrees). In particular $\Psi$ is a bijection, and
Lemma~\ref{lemma:casA1} is proved.
\qed

\medskip
\noindent{\bf Remark.}
The above construction $\Psi_1$ could be
applied just as well to functions $f$ such that  $v_0$ is on a cycle of $\tilde G_f$ or in the component of $0^1$.
However, we have not been able (and we
believe that it is not possible) to define the ``re-arranging'' bijection $\Psi_2$
in these two cases. This is why we had to split Proposition~\ref{prop:casA} into
three separate lemmas, based on three slightly different constructions.


 \subsubsection{Proof of Lemma~\rm{\bf\ref{lemma:casA2}}}
The bijection and the proof are very close 
to those of Lemma~\ref{lemma:casA1}, but 
we  need to introduce a variant of the right concatenation.
We assume that $v_0$ belongs to a cycle of the graph
$\tilde{G}_f$. Let $i_0\le 0 $ be  the abscissa of its source.

\smallskip
\noindent{$\bullet$ \bf A variation on
 $R(i_0)$: the graphs   $\tilde{R}(i_0)$ and $\tilde C(i_0)$.} 
Instead of constructing $R(i_0)$ as before, from all components of $\tilde
G_f$ having their source in $V_{i_0}$, we  ignore the 
component  containing $v_0$, and form a
smaller right concatenation $\tilde R(i_0)$ with the remaining 
components (Figure~\ref{fig:portion-R-tilde}).
Then, we open the  cycle containing $v_0$  at the edge
 entering $v_0$. This gives a tree, denoted by $\tilde{C}(i_0)$. This tree
has a distinguished path from $v_0$ to a vertex 
$u\in V_{-1}\cup V_0\cup V_1$ 
(and $u\in V_0$  happens only if $0\in \cS$).
The following analogue of Observation~\ref{obs:typeRi} holds.
\begin{Observation}\label{obs:typeRi-tilde}
All the vertices belonging to 
$W_{i_0}\setminus\{i_0^1,v_0\}$ 
have the same in-type in the function $f$
and in the graph $\tilde R(i_0) \cup \tilde{C}(i_0)$.
\end{Observation}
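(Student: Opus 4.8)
The plan is to mirror the proof of Observation~\ref{obs:typeRi}, treating the two constituents $\tilde R(i_0)$ and $\tilde C(i_0)$ of the graph separately and then combining them. The only new features, compared with the plain right concatenation $R(i_0)$, are that one component of $\tilde G_f$ (the one containing $v_0$) is removed from the concatenation, and that this component is opened at a nonstandard edge. I would argue that neither feature disturbs the in-type of any vertex other than the two prescribed exceptions.

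First I would handle $\tilde R(i_0)$. By construction it is built exactly as $R(i_0)$, except that the component containing $v_0$ is excluded from the list of pieces. The in-type preservation established in Observation~\ref{obs:typeRi} is entirely local: when a cycle is opened, its source $j^p$ loses a single child $i_0^k\in V_{i_0}$, and during concatenation $j^p$ regains a single child, namely the sink of the piece immediately to its left, which again lies in $V_{i_0}$. Since both the lost and the regained child sit at abscissa $i_0$, the corresponding entry $c^{i_0-j}$ of the in-type of $j^p$ is unchanged. Omitting one piece only alters \emph{which} sink a given source reconnects to, never its abscissa class; hence the argument applies verbatim, and every vertex of $\tilde R(i_0)$ other than $i_0^1$ keeps its in-type.

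Next I would treat $\tilde C(i_0)$. It is obtained from the $v_0$-cycle by deleting the unique edge whose head is $v_0$, say $u\to v_0$ with $f(u)=v_0$ and $u\in V_{-1}\cup V_0\cup V_1$. Deleting a single arc of the digraph modifies the child-set of exactly one vertex, namely the head $v_0$, which loses the child $u$; the child-sets, and hence the in-types, of all other vertices of the component — including $u$ itself and every vertex of the attached subtrees — are left untouched. Therefore every vertex of $\tilde C(i_0)$ except $v_0$ retains its in-type.

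Finally I would combine the two parts. The graph $\tilde R(i_0)\cup\tilde C(i_0)$ is a disjoint union whose vertex set is precisely $W_{i_0}$. Since $i_0^1$ lies on the linear chain prescribed by $(\rm F)$ and so belongs to $\tilde R(i_0)$, whereas $v_0$ lies on the opened cycle and so belongs to $\tilde C(i_0)$, the two exceptional vertices fall into different parts; in particular $i_0^1\neq v_0$ because $i_0^1$ is never on a cycle. Consequently the in-type of every vertex of $W_{i_0}\setminus\{i_0^1,v_0\}$ is preserved, which is the assertion. The one point that requires care — and is the crux — is the claim for $\tilde R(i_0)$ that dropping the $v_0$-component cannot spoil the in-type bookkeeping of the surviving sources; this is exactly where one uses that all sinks produced by the right concatenation lie in the single fibre $V_{i_0}$, so that the abscissa class of each reconnection edge is insensitive to the concatenation order.
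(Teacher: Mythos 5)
Your argument is correct and is essentially the paper's: the paper justifies this observation as a direct analogue of Observation~\ref{obs:typeRi}, using exactly your local bookkeeping (each reconnected source trades a lost pre-image in $V_{i_0}$ for a gained one in $V_{i_0}$, and cutting the cycle edge entering $v_0$ only alters the in-type of $v_0$). The only nitpick is wording: the edge removed to form $\tilde C(i_0)$ is the unique \emph{cycle} edge entering $v_0$ (the vertex $v_0$ may well have other pre-images), but this does not affect your reasoning.
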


\begin{figure}[h]
  \centering
 \scalebox{1}{\includegraphics{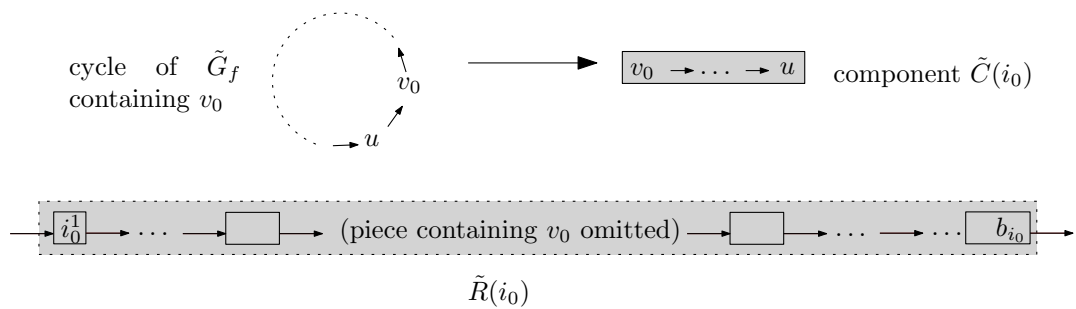}} 
  \caption{The component $\tilde C(i_0)$ is obtained by cutting the cycle of
  $\tilde G_f$ containing $v_0$ at the edge entering $v_0$. The small
    right concatenation $\tilde R(i_0)$ is constructed in a similar way as
    $R(i_0)$, but omitting the piece that would have contained $v_0$.
The subtrees
attached to the distinguished paths of the pieces are not represented.
} 
\label{fig:portion-R-tilde}
\end{figure}


With this construction at hand, wee are now ready to prove Lemma \ref{lemma:casA2}.
We construct a tree $\Psi_1(f)$ from $\tilde G_f$, as
 depicted in Figure~\ref{fig:Psi-A2}.

\begin{itemize}
\item For $i\in \llbracket 1, r\rrbracket$ construct the left concatenation
$L(i)$. Concatenate all these
pieces, by decreasing value of $i$, to
obtain a path from the vertex $a_r\in V_r$ to the vertex $1^1$.
\item For $i\in \llbracket \ell , 0\rrbracket\setminus \{i_0\}$ construct the right concatenation
$R(i)$. Construct also the components $\tilde{R}(i_0)$ and 
 $\tilde{C}(i_0)$. Concatenate $R(\ell), \dots, \tilde{R}(i_0), \dots, R(0)$, by increasing value of $i$, to
obtain a path from $\ell ^1$ to 
a vertex $b_0\in V_0$.
\item Consider the distinguished path of $\tilde{C}(i_0)$, 
which goes from $v_0$ to
$u$. Add an edge from $1^1$ to $v_0$, and
an edge from $u$ to $0^1$. 
This connects the  previously constructed components. 
\end{itemize}
Let $(T,a_r):=\Psi_1(f)$ the marked tree thus obtained. It is rooted
at $b_0$.
 It is clearly an $\cS$-tree  (the edge that goes from 
$u$  to  $0^1$
 is an $\cS$-edge since there was in $G_f$ an $\cS$-edge going from
$u$ to $v_0 \in V_0$).

\begin{figure}[h]
  \centering
  \scalebox{1}{\includegraphics{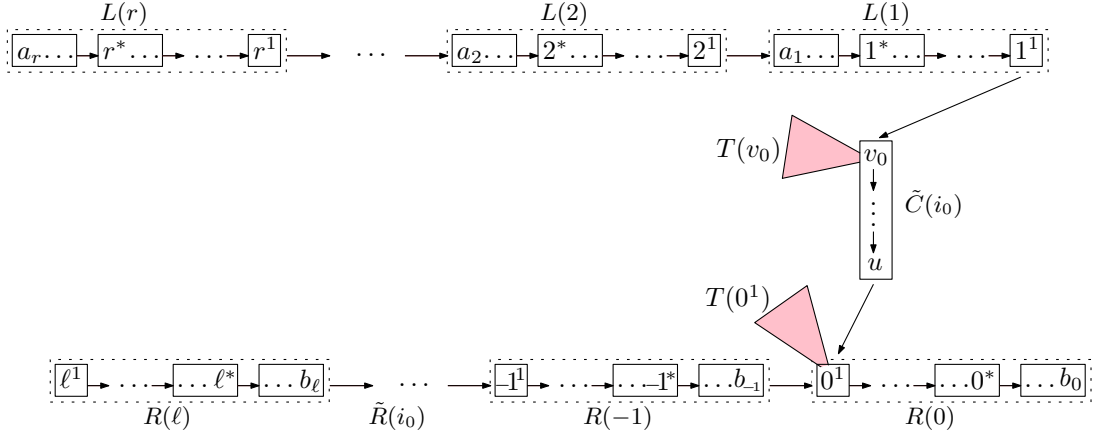}} 
  \caption{The bijection $\Psi_1$ of Lemma~\ref{lemma:casA2}.
  The  subtrees $T(0^1)$ and $T(v_0)$ will be exchanged by the construction $\Psi_2$.
}
  \label{fig:Psi-A2}
\end{figure}

\medskip
\noindent\boite {\bf The marked tree $(T,a_r)$ satisfies Properties $(\rm T _1)$ and $(\rm T _2')$.}
 The proof can be copied \emm verbatim, from the proof
of Lemma~\ref{lemma:casA1}. As before, the vertex $w_0$ of  the marked tree
$\Psi_1(f)=(T, a_r)$ is $v_0=f(-1^1)$.

\medskip

\noindent\boite {\bf The meet $\ell ^1\wedge a_r$ is equal to $0^1$
and distinct from $w_0$.}
This is clear by construction.

\medskip
\noindent\boite {\bf The map $\Psi_1$ is injective.}
Given $(T,a_r)$, one  recovers the graph $\tilde C(i_0)$ by cutting the edge
entering $w_0$ and the one entering $0^1$ on the path from $a_r$
to the root. Closing this piece restores a cycle of the function $f$.
Then,  as in the proof of Lemma~\ref{lemma:casA1}, we  recover
the remaining
pieces by locating the 
lower records of
the path from $r^q$ to $1^1$, and of the path from the root to
$\ell^1$.
Given the pieces, one
recovers the graph $\tilde G_f$ by closing each piece  containing 
no vertex of the form $i^1$. Finally, one adds an edge from $i^1$ to $(i+1)^1$
for $i\in\llbracket \ell, -2\rrbracket$,  an edge from $i^1$ to $(i-1)^1$
for $i\in\llbracket r, 1\rrbracket$, and  an edge from $-1^1$ to 
$w_0$ to recover the graph $G_f$. This shows that $\Psi_1$ is injective. 

\medskip
\noindent\boite {\bf The map $\Psi_1$ is surjective.} Let $(T,r^q)$ be a  marked
$\cS$-tree
rooted at $\rho\in V_0$, 
satisfying~$(\rm T _1)$ and $(\rm T _2')$.
As above, let $w_0$ be the  vertex that follows $1^1$ on the path from
$r^q$ to the root. Assume  that the meet $\ell ^1\wedge r^q$ is
equal to $0^1$ but distinct from $w_0$.

On the
path from $r^q$ to $\rho$, one first meets $w_0$ and, strictly later,
the meet of $\ell^1$ and $r^q$, namely $0^ 1$. We  split the edge
entering $w_0$ and the one entering $0^1$, thus 
creating three connected components. 
One of them contains the path from $r^q$ to $1^1$, another one contains the
path from $\ell ^1$ to $\rho$, and the third one contains the vertex $w_0$.
We transform the latter into a cycle by merging the two half-edges
inherited from the splitting.
We call $C$ the component thus obtained.

We now consider the path going from $r^q$  to $1^1$ in the first 
component. We treat this part 
as in the proof of
Lemma~\ref{lemma:casA1}: we obtain a graph 
which consists of a collection of cycles and a component
containing the path $r^1\rightarrow (r-1)^1\rightarrow\dots\rightarrow
1^1$. 
On each cycle, the smallest vertex lies at a positive abscissa.

We now consider the path going from $\ell ^1$ to the root $\rho$. 
We treat this part 
as in the proof of
Lemma~\ref{lemma:casA1}: we obtain a graph 
which consists of a collection of cycles and a component
containing the path $\ell ^1\rightarrow (\ell
+1)^1\rightarrow\dots\rightarrow -1^1$.
On each cycle, the smallest vertex lies at a non-positive abscissa.

Finally, add an edge from $1^1$ to  $0^1$, from $-1^1$ to $w_0$, and let $H$ be
the graph thus obtained.
 By construction, $H$ is the graph $G_h$ of a function $h: V\setminus \{0^1\}
\rightarrow V$ satisfying (F). 
We check as in the proof of Lemma~\ref{lemma:casA1} that $h$ 
is an $\cS$-function.

By construction $v_0:=h(-1^1)=w_0$ belongs to a cycle of the function $h$, namely the
unique cycle of the component $C$, the source of which has abscissa at
most $0$. 
 Thus  $h$ belongs to the set of
$\cS$-functions considered in  Lemma~\ref{lemma:casA2}.

Finally, it is clear by construction that $\Psi_1(h)=(T,r^q)$, so $\Psi_1$ is
surjective.\\

\noindent\boite{\bf Re-arranging subtrees: the bijection $\Psi_2$.}
We say  that a vertex $v\in V$ is \emm frustrated, if its in-type is not the
same in $f$ and in $(T,r^q)=\Psi_1(f)$.
 Note that 
Observation~\ref{obs:type-casA1} still holds (since Observation~\ref{obs:typeRi-tilde} is the analogue of 
Observation~\ref{obs:typeRi} for the components $\tilde R(i_0)$ and $\tilde
C(i_0)$).

We first correct the in-types of $0^1$ and $v_0$ by exchanging the
subtrees $T(0^1)$ and $T(v_0)$ that are attached to them 
in the pieces $R(0)$
and $\tilde C(i_0)$, respectively ($0^1$ is an ancestor of $v_0$, and
 these subtrees are disjoint).
Let $\tilde T$ be the tree thus
obtained. Then the in-type of $0^1$ in $\tilde{T}$ equals the in-type
of $v_0$ in the function $f$:
 indeed  the edges contributing to these in-types  are, in both cases, the edges coming from $T(v_0)$, plus an edge coming from $u$, plus an edge
coming from 
$V_{-1}$ (this edge joins $b_{-1}$ to $0^1$
in $\tilde T$, 
and  $-1^1$ to $v_0$ in $f$). 
Similarly, the
in-type of $v_0$ in $\tilde T$ equals the in-type of $0^1$ in $f$,
since the edges contributing to these in-types  are in both cases all edges coming from $T(0^1)$ and the edge coming from $1^1$. 
Finally,  the operation $(T,r^q)\mapsto(\tilde T,r^q)$ is an involution since
the exchange of subtrees does not change the marked vertex.

We  treat the path going from $r^q$ to $1^1$ in $\tilde T$ 
as in the proof of Lemma~\ref{lemma:casA1}.
That is, we exchange pairwise
 the trees attached to successive frustrated
vertices along this path.
Let $\Psi_2(T)$ be the marked tree obtained after performing these exchanges for
all $i\in\llbracket 1, r\rrbracket$, and  let $\Psi(f)=\Psi_2\circ\Psi_1(f)$. 
Since we have corrected all types, $\Psi$ satisfies Property~(b).
Moreover $\Psi_2$ is again an involution.  In particular $\Psi$ is
bijection, and Lemma~\ref{lemma:casA2} is proved.
\qed

 \subsubsection{Proof of Lemma~{\bf\rm{\bf\ref{lemma:casA3}}}}
We assume that $v_0=f(-1^1)$ belongs to the
connected component of $\tilde G_f$ of source $0^1$.

\noindent $\bullet$ {\bf A variation of $\tilde G_f$: the graph $\hat G_f$.}
Recall from Section~\ref{sec:A1} the construction of  the graph
$\tilde G_f$, obtained
 by cutting into two half-edges all edges that
leave a vertex of the form $i^1$. If $v_0\not = 
0^1$, we create a new graph $\hat G_f$ having one more cycle than
$\tilde G_f$, as follows. Let 
$u$ be the vertex preceding $0^1$ on
the path that goes in $\tilde G_f$ from $v_0$ to $0^1$. Replace the edge
$u\rightarrow 0^1$ by an edge $u\rightarrow v_0$, thus creating a
new $\cS$-edge, a new cycle, and a new graph $\hat G_f$. If $i_0$ is
the smallest abscissa occurring on this cycle, then $i_0 \le 0$.
If $v_0=0^1$, we let $\hat G_f=\tilde G_f$. 

\begin{figure}[h]
  \centering
   \scalebox{0.8}{\input{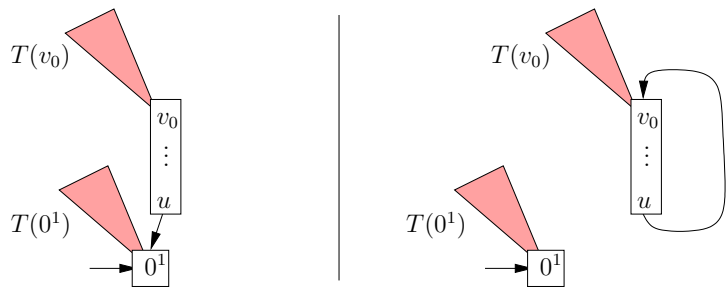}}
  \caption{The component containing $0^1$ in $\tilde G_f$ gives rise, in
    $\hat G_f$, to two components. One of them is a cycle containing $v_0$.
} 
  \label{fig:portion-R-hat}
\end{figure}

Now apply to  $\hat G_f$ all the transformations applied to  $\tilde G_f$ in
Section~\ref{sec:A1}, that led to the definition of $\Psi_1$: open the
cycles before or after their source (depending on the abscissa of the source), connect the
resulting pieces by decreasing or increasing minima (depending again
on the abscissas of the sources), and finally add an edge from $1^1$
to $v_0$. We call the piece containing $v_0$ the \emm special piece,. 
Let $(T,a_r):=\Psi_1(f)$ be the marked tree thus obtained. It is
rooted at $b_0$. It is clearly a marked $\cS$-tree.

\begin{figure}[h]
  \centering
  \scalebox{1}{\includegraphics{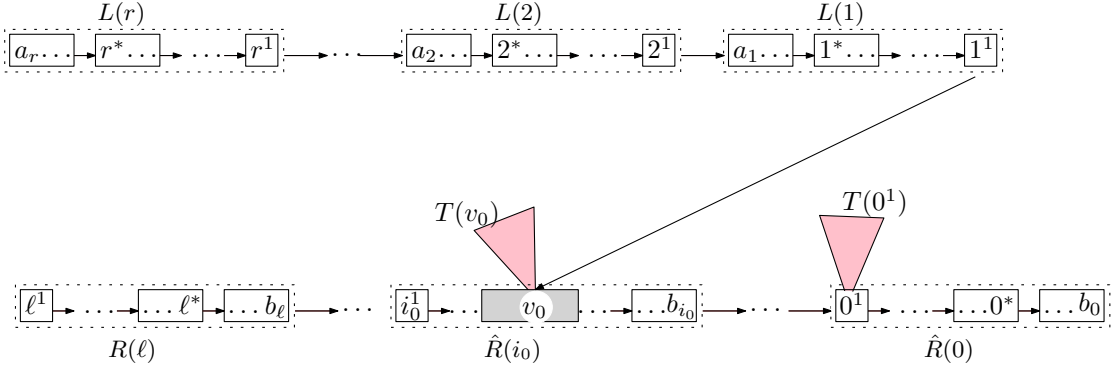}} 
  \caption{The bijection $\Psi_1$ of Lemma~\ref{lemma:casA3}. The special
piece containing $v_0$ is shaded. 
The smallest abscissa of the cycle containing $v_0$ is denoted $i_0$.
The subtrees $T(0^1)$ and $T(v_0)$ 
 will be exchanged  by the construction  $\Psi_2$. 
}
  \label{fig:Psi-A3}
\end{figure}

On checks as in Section~\ref{sec:A1} that the tree $(T,r^q)$ satisfies
$(\rm T_1)$ and $(\rm T_2')$. Moreover, $w_0=v_0=\ell^1 \wedge r^q $,
and thus the condition of Lemma~\ref{lemma:casA3} holds.

\medskip
\noindent\boite {\bf The map $\Psi_1$ is injective.} One recovers the
graph $\hat G_f$ in the same way one recovers $\tilde
G_f$ in Section~\ref{sec:A1}. If $w_0\not
=0^1$,  the edge that enters $v_0$ in the
cycle containing $v_0$ is then cut into two half-edges, and the
outgoing half-edge is re-directed to $0^1$. This gives the graph $\tilde
G_f$, from which one reconstructs $f$ easily.

\medskip
\noindent\boite {\bf The map $\Psi_1$ is surjective.} Let $(T,r^q)$ be a marked
$\cS$-tree
rooted at $\rho\in V_0$, 
satisfying  $(\rm T _1)$ and $(\rm T _2')$. 
Let $w_0$ be the vertex that follows $1^1$ on the path from $r^q$ to
the root.
 Assume  that $w_0$ is the meet $\ell ^1\wedge r^q$. In particular,
it lies on the path from $\ell^1$ to the root.

We first construct from $(T,r^p)$ a functional graph $\hat H$ in the same
way we constructed $H$ in the proof of Lemma~\ref{lemma:casA1}. If
$w_0$ is distinct from $0^1$, it belongs to a cycle of $\hat H$: we
cut the edge of this cycle entering $w_0$ into two half-edges,
and redirect the outgoing half-edge onto $0^1$. 

 The graph $H$ thus obtained is the graph $G_h$ of a function $h: V\setminus \{0^1\}
\rightarrow V$ satisfying (F), which is checked as before to be an
$\cS$-function.

By construction $h(-1^1)=w_0$ belongs to the connected component containing $0^1$
in $\tilde G_{h}$,
so that $h$ belongs to the set of
$\cS$-functions considered in  Lemma~\ref{lemma:casA3}.

Finally, it is clear by construction that $\Psi_1(h)=(T,r^q)$, so $\Psi_1$ is
surjective.

\medskip

{\noindent$\bullet$ \bf Re-arranging subtrees: the bijection $\Psi_2$.}
The only vertices that are likely to be frustrated are the lower
records of the path going from $a_r$ to $1^1$, and the vertices $0^1$
and $v_0$.
These two vertices belong to the
path going from $\ell^1$ to the root.  We first correct their in-types
by swapping the subtrees $T(0^1)$ and $T(v_0)$ that are attached to
them 
in their respective pieces.
Let $\tilde T$ be the tree thus obtained. 
Then the in-type of $0^1$ in $\tilde{T}$ equals the in-type
of $v_0$ in the function $f$: indeed the edges contributing to these
in-types  are, in both cases, the
edges coming from $T(v_0)$, plus an edge coming from $V_{-1}$
(this edge joins $b_{-1}$ to $0^1$ in
$\tilde T$, and  $-1^1$ to $v_0$ in $f$). 
Similarly, the in-type of $v_0$ in $\tilde T$ equals the in-type of
 $0^1$ in $f$, since edges contributing to these in-types are, in
 both cases,  the edges coming from $T(0^1)$, plus an edge coming
 from $1^1$, plus an edge coming from a vertex having the same
 asbcissa as $u$ (this edge joins $u$ to $0^1$ in $f$; in $\tilde T$,
 it joins  $u$ to $v_0$, unless $u$ is the minimum on the
 cycle containing $v_0$. In this case, the abscissa of $u$ is $i_0$,
 $v_0$ is a source in $\tilde T$, and is the endpoint of another sink
 of abscissa $i_0$).
Finally,  the map $(T,a_r)\mapsto(\tilde T,a_r)$ is again an involution.

The frustrated vertices lying on the path going from $a_r$ to $1^1$ in
$\tilde T$ are treated as before, by  exchanging the trees attached to successive frustrated
vertices. Let $\Psi_2(T)$ be the marked tree obtained after performing these exchanges for
all $i\in\llbracket 1, r\rrbracket$.
As before $\Psi_2$ is an involution, so that  $\Psi:=\Psi_2\circ
\Psi_1$ is a bijection. It satisfies Property~(b),
and Lemma~\ref{lemma:casA3} is proved.

\subsection{Proof of Proposition~\ref{prop:casB}}

We now assume that $v_0=f(-1^1)$ belongs to $W_{i_0}$, with $i_0\ge
1$. We first perform some surgery on the piece $L(i_0)$ containing $v_0$.

\smallskip
\noindent{$\bullet$ \bf Surgery on $L(i_0)$: the graphs $\tilde L(i_0),
A(i_0)$ and $ B(i_0)$}.

Recall that $L(i_0)$ consists of a 
path going from the vertex $a_{i_0}$ to the vertex $i_0^1$, to which trees are
attached. One of these trees contains the vertex $v_0$, and we let $v$ be the
attachment point of this tree on the path. Note that we have $a(v)\geq i_0
\geq 1$, and in particular $v\neq v_0$ since $v_0 \in V_0$.

We now define two other vertices $x_0$, $y_0$ as follows (Figure~\ref{fig:portion-L-tilde}). Consider the path of $L(i_0)$ going from $v_0$ to $v$, and let us distinguish two
cases. If there
exists a vertex of negative abscissa along this path, we let
$x_{-1}$ (resp. $y_{-1}$) be the first (resp.  last) vertex of $V_{-1}$
encountered on the path from $v_0$ to $v$, and we let $x_0$ (resp. $y_0$) be the
vertex preceding $x_{-1}$ (resp. following $y_{-1}$) on the path.
Note that since $\max \cS= 1$
 and $\min \cS=-1$, the vertices $x_{-1}$ and $y_{-1}$ are
 well defined;
moreover, $x_0$ and $y_0$ belong to $V_0$.
Now, cut the edge between $x_0$ and $x_{-1}$, and the edge between $y_{-1}$ and
$y_0$. Among the three connected
components thus created, we call $\tilde L(i_0)$ the one containing $v$, 
we call $A(i_0)$ the one containing $x_{-1}$, and we call $B(i_0)$ the one
containing $v_0$. 
If all the vertices on the path
from $v_0$ to $v$ have a nonnegative abscissa, we let $\tilde
L(i_0):=L(i_0)$,
 $A(i_0):=B(i_0):=\varnothing$, and $y_0:=x_0:=v_0$. 
In this case, the vertices $x_{-1}$ and $y_{-1}$ are not defined.
\begin{figure}[h]
  \centering
  \scalebox{1}{\includegraphics{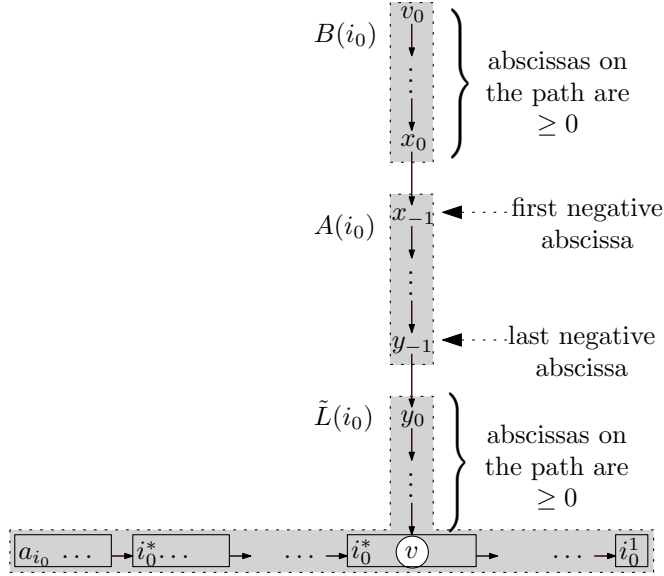}}
  \caption{The graphs $\tilde L (i_0)$, $A(i_0)$, and $B(i_0)$, in the case
where there is a vertex of negative abscissa between $v_0$ and $v$. 
Otherwise, $\tilde L(i_0)=L(i_0)$, and $A(i_0)=B(i_0)=\varnothing$. The
subtrees attached to the paths are not represented.}
  \label{fig:portion-L-tilde}
\end{figure}

 By construction, we have the following analogue of
Observation~\ref{obs:typeLi}.
\begin{Observation}\label{obs:typeLi-tilde}
Any vertex of the graph $\tilde L(i_0) \cup A(i_0)\cup
 B(i_0)$, distinct from $v_0$, $y_0$ and $x_{-1}$ and from the
 lower records  of the path going from $a_{i_0}$ to $i_0^1$,  has the same
 in-type in this graph and in the function $f$. 
\end{Observation}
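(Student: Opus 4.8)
The plan is to track the in-type of each vertex of $W_{i_0}$ through the elementary operations that produce $\tilde L(i_0)\cup A(i_0)\cup B(i_0)$ from $f$, recording at each step the few vertices whose set of incoming edges is modified. Since the in-type of a vertex depends only on its incoming edges, it changes exactly when an incoming edge is added or deleted, so it suffices to locate the affected endpoints.

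First I would recall the passage from $f$ to the left concatenation $L(i_0)$, which is governed by Observation~\ref{obs:typeLi}. The incoming edges touched here are of two kinds. On the one hand, the edges entering the sources of the pieces are split and then reconnected to the sink of the piece immediately to the left during concatenation, so that the in-type changes only at the lower records of the path from $a_{i_0}$ to $i_0^1$. On the other hand, the edge $-1^1\rightarrow v_0$ is split when forming $\tilde G_f$; since $v_0=f(-1^1)$ while $-1^1$ lies in $W_{-1}$ and not in $W_{i_0}$, this edge is deleted and never restored, so $v_0$ loses exactly one incoming edge (of increment $-1$). As $v_0\in V_0$ is not a source, it is not a lower record, which is precisely why it must be listed among the exceptions. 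No further incoming edge of a vertex of $W_{i_0}$ is affected: if $w$ is an $f$-preimage of $u\in W_{i_0}$ with $w\notin W_{i_0}$, then the edge $w\rightarrow u$ must have been split, hence $w=j^1$; by condition (F) the only such pairs are $(w,u)=(-1^1,v_0)$ and $(w,u)=((i_0+1)^1,i_0^1)$, and in the second case $u=i_0^1$ is already a lower record.

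Then I would account for the two cuts defining $\tilde L(i_0)$, $A(i_0)$ and $B(i_0)$. Cutting the edge between $x_0$ and $x_{-1}$ deletes the single incoming edge of $x_{-1}$ coming from its child $x_0\in V_0$, hence alters the in-type of $x_{-1}$ only; likewise, cutting the edge between $y_{-1}$ and $y_0$ deletes the single incoming edge of $y_0\in V_0$ coming from $y_{-1}\in V_{-1}$, hence alters the in-type of $y_0$ only. Combining with the previous step, the only vertices of $\tilde L(i_0)\cup A(i_0)\cup B(i_0)$ whose in-type differs from the one in $f$ are the lower records of the path from $a_{i_0}$ to $i_0^1$, together with $v_0$, $x_{-1}$ and $y_0$, which is the assertion. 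In the degenerate case where no vertex of negative abscissa lies between $v_0$ and $v$, no cut is made, $x_{-1}$ is undefined and $x_0=y_0=v_0$, so the statement reduces to the conclusion of the first step.

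The main obstacle is the bookkeeping around $v_0$: one must verify that the single incoming edge lost by $v_0$ is exactly the one coming from $-1^1$, and that no other vertex of $W_{i_0}$ silently loses an incoming edge through the formation of $\tilde G_f$. This is where condition (F) is essential, since it pins down the preimages $j^1$ whose outgoing edge is split and whose image falls in $W_{i_0}$ as being only $-1^1$ (mapping to $v_0$) and $(i_0+1)^1$ (mapping to the already-excluded source $i_0^1$).
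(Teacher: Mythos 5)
Your proposal is correct and follows the same route as the paper, which simply asserts the observation as immediate from the construction (as the analogue of Observation~\ref{obs:typeLi}); you have made explicit the edge-by-edge bookkeeping that the paper leaves implicit. In particular, your use of Condition (F) to pin down $-1^1\rightarrow v_0$ and $(i_0+1)^1\rightarrow i_0^1$ as the only split edges of $G_f$ pointing into $W_{i_0}$, and your identification of the two cut edges as affecting only the in-types of $x_{-1}$ and $y_0$, account correctly for every exception in the statement.
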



We now proceed with the description of the bijection, depicted in
Figure~\ref{fig:Psi-B}. We perform the following operations.
\begin{itemize}
\item For $i\in \llbracket 0 ,r\rrbracket$, 
construct the left concatenation $L(i)$. Construct the graph $\tilde L(i_0)$
from $L(i_0)$, and concatenate  the pieces
$$
L(r), L(r-1), \dots, L(i_0-1), \tilde L(i_0), L(i_0+1), \dots, L(1), L(0)
$$
to obtain a path from the vertex $a_r\in V_r$ to the vertex $0^1$. Note
that $y_0$ belongs to a subtree attached to  this path.
\item For $i\in \llbracket \ell , -1\rrbracket$ construct the right
  concatenation
$R(i)$. Concatenate all these pieces 
by increasing value of $i$, to
obtain a path from $\ell ^1$ to the vertex $b_{-1}\in V_{-1}$.
\item Add an edge from $b_{-1}$ to $y_0$. This connects the two previously
constructed components. 
\item If $A(i_0)\neq \varnothing$ (equivalently, if $B(i_0)\neq \varnothing)$, add
an edge from $0^1$ to  $x_{-1}$, and
 an edge from $y_{-1}$ to  $v_0$. This connects the components
$A(i_0)$ and $B(i_0)$ 
to the previously constructed tree.
\end{itemize}
 Let $\Psi_1(f):=(T,a_r)$ be the marked tree thus obtained.
It is rooted at 
 $x_0$ if $A(i_0)\neq \varnothing$
and  at $0^1$ otherwise. 
It is easily checked to be an $\cS$-tree.
\begin{figure}[h]
  \centering
  \scalebox{1}{\includegraphics{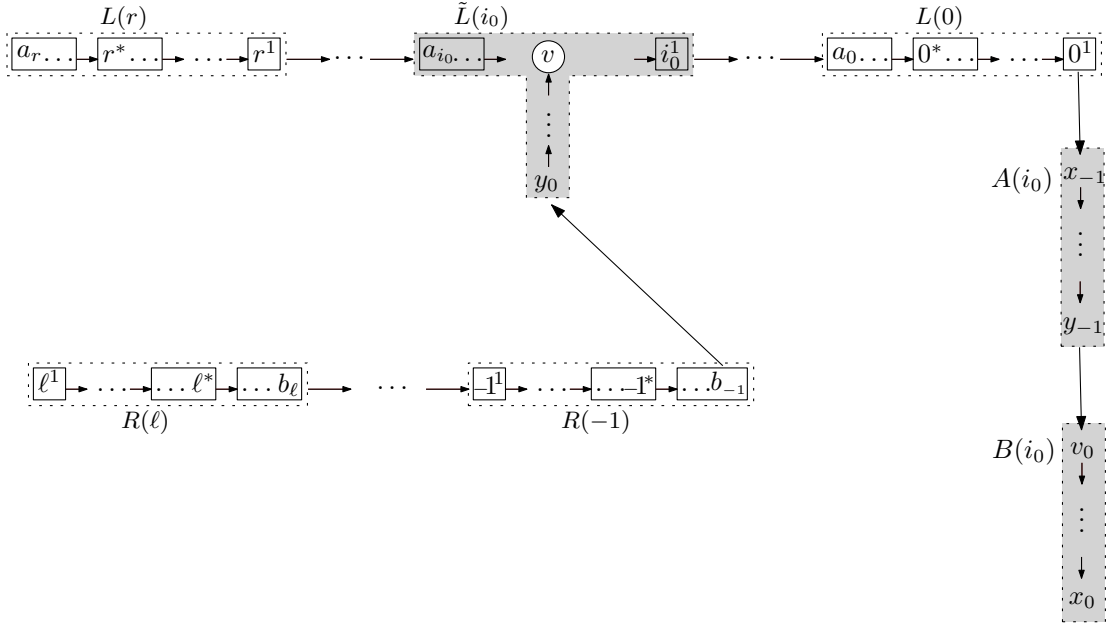}} 
  \caption{The bijection $\Psi_1$ of Proposition~\ref{prop:casB}. } 
  \label{fig:Psi-B}
\end{figure}

As in the previous proofs, we are first going to show that the mapping $\Psi_1$
is a bijection between the sets described  in Proposition~\ref{prop:casB}. Then
we will describe another bijection, $\Psi_2$, such that $\Psi_2\circ\Psi_1$
satisfies  Property~(b),  which $\Psi_1$ lacks.

\medskip
\noindent\boite {\bf The marked tree $(T,a_r)$ satisfies Properties $(\rm T _1)$ and $(\rm T _2'')$.}
 The fact that $T$ satisfies $(\rm T _1)$ is proved as in the proof of
Lemma~\ref{lemma:casA1}. 
The meet $\ell^1\wedge a_r$ is equal to
$v$, and since $i_0\geq 1$, it appears weakly before the
vertex $1^1$ on the path from $a_r$ to the root. Hence the first part
of $(\rm T _2'')$ holds. 
As underlined above, the abscissa of $v$,
being at least $i_0$, is positive. 
The arguments proving the second part of $(\rm T _2'')$ are the same
as those proving  $(\rm T _2')$  in Lemma~\ref{lemma:casA1} (the
concatenation of $R(\ell), \ldots, R(-1)$ is the same in both proofs).
Finally,  all vertices on the path
from $a_r$ to $0^1$ have a nonnegative abscissa. Hence either $0^1$ is the root
of the tree, and there are no vertices of negative abscissa on the path joining $a_r$ to the root, or
$0^1$ is not the root, in which case there are such vertices, and $0^1$
precedes the first of them  (which is $x_{-1}$). This
establishes the last part of~$(\rm T _2'')$.

\medskip

\noindent\boite {\bf The map $\Psi_1$ is injective.} 
Let us start from the marked tree $\Psi_1(f)=(T,a_r)$  and 
reconstruct $f$. 
First, on the path going from $\ell^1$ to the root,
we cut the edge that leaves the last vertex of abscissa $-1$, denoted $b_{-1}$.
The other endpoint of this edge is $y_0$.
By cutting the path that goes from $a_r$ to $0^1$ after each vertex of
the form $i^1$, we recover  the graphs $L(i)$, for $i\in
\llbracket0,r\rrbracket\setminus\{i_0\}$, and the 
graph $\tilde L(i_0)$. Similarly, by cutting, in the path that goes
from $\ell^1$ to $b_{-1}$, each edge that enters a vertex of the form
$i^1$, we recover the graphs $R(i)$, for ${i\in \llbracket \ell ,
-1\rrbracket}$. 
If $0^1$ is not the root of the tree, we recover moreover the vertices
$x_{-1}$ (it follows
$0^1$ on its path to the root) and $y_{-1}$ (the last vertex of negative abscissa on
this path), so that we can reconstruct the graphs $A(i_0)$ and $B(i_0)$. In all
cases, we can then reconstruct the graph $L(i_0)$.
We then proceed as in the proof of Lemma~\ref{lemma:casA1}:
in
each $L(i)$ (resp $R(i)$), locate each left-to-right (resp.
right-to-left) lower record, and split the edges
entering (resp. leaving) each of them. Then close the pieces
 whose root is not of the form
 $i^1$ to recover a cycle of the original function $f$. 
One thus recovers the graph $\tilde G_f$.
Finally, add an edge from $i^1$ to $(i+1)^1$
for $i\in\llbracket \ell , -2\rrbracket$, an edge from $i^1$ to $(i-1)^1$
for $i\in\llbracket r, 1\rrbracket$, and  an edge from $-1^1$ to $v_0$ 
(the vertex that follows $y_{-1}$ on its path to the root)
to recover the graph $G_f$. 

\medskip

\noindent\boite {\bf The map $\Psi_1$ is surjective.} Let $(T,r^q)$ be a marked
$\cS$-tree
rooted at $\rho\in V_0$, 
satisfying $(\rm T _1)$ and $(\rm T _2'')$.
Denote $v:=\ell ^1\wedge r^q$. By 
$(\rm T _2'')$, we have $a(v)\geq 1$.

 Consider the path going from $\ell ^1$ to $v$ in $T$, and 
let $b_{-1}$ be the last vertex of negative abscissa on this path, and
$y_0$ the next vertex on this path. 
Note that $b_{-1}$ (resp. $y_0$) has abcissa $-1$ (resp. $0$).
Split the edge joining  $b_{-1}$ to $y_0$. 
This gives two connected components, one
containing the path from $r^q$ to the root (including the vertex $v$),
the other  the path from $\ell ^1$ to $b_{-1}$.

Now visit  the  path going $b_{-1}$ to $\ell ^1$ (in this ``wrong''
direction).
Lower records on this
path 
are called \emm sinks,, and vertices preceding the
sinks (in the same ``wrong'' direction) are called
\emm sources, 
(we consider $\ell^{1}$ as a source).
We now split  all edges between sinks and sources, and thus obtain
 a number of \emm pieces,. 
By  $(\rm T _2'')$, each vertex $i^1$ for $i\in\llbracket \ell ,-1 \rrbracket$ is
the source and the sink of a piece.
Take  all the pieces containing a  vertex $i^1$, for $i\leq -1$, and
concatenate them  by adding an edge from $i^1$ to
$(i+1)^1$ for $i\in\llbracket \ell , -2\rrbracket$.
In the remaining pieces, merge the two extremal half-edges to form a cycle.

We now describe a step of the reverse bijection that applies only if $0^1$ is not the
root of $T$. In that case,  $(\rm T _2'')$ implies that  the vertex $0^1$ 
belongs to the path from $r^q$ to the root,
and is followed by a vertex of $V_{-1}$, say $x_{-1}$.  Let $y_{-1}$
be the last vertex of 
$V_{-1}$ on the path from $0^1$ to the root, and  let $v_0\in V_0$ be the
vertex following it. We now split the edge between $0^1$ and $x_{-1}$, and the
edge between $y_{-1}$ and $v_0$. We call $\tilde A$ and $\tilde B$ the connected components
containing $x_{-1}$ and $v_0$ after the splitting, respectively.
We re-connect the components $\tilde A$ and $\tilde B$ to the connected component of $r^q$ by
adding an edge from $x_0$ to $x_{-1}$ and from $y_{-1}$ to $y_0$. This
concludes the step 
that is specific to the case where $0^1$ is not the root of $T$.
Otherwise, we denote 
$\tilde A:=\tilde B:=\varnothing$, and
 $y_0=x_0:=v_0$. 

We now consider the path going from $r^q$  to $0^1$. 
Lower records on this
path are called \emm sources,, and vertices preceding the sources are called
\emm sinks, 
(we consider $0^1$ as a sink).
We now split  each edge that enters a source of the path,
and thus obtain a number of \emm pieces,. 
By  $(\rm T _1)$ and $(\rm T _2'')$, each vertex $i^1$ for $i\in\llbracket 0,r \rrbracket$ is
the source and the sink of a piece.
Take  all the pieces containing a  vertex of the form $i^1$, for
$i\geq0$, and concatenate them by
adding an edge from $i^1$ to
$(i-1)^1$ for $i\in\llbracket 1, r\rrbracket$.
Transform each of the remaining pieces into a cycle by merging the two
extremal half-edges. 
Finally, add an edge from $-1^1$ to  $v_0$.

 Let $H$ be the graph
thus obtained.   By construction, $H$ is the graph $G_h$ of a function $h:
V\setminus \{0^1\} \rightarrow V$ satisfying (F). 
One easily checks that $h$ is an $\cS$-function, using the same
arguments as in the proof of Lemma~\ref{lemma:casA1},
 plus the facts
that $h(y_{-1})=y_0 \in V_0$ and $h(x_0)=x_{-1} \in V_{-1}$.

Let us now prove that $h$ satisfies the condition of
Proposition~\ref{prop:casB}.
By construction, the source of the component of $\tilde G_h$
containing $v_0=h(-1^1)$ is the last lower record encountered 
(weakly) 
before $v$ on the path
from $r^q$ to $v$ in $T$. By  $(\rm T _2'')$ this source appears (weakly) before
$1^1$, and since $1^1$ is a lower record by $(\rm T _1)$, the abscissa of this
source is at least $1$. In other words, we have 
$v_0\in\cup_{i=1}^r W_i$,
 the sets $W_i$ being understood with
respect with the function $h$.

Finally, note that all abscissas are nonnegative on the paths from $y_0$ to $v$
and from $v_0$ to $x_0$. 
This implies that $\tilde A$ and
$\tilde B$ coincide with the pieces $A(i_0)$ and $B(i_0)$ which one would build
from the function $h$. From that it is clear  that $\Psi_1(h)=(T,r^q)$, so that $\Psi_1$ is surjective.

\medskip

{\noindent$\bullet$ \bf Re-arranging subtrees: the bijection $\Psi_2$.}
We say  that a vertex $v\in V$ is \emm frustrated, if its in-type is not the
same in $f$ and in $(T,a_r)=\Psi_1(f)$.
We claim that the vertices belonging to $\cup_{i=\ell }^{-1} W_i$ are
not frustrated. This is a direct
consequence of Observation~\ref{obs:typeRi} and of the fact that to concatenate
$R(i-1)$ to $R(i)$, for $i\in\llbracket \ell +1, -1 \rrbracket$, we add a new incoming
edge to the vertex $i^1$ coming from $V_{i-1}$, which 
 compensates the deletion of the edge $(i-1)^1 \rightarrow
i^1$ in the construction of $\tilde{G}_f$ from $f$.
Similarly,  Observations~\ref{obs:typeLi} and~\ref{obs:typeLi-tilde}
give:
\begin{Observation}\label{obs:type-casB}
Any vertex of 
$\cup_{i=0 }^{r} W_i$ 
 distinct from $v_0,y_0,x_{-1}$, and from the lower records of
the path from $r^q$ to $0^1$ is not frustrated.
\end{Observation}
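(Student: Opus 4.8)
The plan is to obtain Observation~\ref{obs:type-casB} in exactly the way the preceding claim about $\cup_{i=\ell}^{-1}W_i$ was obtained from Observation~\ref{obs:typeRi}: by combining the in-type preservation already recorded for the left concatenations (Observations~\ref{obs:typeLi} and~\ref{obs:typeLi-tilde}) with a check that the final assembly of the pieces into $(T,a_r)=\Psi_1(f)$ disturbs in-types only at the excluded vertices. So I would fix a vertex $v\in\cup_{i=0}^r W_i$ distinct from $v_0$, $y_0$, $x_{-1}$, and from every lower record of the path from $r^q$ to $0^1$, and split into two cases according to the component $W_i$ containing $v$.

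First, if $v\in W_i$ with $i\neq i_0$, then $v$ lives in the ordinary left concatenation $L(i)$ and, not being a lower record of the path from $a_i$ to $i^1$ (such records are among the lower records of the whole path from $r^q$ to $0^1$, which $v$ avoids), Observation~\ref{obs:typeLi} gives that $v$ has the same in-type in $f$ and in $L(i)$. If instead $v\in W_{i_0}$, then $v$ lies in $\tilde L(i_0)\cup A(i_0)\cup B(i_0)$, and since it differs from $v_0$, $y_0$, $x_{-1}$ and from the relevant lower records, Observation~\ref{obs:typeLi-tilde} gives that $v$ has the same in-type in that graph and in $f$.

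It then remains to pass from these individual components to the marked tree $T$. The only edges created during the assembly are the concatenation edges $i^1\rightarrow(i-1)^1$ for $i\in\llbracket 1,r\rrbracket$, the edge $b_{-1}\rightarrow y_0$, and, when $A(i_0)\neq\varnothing$, the two reconnection edges $0^1\rightarrow x_{-1}$ and $y_{-1}\rightarrow v_0$; when $A(i_0)=B(i_0)=\varnothing$ one has $y_0=v_0$ and the last two edges are absent. In the root-ward orientation each of these edges adds a child (pre-image) only to its head, and the heads are precisely the lower records $(i-1)^1$ together with $y_0$, $x_{-1}$ and $v_0$, all of which are excluded from the statement. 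Since the subtrees are transported unchanged, the in-type of our $v$ is therefore the same in $T$ as in its component, hence the same as in $f$, and $v$ is not frustrated.

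The part needing the most care is the bookkeeping of the last paragraph in the subcase $A(i_0)\neq\varnothing$: I would have to verify that cutting $L(i_0)$ into $\tilde L(i_0),A(i_0),B(i_0)$ and then reattaching via $0^1\rightarrow x_{-1}$ and $y_{-1}\rightarrow v_0$ changes the child-set of no vertex other than $x_{-1}$, $y_0$ and $v_0$, so that the set of vertices whose in-type can be altered by the assembly is exactly the lower records together with $\{v_0,y_0,x_{-1}\}$. Once this is confirmed, the case disjunction above yields the observation.
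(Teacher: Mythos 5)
Your argument is correct and is essentially the paper's own proof (which is stated in one line as a consequence of Observations~\ref{obs:typeLi} and~\ref{obs:typeLi-tilde}), with the assembly bookkeeping written out explicitly. One tiny imprecision: the head of the edge joining $L(i)$ to $L(i-1)$ is $a_{i-1}$, the greatest source of $L(i-1)$, rather than $(i-1)^1$; since $a_{i-1}$ is still a lower record of the path from $r^q$ to $0^1$, your conclusion is unaffected.
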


In fact,  $y_0$ is not frustrated: 
 indeed, the edge coming from
  $b_{-1}$ in $T$ compensates either the loss of the edge coming from $y_{-1}$ in
$L(i_0)$ (if $y_0\neq v_0$) or the loss of the edge coming from $-1^1$
in $f$ (if 
$y_0=v_0)$, and 
since $y_{0}$ does not belong to the distinguished path of
$L(i_0)$, Observation~\ref{obs:typeLi} enables us to conclude.
Similarly, $x_{-1}$ (if it exists) 
is not frustrated, since the edge coming from $0^1$ in $T$ compensates
the edge coming from $x_0$ in $L(i_0)$. 
Finally,
$v_0$ is not frustrated either: either it is equal to
$y_0$ (if $0^1$ is the root of $T$), or the edge coming from $y_{-1}$ in $T$
compensates the loss of the edge coming from $-1^1$ in $f$.
Therefore, we can strengthen  our previous observation as follows.
\begin{Observation}\label{obs:type-casB-bis}
Any vertex of  $\cup_{i=0 }^{r} W_i$ 
 distinct  from the lower records of
the path from $r^q$ to $0^1$ is not frustrated.
\end{Observation}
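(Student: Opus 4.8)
The plan is to reduce to the three vertices that Observation~\ref{obs:type-casB} leaves open. That observation already shows that every vertex of $\cup_{i=0}^{r}W_i$ which is neither a lower record of the path from $r^q$ to $0^1$ nor one of $v_0,y_0,x_{-1}$ is unfrustrated; so to obtain the sharpened statement it suffices to prove that $v_0$, $y_0$ and $x_{-1}$ are themselves unfrustrated. The guiding principle is that the in-type $(i;\bmc)$ of a vertex records only the abscissas of its incoming edges, so such a vertex keeps its in-type as soon as the surgery building $\Psi_1(f)$ removes each of its incoming edges only to replace it by a new incoming edge issued from a vertex at the same abscissa. For each of the three vertices I would therefore identify precisely which incoming edge is deleted and which is created along the chain $f\mapsto\tilde G_f\mapsto L(i_0)\mapsto \tilde L(i_0)\cup A(i_0)\cup B(i_0)\mapsto T$, and check that their abscissas match.

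First I would treat $y_0$. The only incoming edge of $y_0$ disturbed by the surgery is severed when $\tilde L(i_0)$, $A(i_0)$, $B(i_0)$ are cut apart, and $\Psi_1$ then adds the edge $b_{-1}\rightarrow y_0$; since $b_{-1}\in V_{-1}$, this edge compensates the loss of the edge from $y_{-1}\in V_{-1}$ when $y_0\neq v_0$, and the loss of the edge from $-1^1\in V_{-1}$ (present in $f$ because $f(-1^1)=v_0$) when $y_0=v_0$. In both cases the child of $y_0$ at abscissa $-1$ is restored, and Observation~\ref{obs:typeLi} (applied to the vertices off the distinguished path of $L(i_0)$) shows that no other incoming edge of $y_0$ has moved, so $y_0$ is unfrustrated. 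The vertex $x_{-1}$ is handled in the same spirit: the edge from $x_0\in V_0$ lost in forming $A(i_0)$ is replaced by the edge $0^1\rightarrow x_{-1}$ added by $\Psi_1$, with $0^1\in V_0$, so the child of $x_{-1}$ at abscissa $0$ is preserved.

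Finally I would deal with $v_0$ by splitting on whether $0^1$ is the root of $T$, equivalently whether $A(i_0)=\varnothing$. If it is, then $v_0=y_0$ and the first case above already applies. Otherwise $y_{-1}$ exists, and the unique incoming edge of $v_0$ lost along the way is the edge $-1^1\rightarrow v_0$ of $f$ (severed already in $\tilde G_f$, hence absent from $B(i_0)$); the edge $y_{-1}\rightarrow v_0$ added by $\Psi_1$, with $y_{-1}\in V_{-1}$, restores the child at abscissa $-1$. I expect the crux to be exactly this bookkeeping: one must verify that, across the four successive transformations, each of $v_0,y_0,x_{-1}$ loses precisely one incoming edge and gains precisely one, and that the case splits ($y_0=v_0$ versus $y_0\neq v_0$, and $A(i_0)=\varnothing$ versus $A(i_0)\neq\varnothing$) are exhaustive and mutually consistent. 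Granting this, $v_0,y_0,x_{-1}$ are unfrustrated, and together with Observation~\ref{obs:type-casB} this gives the claim.
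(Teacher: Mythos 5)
Your proposal is correct and follows essentially the same route as the paper: reduce to the three vertices $v_0$, $y_0$, $x_{-1}$ left open by Observation~\ref{obs:type-casB}, and for each one match the deleted incoming edge with the newly created one at the same abscissa ($b_{-1}\rightarrow y_0$ versus $y_{-1}$ or $-1^1$; $0^1\rightarrow x_{-1}$ versus $x_0$; $y_{-1}\rightarrow v_0$ versus $-1^1$, with the case split on whether $0^1$ is the root). The case analysis and the appeal to Observation~\ref{obs:typeLi} coincide with the paper's argument.
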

We correct the in-types of these sources by a second map $\Psi_2$.
We proceed as in the proof of Theorem~\ref{thm:basic} in
Section~\ref{sec:basic}, 
by exchanging the subtrees attached to adjacent frustrated vertices.
 Let $\Psi_2(T,r^q)$ be the tree thus obtained,
and $\Psi(f)=\Psi_2\circ\Psi_1(f)$. 
One again, $\Psi_2$ is an involution.  In particular $\Psi$ is
bijection, which satisfies Property~(b) thanks to the construction
$\Psi_2$.  Proposition~\ref{prop:casB} is proved.

\section{Enumeration of general embedded trees}
\label{sec:enum-neg}
In this section, we prove the  enumerative results of
Section~\ref{sec:main} in the case $\ell<0$.
These results follow from the bijection of 
Theorem~\ref{thm:rooted}, combined with the enumeration of
$\cS$-functions (which remains an elementary exercise). We also need to
relate the $\cS$-trees 
occurring in Theorem~\ref{thm:rooted} to the $\cS$-embedded
Cayley trees  of Section~\ref{sec:main}. This is done in the following lemma.
We adopt  the same notation as in the previous section:  $V= \cup_{i=\ell}^r V_i$ with
$V_i=\{i^1, \ldots, i^{n_i}\}$, and $\cS\subset \zs$ satisfies $\min
\cS=-1$ and $\max \cS=1$.  The \emm type distribution, of a tree is the
collection  of numbers $n(i,s,\bmc)$ (with $i\in
 \llbracket 0,r\rrbracket$, $s\in \cS$ and 
$\bmc \in \ns^{3}$)
 giving the number of vertices of type $(i;s;\bmc)$.

\begin{Lemma}\label{lem:linkneg}
   The number of 
 $\cS$-embedded Cayley trees having a prescribed type distribution is
$$
\frac 1 {n_\ell n_r} \frac {n!}{\prod_{i=\ell}^r (n_i-1)!}
$$
times the number of marked $\cS$-trees satisfying Conditions $(\rm T_1)$ and $(\rm T_2)$ of
Theorem~{\rm\ref{thm:rooted}} and having the  same  type distribution (as
always, 
$(n_\ell, \ldots, n_r)$ 
denotes the profile of the tree, and $n$
its size).
\end{Lemma}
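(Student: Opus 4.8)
The plan is to follow, step for step, the proof of Lemma~\ref{lem:link}, the only genuinely new feature being that a marked $\cS$-tree of Theorem~\ref{thm:rooted} now carries \emph{two} distinguished vertices instead of one: the marked vertex $r^q\in V_r$ and the vertex $\ell^1\in V_\ell$. This is exactly what upgrades the prefactor $1/n_r$ of Lemma~\ref{lem:link} to $1/(n_\ell n_r)$. Accordingly, I would first recast the statement in marked form. Since an $\cS$-embedded Cayley tree with the prescribed profile has $n_\ell$ vertices at abscissa $\ell$ and $n_r$ at abscissa $r$, it is equivalent to show that the number of such Cayley trees carrying one marked vertex at abscissa $\ell$ and one marked vertex at abscissa $r$ (and the prescribed type distribution) equals
$$
\frac{n!}{\prod_{i=\ell}^r(n_i-1)!}
$$
times the number of marked $\cS$-trees satisfying $(\rm T_1)$ and $(\rm T_2)$ with the same type distribution; the lemma then follows upon dividing by the $n_\ell n_r$ ways of placing the two marks.

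Then I would build an explicit $1$-to-$\bigl(n!/\prod_{i=\ell}^r(n_i-1)!\bigr)$, type-distribution-preserving correspondence, reproducing the two relaxation steps of Lemma~\ref{lem:link}. Starting from a marked $\cS$-tree $(T,r^q)$ satisfying $(\rm T_1)$ and $(\rm T_2)$, I transfer $r^q$ and $\ell^1$ to the future Cayley tree as its two marks. Conditions $(\rm T_1)$ and $(\rm T_2)$ rigidify the naming: reading the paths from $r^q$ and from $\ell^1$ to the root, they single out a distinguished vertex at each abscissa (the various $i^1$, together with the root in $V_0$). Relaxing this rigid naming, by renaming the distinguished vertex of abscissa $i$ with an arbitrary vertex of $V_i$ for each $i\in\llbracket\ell,r\rrbracket$, multiplies the count by $\prod_{i=\ell}^r n_i$; a relabelling by $\{1,\dots,n\}$ that is increasing within each $V_i$, followed by erasure of the names, then multiplies it by $n!/\prod_i n_i!$, and $\prod_{i=\ell}^r n_i\cdot n!/\prod_i n_i!=n!/\prod_{i=\ell}^r(n_i-1)!$. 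As no step alters the underlying tree or the abscissas, the full type distribution is preserved, which is all that is required.

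The part needing genuine care, and the reason this is isolated as a separate lemma, is the bookkeeping at the three exceptional abscissas $\ell$, $0$ and $r$, where the left/right asymmetry of $(\rm T_1)$ and $(\rm T_2)$ emphasised just before Theorem~\ref{thm:rooted} makes itself felt. At abscissa $r$ the marked vertex $r^q$ and the path-anchor $r^1$ are two distinct vertices; at abscissa $0$ the root and the anchor $0^1$ must be disentangled; and at abscissa $\ell$ the single vertex $\ell^1$ plays simultaneously the role of the distinguished mark and of the starting point of the path governing $(\rm T_2)$, so that its relaxation and its mark refer to the very same vertex. The crux is to verify that, conversely, from a doubly-marked Cayley tree one recovers the rigid naming, hence both the relaxation choices and the tree $(T,r^q)$, in a unique way. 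I would carry this out by case disjunction on the position of the meet $\ell^1\wedge r^q$, treating $(\rm T_2')$ and $(\rm T_2'')$ in parallel exactly as in Propositions~\ref{prop:casA} and~\ref{prop:casB}; this is where the hypotheses $\min\cS=-1$ and $\max\cS=1$ are used, to guarantee that the relevant lower records (and in particular the anchors $i^1$) are well defined along both paths, so that the recovery never stalls.
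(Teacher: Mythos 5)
Your proposal is correct and follows essentially the same route as the paper: restate the lemma for doubly-marked Cayley trees, then build the $1$-to-$\bigl(n!/\prod_{i=\ell}^r(n_i-1)!\bigr)$ type-preserving correspondence by relaxing the rigid names $i^1$ (factor $\prod_i n_i$) and then assigning increasing labels within each abscissa class (factor $n!/\prod_i n_i!$), with invertibility coming from the fact that the mark at abscissa $\ell$ locates $\ell^1$ while $(\rm T_1)$ and $(\rm T_2)$ pin down the remaining $i^1$. The only cosmetic difference is that you spell out the case disjunction between $(\rm T_2')$ and $(\rm T_2'')$ when recovering the names $0^1,\dots,(\ell+1)^1$, which the paper leaves implicit.
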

\begin{proof}
Equivalently, we want to prove that the number  of
 $\cS$-embedded Cayley trees having a prescribed type distribution 
and \emm  two marked vertices,, one at abscissa $\ell$ and the other
at abscissa~$r$, is 
$$
 \frac {n!}{\prod_i (n_i-1)!}
$$
times the number of marked $\cS$-trees satisfying  $(\rm T_1)$ and $(\rm T_2)$ and having the  same  type
distribution. We will construct a 1-to-$n!/\prod_i (n_i-1)!$
correspondence between marked
$\cS$-trees satisfying $(\rm T_1)$ and $(\rm T_2)$ and doubly marked
$\cS$-embedded Cayley trees, preserving  the type distribution.

Let $(T,r^q)$ be a marked $\cS$-tree on $V$ satisfying $(\rm T_1)$ and
$(\rm T_2)$. Let  us mark, in addition, the vertex $\ell^1$.  
For $\ell \le i \le r$,  let us rename
the vertex $i^1$ by $i^k$, for a $k$ chosen in
$\{1, \ldots, n_i\}$; conversely, let us rename $i^k$ by $i^1$.  This gives
an arbitrary  $\cS$-tree $T_1$, rooted at a vertex of $V_0$, with two
marked vertices, one at abscissa $\ell$ and one  at abscissa $r$. This tree may
or may not satisfy  $(\rm T_1)$ and $(\rm T_2)$. 
The number of different trees $T_1$ that can be constructed  from $T$
in such a way is $\prod_{i=\ell}^r n_i$.  
The tree $T$ can be recovered
from $T_1$ by restoring vertex names: indeed, the vertex
$\ell^1$ in $T$ is at the position of the marked vertex of abscissa
$\ell$ in $T_1$, and the position of $i^1$ in $T$, for $i>\ell$, is
prescribed by $(\rm T_1)$ and $(\rm T_2)$.

Let us now assign labels from $\{1, \ldots,
n\}$, with $n=\sum_i n_i$, to the vertices of $T_1$, in such a way
that the labels $a$ and $b$ assigned to $i^k$ and $i^p$ satisfy
$a<b$ if $k<p$. There are $n!/\prod_i n_i!$ ways to do so. Finally,
erase all names $i^k$ from the tree, for all $i$ and $k$. This gives
an arbitrary rooted $\cS$-embedded
Cayley tree $T_2$, with a marked vertex at abscissa $\ell$ and one at
abscissa $r$. The tree
$T_1$ can be recovered from $T_2$ by renaming the vertices of abscissa
$i$ with $i^1, \ldots, i^{n_i}$ in the unique way that is consistent
with the order on labels: if two vertices of labels $a$ and $b$, with
$a<b$, lie at abscissa $i$, then  their names $i^k$ and $i^p$ must
satisfy $k<p$. 

 The marked $\cS$-tree $T$ has given rise to $n!/\prod_i (n_i-1)!$
doubly marked embedded trees $T_2$. Moreover,
$T_2$, $T_1$ and $T$ have the same type distribution. The result
follows.
\end{proof}

In what follows, we count trees by counting functions, using the
correspondence of Theorem~\ref{thm:rooted}. When we prescribe the
types of vertices, or the number of vertices of a certain type, we
assume as in Section~\ref{sec:enum-nonneg} that the natural \emm
compatibility conditions, hold.

Recall that Theorem~\ref{thm:cayley-profile} is already proved, even
when $\ell<0$,  thanks to the fourth remark that follows its statement.

\subsection{The profile of $\cS$-ary trees: proof of  Theorem~\ref{thm:S-ary-profile}}
We argue as in Section~\ref{sec:proof-S-ary-profile}.
The number of $\cS$-ary trees of vertical profile $(n_ \ell,
\ldots, n_r)$ is obtained by divising by $n!$ the number of
injective $\cS$-embedded Cayley trees with the same profile.
By Lemma~\ref{lem:linkneg}, the number of injective
$\cS$-embedded Cayley trees having vertical profile $(n_ \ell,
\ldots, n_r)$ is $n!/n_\ell /n_r /\prod_i (n_i-1)!$ times the number
of marked injective $\cS$-trees satisfying $(\rm T_1)$ and $(\rm
T_2)$.
By Theorem~\ref{thm:rooted} (and in particular Property (b)), the
number of such trees is also the number of $\cS$-functions from
$V\setminus\{0^1\}$ satisfying (F) that are injective on each
$V_i$. This number is given by the following
lemma. Theorem~\ref{thm:S-ary-profile} follows, in the case $\ell<0$.

\begin{Lemma}
  The number of $\cS$-functions from
$V\setminus\{0\}$, injective on each $V_i$ and satisfying $(\rm F)$ is
$$
n_0\, {\sum_{s\in \cS} n_{-s}  \choose n_0 -1}
 \prod_{i=\ell \atop i\not = 0}^r{\sum_{s\in \cS} 
n_{i-s} -1 \choose n_i -1} \prod_{i=\ell}^r(n_i-1)!.
$$
\end{Lemma}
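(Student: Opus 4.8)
The plan is to count these functions directly, abscissa by abscissa, exactly in the spirit of the proof of Lemma~\ref{thm:injections-profile}. The key structural observation is that the image of every vertex of $V_i$ lies in $\cup_{s\in\cS}V_{i-s}$, and that the injectivity requirement concerns only vertices sharing a common abscissa; hence the choices made at distinct abscissas are independent, and the total count factors as a product over $i\in\llbracket \ell,r\rrbracket$. For each $i$ I would determine which vertices of $V_i$ lie in the domain $V\setminus\{0^1\}$, which of them have their image pinned down by $(\rm F)$, and then count the injective assignments of the remaining vertices into the still-available targets.

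First I would treat the cases dictated by $(\rm F)$. For $1\le i\le r$, the vertex $i^1$ is forced onto $(i-1)^1\in\cup_s V_{i-s}$ (using $1\in\cS$), so the remaining $n_i-1$ vertices of $V_i\setminus\{i^1\}$ inject into the $\sum_s n_{i-s}-1$ targets that are left, contributing $\binom{\sum_s n_{i-s}-1}{n_i-1}(n_i-1)!$. For $\ell\le i\le -2$ the situation is identical, with $(i-1)^1$ replaced by $(i+1)^1$ (using $-1\in\cS$): this again occupies one slot of $\cup_s V_{i-s}$ and yields the same factor. For $i=0$, however, $0^1$ is absent from the domain altogether rather than being pinned to a target, so no slot is pre-occupied: the $n_0-1$ vertices of $V_0\setminus\{0^1\}$ inject into the full set of $\sum_s n_{-s}$ elements, giving $\binom{\sum_s n_{-s}}{n_0-1}(n_0-1)!$. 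This is precisely why the $i=0$ binomial carries no $-1$ on top.

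The one case requiring slightly more care is $i=-1$, which is where the extra factor $n_0$ originates. Here $(-1)^1$ must be sent to some $v_0\in V_0$, offering $n_0$ choices; since $V_0=V_{-1-(-1)}$, the chosen $v_0$ occupies one slot of $\cup_s V_{-1-s}$, so the remaining $n_{-1}-1$ vertices inject into $\sum_s n_{-1-s}-1$ targets, for a total of $n_0\,\binom{\sum_s n_{-1-s}-1}{n_{-1}-1}(n_{-1}-1)!$. Multiplying these contributions over all $i$ then collects the $n_0$ out front, leaves $\binom{\sum_s n_{-s}}{n_0-1}$ for $i=0$, a factor $\binom{\sum_s n_{i-s}-1}{n_i-1}$ for every $i\ne0$, and the product $\prod_{i=\ell}^r(n_i-1)!$, which is exactly the claimed expression.

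I do not anticipate a serious obstacle, as the whole argument is elementary counting. The only delicate point is the bookkeeping of occupied target slots: one must check in each case that the image forced by $(\rm F)$ genuinely lies in $\cup_s V_{i-s}$ (so that it really removes one available target and produces the $-1$ in the binomial), and one must notice the asymmetry that $0^1$ contributes no such occupied slot, while $(-1)^1$ contributes both an occupied slot \emph{and} the multiplicative factor $n_0$.
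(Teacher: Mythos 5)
Your proposal is correct and is essentially the paper's own proof: the same abscissa-by-abscissa factorization, the same treatment of the slot occupied by the forced image of $i^1$ for $i\neq 0$, the same absence of an occupied slot at $i=0$, and the same $n_0$ choices for $v_0=f(-1^1)$ at $i=-1$. No gaps.
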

\begin{proof}
   We proceed as in the proof of Lemma~\ref{thm:injections-profile}.
For $i\not\in\{0, -1\}$,
 we choose the (distinct) images of the elements of
$V_i\setminus\{i^1\}$ in the set 
$\cup_s V_{i-s} \setminus\{f(i^1)\}$, 
where
$f(i^1)=(i-1)^1$ if $i>1$ and $f(i^1)=(i+1)^1$ if $i<0$. There are ${\sum_s
n_{i-s} -1 \choose n_i -1} (n_i-1)!$ ways to do so.

For $i=0$, we choose the (distinct) images of the elements of $V_0\setminus\{0^1\}$
in the set $\cup_s V_{-s}$. There are ${\sum_s n_{-s}  \choose n_0 -1} (n_0-1)!$
ways to do so.

For $i=-1$, we first choose the image 
$v_0$
of $-1^1$ in the set $V_0$: there are
$n_0$ ways to do so.  Then, we choose the (distinct) images of elements of
$V_{-1}\setminus \{-1^1\}$ in the set $\cup_s V_{-1-s} \setminus \{v_0\}$.
There are ${\sum_s n_{-1-s} -1  \choose n_{-1} -1 } (n_{-1} -1)!$ ways to do so.
\end{proof}

\subsection{The out-types of $\cS$-embedded Cayley trees: proof of
  Theorem~\ref{thm:cayley-out}}
We argue as in Section~\ref{sec:proof-cayley-out}. By
Lemma~\ref{lem:linkneg} and Theorem~\ref{thm:rooted} (in particular
Property (a)), the number of $\cS$-embedded Cayley trees having
$n(i,s)$ non-root vertices of out-type $(i;s)$ is  $n!/n_\ell /n_r
/\prod_i (n_i-1)!$ times the number 
of  $\cS$-functions from
$V\setminus\{0^1\}$ to $V$ satisfying (F) and having the same distribution of
out-types. This number is given by the following 
lemma. Theorem~\ref{thm:cayley-out} follows, in the case $\ell<0$.
\begin{Lemma}\label{lem:funtctions-out-neg}
 $1$. The number of $\cS$-functions from $V\setminus\{0^1\}$ to $V$ satisfying
 $(\rm F)$ and in which each  $v \in V$  has a prescribed
out-type $(i_v;s_v)$ is, assuming compatibility,
$$
n_\ell  n_r\prod_{i=\ell }^r n_i^{c(i)-1},
$$
where  $c(i)$ is the number of vertices whose image lies in $V_i$:
$$
c(i)=\sharp\{v\in V: i_v-s_v =i\}.
$$

\noindent
$2$. Let $n(i,s)$ be non-negative integers, for $i\in\llbracket \ell,r
\rrbracket$ and $s\in \cS$, satisfying the compatibility conditions of
an out-type distribution. The number of $\cS$-functions from $V\setminus\{0^1\}$ to $V$ satisfying
 $(\rm F)$ and in which, for
  all  $i\in \llbracket \ell,r \rrbracket$ and $s\in \cS$, exactly
  $n(i,s)$  vertices have   out-type $(i;s)$ is    
$$
\frac {n_\ell n_r \prod\limits_{i=\ell} ^r (n_i-1)!\prod\limits_{i=\ell}^rn_i^{c(i)-1} \prod\limits_{i=\ell}^{-1} n(i,-1)\prod\limits_{i=1}^r n(i,1)}
{\prod\limits_{i,s} n(i,s)!},
$$
where $c(i)$ is the number of vertices whose image lies in $V_i$:
$$
c(i)= \sum_{s} n(i+s,s).
$$
\end{Lemma}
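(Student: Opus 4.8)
The plan is to mirror the proof of Lemma~\ref{thm:functions-out}, keeping careful track of the new constraints imposed by~$(\rm F)$ at the three families of special vertices. The only genuinely new feature compared with the non-negative case is the vertex $-1^1$, which $(\rm F)$ sends to an \emph{arbitrary} vertex $v_0\in V_0$ rather than to a prescribed one.

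First I would treat Part~1. Since an out-type $(i_v;s_v)$ fixes the abscissa $i_v-s_v$ of the target box but not the target vertex, building an $\cS$-function with prescribed out-types amounts to choosing, for every $v$, an image inside the box $V_{i_v-s_v}$. I would read off from~$(\rm F)$ exactly which of these images are already forced: the vertex $i^1$ has a forced image for $i\in\llbracket \ell,-2\rrbracket$ (namely $(i+1)^1$) and for $i\in\llbracket 1,r\rrbracket$ (namely $(i-1)^1$), whereas $-1^1$, whose image $v_0$ is free \emph{within} $V_0$, is not forced, and $0^1$ has no image. Sorting these forced images by their target box, one finds that $V_\ell$ and $V_r$ receive no forced image, while every interior box $V_j$ with $\ell<j<r$ receives exactly one (coming from $(j-1)^1$ when $j<0$ and from $(j+1)^1$ when $j\ge 0$; in particular the unique forced image into $V_0$ comes from $1^1$, while $-1^1$ remains a free arrow into $V_0$). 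Hence the number of free choices into $V_i$ is $c(i)$ for $i\in\{\ell,r\}$ and $c(i)-1$ otherwise, giving
$$
n_\ell^{c(\ell)}\,n_r^{c(r)}\prod_{\ell<i<r}n_i^{c(i)-1}=n_\ell\, n_r\prod_{i=\ell}^r n_i^{c(i)-1},
$$
which is Part~1.

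For Part~2, I would first count the ways to distribute out-types among the vertices of each box and then invoke Part~1, exactly as in Lemma~\ref{thm:functions-out}. Inside $V_i$ the vertex $i^1$ carries a forced out-type for $i\ne 0$ --- namely $(i;-1)$ when $\ell\le i\le -1$ and $(i;1)$ when $1\le i\le r$ --- while $0^1$ carries none. Distributing the remaining $n_i-1$ vertices of $V_i$ among the out-types therefore contributes a multinomial in which the count $n(i,-1)$ (resp.\ $n(i,1)$, resp.\ nothing) is decremented by one; using $1/(n(i,\pm1)-1)!=n(i,\pm1)/n(i,\pm1)!$ the product of these multinomials collapses to
$$
\frac{\prod_{i=\ell}^r(n_i-1)!}{\prod_{i,s}n(i,s)!}\,\prod_{i=\ell}^{-1}n(i,-1)\prod_{i=1}^r n(i,1).
$$
Multiplying by the Part~1 count $n_\ell\, n_r\prod_i n_i^{c(i)-1}$ yields the announced formula.

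The only delicate point, and the place where the argument genuinely differs from the non-negative case, is the bookkeeping around abscissa $0$: one must check that $-1^1$ counts as a \emph{free} arrow into $V_0$ (because $v_0$ is unconstrained inside $V_0$) yet carries the \emph{forced} out-type $(-1;-1)$, so that it affects the out-type distribution of Part~2 but not the ``$c(0)-1$'' tally of Part~1, and that $0^1$, lying outside the domain of $f$, is excluded from both counts. Once this is set up correctly, the two parts combine verbatim as above.
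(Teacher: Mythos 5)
Your proof is correct and follows essentially the same route as the paper's: Part~1 by counting the free image choices into each box $V_i$ (the boundary boxes $V_\ell$ and $V_r$ receiving no forced arrow and each interior box exactly one), and Part~2 by first distributing out-types within each $V_i$ --- with $i^1$ forced for $i\neq 0$ --- and then multiplying by the count of Part~1. The bookkeeping at abscissa $0$ that you single out (the arrow from $-1^1$ being free within $V_0$ while carrying the forced out-type $(-1;-1)$) is exactly the point the paper's argument also relies on.
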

\begin{proof}
   We proceed as in the proof of Lemma~\ref{thm:functions-out}.

1. We first choose  the images of the $c(i)$
vertices whose image is in $V_i$, for $i\in \{\ell ,r\}$. There are
$n_i^{c(i)}$ possible choices. 
 For  $i\in \llbracket 0, r-1\rrbracket$ (resp.  $i\in \llbracket \ell +1,
-1\rrbracket$) we only choose in $V_i$  the images of the
vertices different from $(i+1)^1$ (resp. $(i-1)^1$). There are $n_i^{c(i)-1}$ possible choices. 

2. We first choose the out-type of every vertex, and then apply the
previous result. For all $i$ and $s$, we must choose the $n(i,s)$
vertices of $V_i$ that have out-type $(i;s)$, keeping in mind that $i^1$ has
out-type $(i;1)$ for $i\ge 1$, out-type $(i;-1)$ for $i\le -1$, and
out-type $(0;\varepsilon)$ for $i=0$. Thus the number of ways to
assign the out-types is 
$$\label{eq:factor-out}
\frac{(n_0-1)!}{\prod\limits_s n(0,s)!}  \prod_{i=\ell}^{-1} \frac{(n_i-1)!}
{(n(i,-1)-1)!\prod\limits_{s\not = -1} n(i,s)!}\prod_{i=1}^r \frac{(n_i-1)!}
{(n(i,1)-1)!\prod\limits_{s\not = 1} n(i,s)!}.
$$
The lemma follows. 
\end{proof}

\subsection{The out-types of $\cS$-ary trees: Proof of
  Theorem~\ref{thm:S-ary-out}} 
We argue as in Section~\ref{sec:proof-S-ary-out}. By
Lemma~\ref{lem:linkneg} and Theorem~\ref{thm:rooted}, the number of
$\cS$-ary trees having $n(i,s)$ non-root vertices of out-type $(i;s)$ is
$1/n_\ell/n_r/\prod_i (n_i-1)!$   times  the number of $\cS$-functions from
$V\setminus\{0^1\}$ to $V$ satisfying  (F) that are
injective on each $V_i$ and  have the
same distribution of out-types. This number is given by the second
part of the following lemma. Theorem~\ref{thm:S-ary-out} follows, in
the case $\ell<0$. 
\begin{Lemma}
$1$. The number of $\cS$-functions from $V\setminus\{0^1\}$ to $V$,
  injective on each $V_i$,  satisfying
 $(\rm F)$, and in which each  $v \in V$ has a prescribed out-type
 $(i_v;s_v)$ is, assuming compatibility,
$$
\frac 1 {\prod\limits_{i=\ell+1}^{r-1} n_i} \prod_{i,s} n(i,s)! {{n_{i-s}} \choose {n(i,s)}},
$$
where $n(i,s)$ is the number of vertices of out-type $(i,s)$.

\noindent
$2$. Let $n(i,s)$ be non-negative integers, for $i\in\llbracket \ell,r
\rrbracket$ and $s\in \cS$, satisfying the compatibility conditions of
an out-type distribution.
The number of $\cS$-functions from $V\setminus\{0^1\}$ to $V$, injective on
each $V_i$,  satisfying  $(\rm F)$ and in which, for
  all  $i\in \llbracket \ell,r \rrbracket$ and $s\in \cS$, exactly
  $n(i,s)$  vertices have   out-type $(i;s)$ is    
$$
\frac {\prod\limits _{i=\ell}^r (n_i-1)! \prod\limits_{i=\ell}^{-1} n(i,-1)\prod\limits_{i=1}^r n(i,1)} 
{\prod \limits_{i=\ell+1}^{r-1} n_i} \prod_{i,s} {n_{i-s} \choose n(i,s)}.
$$
\end{Lemma}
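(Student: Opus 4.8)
The plan is to mirror the proof of Lemma~\ref{thm:injections-out}, the case $\ell=0$ of the same statement, since the only new feature when $\ell<0$ is that condition (F) now pins down the image of the distinguished vertex $i^1$ on \emph{both} sides of the origin. For part~1, I would proceed out-type by out-type: for each pair $(i,s)$, the $n(i,s)$ vertices of $V_i$ carrying out-type $(i;s)$ must receive distinct images in $V_{i-s}$ (distinctness being precisely the injectivity of $f$ on $V_i$, since vertices of $V_i$ with different values of $s$ land in disjoint target sets $V_{i-s}$ and cannot collide). If none of these $n(i,s)$ vertices is a vertex $i^1$ whose image is forced by (F), there are $\binom{n_{i-s}}{n(i,s)}n(i,s)!$ choices. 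The forced vertices are $i^1$ with $1\le i\le r$ (out-type $(i;1)$, image $(i-1)^1$) and $i^1$ with $\ell\le i\le -2$ (out-type $(i;-1)$, image $(i+1)^1$); for each such class one target slot is already occupied, so the remaining $n(i,s)-1$ vertices are placed in $V_{i-s}$ minus that slot, giving $\binom{n_{i-s}-1}{n(i,s)-1}(n(i,s)-1)! = \tfrac{1}{n_{i-s}}\binom{n_{i-s}}{n(i,s)}n(i,s)!$.

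The decisive point is that $-1^1$ also has out-type $(-1;-1)$, but its image $v_0\in V_0$ is a \emph{free} choice, so its class receives the generic factor with no correction. Collecting the $\tfrac{1}{n_{i-s}}$ corrections over $i\in\{1,\dots,r\}$, for which $i-1$ ranges over $\{0,\dots,r-1\}$, and over $i\in\{\ell,\dots,-2\}$, for which $i+1$ ranges over $\{\ell+1,\dots,-1\}$, these combine to $\tfrac{1}{\prod_{j=\ell+1}^{r-1}n_j}$, which is exactly the prefactor in part~1.

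For part~2 I would first count the ways to assign out-types to the vertices and then apply part~1. Inside each $V_i$ the vertex $i^1$ has a forced out-type, namely $(i;1)$ for $i\ge 1$, $(i;-1)$ for $i\le -1$ (including $i=-1$), and $\varepsilon$ for $0^1$, while the remaining $n_i-1$ vertices are distributed freely. The resulting multinomial for $V_i$ is $\frac{(n_i-1)!}{\prod_s n(i,s)!}$ times $n(i,1)$ when $i\ge 1$ and times $n(i,-1)$ when $i\le -1$, so the total number of out-type assignments is $\prod_{i=\ell}^r\frac{(n_i-1)!}{\prod_s n(i,s)!}\,\prod_{i=\ell}^{-1}n(i,-1)\,\prod_{i=1}^r n(i,1)$. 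Multiplying this by the part~1 formula cancels the factor $\prod_{i,s}n(i,s)!$ and produces the announced expression.

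The proof is essentially routine bookkeeping copied from Lemma~\ref{thm:functions-out}/\ref{thm:injections-out}; the one genuinely delicate step, and the only place where $\ell<0$ differs qualitatively from $\ell=0$, is the asymmetric double role of $-1^1$. For part~1 it behaves as a free-image vertex and contributes no $1/n_0$ correction, yet for part~2 it still carries the forced out-type $(-1;-1)$ and hence supplies the factor $n(-1,-1)$ inside $\prod_{i=\ell}^{-1}n(i,-1)$. Keeping these two roles distinct is the crux of getting the prefactors right.
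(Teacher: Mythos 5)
Your proof is correct and follows essentially the same route as the paper's: out-type class by out-type class for part~1, with the $1/n_{i-s}$ corrections for the forced vertices $i^1$ ($i\ge 1$ and $i\le -2$) combining into $1/\prod_{j=\ell+1}^{r-1}n_j$, and for part~2 the multinomial count of out-type assignments (with $i^1$ forced to $(i;1)$ or $(i;-1)$ according to the sign of $i$) multiplied by part~1. The special treatment of $-1^1$ — free image in part~1 but forced out-type in part~2 — is exactly the point the paper's proof also turns on.
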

\begin{proof}
   We proceed as in the proof of Lemma~\ref{thm:injections-out}.

1. For 
\begin{itemize}
\item $i\in \llbracket -1, 0\rrbracket$ and $s\in \cS$, 
\item or $i \in \llbracket \ell, -2\rrbracket$, and $s\in \cS\setminus\{-1\}$,
\item or $i \in \llbracket 1, r\rrbracket$, and $s\in \cS\setminus\{1\}$, 
\end{itemize}
we choose  in $V_{i-s}$ the (distinct) images  of the $n(i,s)$
vertices having out-type $(i,s)$. There are ${n_{i-s} \choose
  n(i,s)}n(i,s)!$ ways to do so.

When $i \in \llbracket 1, r\rrbracket$ and $s=1$,  we
choose  in $V_{i-1}\setminus\{(i-1)^1\}$ the images  of the $n(i,s)-1$
vertices different from $i^1$ 
having out-type $(i,1)$. There are
$$
{n_{i-1}-1 \choose n(i,1)-1}(n(i,1)-1)!
= {n_{i-1} \choose n(i,1)}\frac{n(i,1)!}{n_{i-1}}
$$
 ways to do so.

When $i \in \llbracket \ell,-2\rrbracket$ and $s=-1$,  we
choose  in $V_{i+1}\setminus\{(i+1)^1\}$ the images  of the $n(i,s)-1$
vertices different from $i^1$ 
having out-type $(i,-1)$. There are
$$
{n_{i+1}-1 \choose n(i,-1)-1}(n(i,-1)-1)!
= {n_{i+1} \choose n(i,-1)}\frac{n(i,-1)!}{n_{i+1}}
$$
 ways to do so.

 This concludes the proof of the first result.

\medskip
\noindent
2. The argument used to prove the second part of
Lemma~\ref{lem:funtctions-out-neg} can be copied \it{verbatim}.
\end{proof}

\subsection{The in-types: Proof of Theorem~\ref{thm:cayley-in}}
Assume $\cS=\llbracket -1, 1\rrbracket$. We argue as in Section~\ref{sec:proof-cayley-in}. By
Lemma~\ref{lem:linkneg} and Theorem~\ref{thm:rooted} (in particular
Property (b) of this theorem), the number of $\cS$-embedded Cayley
trees having $n(i,\bmc)$  vertices of in-type $(i;\bmc)$ is
$n!/n_\ell/n_r/\prod_i (n_i-1)!$   times  the number of $\cS$-functions from
$V\setminus\{0^1\}$ to $V$ satisfying (F) and having the
same distribution of in-types. This number is given by the second
part of the following lemma. Theorem~\ref{thm:cayley-in} follows, in
the case $\ell<0$.

 \begin{Lemma}
 Let $\cS=\llbracket -1,1\rrbracket$.\\
 $1$.  The number of $\cS$-functions $f$ from $V\setminus\{0^1\}$ to $V$ satisfying
 Conditions~$(\rm F)$, \emm except maybe the condition $f(-1^1)\in V_0$,, and in which each  $v \in V$ has a prescribed
in-type $(i_v;\bmc_v)$ is, assuming compatibility,
$$
n_{-1}\,\frac{\prod\limits_{i=\ell}^r (n_i-1)!}{\prod\limits_{b\ge 0, s}  b!^{n_s(b)}}
 \prod_{i=\ell+1}^{-1} c_{i^1}^{-1}  \prod_{i=0}^{r-1} c_{i^1}^1,
$$
where $n_s(b)$ is the number of vertices $v$ that
have exactly $b$ pre-images at abscissa $a(v)+s$:
$$
n_s(b)=\sharp \{v \in V: c_v^s=b\}.
$$

\noindent
$2.$ Let $n(i,\bmc)$ be non-negative integers, for $i\in\llbracket \ell,r
\rrbracket$ and $\bmc\in \ns^{3}$, satisfying the compatibility
conditions of an in-type distribution.
The number of $\cS$-functions from $V\setminus\{0^1\}$ to $V$ satisfying
 $(\rm F)$ and in which, for
  all  $i\in \llbracket \ell,r \rrbracket$ and $\bmc\in \ns^{3}$, exactly
  $n(i,\bmc)$  vertices have in-type $(i;\bmc)$ is    
$$
\frac {n_\ell n_r\prod\limits_{i=\ell}^r(n_i-1)!^2}
{\prod\limits_{i, \bmsc}n(i,\bmc)!
\prod\limits_{ b\ge 0,s}b!^{n_s(b)}}
\prod\limits_{i=\ell}^{-1} n(i,-1)\prod\limits_{i=1}^{r} n(i,1),
$$
where  $n_s(b)$ is the number of vertices $v$ that
have exactly $b$ pre-images at abscissa $a(v)+s$, and 
$n(i,1)$ is the number of vertices of out-type $(i;1)$.
Equivalently,
\begin{align*}
n_s(b)=\sum_{i} \sum_{\bmsc: c^s=b} n(i,\bmc),
\quad \quad
n(i,1)=\sum_{\bmsc} c^1 n(i-1, \bmc).
\end{align*}
\end{Lemma}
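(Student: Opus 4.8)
The plan is to follow the proof of Lemma~\ref{thm:functions-in} almost verbatim, enumerating the relevant $\cS$-functions one abscissa at a time; the only genuinely new feature is that the forced edges of~$(\rm F)$ now run along \emph{two} spines meeting at the origin (towards $0$ from the right for $i\ge 1$, and from the left for $i\le -2$), with the vertex $-1^1$ acting as a hinge between them.

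For Part~1 I would fix the prescribed in-types and, for each $i$, count the maps sending $V_i$ to its allowed targets. Because the in-type of a vertex $(i-s)^k$ records exactly how many pre-images it receives from $V_i$, this count is a multinomial coefficient with group sizes $c^s_{(i-s)^k}$. Four cases occur: for $i\ge1$ the forced edge $i^1\to(i-1)^1$ frees only $n_i-1$ vertices and depletes the slot of $(i-1)^1$ by one, producing $\tfrac{(n_i-1)!\,c^1_{(i-1)^1}}{\prod_{s,k}c^s_{(i-s)^k}!}$; symmetrically $\ell\le i\le-2$ produces $\tfrac{(n_i-1)!\,c^{-1}_{(i+1)^1}}{\prod_{s,k}c^s_{(i-s)^k}!}$; for $i=0$ the vertex $0^1$ has no image, giving $\tfrac{(n_0-1)!}{\prod_{s,k}c^s_{(-s)^k}!}$; and since Part~1 \emph{drops} the requirement $f(-1^1)\in V_0$, the vertex $-1^1$ is free, so all $n_{-1}$ vertices of $V_{-1}$ are distributed, giving $\tfrac{n_{-1}!}{\prod_{s,k}c^s_{(-1-s)^k}!}$. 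Multiplying over $i$, regrouping $\prod_i\prod_{s,k}c^s_{(i-s)^k}!=\prod_{b\ge0,s}b!^{n_s(b)}$, reindexing the two spine products and writing $n_{-1}!=n_{-1}(n_{-1}-1)!$ gives the claimed expression, the stray factor $n_{-1}$ being exactly the signature of the free hinge.

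For Part~2 I would mimic the summation step of Lemma~\ref{thm:functions-in}: prescribe the in-type $\bmc_{i^1}$ of each spine vertex together with the reduced counts $\tilde n(i,\bmc)=n(i,\bmc)-\chi_{\bmsc=\bmsc_{i^1}}$, so that distributing in-types over the non-spine vertices contributes $\tfrac{(n_i-1)!}{\prod_{\bmsc}n(i,\bmc)!}\,n(i,\bmc_{i^1})$. The one new ingredient is to restore the full condition $(\rm F)$, namely $f(-1^1)\in V_0$: among the $n_{-1}$ vertices of $V_{-1}$ exactly $n(-1,-1)=\sum_{\bmsc}c^{-1}n(0,\bmc)$ are sent into $V_0$, so forcing $-1^1$ to be one of them multiplies the $V_{-1}$-count by $n(-1,-1)/n_{-1}$ and thereby replaces the free factor $n_{-1}$ of Part~1 by $n(-1,-1)$. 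Summing over the $\bmc_{i^1}$, only $\prod_i n(i,\bmc_{i^1})$, $\prod_{\ell+1}^{-1}c^{-1}_{i^1}$ and $\prod_{0}^{r-1}c^1_{i^1}$ depend on them, and the sum factorizes over $i$: the two endpoints give $\sum_{\bmsc}n(\ell,\bmc)=n_\ell$ and $\sum_{\bmsc}n(r,\bmc)=n_r$, each left-spine factor gives $\sum_{\bmsc}c^{-1}n(i,\bmc)=n(i-1,-1)$, and each right-spine factor gives $\sum_{\bmsc}c^1n(i,\bmc)=n(i+1,1)$. Collecting these, the left-spine product $\prod_{j=\ell}^{-2}n(j,-1)$ absorbs the constant $n(-1,-1)$ into $\prod_{j=\ell}^{-1}n(j,-1)$ while the right-spine product yields $\prod_{j=1}^{r}n(j,1)$, which is the stated formula.

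The step I expect to be the main obstacle is the correct handling of the hinge $-1^1$. One must observe that $n(-1,-1)$, although it superficially couples the in-types at abscissa $0$ over which we are summing, is in fact fixed by the prescribed distribution $n(0,\cdot)$ — it is a sum over \emph{all} of $V_0$, not just the non-spine part — and is therefore \emph{constant} throughout the summation, so that it factors out and merges with the left-spine product. Verifying that this single constant is exactly what upgrades the relaxed count of Part~1 to the true $(\rm F)$ count, and that no analogous correction is needed at $0^1$ or along either spine, is the delicate point; the remaining manipulations are the routine factorial regroupings already carried out in the non-negative case.
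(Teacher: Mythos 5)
Your proposal is correct and follows essentially the same route as the paper: the same case-by-case multinomial count for Part 1 (with $-1^1$ left free at the hinge), the same intermediate prescription of the spine in-types $\bmc_{i^1}$ with reduced counts $\tilde n$, the same factorized summation yielding $n_\ell$, $n_r$ and the two spine products, and the same $n(-1,-1)/n_{-1}$ correction to restore $f(-1^1)\in V_0$. The only cosmetic difference is that you justify this proportion by the multinomial symmetry among the vertices of $V_{-1}$ at the level where every in-type is prescribed (noting, correctly, that $n(-1,-1)$ is determined by the distribution at abscissa $0$ and hence constant in the summation), whereas the paper realizes the same fact via an explicit in-type-preserving swap bijection exchanging the images of $-1^1$ and a marked vertex of $V_{-1}$; both arguments are sound.
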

\begin{proof} We proceed as in the proof of
  Lemma~\ref{thm:functions-in}.

  1. Let us first choose the images of the $n_{-1}$ vertices of
  $V_{-1}$. Exactly $c_{(-1-s)^k}^{s}$ of
  them have image   $(-1-s)^k$, for all $s$ and $k$.  For $i\in \llb
  \ell, r\rrb \setminus\{-1\}$, let us choose the images of the
  $n_i-1$ vertices of 
  $V_i\setminus\{i^1\}$. Exactly $c_{(i-s)^k}^{s}-\chi_{s=1=k=1,i>0}-\chi_{s=-1,k=1,i<-1}$ of
  them have image   $(i-s)^k$, for all $s$ and $k$.  
The first result follows.

\medskip
2. We first focus on functions that satisfy  Conditions (F), \emm except
maybe the condition $f(-1^1)\in V_0$,.  Let us first prescribe the in-type
$(i;\bmc_{i^1})$ of all vertices of the form $i^1$, for all $i$, and the number $\tilde
n(i;\bmc)$ of vertices of $V_i\setminus\{i^1\}$ having in-type
$(i;\bmc)$, for all $\bmc\in \ns^{3}$. Clearly,
$$
\tilde n (i,\bmc)= n (i,\bmc)- \chi_{\bmsc=\bmsc_{i^1}}.
$$
The number of ways to assign types to  vertices of
$V_i\setminus\{i^1\}$ is
$$
\frac{(n_i-1)!}{\prod_{\bmsc}\tilde n(i, \bmc)!}
=
\frac{(n_i-1)!}{\prod_{\bmsc} n(i, \bmc)!}\, n(i, \bmc_{i^1}) .
$$
Using the first result, we conclude that the number of functions such
that $i^1$ has in-type $(i;\bmc_{i^1})$ and $\tilde
n(i;\bmc)$ of vertices of $V_i\setminus\{i^1\}$ having in-type
$(i;\bmc)$ is
$$
n_{-1}\,\frac {\prod\limits_{i=\ell}^r(n_i-1)!^2}
{\prod\limits_{i, \bmsc}n(i;\bmc)!
\prod\limits_{ b\ge 0,s}b!^{n_s(b)}}
\left( \prod\limits_{i=\ell+1}^{-1} c_{i^1}^{-1} \right)
\left( \prod\limits_{i=0}^{r-1} c_{i^1}^1 \right)\left( \prod_{i=\ell}^r n(i, \bmc_{i^1}) \right).
$$

 Let us now only prescribe the values $n(i,\bmc)$ (still focussing on
 functions that may not satisfy $f(-1^1)\in V_0$). That is, we
need to sum the above formula over all possible in-types of the vertices
$i^1$, for $i=\ell,\ldots, r$. Note that only the three rightmost products
depend on the choice of these types. We are thus led to evaluate
\begin{multline*}
  \sum_{i=\ell}^r\sum_{\bmsc_{i^1}\in \ns^{3}}\left( \prod\limits_{i=\ell+1}^{-1} c_{i^1}^{-1} \right) \left( \prod\limits_{i=0}^{r-1}
c_{i^1}^1 \right)\left( \prod_{i=\ell}^r n(i, \bmc_{i^1}) \right)
=\\
\left(\sum_{\bmsc_{\ell^1}} n(\ell,\bmc_{\ell^1})\right)
\prod_{i=\ell+1}^{-1} \left(\sum_{\bmsc_{i^1} } c_{i^1}^{-1}n(i,
\bmc_{i^1})\right)
\prod_{i=0}^{r-1} \left(\sum_{\bmsc_{i^1} } c_{i^1}^1n(i,
\bmc_{i^1})\right)
\left(\sum_{\bmsc_{r^1}} n(r,\bmc_{r^1})\right)
=\\
n_\ell n_r \prod_{i=\ell}^{-2}n(i,-1) \prod_{i=1}^{r} n(i,1).
\end{multline*}
Thus the number of functions that satisfy  Conditions (F),  except
maybe the condition $f(-1^1)\in V_0$, and have $n(i;\bmc)$ vertices on
in-type $(i;\bmc)$ for all $i$ and $\bmc$ is
$$
n_\ell  n_{-1} n_r
\frac{\prod_{i=\ell }^{r} (n_i-1)!^2}
{\prod_{i,\bmc} n(i,\bmc)! \prod_{b\geq 0, s} b!^{n_s(b)}}
\prod_{i=\ell }^{-2}
n(i,-1)
\prod_{i=1}^{r}
n(i,1).
$$

It remains to prove that the proportion of these functions that also
satisfy $f(-1^1)\in V_0$ is $n(-1,-1)/n_{-1}$.
This follows from the existence of an in-type preserving bijection
between functions $f$ that satisfy all
conditions of (F) and have a marked vertex $v$ in $V_{-1}$, and
functions $g$ that satisfy (F), with the possible exception of
$g(-1^1)\in V_0$, and have a marked vertex $w$ in $V_{-1}\cap
g^{-1}(V_0)$ (that is, a vertex of out-type $(-1,-1)$). This bijection sends $(f,v)$ to $(g,w)$, where $w=v$, $g (-1^1) = f(v)$,
$g(v)=f(-1^1)$ and $g(x)=f(x)$ if $x\not\in\{-1^1,v\}$. As there are
$n_{-1}$ choices for the vertex $v$ in $f$, and $n(-1,-1)$ choices for
the vertex $w$ in $g$, this completes the proof of the second part of
the lemma.
\end{proof}

\section{Other approaches}
\label{sec:other}
We now present two other approaches for counting embedded trees:
the first one combines  recursive  descriptions of trees, functional
equations and the Lagrange 
inversion formula; the second one is based on the matrix-tree theorem. The first approach
proves the results on the profile and on the out-types, both for
$\cS$-embedded Cayley trees and for $\cS$-ary trees. The second one
proves the results on the profile and on the out-types of
$\cS$-embedded Cayley trees only. These methods are of course more
routine, but not bijective. They involve computing certain determinants that factor for
reasons that are not clear combinatorially. To our knowledge, they do
not prove the other results of this paper.

\subsection{Functional equations and the Lagrange inversion formula}
\label{sec:lagrange}
One can prove the results that deal with the vertical profile
(Theorems~\ref{thm:cayley-profile} and~\ref{thm:S-ary-profile}) and
with the out-type (Theorems~\ref{thm:cayley-out} 
and~\ref{thm:S-ary-out}) via  elementary recursive descriptions of trees and the
Lagrange inversion formula (LIF). We give the details of the proof of
Theorem~\ref{thm:cayley-profile}, and sketch the other three, which
are similar.

 We have been unable to reprove in this way the results that deal with the
in-type or the complete type.

\subsubsection{The vertical profile of $\cS$-embedded Cayley trees.}
Let $x=(x_i)_{ i\in \zs}$ be a sequence of indeterminates, and let
$A_0\equiv A_0(x)$
be the exponential \gf\  of $\cS$-embedded Cayley trees, where $x_i$ keeps track
of the number of  vertices lying
at abscissa $i$, for all $i\in\zs$. That is,
$$
A_0=\sum_T \frac{1}{|T|!}\prod_{v\in V} x_{a(v)},
$$
where
$|T|$ is the size of the tree $T$ (the number of vertices) and $V$
the vertex set of $T$. For $j \in
\zs$, let $A_j$ be the series obtained from $A_0$ by replacing each
$x_i$ by $x_{i+j}$. An $\cS$-embedded Cayley tree is obtained by
attaching to a root vertex (lying at abscissa $0$) a set of Cayley 
trees  whose roots lie in $\cS$. Hence
$$
A_0=x_0 \exp\left(\sum_{s\in \cS} A_s\right).
$$
Consequently, for all $i \in \zs$,
\beq\label{syst-A}
A_i=x_i \exp\left(\sum_{s\in \cS} A_{i+s}\right).
\eeq
Let now $n\equiv (n_i)_{\ell\le i \le r}$ be a sequence of
positive integers.  The number of $\cS$-embedded Cayley trees
of vertical profile $n$ is $|n|! [x^n] A_0$, where $|n|=\sum_i n_i$ and
$[x^n] A_0$ stands
for the coefficient of $x_\ell ^{n_\ell} \cdots x_r ^{n_r}$ in
$A_0$. We  use the following version  of the  Lagrange-Good
inversion formula~\cite{gessel-lagrange,good}: 
if for all $\ell\le i\le r$,
$$
F_i=x_i g_i(F_\ell, \ldots,F_r),
$$
then 
$$
[x^n]F_0\ \ =\ \  [x^n]\left( x_0 \prod_{i=\ell}^r  g_i(x)^{n_i}
\det\left( \delta_{ij}-\frac {x_i}{g_j(x)}\frac{\partial
    g_j(x)}{\partial x_i}\right)_{ \ell \le i, j \le r}\right).
$$
Hence, it follows from~\eqref{syst-A} that
\begin{eqnarray*}
[x^n]A_0 &=& [x^n]\left( x_0 \prod_{i=\ell}^r  \exp\left(n_i \sum_{s\in \cS}
x_{i+s}\right)
\det\left( \delta_{ij}-x_i\chi_{i-j \in \cS}\right)_{ \ell \le i, j \le r}\right),
\end{eqnarray*}
where by convention $x_i=0$ if $i<\ell $ or $i>r$.

We find convenient to use the following classical expression of
the above determinant in terms of \emm configuration of 
cycles,. Let $G\equiv G_{\ell,r}(\cS)$ be the digraph with vertices $\{\ell, \ldots, r\}$
and with an arc from $i$ to $j$ if and only if $i-j \in \cS$. A cycle
of $G$ is \emm elementary, if it never visits the same vertex twice. A
\emm configuration of cycles, if a set $C$ of elementary cycles such that
each vertex $i \in \llbracket\ell,r\rrbracket$ is contained in at most
one cycle of $C$.  We loosely write $i\in C$ when $i$ is contained in
a cycle of $C$. Then, by expanding the determinant, one finds
$$
\det\left( \delta_{ij}-x_j\chi_{i-j \in \cS}\right)_{ \ell \le i, j \le r}
= \sum_C (-1)^{|C|} \prod_{i \in C} x_i,
$$
where $|C|$ denotes the number of elementary cycles of $C$. Hence we can now
rewrite
\begin{eqnarray}
[x^n]A_0 &=& [x^n]\left( x_0 \prod_{i=\ell}^r  \exp\left(x_i \sum_{s\in \cS}
n_{i-s}\right) \sum_C (-1)^{|C|} \prod_{i \in C} x_i\right),\nonumber
\\
&=& \sum_C (-1)^{|C|} \prod_{i=\ell}^r\, [x_i^{n_i}]
\left(x_i^{\chi_{i=0}+\chi_{i \in C}} \exp\left(x_i \sum_{s\in  \cS}n_{i-s}\right) \right) \nonumber
\\
&=& \sum_C (-1)^{|C|} \prod_{i=\ell}^r \frac{\left(\sum_{s\in
      \cS}n_{i-s}\right)^{n_i-\chi_{i=0}-\chi_{i \in C}}}
{(n_i-\chi_{i=0}-\chi_{i \in C})!}\nonumber
\\
&=&\prod_{i=\ell}^r \frac{\left(\sum_{s\in
      \cS}n_{i-s}\right)^{n_i-\chi_{i=0}-1}}
{(n_i-\chi_{i=0})!} 
\sum_C (-1)^{|C|}\prod_{i=\ell}^r \left(\left(\sum_{s\in
      \cS}n_{i-s}\right)^{\chi _{i\not \in C}} \left(n_i
  -\chi_{i=0}\right)^{\chi _{i \in C}} \right)\label{sum-C}
\end{eqnarray}
where now $n_i=0$ if  $i<\ell $ or $i>r$. 

The following lemma shows that, under the hypotheses of
Theorem~\ref{thm:cayley-profile},  the sum over $C$ factors nicely.
Theorem~\ref{thm:cayley-profile} follows at once.

\begin{Lemma}\label{lem:cycles}
  Let   $\cS\subset \zs$ 
  such that $\max \cS=1$.  For $\ell \le 0 \le r$, let $G_{\ell,r}(\cS)$ be
  the  above defined  digraph. Define the following polynomial in the
  indeterminates $y_\ell, \ldots, y_r$:
$$
P_{\ell,r}= \sum_C (-1)^{|C|}\prod_{i=\ell}^r \left(\left(\sum_{s\in
      \cS}y_{i-s}\right)^{\chi _{i\not \in C}} \left(y_i
  -\chi_{i=0}\right)^{\chi _{i \in C}} \right),
$$
where the sum runs over configurations of cycles $C$ on the graph
$G_{\ell,r}(\cS)$, $|C|$ stands for the number of elementary cycles in
$C$ and $y_i=0$ if  $i<\ell $ or $i>r$. Clearly, 
$P_{0,0}=\chi_{0 \in  \cS}$. Assume now $|\ell|+r >0$. If $\min
\cS=-1$ or $\ell=0$, then
$$
P_{\ell,r}= \left(\sum_{s\in \cS} y_{-s}\right)
\prod_{i=\ell+1}^{r-1}y_i.
$$
\end{Lemma}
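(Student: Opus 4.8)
The first move is to recognise $P_{\ell,r}$ as a determinant. Write $z_i=y_i-\chi_{i=0}$ and $Q_i=\sum_{s\in\cS}y_{i-s}$, so that a vertex $i\notin C$ carries the factor $Q_i$ and a vertex $i\in C$ carries $z_i$, a loop (possible only when $0\in\cS$) counting as an elementary cycle of length $1$. I would then define the $(r-\ell+1)\times(r-\ell+1)$ matrix
$$
M=\Bigl(Q_j\,\delta_{ij}-z_j\,\chi_{i-j\in\cS}\Bigr)_{\ell\le i,j\le r},
$$
and prove $P_{\ell,r}=\det M$ directly by grouping the permutation expansion of $\det M$ according to the cycles on which a permutation is not the identity: a non-trivial cycle $\gamma$ contributes $\mathrm{sgn}(\gamma)\prod_{i\in\gamma}(-z_{i})=-\prod_{i\in\gamma}z_i$, each fixed point $i$ contributes either $Q_i$ (when $\sigma(i)=i$ uses the $Q_i$ part of the diagonal) or $-z_i$ (the loop part, present iff $0\in\cS$), which reproduces $\sum_C(-1)^{|C|}\prod_{i\in C}z_i\prod_{i\notin C}Q_i$. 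This is in the same spirit as the cycle--determinant identity $\det(\delta_{ij}-x_j\chi_{i-j\in\cS})=\sum_C(-1)^{|C|}\prod_{i\in C}x_i$ recorded just above, and I would check the sign bookkeeping against $P_{0,0}=\chi_{0\in\cS}$.

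The point of the hypothesis $\max\cS=1$ is that it makes $M$ extremely sparse. Since a nonzero off-diagonal entry forces $i-j\in\cS$, hence $j\ge i-1$, the matrix $M$ is upper Hessenberg; moreover its last row has nonzero entries only in columns $r-1,r$, and its first column only in rows $\ell,\ell+1$ (using $i-\ell,\,r-j\in\cS\cap\{\ge 0\}\subseteq\{0,1\}$). I would also record the row-sum identity $M\mathbf 1=(\chi_{i\in\cS})_{\ell\le i\le r}$, which follows from $\sum_j z_j\chi_{i-j\in\cS}=\sum_{s\in\cS}z_{i-s}=Q_i-\chi_{i\in\cS}$; replacing the last column of $M$ by the sum of all its columns therefore produces the very sparse vector $(\chi_{i\in\cS})_i$, supported on rows $\le 1$, while leaving the determinant unchanged. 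Finally, the hypothesis splits naturally: if $\ell<0$ then $\min\cS=-1$ is assumed, and together with $\max\cS=1$ this forces $\cS\subseteq\{-1,0,1\}$, so $M$ is \emph{tridiagonal}; if $\ell=0$ then $\cS$ is arbitrary but $y_i=z_i=0$ for $i<0$, and $M$ is genuinely Hessenberg.

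For the tridiagonal case I would solve the three--term recurrence for the trailing minors $\Delta_k:=\det M_{[k,r]}$, namely $\Delta_k=M_{kk}\Delta_{k+1}-z_kz_{k+1}\Delta_{k+2}$ with $\Delta_{r+1}=1,\ \Delta_{r+2}=0$. Running it downward from $k=r$, the $\chi_{i=0}$ corrections are dormant while $k\ge 1$ and one finds the clean telescoping product $\Delta_k=\prod_{i=k-1}^{r-1}y_i$; crossing $k=0$, where $z_0=y_0-1$, is exactly what converts the emerging $y_{-1}$-factors into $Q_0=\sum_s y_{-s}$, and terminating at $k=\ell$ (where $y_{\ell-1}=0$ kills the spurious monomial) yields $\Delta_\ell=Q_0\prod_{i=\ell+1}^{r-1}y_i$. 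I have checked the predicted cancellations on $(\ell,r)=(0,1),(0,2),(-1,1)$. For the Hessenberg case $\ell=0$ I would instead apply the column-sum reduction above, turning the last column into $(\chi_{i\in\cS})_i$ (supported on rows $0$ and, if $0\in\cS$, nothing extra beyond row $1$ for $r\ge 2$), and expand along it; on the test case $\cS=\{-2,1\},\ r=2$ this collapses to a single surviving minor and returns $y_1y_2=Q_0y_1$, as required. The remaining minor is again upper Hessenberg, and one closes the argument by induction on $r$.

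The main obstacle is a \emph{boundary contamination}: the diagonal entries $Q_i=\sum_s y_{i-s}$ near an end of the range still involve the outermost variable $y_r$ (resp.\ $y_\ell$), so a sub-, trailing, or leading minor of $M$ is \emph{not} itself an instance of $P$ for a shorter range. Concretely, the naive recursion ``$P_{0,r}=y_{r-1}P_{0,r-1}$'' is false: for $\cS=\{-2,1\}$ one has $P_{0,1}=0$ but $P_{0,2}=y_1y_2\neq 0$, because $Q_0$ equals $y_2$ in the range $\llbracket 0,2\rrbracket$ and $0$ in $\llbracket 0,1\rrbracket$. Thus the induction must be carried out on determinants whose entries are frozen from the full range, and one must verify explicitly that the unwanted $y_r$- and $y_\ell$-monomials cancel, precisely as in the small cases above. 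In the Hessenberg case there is the extra nuisance that expanding a non-principal minor leaves a dense super-diagonal, which is why concentrating all of the $\cS$-dependence into one sparse column via $M\mathbf 1=(\chi_{i\in\cS})_i$ before any expansion is the cleanest route. Establishing this family of cancellations is the crux, and is exactly the ``non-trivial cancellation'' that the introduction warns about in connection with the matrix-tree approach.
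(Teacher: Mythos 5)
Your determinantal reformulation is correct: $P_{\ell,r}=\det\bigl(Q_j\delta_{ij}-z_j\chi_{i-j\in\cS}\bigr)$ with $z_j=y_j-\chi_{j=0}$, the upper Hessenberg (resp.\ tridiagonal) structure under $\max\cS=1$ (resp.\ $\cS\subseteq\{-1,0,1\}$), and the row-sum identity $M\mathbf 1=(\chi_{i\in\cS})_i$ all check out, and your small-case verifications are accurate. This is a genuinely different route from the paper's. The paper never forms a determinant; it observes that $\max\cS=1$ forces every elementary cycle of $G_{\ell,r}(\cS)$ to occupy an interval $\{i,i+1,\dots,i-s\}$ with $s\in\cS\setminus\{1\}$, and conditions directly, in the configuration sum, on the cycle (if any) containing the extreme vertex. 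The auxiliary polynomial $Q_r(y_0,\dots,y_r)$ it introduces carries its own truncation convention, so the ``boundary contamination'' you rightly worry about simply never arises there: $Q_{r-1}(y_1,\dots,y_r)$ is literally the same polynomial family at shifted arguments, the induction gives $Q_r=\prod_{i=0}^{r-1}y_i$ in two lines, and $P_{0,r}$ and $P_{\ell,r}$ follow by one further conditioning on the cycle through $0$. What your approach buys is uniformity with the matrix-tree computation of Section 7.2; what the paper's buys is that the recursion closes on a single explicit family with no contaminated minors.

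The gap is exactly the part you yourself label the crux, and it is not closed in your write-up. In the tridiagonal case the trailing minors $\Delta_k$ for $k\le 0$ are not monomials: one finds $\Delta_0=\bigl(y_{-1}y_0+\chi_{0\in\cS}\,y_0+y_1\bigr)\prod_{i=1}^{r-1}y_i$ and $\Delta_{-1}=\bigl(y_{-2}(y_{-1}y_0+\chi_{0\in\cS}\,y_0+y_1)+y_0Q_0\bigr)\prod_{i=1}^{r-1}y_i$, so ``running the recurrence down to $k=\ell$'' requires guessing this two-term closed form for all $\ell\le k\le 0$ and proving it by a separate induction in which the contaminated $y_{k-1}$-term propagates until $y_{\ell-1}=0$ kills it; you verify only $(\ell,r)=(-1,1)$. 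In the Hessenberg case, after reducing the last column to $(\chi_{i\in\cS})_i$ you say ``one closes the argument by induction on $r$,'' but the surviving minors are, as you note, not members of the family $P_{0,r'}$ (your own counterexample $P_{0,1}=0\ne y_1$ for $\cS=\{-2,1\}$ shows the naive recursion fails), and no induction hypothesis for those frozen-entry minors is formulated. Both gaps look fillable, but together they contain essentially all of the content of the lemma; as written the proposal is a sound plan whose decisive cancellations are asserted and spot-checked rather than proved.
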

\begin{proof}
We first assume that $\ell=0$.  Define the auxilliary polynomial
$Q_r(y_0, y_1, \ldots, y_r)$ by
$Q_0(y_0)=1$ and for $r>0$,
$$
Q_r(y_0, y_1, \ldots, y_r)=\sum_C (-1)^{|C|}\prod_{i=1}^r \left(\left(\sum_{s\in
      \cS}y_{i-s}\right)^{\chi _{i\not \in C}} \left(y_i
  \right)^{\chi _{i \in C}} \right),
$$
where the sum now runs over configurations of cycles on the graph
$G_{1,r}(\cS)$, and $y_i=0$ if  $i<0 $ or
$i>r$. 

Given that $\max \cS=1$, an elementary cycle necessarily consists of
the vertices $i-s, i-s-1,  \ldots, i+1, i$ (visited in this order),
where $s \in \cS 
\setminus\{1\}$. In particular, two elementary cycles having no
vertex in common occupy disjoint \emm intervals, of vertices. This
allows us to write a recurrence relation for the polynomials $Q_r$,  by
considering whether the vertex 1 belongs to the configuration 
$C$ or not. 
For $r>0$,
$$
Q_r(y_0, \ldots, y_r)= \left( \sum_{s \in \cS}y_{1-s}\right) Q_{r-1}
(y_1, \ldots, y_r)
- \sum_{s\in \cS \setminus\{1\}}  Q_{r+s-1}(y_{1-s},
\ldots, y_r) \left(\prod_{i=1}^{1-s}y_i\right).
$$
The first term corresponds to configurations not containing $1$. The sum over $s \in \cS \setminus\{1\}$ corresponds to the choice of the
size of the cycle containing 1, which contains $1-s$ vertices. It follows by induction on $r$ that 
\beq\label{Q}
Q_r(y_0, \ldots, y_r)= \prod_{i=0}^{r-1} y_i.
\eeq
Similarly,
$$
P_{0,r}= \left( \sum_{s \in \cS}y_{-s}\right) Q_{r}
(y_0, \ldots, y_r)
-(y_0-1) 
\sum_{s\in \cS \setminus\{1\}}
Q_{r+s}(y_{-s},
\ldots, y_r) \left(\prod_{i=1}^{-s}y_i\right).
$$
It then follows from~\eqref{Q} that
$$
P_{0,r}=\left( \sum_{s \in \cS}y_{-s}\right) \prod_{i=1}^{r-1} y_i,
$$
as announced in the lemma.

We now assume that $\ell<0$ and that $\min \cS=-1$. That is,
$\cS=\{\pm 1\}$ or $\cS=\{\pm 1,0\}$, and in particular $\cS$ is
  symmetric.  There are
now two types of elementary cycles, those reduced to a loop (if $0 \in
\cS$) and those consisting of two neighbour vertices $i$ and $i+1$. By
considering whether the vertex 0 belongs to the configuration $C$ or
not, and whether this cycle consists solely of the vertex $0$, or of
the vertices $0$ and $1$, or of the vertices $0$ and $-1$, one obtains 
\begin{multline*}
  P_{\ell,r}= \left( \sum_{s \in \cS}y_{-s}\right) Q_r(y_0, \ldots, y_r)
Q_{|\ell|}(y_0,y_{-1}, \ldots, y_\ell)
\\
-(y_0-1)Q_r(y_0, \ldots, y_r)Q_{|\ell|}(y_0,y_{-1}, \ldots,
y_\ell)\chi_{0\in\cS}
\\
-(y_0-1)y_1Q_{r-1}(y_1, \ldots, y_r)Q_{|\ell|}(y_0,y_{-1}, \ldots,
y_\ell)
\\
-(y_0-1)y_{-1}Q_{r}(y_0, \ldots, y_r)Q_{|\ell|}(y_{-1}, \ldots,
y_\ell)
\end{multline*}
with $Q_{-1}=0$.
The announced expression of $P_{\ell,r}$ now follows
from~\eqref{Q}. This concludes the proof of the lemma, and our first
alternative proof of Theorem~\ref{thm:cayley-profile}.
\end{proof}

\subsubsection{The vertical profile of $\cS$-ary trees.}
The proof of Theorem~\ref{thm:S-ary-profile}, which deals with the
profile of $\cS$-ary trees, is very similar. One
starts from the system of equations
$$
A_i=x_i \prod_{s\in \cS}(1+A_{i+s}),
$$
for all $i \in \zs$. The LIF now gives
\begin{eqnarray*}
[x^n]A_0 &=& [x^n]\left( x_0 \prod_{i=\ell}^r\prod_{s\in \cS}  (1+x_{i+s})^{n_i}
\det\left( \delta_{ij}-\frac{x_i}{1+x_i}\chi_{i-j \in \cS}\right)_{
  \ell \le i, j \le r}\right) .
\end{eqnarray*}
Again, we express the determinant as a sum over configurations of
cycles. This yields
\begin{eqnarray*}
[x^n]A_0 &=&\sum_C (-1)^{|C|}\prod_{i=\ell}^r
[x_i^{n_i}]\left( x_i^{\chi_{i=0}+\chi_{i\in C}}
(1+x_i)^{\sum_sn_{i-s}-\chi_{i\in C}}\right)
\\
&=&\prod_{i=\ell}^r \frac{(\sum_{s\in \cS} n_{i-s}-1)!}{(n_i-
  \chi_{i=0})! (\sum_{s\in \cS} n_{i-s}-n_i+\chi_{i=0})!}
\\
&&\ \hbox{} \hskip 30mm \sum_C (-1)^{|C|}\prod_{i=\ell}^r \left(\left(\sum_{s\in
      \cS}n_{i-s}\right)^{\chi _{i\not \in C}} \left(n_i
  -\chi_{i=0}\right)^{\chi _{i \in C}} \right)
\end{eqnarray*}
where by convention $x_i=n_i=0$ if $i<\ell $ or $i>r$. We recognize the
same sum over $C$ as in~\eqref{sum-C}, and Lemma~\ref{lem:cycles}
then yields Theorem~\ref{thm:S-ary-profile}.

\subsubsection{The out-type of $\cS$-embedded Cayley trees.}
In order to prove
Theorem~\ref{thm:cayley-out},
we start from the system
\beq\label{eq-func-cayley}
A_i=x_i \exp\left(\sum_{s\in \cS} x_{i+s,s}A_{i+s}\right),
\eeq
which is a refined version of \eqref{syst-A} where for each
$(j,s)\in\mathbb{Z}\times \cS$ the 
 indeterminate $x_{j,s}$ keeps
track of the number of vertices of out-type $(j;s)$.

We first apply the LIF with respect to the variables $x_i$:
\begin{eqnarray*}
  [x^n]A_0&=&[x^n] \left(x_0 \prod_{i=\ell}^r \exp\left(n_i \sum_s
x_{i+s,s}x_{i+s}\right)
\det(\delta_{ij}-x_i x_{i,i-j}\chi_{i-j \in \cS})_{\ell \le i,j\le
  r}\right).
\end{eqnarray*}
The expansion of the determinant now reads
$$
\sum_C(-1)^{|C|}\prod_{(i,i-s)\in C}x_i x_{i,s},
$$
where  we write $(i,i-s) \in C$ when the arc $(i,i-s)$ belongs to
one of the cycles of $C$. This gives
\begin{eqnarray}
  [x^n]A_0&=&\sum_{C} (-1)^{|C|}\left(\prod_{(i,i-s)\in C}x_{i,s}\right) \prod_{i=\ell}^r\, [x_i^{n_i}]
\left( x_i^{\chi_{i=0}+\chi_{i\in C}} \exp\left( x_i \sum_s n_{i-s} x_{i,s}\right)\right).\label{lif}
\end{eqnarray}
We now extract the coefficient  of $\prod_{i, s}
x_{i,s}^{n(i,s)}$, with 
$$
n_i=\chi_{i=0} +\sum_{s\in \cS} n(i,s).
$$
We obtain for this coefficient the following expression
$$
\sum_C (-1)^{|C|}\prod_{i=\ell}^r\prod_{s\in \cS}
\frac{n_{i-s}^{n(i,s)-\chi_{(i,i-s)\in C}}}{(n(i,s)-\chi_{(i,i-s)\in
    C})!}.
$$
After a few simple reductions, this gives
Theorem~\ref{thm:cayley-out}, provided the following counterpart of
Lemma~\ref{lem:cycles} holds: 
\beq\label{ide2}
P_{\ell,r}:= \sum_C (-1)^{|C|}\prod_{i=\ell}^r n_i^{\chi_{i\not \in C}}
\prod_{(i,i-s) \in C}  n(i,s)
=
\prod_{i=\ell}^{-1}n(i,-1) \prod_{i=1}^{r}n(i,1),
\eeq
where $n_i=\chi_{i=0}+ \sum_s n(i,s)$.
The proof of this identity is similar to the proof of
Lemma~\ref{lem:cycles}. One proceeds by induction on $r+|\ell|$, first
for $\ell=0$ 
and then for $\ell<0$, after introducing the following auxilliary
polynomial:
$$
Q_r=\sum_C (-1)^{|C|} \prod_{i=1}^r n_i^{\chi_{i\not \in C}}
\prod_{(i,i-s) \in C}  n(i,s),
$$
where the sum now runs over configurations of cycles on the graph
$G_{1,r}(\cS)$. A recurrence relation on $Q_r$ implies that $Q_r=
\prod_{i=1}^r n(i,1)$. Expressing $P_{\ell,r}$ in terms of the
$Q_i$'s, as in the proof of Lemma~\ref{lem:cycles}, finally
establishes~\eqref{ide2}.

\subsubsection{The out-type of $\cS$-ary trees.}
In order to prove Theorem~\ref{thm:S-ary-out}, 
we start from
\beq\label{eq-func-S-ary}
A_i=x_i \prod_{s\in \cS}(1+x_{i+s,s}A_{i+s}).
\eeq
The calculation is similar to the previous one. In particular, one uses again~\eqref{ide2}.
\medskip

 \subsubsection{A variant for the out-type of $\cS$-ary trees.}
One can also enrich the first calculation of this
section by adding
weights $x_{i,s}$ on vertices of out-type $(i;s)$, and thus prove
directly Theorem~\ref{thm:cayley-out}, in the
form~\eqref{cayley-profile-refined}. One starts again
from~\eqref{eq-func-cayley}. Extracting the coefficient of $x^n$
gives~\eqref{lif}. Then, one does \emm not, extract the coefficient of $\prod_{i, s}
x_{i,s}^{n(i,s)}$, but uses instead the following refinement of Lemma~\ref{lem:cycles}:
\begin{multline}\label{refined}
  P_{\ell,r}:= \sum_C (-1)^{|C|}\left(\prod_{(i,i-s)\in C} x_{i,s}\right)\prod_{i=\ell}^r \left(\left(\sum_{s\in
      \cS}y_{i-s}x_{i,s}\right)^{\chi _{i\not \in C}} \left(y_i
  -\chi_{i=0}\right)^{\chi _{i \in C}}
\right)\\ \prod_{i=\ell}^{-1} x_{i,-1} \prod_{i=1}^r x_{i,1}\left(\sum_{s\in \cS} x_{0,s}y_{-s}\right)
\prod_{i=\ell+1}^{r-1}y_i.
\end{multline}
The proof is a straightforward extension of the proof of Lemma~\ref{lem:cycles}, using
\begin{eqnarray*}
  Q_r(y_0, y_1, \ldots, y_r)&:=&\sum_C (-1)^{|C|}\left(\prod_{(i,i-s)\in C} x_{i,s}\right)\prod_{i=1}^r \left(\left(\sum_{s\in
      \cS}y_{i-s}\right)^{\chi _{i\not \in C}} \left(y_i
  \right)^{\chi _{i \in C}} \right)
\\
&=&
\left(\prod_{i=1}^r x_{i,1}\right) \left( \prod_{i=0}^{r-1} y_i\right).
\end{eqnarray*}
where the sum over $C$ is over configurations of cycles of $G_{1,r}(\cS)$.

\subsection{Application of the matrix-tree theorem}

We now apply the matrix-tree theorem to prove that the \gf\ of
$\cS$-embedded trees of vertical profile $(n_\ell, \ldots, n_r)$,
counted by the number of vertices of out-type $(i;s)$, for all $i$ and
$s$, is given by~\eqref{cayley-profile-refined}. This proves
Theorems~\ref{thm:cayley-profile} and~\ref{thm:cayley-out}
simultaneously.

We consider as before the vertex set $V=\cup_{i=\ell}^r V_i$, with $V_i=\{i^1,
i^2, \dots, i^{n_i}\}$, and we consider the digraph $K$ on $V$ where
an arc joins $i^p$ to $j^q$ if and only if $i^p  \not = j^q$ and
$j=i-s$ for some $s \in \cS$. This arc receives the weight $x_{i,s}$.
Recall that, on a digraph, a spanning tree is always rooted, with all
edges of the tree pointing towards the root vertex. Thus a spanning
tree of $K$ is precisely an $\cS$-tree, as defined in
Section~\ref{sec:general}. The out-type is defined as before.  It
is easy to see that the \gf\ of
$\cS$-embedded 
Cayley trees of profile $(n_\ell, \ldots, n_r)$ is $ n_0 n! /
\prod_{i=0}^r n_i!$ times the \gf\ of spanning trees of
$K$ rooted at  $0^{n_0}$. Hence~\eqref{cayley-profile-refined} is
equivalent to the following proposition.
\begin{Proposition}
The \gf\  of spanning trees of $K$ rooted at $0^{n_0}$ equals:
$$
\left( \prod_{i=\ell}^{-1} x_{i,-1} \right) \left(\prod_{i=1}^r x_{i,1} \right)\left(\prod_{i=\ell+1}^{r-1} n_i  \right)
\prod_{i=\ell}^{ r}\left(\sum_{s\in\cS}n_{i-s}x_{i,s}\right)^ {n_i-1}.
$$
\end{Proposition}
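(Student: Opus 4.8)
The plan is to apply the (weighted, directed) matrix--tree theorem, reduce the resulting $n\times n$ determinant to one of size $r-\ell+1$ indexed by abscissas, and then recognise that smaller determinant as the very cycle sum already evaluated in Lemma~\ref{lem:cycles}.

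First I would invoke the matrix--tree theorem for in-arborescences: the generating function of spanning trees of $K$ oriented towards $0^{n_0}$ equals the determinant of the reduced Laplacian $\tilde L$, obtained from the Laplacian $L$ of $K$ by deleting the row and the column indexed by $0^{n_0}$, where $L_{u,u}$ is the weighted out-degree of $u$ and $L_{u,v}=-w(u\to v)$ for $u\ne v$. Writing $D_i:=\sum_{s\in\cS}n_{i-s}x_{i,s}$, a direct inspection shows that any two rows of $\tilde L$ indexed by vertices of the same block $V_i$ have identical entries outside the columns of $V_i$; hence their difference has all external entries cancelled and retains only two nonzero entries, $D_i$ and $-D_i$, in the columns of $V_i$. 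This holds uniformly, whether or not $0\in\cS$, the loop contributions $x_{i,0}$ being absorbed into $D_i$.

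Second, for each block $i$ I would replace the rows of $V_i\setminus\{i^1\}$ present in $\tilde L$ by their differences with the row of $i^1$; this yields $n_i-1$ difference rows for $i\ne 0$ and $n_0-2$ for $i=0$ (one vertex of $V_0$ having been deleted). Factoring $D_i$ out of each such row contributes $\left(\prod_{i\ne 0}D_i^{\,n_i-1}\right)D_0^{\,n_0-2}$. A unimodular column operation (replacing, in each block, the column of $i^1$ by the sum of all columns of $V_i$) turns every normalized difference row into a single unit entry; expanding the determinant along these rows and their columns leaves an $(r-\ell+1)\times(r-\ell+1)$ determinant $\det N$ indexed by abscissas, whose entry $N_{ij}$ is the sum, over the present columns of $V_j$, of the entries of the row of $i^1$. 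Explicitly $N_{ij}=-x_{i,i-j}\,(n_j-\chi_{j=0})$ when $i\ne j$ and $i-j\in\cS$, while the diagonal entries absorb the omitted loop terms.

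Finally I would expand $\det N$ over permutations and identify its cycle structure with configurations of cycles of the graph $G_{\ell,r}(\cS)$: each nontrivial cycle of a permutation matches a directed cycle, contributing the sign $(-1)$ and the weight $\prod_{(i,i-s)\in C}x_{i,s}\prod_{i\in C}(n_i-\chi_{i=0})$, whereas each diagonal entry $N_{ii}$ equals the sum of the ``$i\notin C$'' term $\sum_{s}n_{i-s}x_{i,s}$ and the loop term $-x_{i,0}(n_i-\chi_{i=0})$ of Lemma~\ref{lem:cycles}. Grouping the configurations of that lemma according to their set of non-loop cycles then gives $\det N=P_{\ell,r}|_{y_i=n_i}$, and the refined identity~\eqref{refined} evaluates this to $\left(\sum_{s\in\cS}x_{0,s}n_{-s}\right)\prod_{i=\ell}^{-1}x_{i,-1}\prod_{i=1}^{r}x_{i,1}\prod_{i=\ell+1}^{r-1}n_i=D_0\prod_{i=\ell}^{-1}x_{i,-1}\prod_{i=1}^{r}x_{i,1}\prod_{i=\ell+1}^{r-1}n_i$. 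Multiplying by the prefactor extracted above reproduces exactly the announced product, the extra factor $D_0$ from $\det N$ compensating the missing power of $D_0$ in block $0$. I expect the main obstacle to be precisely this last identification: matching the signs in the Laplace and permutation expansions, and, when $0\in\cS$, reconciling the loop terms of Lemma~\ref{lem:cycles} with the diagonal of $N$ so that the bookkeeping of the power of $D_0$ comes out correctly.
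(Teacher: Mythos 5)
Your proposal is correct and takes essentially the same route as the paper: the weighted matrix--tree theorem, reduction of the reduced Laplacian to the $(r-\ell+1)\times(r-\ell+1)$ matrix $N$ indexed by abscissas with a prefactor $\prod_{i\ne 0}D_i^{n_i-1}\,D_0^{n_0-2}$, then the expansion of $\det N$ over configurations of cycles and the identity~\eqref{refined}. The only (cosmetic) difference is that the paper extracts the factor $\prod_j D_j^{\tn_j-1}$ by exhibiting the eigenspaces spanned by within-block row differences, whereas you perform the equivalent elementary row and column operations explicitly; the matrix $N$, the loop bookkeeping when $0\in\cS$, and the final compensation of the missing power of $D_0$ by the factor $\sum_s x_{0,s}n_{-s}$ coming from $\det N$ all coincide with the paper's argument.
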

\begin{proof} We apply the weighted version of the matrix-tree
  theorem~(see~\cite[Thm.~3.6]{Tutte:orientedMatrixTreeRefAccordingToStanley}
  or~\cite[Thm.~5.6.8]{stanley-vol2}).
The (weighted) Laplacian matrix $M$ of $K$ has its rows and
columns  indexed by elements of $V$, and coefficients given by:
$$
M(i^p,j^q)=\left\{
\begin{array}{ll}
\displaystyle -x_{i,0}\chi_{ 0 \in \cS} +\sum_{s\in\cS} x_{i,s}n_{i-s}  & \mbox{if } i^p=j^q,
\\
\displaystyle -x_{i,i-j}\chi_{i-j \in \cS} &\mbox{otherwise.}
\end{array}
\right.
$$
In this matrix, the diagonal coefficient $\sum_{s\in\cS} x_{i,s}n_{i-s}
-x_{i,0}\chi_{ 0 \in \cS}$ 
is the (weighted) out-degree of any vertex of the form $i^p$.

By the matrix-tree theorem, the \gf\ of spanning trees of $K$ rooted
at $0^{n_0}$ is the determinant of the matrix $\tilde M$ obtained
from $M$ 
by removing the line and column indexed by $0^{n_0}$.

We consider $\tilde M$ as a linear
operator acting on the vector space on $\cs$ spanned by 
$V\setminus\{0^{n_0}\}$. We will first identify a number of eigenvectors
and eigenvalues of $\tilde M$, and then describe the action of $\tilde
M$ on the orthogonal complement of the subspace spanned by these
eigenvectors.

   Let $\ell \le j \le r$, $\alpha=(\alpha_1, \ldots, \alpha_{\tn_j}) \in
\cs^{\tn_j}$, and denote 
$$
V_j(\alpha)=\alpha _1 j^1 +\alpha _2 j^2
+ \cdots +\alpha _{\tn_j} j^{\tn_j},
$$
where $\tilde n_i=n_i-\chi_{i=0}$.
Denote also $W_j=V_j(1,\ldots, 1)$.
Using the definition of $\tilde M$, one computes:
\beq\label{MV}
\tilde M V_j(\alpha)= V_j(\alpha) \sum_{s\in \cS} x_{j,s}n_{j-s}
- \left(\sum_{q=1}^{\tn_j} \alpha_q \right) \sum_{i=\ell}^r x_{i,i-j}
\chi_{i-j \in \cS}W_i.
\eeq
Consequently, for $\ell\le j\le r$, the vector space formed of the
$V_j(\alpha)$ such that $\sum_{q=1}^{\tn_j} \alpha_q =0$ is an
eigenspace of dimension $\tn_j-1$, associated with the eigenvalue
$\sum_{s\in \cS} x_{j,s}n_{j-s}$. If $n=\sum_j n_j$ denotes the size of
the trees we are counting, we have thus identified an eigenspace of
$\tilde M$ of dimension $\sum_j (\tn_j-1)= n-1-(r-\ell+1)$, and found
in $\det (\tilde M)$ a factor
$$
\prod_{j=\ell}^r \left(\sum_{s\in \cS}
  x_{j,s}n_{j-s}\right)^{\tn_j-1}.
$$
The vectors $W_j$, for $\ell \le j\le r$, span the orthogonal
complement of this eigenspace, and by~\eqref{MV},
$$
\tilde MW_j = W_j \sum_{s\in \cS} x_{j,s}n_{j-s}
-\tn_j\sum_{i=\ell}^r x_{i,i-j}
\chi_{i-j \in \cS}W_i.
$$
Therefore,
\beq\label{MN}
\det(\tilde M)= \det(N)\prod_{j=\ell}^r \left(\sum_{s\in \cS}
  x_{j,s}n_{j-s}\right)^{\tn_j-1}
\eeq
where   $N=(N(i,j))_{\ell \le i,j \le r}$ is the square matrix of size
$(r-\ell+1)$  with coefficients
$$
N(i,j)= \left\{
\begin{array}{ll}
\displaystyle -\tn_i x_{i,0} \chi_{0\in \cS} + \sum_{s\in \cS} x_{i,s} n_{i-s}
&\mbox{if } i=j,
 \\
\displaystyle  - \tilde n_jx_{i,i-j} \chi_{i-j \in \cS} & \mbox{otherwise.}
\end{array}
\right.
$$
We expand $\det(N)$ first as a sum over permutations $\sigma$ of $\{\ell,
\ldots, r\}$, and then as a sum over configurations of cycles $C$ on the
graph $G_{\ell,r}(\cS)$ (a fixed point $i$ of $\sigma$ gives rise
either to a
loop of weight $-\tn_i x_{i,0} \chi_{0\in S}$ or to a point not
belonging to $C$, with weight $\sum_{s\in \cS} x_{i,s} n_{i-s}$). The
resulting expression coincides with the left-hand side
of~\eqref{refined} (with $y_i=n_i$),
and the identity~\eqref{refined} thus gives:
$$
\det(N)=\prod_{i=\ell}^{-1} x_{i,-1} \prod_{i=1}^r x_{i,1}\left(\sum_{s\in \cS} x_{0,s}n_{-s}\right)
\prod_{i=\ell+1}^{r-1}n_i.
$$
Together with~\eqref{MN}, this completes the proof of the proposition.
\end{proof}

\section{Final comments}

\subsection{Simpler proofs?} 
The bijection of Section~\ref{sec:general} is  fairly complicated. Can one find
simpler proofs of our results, for trees with negative labels? 
Many proofs of Cayley's formula exist, beyond the
three that we have adapted in this paper (namely, Joyal's bijective proof,
functional equations and Lagrange's formula, and the matrix-tree
theorem). It is possible that other proofs may be adapted to provide
simpler proofs of our results, especially for trees with negative
labels and for the distribution of in-types (which we can only address
via the bijection of Section~\ref{sec:general}). Finding such a proof
could also enlighten the questions raised in the following
subsections.

\subsection{The complete type}
\label{sec:complete-questions}
Is Theorem~\ref{thm:cayley-complete}, which
deals with the complete type of $\cS$-embedded trees,  as general as
it could? Does one really need to assume that the trees are
non-negative, and that $0\not \in \cS$? 

We do not know how to answer this question, but is it easy to see that, if
there exists a more general formula, the sets of functions
considered in Theorem~\ref{thm:basic} (for non-negative trees) and
in Theorem~\ref{thm:rooted} (for trees with negative abscissas) will
not allow us to prove it. More precisely,
when $0\in \cS$, or when the trees have negative abscissas, there exists no bijection   between the functions and the $\cS$-trees  of
Theorem~\ref{thm:basic} (or Theorem~\ref{thm:rooted}) that  would preserve the distribution of
complete types. Here are two simple counterexamples. For non-negative trees first, take $V=\{0^1,0^2,1^1\}$, and define the function $f$ by 
$$
f(1^1)=0^1 \quad \hbox{and} \quad f(0^2)=0^2.
$$
This function satisfies Condition (F) of Theorem~\ref{thm:basic} as soon as $\{0, 1\}\subset \cS$, but there exists no
$\cS$-tree with the same type distribution.
Now for trees with negative abscissas, take $V=\{-1^1, -1^2, 0^1, 0^2,
1^1\}$ and consider the following tree:

\begin{center}
\scalebox{0.8}{\input{contrex.pstex_t}}  
\end{center}
It satisfies Conditions $(\rm T_1)$ and $(\rm T_2)$ of
Theorem~\ref{thm:rooted}, 
but there exists no $\cS$-function satisfying (F) with the same type distribution.

\subsection{Trees embedded in trees}
\label{sec:treesintrees}
Following a seminar presenting this work in December 2011, Andrea
Sportiello discovered a remarkable formula that generalizes 
Theorem~\ref{thm:cayley-profile-pm},
and added evidence that more factorization results exist for
embeddings of trees in  general graphs.

Let $\cT$ be a finite rooted tree with vertex
set $\cA$ (called the set of abscissas) and root $\rho$. 
Let $T$ be a Cayley
tree with vertices labelled $1, 2, \ldots, n$. By a \emm $\cT$-embedding of
$T$,, we mean an assignment of abscissas to vertices of $T$,
that is, a map $a:\llbracket 1, n \rrbracket \rightarrow
\cA$ such that 
\begin{itemize}
\item the abscissa of the root of $T$ is $\rho$,
\item if $v$ and $v'$ are neighbours in $T$, then $a(v)$ and $a(v')$
  are neighbours in $\cT$.
\end{itemize}
In graph theoretic terms, we have a root preserving \emm morphism, of
$T$ to $\cT$. The \emm profile, of  this $\cT$-embedded tree is the collection
$(n_i)_{i\in \cA}$, where $n_i$ is the number of vertices of $T$ of
abscissa~$i$. 
The embedding is \emm surjective, if $n_i>0$ for all $i$. 
Then the number of surjective $\cT$-embedded Cayley trees having
profile  $(n_i)_{i\in \cA}$
is
$$
 {n_\rho}\,\frac {n!} {\prod_{i\in \cA} n_i!} \prod _{i\in
  \cA}\left( \left(\sum_{j \sim i} n_j\right)^{n_i-1} n_i^{\deg(i)-1}\right),
$$
where $n=\sum_i n_i$ is the size of the trees, $\deg(i)$ is the degree of $i$ in $\cT$ and $j\sim i$ means that
$i$ and $j$ are neighbours in $\cT$. It is easily checked that this
gives Theorem~\ref{thm:cayley-profile-pm} when  $\cT$ is the
tree on the vertex set $\llbracket \ell, r\rrbracket$ with an edge
between $i$ and $i+1$ for all $i \in \llbracket \ell, r-1\rrbracket$. 

Andrea Sportiello proved the above formula using the matrix-tree
theorem. We do not know of any bijective proof.

\bigskip 
\noindent{\bf (Important) note added to the proof (September 2012).}
After publication of this paper on ArXiv, our results have been
reproved and generalized by Bernardi and Morales~\cite{bernardi}. Their proof is very
elegant, combinatorial but not bijective. Their formulas are valid for
\emm any set , $\cS$, and for \emm general, embedded trees --- but of course
they do not always simplify into product forms. These formulas involve
a non-explicit sum over  a family of trees, which simplifies in some 
cases.

Let us give an example. Assume $\max \cS=1$, and take an integer sequence
$(n_\ell, \ldots ; n_0, \ldots, n_r)$.
%
If $\ell=0$,
the number of $\cS$-embedded Cayley trees having vertical profile
$(n_\ell, \ldots ; n_0, \ldots, n_r)$ is given by
Theorem~\ref{thm:cayley-profile}, 
which we rewrite as
$$
\displaystyle \frac {n!}{\prod\limits_{i=\ell}^r n_i!}\ \prod_{i=0}^{r-1}
n_i\ \prod_{i=\ell}^r \left( \sum_{s\in \cS} n_{i-s}\right)^{n_i-1}.
$$
If $\ell=-1$ (and $n_{-1}>0$), it follows from~\cite[Section~3]{bernardi} that
this number is
$$
\frac {n!}{\prod\limits_{i=\ell}^r n_i!}\ \prod_{i=0}^{r-1}
n_i\ \prod_{i=\ell}^r \left( \sum_{s\in \cS} n_{i-s}\right)^{n_i-1}\left(\sum_{s\in \cS, s\le -1} n_{-s-1}\right).
$$
The formula becomes more and more complex as $\ell$ decreases. If
$\ell=-2$ and $n_{-2}n_{-1}>0$, it reads
$$
\frac {n!}{\prod\limits_{i=\ell}^r n_i!}\ \prod_{i=0}^{r-1}
n_i\ \prod_{i=\ell}^r \left( \sum_{s\in \cS} n_{i-s}\right)^{n_i-1}\left(n_{-2}\sum_{s\in \cS, s\le -2} n_{-s-2}
+ \sum_{s\in \cS, s\le -1} n_{-s-2}\sum_{s\in \cS, s\le -1} n_{-s-1}
\right).
$$

Bernardi and Morales also answer the question raised in
Section~\ref{sec:complete-questions} on the generality of
Theorem~\ref{thm:cayley-complete} (the complete type). For non-negative trees with $\max
\cS=1$ and $0\in \cS$, they find an explicit, but complicated,
expression. In the other cases, the sum over trees does not seem to
simplify.

\spacebreak

\bigskip
\noindent 
{\bf Acknowlegements.} We thank  Philippe Marchal, Jean-François
Marckert and Andrea
Sportiello for interesting discussions about this work, some of which
led to the results of Section~\ref{sec:treesintrees}.

\bibliographystyle{plain}
\bibliography{biblio.bib}

\end{document}